\numberwithin{equation}{section}
\tikzset{ 
    table/.style={
        matrix of nodes,
        row sep=-\pgflinewidth,
        column sep=-\pgflinewidth,
        nodes={rectangle, text width=2.5em, text height = 1.5em, align=center},
        text depth=1.25ex,
        text height=2.5ex,
        nodes in empty cells
    },
}
\newcommand{\enrico}[1]{\todo[inline, size=\tiny, author=Enrico, backgroundcolor=blue!20]{#1}}
\newcommand{\claudio}[1]{\todo[inline, size=\tiny, author=Claudio, backgroundcolor=red!20]{#1}}
\let\blb\mathbb
\def\CC{{\blb C}}
\def\PP{{\blb P}}
\def\QQ{{\blb Q}}
\def\ZZ{{\blb Z}}
\DeclareMathOperator{\Gr}{Gr}
\DeclareMathOperator{\oH}{H}
\DeclareMathOperator{\SL}{SL}
\DeclareMathOperator{\Sym}{Sym}
\newcommand{\mE}{\mathcal{E}}
\newcommand{\mF}{\mathcal{F}}
\newcommand{\mO}{\mathcal{O}}
\newcommand{\mQ}{\mathcal{Q}}
\newcommand{\mU}{\mathcal{U}}
\newcommand{\tmQ}{\tilde{\mathcal{Q}}}
\newcommand{\W}{\bigwedge}
\newtheorem{lemma}{Lemma}[section]
\newtheorem{proposition}[lemma]{Proposition}
\newtheorem{thm}[lemma]{Theorem}
\newtheorem{theorem}{Theorem}
\newtheorem{corollary}[lemma]{Corollary}
\newtheorem{definition}[lemma]{Definition}
\newtheorem{conjecture}{Conjecture}
\newtheorem*{conjA}{Conjecture A}
\newtheorem*{conjB}{Conjecture B}
\theoremstyle{remark}
\newtheorem{example}[lemma]{Example}
\newtheorem{remark}[lemma]{Remark}
\newtheorem*{notation*}{Notation}
\def\Hom{\operatorname{Hom}}
\def\End{\operatorname{End}}
\def\Ext{\operatorname{Ext}}
\def\rk{\operatorname{rk}}
\def\ch{\operatorname{ch}}
\def\grass{\operatorname{Gr}}
\DeclareMathOperator{\sHom}{\mathscr{H}\text{\kern -3pt {\calligra\large om}}\,}
\newcommand\quotient[2]{
        \mathchoice
            {
                \text{\raise1ex\hbox{$#1$}\Big/\lower1ex\hbox{$#2$}}%
            }
            {
                #1\,/\,#2
            }
            {
                #1\,/\,#2
            }
            {
                #1\,/\,#2
            }
    }
\newcounter{gensurf}
\title{Modular vector bundles with and without moduli}
\author{Enrico Fatighenti}
\address{\newline
Alma Mater studiorum Universit\`a di Bologna\hfill\newline
Dipartimento di Matematica\hfill\newline
Piazza di porta San Donato 5, 40126 Bologna, Italy}
\email[E.~Fatighenti]{enrico.fatighenti@unibo.it}
\author{Claudio Onorati}
\address{\newline
Alma Mater studiorum Universit\`a di Bologna\hfill\newline
Dipartimento di Matematica\hfill\newline
Piazza di porta San Donato 5, 40126 Bologna, Italy}
\email[C.~Onorati]{claudio.onorati@unibo.it}
\begin{document}
\dedicatory{A Kieran}

\maketitle
\begin{abstract}
    If $X\subset\Gr(2,6)$ is the Fano variety of lines of a smooth cubic fourfold, then we show that the restriction to $X$ of any Schur functor of the tautological quotient bundle is modular and slope polystable. Moreover it is atomic if and only if it is rigid, in which case it is also slope stable. We further compute the Ext-groups of such bundles in infinitely many cases, showing in particular the existence of new modular vector bundles on manifolds of type $\operatorname{K3}^{[2]}$ that are slope stable and whose $\Ext^1$-group is 40-dimensional.
\end{abstract}

\tableofcontents

\section*{Introduction}

In the recent years there has been a lot of interest around the study of special sheaves on irreducible holomorphic symplectic manifolds. Three different definitions are present in literature, each generalising some aspects of the moduli theory of sheaves on K3 surfaces:
\begin{itemize}
    \item in \cite{Verbitsky:JAG1996}, Verbitsky introduced \emph{projectively hyper-holomorphic} vector bundles, and proved that the smooth locus of (an irreducible component of) the moduli space of stable hyper-holomorphic vector bundles (with fixed numerical invariants) is hyper-K\"ahler. On the same lines, in \cite{MeaOno} it is shown that the infinitesimal deformations of a projectively hyper-holomorphic vector bundle are controlled by a formal DG Lie algebra;
    \item in \cite{O'Grady:Modular}, O'Grady introduced \emph{modular} torsion free sheaves, and proved that the (slope) stability problem of such sheaves decomposes the ample cone as in the surface case;
    \item finally in \cite{Beckmann} (see also \cite{Markman:Modular}), Beckmann defined \emph{atomic} objects in the derived category, and proved that they admits an \emph{extended Mukai vector}.
\end{itemize}
It is known that atomic torsion free sheaves and stable projectively hyper-holomorphic vector bundles are modular. When the irreducible holomorphic symplectic manifold $X$ is of type $\operatorname{K3}^{[2]}$, then a stable modular vector bundle is projectively hyper-holomorphic. Nevertheless, there are examples of stable modular sheaves that are not atomic already for this kind of manifolds (as the present paper contributes to show).
\medskip

The brief discussion above shows many important features shared by these special vector bundles, but leaves open a fundamental question, namely the existence of such objects. Very few examples exist in literature, and most of them are rigid. 

When $X$ is of type $\operatorname{K3}^{[2]}$, then \cite{O'Grady:Modular} exhibits many rigid modular vector bundles on $X=S^{[2]}$, all coming from the K3 surface $S$; with a similar technique, in \cite{O'Grady:Modular with many moduli} the author constructs infinitely many examples with $2(a^2+1)$ parameters for any integer $a\geq1$. In \cite{Bottini}, Bottini studies a stable atomic vector bundle on $X=S^{[2]}$ with $10$ parameters, and shows that an irreducible component of the corresponding moduli space is birational to a manifold of type OG10. In \cite{Markman:Modular,Beckmann} other examples of torsion sheaves are exhibited. Finally, in \cite{Fat24} two more examples of modular vector bundles on the Fano varieties of lines of a smooth cubic fourfold are produced, both with $\Ext^1$-group $20$-dimensional.
\medskip

Recall that if $F$ is a torsion free sheaf on a smooth and projective variety $X$, its \emph{discriminant} is defined as 
\[ \Delta(F)=c_1(F)^2-2\operatorname{rk}(F)\ch_2(F). \]
If $X$ is an irreducible holomorphic symplectic variety of type $\operatorname{K3}^{[2]}$, then a torsion free sheaf is modular if $\Delta(F)\in\QQ \mathsf{c}_2(X)$, where $\mathsf{c}_2(X)=c_2(T_X)$ is the second Chern class of $X$. 

The first aim of this work is to expand the list of examples above by proving the following result. 

\begin{theorem}\label{theorem A}
    Let $X\subset\Gr(2,V_6)$ be the Fano variety of lines of a very general cubic fourfold. For any partition $\lambda=(\lambda_1,\lambda_2,\lambda_3,\lambda_4)$, the restricted Schur functor $\Sigma_\lambda\mQ$ on $X$ is (slope) polystable and modular. More precisely,
    \[ \Delta(\Sigma_\lambda\mQ)=\frac{\delta(\lambda)}{4}\operatorname{rk}(\Sigma_\lambda\mQ)^2\mathsf{c}_2(X), \]
    where 
    \[ \delta(\lambda)=\frac{3(\sum_{i=1}^4\lambda_i^2)-2(\sum_{1\leq i<j\leq4}\lambda_i\lambda_j)+12\lambda_1+4\lambda_2-4\lambda_3-12\lambda_4}{60}. \]
\end{theorem}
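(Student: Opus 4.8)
The plan is to split the statement into a representation–theoretic part and a geometric part, the bridge between them being the single identity $\Delta(\mQ)|_X=\mathsf{c}_2(X)$. Write $r=\rk\mQ=4$ and recall $\Delta(F)=2r\,c_2(F)-(r-1)c_1(F)^2$, so $\Delta(\mQ)=8c_2(\mQ)-3c_1(\mQ)^2$. I first observe that for \emph{any} bundle $E$ the class $\Delta(\Sigma_\lambda E)$ is a symmetric quadratic polynomial in the Chern roots $x_1,\dots,x_r$ of $E$ that vanishes on the diagonal $\{x_1=\dots=x_r\}$: indeed $\Delta(\Sigma_\lambda E)=-\sum_{\mu<\nu}(w_\mu-w_\nu)^2$, where $w_\mu=\sum_i\mu_i x_i$ runs over the weights $\mu$ (with multiplicity) of the $\GL_r$–representation $V_\lambda$, and if all $x_i$ coincide then all $w_\mu$ coincide. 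Since the space of symmetric quadratics vanishing on the diagonal is one–dimensional, spanned by $\Delta(E)=(\sum_i x_i)^2-r\sum_i x_i^2$, there is a scalar $c_\lambda$, depending only on $\lambda$ and $r$, with $\Delta(\Sigma_\lambda E)=c_\lambda\,\Delta(E)$.

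Granting this, modularity is immediate once the geometric identity $\Delta(\mQ)|_X=\mathsf{c}_2(X)$ is proved, for then $\Delta(\Sigma_\lambda\mQ)=c_\lambda\,\mathsf{c}_2(X)\in\QQ\,\mathsf{c}_2(X)$. To prove the identity I use the tangent–normal sequence
\[ 0\to T_X\to(\mU^*\otimes\mQ)|_X\to\Sym^3\mU^*|_X\to0 \]
arising from the description of $X$ as the zero locus of the section of $\Sym^3\mU^*$ cutting out lines on the cubic ($\mU$ the tautological subbundle). Setting $A=c_1(\mQ)$ and $B=c_2(\mU)$ on $\Gr(2,V_6)$, one has $c_1(\mQ)^2=A^2$ and $c_2(\mQ)=A^2-B$, hence $\Delta(\mQ)=5A^2-8B$. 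A splitting–principle computation gives $c_2(\mU^*\otimes\mQ)=16A^2+2B$ and $c_2(\Sym^3\mU^*)=11A^2+10B$, and since $c_1(T_X)=0$ the sequence yields $\mathsf{c}_2(X)=c_2(\mU^*\otimes\mQ)-c_2(\Sym^3\mU^*)=5A^2-8B$ after restriction; the two expressions coincide.

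It remains to pin down the constant and its closed form. Evaluating $\Delta(\Sigma_\lambda E)=c_\lambda\Delta(E)$ on $x=(1,0,\dots,0)$ and using $\sum_\mu\mu=\tfrac{N|\lambda|}{r}(1,\dots,1)$ (with $N=\rk\Sigma_\lambda\mQ=\dim V_\lambda$ and $|\lambda|=\sum_i\lambda_i$) reduces $c_\lambda$ to the second moment $S=\sum_\mu\|\mu\|^2$ of the weight system; the $S_4$–symmetry of the weights then gives $c_\lambda=\tfrac{N^2}{4}\delta(\lambda)$ with $\delta(\lambda)=\tfrac{S}{3N}-\tfrac{|\lambda|^2}{12}$. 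Finally $S/N$ is computed by the standard second–moment (Freudenthal/Casimir) formula for $\GL_4$, namely $\tfrac{S}{N}=\tfrac15\big(\sum_i\lambda_i^2+(\sum_i\lambda_i)^2+\langle 2\rho,\lambda\rangle\big)$ with $2\rho=(3,1,-1,-3)$; substituting and clearing denominators yields exactly the stated $\delta(\lambda)$, the quadratic part being $3\sum_i\lambda_i^2-2\sum_{i<j}\lambda_i\lambda_j$ and the asymmetric linear part $12\lambda_1+4\lambda_2-4\lambda_3-12\lambda_4$ being precisely $4\langle 2\rho,\lambda\rangle$.

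For polystability I argue in two steps. Since the cubic is very general, $X$ has Picard rank one and slope is measured against the Plücker class $g=c_1(\mQ)|_X$. First I show $\mQ|_X$ is slope–stable: as $\operatorname{Pic}(X)=\ZZ g$, this follows from Hoppe's criterion, i.e. from the vanishing of $H^0\big(X,(\W^{k}\mQ)\otimes\mO_X(-g)\big)$ for $1\le k\le 3$, which I check by resolving $\mO_X$ with the Koszul complex of the section of $\Sym^3\mU^*$ and applying Bott vanishing on $\Gr(2,V_6)$. Second, in characteristic zero a stable bundle carries a Hermite–Einstein metric (Kobayashi–Hitchin), and the induced metric on $\Sigma_\lambda\mQ$ is again Hermite–Einstein, so $\Sigma_\lambda\mQ$ is slope–polystable. \textbf{The main obstacle} is precisely the stability of $\mQ|_X$: restricting a stable homogeneous bundle from $\Gr(2,V_6)$ to the non–complete–intersection fourfold $X$ is not formal, and the cohomological vanishing underlying Hoppe's criterion must be established by hand through the Koszul–Bott analysis.
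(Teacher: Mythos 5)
Your proposal is correct, and for the modularity half it takes a genuinely different and more conceptual route than the paper. The paper proves the discriminant formula by first computing the full Chern character of $\Sigma_\lambda\mQ$ (Theorem~\ref{thm:Chern character}) through a triple induction on the entries of $\lambda$ via Pieri's rule, repeatedly using the special relations in $\oH^\ast(X,\QQ)$ from Lemma~\ref{lemma:identities ch_3(Q) e ch_4(Q)} to simplify; modularity is then read off in Proposition~\ref{prop:modular}. Your argument instead establishes the universal identity $\Delta(\Sigma_\lambda E)=c_\lambda\,\Delta(E)$ for any rank-$4$ bundle, by observing that $\Delta(\Sigma_\lambda E)=-\sum_{\mu<\nu}(w_\mu-w_\nu)^2$ is a symmetric quadratic in the Chern roots vanishing on the diagonal, and that the space of such quadratics is one-dimensional, spanned by $\Delta(E)$; the constant is then pinned down by the second-moment (Casimir) formula for the weight system of $V_\lambda$. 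I checked your formula $\tfrac{S}{N}=\tfrac15\bigl(\sum_i\lambda_i^2+|\lambda|^2+\langle2\rho,\lambda\rangle\bigr)$ against the trace-form identity $\operatorname{tr}_V(XY)=a\operatorname{tr}(XY)+b\operatorname{tr}X\operatorname{tr}Y$ and on small examples, and the resulting $\delta(\lambda)$ matches the paper exactly. This is shorter, avoids the cohomological relations of Lemma~\ref{lemma:identities ch_3(Q) e ch_4(Q)} for this part, and proves the stronger general statement that the introduction of the paper explicitly defers to future work; your recomputation of $\Delta(\mQ)|_X=\mathsf{c}_2(X)$ from the tangent–normal sequence ($c_2(\mU^\vee\otimes\mQ)=16A^2+2B$, $c_2(\Sym^3\mU^\vee)=11A^2+10B$) is also correct and reproves Lemma~\ref{lemma:c_2(X)^2 and ch_4(Q)}(4).

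For polystability both arguments rest on the same two pillars: slope stability of $\mQ|_X$ and the fact that Schur/tensor operations preserve polystability (your Hermite--Einstein argument is the content of \cite[Theorem~3.2.11]{HL}, which the paper applies iteratively along the Pieri decompositions). The one genuinely incomplete step in your write-up is the stability of $\mQ|_X$ itself: you propose Hoppe's criterion, i.e.\ the vanishing of $\oH^0(X,\W^k\mQ\otimes\mO_X(-1))$ for $k=1,2,3$, to be checked through the Koszul resolution and Borel--Weil--Bott, but you do not carry out these computations. They are routine and should go through, but be aware the paper simply quotes \cite[Remark~8.6]{O'Grady:Modular} (where stability comes from a deformation to $(S^{[2]},F^{[2]})$), so if you want a self-contained proof you do need to perform those vanishing checks explicitly.
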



As remarked before, it is very interesting to understand the relation between modular and atomic sheaves. Roughly speaking atomic sheaves are those for which we can define a surface-like Mukai vector, and conjecturally they deform with $X$ over all its directions, included the non-commutative ones (see Section~\ref{section:atomic}). 

Since we have just seen that the Schur functors of the tautological quotient bundle are modular, it is natural to ask weather they are also atomic. The answer is negative in general, but it is interesting to notice that only those that are rigid are in fact atomic. Our second main result is the following.

\begin{theorem}\label{theorem B}
    Let $X\subset\Gr(2,V_6)$ be the Fano variety of lines of a very general cubic fourfold. For any partition $\lambda=(\lambda_1,\lambda_2,\lambda_3,\lambda_4)$, the associated vector bundle $\Sigma_\lambda\mQ$ on $X$ is atomic if and only if $\lambda_1=\lambda_2=\lambda_3$ or $\lambda_2=\lambda_3=\lambda_4$.

    Moreover, these bundles are all slope stable and rigid.
\end{theorem}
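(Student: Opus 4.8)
The plan begins by making the two families of partitions in the statement transparent. Factoring out the determinant and using the rank-$4$ identity $\Sigma_{(c,c,c,0)}\mQ\cong(\det\mQ)^{c}\otimes\Sym^{c}\mQ^\vee$, one checks directly that
\[ \Sigma_{(a,a,a,b)}\mQ\cong(\det\mQ)^{a}\otimes\Sym^{a-b}\mQ^\vee,\qquad \Sigma_{(a,b,b,b)}\mQ\cong(\det\mQ)^{b}\otimes\Sym^{a-b}\mQ. \]
Thus the partitions singled out in the statement are exactly those for which $\Sigma_\lambda\mQ$ is a power of the Plücker polarization $\det\mQ|_X=\mO_X(1)$ tensored with a symmetric power of $\mQ$ or of $\mQ^\vee$. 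Since twisting by $\mO_X(1)$ multiplies the extended Mukai vector only by $\exp(c_1(\mO_X(1)))$ and leaves $\Sigma_\lambda\mQ\otimes(\Sigma_\lambda\mQ)^\vee$ unchanged, both the atomicity criterion and the stability/rigidity claims reduce to the corresponding statements for $\Sym^{k}\mQ$ and $\Sym^{k}\mQ^\vee$.

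For the atomicity \emph{if and only if}, I would make the Beckmann criterion explicit on the type $\operatorname{K3}^{[2]}$ manifold $X$. By Theorem~\ref{theorem A} the bundle is modular, so its Mukai vector already agrees with the image of the extended Mukai lattice under Beckmann's morphism (see \cite{Beckmann,Markman:Modular}) in cohomological degrees $\le 4$; the entire obstruction to atomicity is therefore concentrated in degrees $6$ and $8$, i.e.\ in $\ch_3(\Sigma_\lambda\mQ)\in \oH^6(X)$ and $\ch_4(\Sigma_\lambda\mQ)\in \oH^8(X)$. Since the Chern classes of $\Sigma_\lambda\mQ$ are pulled back from $\Gr(2,V_6)$, these live in the tautological subspace, where they can be computed from the Chern roots of $\mQ$ by the splitting principle and compared with the class predicted by the extended Mukai vector $(\rk,c_1,s)$. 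Using that $\operatorname{Pic}(X)=\ZZ$ for a very general cubic, the resulting obstruction reduces to finitely many polynomials in $\lambda$; I expect them to factor, after reduction by the modularity relation, into the linear form $\lambda_2-\lambda_3$ and the quadratic form $(\lambda_1-\lambda_2)(\lambda_3-\lambda_4)$, whose common zero locus is precisely $\{\lambda_1=\lambda_2=\lambda_3\}\cup\{\lambda_2=\lambda_3=\lambda_4\}$. The conceptual input here is the explicit form of Beckmann's criterion; the main labor is the degree-$6$ Chern character computation and the factorization.

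For the second assertion I would fix an atomic $\lambda$, i.e.\ work with $\Sym^{k}\mQ$ (the case $\Sym^{k}\mQ^\vee$ being dual). Theorem~\ref{theorem A} already gives polystability, so slope stability follows once $\Hom(\Sym^{k}\mQ,\Sym^{k}\mQ)=\CC$: a polystable bundle $\bigoplus_i E_i^{\oplus m_i}$ with $E_i$ stable of equal slope has $\dim\Hom=\sum_i m_i^2$, which equals $1$ exactly when the bundle is stable. Both the vanishing $\Ext^1=0$ (rigidity) and the computation $\hom=1$ (stability) are instances of computing $\Ext^\bullet\big(\Sym^{k}\mQ,\Sym^{k}\mQ\big)=\oH^\bullet\!\big(X,\Sym^{k}\mQ\otimes(\Sym^{k}\mQ)^\vee\big)$. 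I would decompose the endomorphism bundle by the Littlewood--Richardson/Pieri rules into irreducible summands $\Sigma_\nu\mQ$ and compute each $\oH^\bullet(X,\Sigma_\nu\mQ)$ through the Koszul resolution of $\mO_X$ inside $\Gr(2,V_6)$ attached to the defining section of $\Sym^3\mU^\vee$. This yields a hypercohomology spectral sequence whose $E_1$-page consists of the groups $\oH^q\big(\Gr(2,V_6),\Sigma_\nu\mQ\otimes\W^{p}(\Sym^3\mU^\vee)^\vee\big)$, each computable by Borel--Weil--Bott after expanding into irreducibles of the two tautological bundles.

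The main obstacle is controlling this spectral sequence uniformly in $k$: one must track which weights become singular (forcing acyclic terms), and then show that the surviving contributions to $\oH^1$ cancel while exactly one trivial summand survives in $\oH^0$. I anticipate that the cleanest route to rigidity avoids the full bookkeeping by instead invoking the Beckmann--Markman Mukai-type pairing: once atomicity is established, the extended Mukai vector of $\Sym^{k}\mQ$ is determined, and computing its self-pairing should exhibit it as a $(-2)$-class, which together with stability forces $\Ext^1=0$ by the Euler-characteristic computation $\chi(\Sigma_\lambda\mQ,\Sigma_\lambda\mQ)=-(\tilde v,\tilde v)$. I would present the direct Borel--Weil--Bott computation as the primary argument and use the Mukai pairing as the structural explanation.
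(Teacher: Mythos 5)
Your reduction of the two families to $\Sym^{k}\mQ$ and $\Sym^{k}\mQ^{\vee}$ up to a power of $\det\mQ$ is exactly what the paper does, and your plan for stability and rigidity (Littlewood--Richardson decomposition of the endomorphism bundle, Koszul resolution, Borel--Weil--Bott) coincides with the paper's Proposition~\ref{prop: ext of sym} and Lemma~\ref{lemma: (2n n n 0 | n n)}. Your framing of the atomicity obstruction as living in degrees $6$ and $8$ of the Mukai vector is also a legitimate reorganization: the paper instead extracts two expressions for $\tilde{q}(\tilde{v})$ from $\Delta$ (Lemma~\ref{lemma:Bottini}(1)) and from $\chi(E,E)$ (Lemma~\ref{lemma:Bottini}(2)) and shows the consistency equation $(3\delta(\lambda)-1)^2=\mathsf{P}(\lambda)$ has no solutions unless three entries coincide. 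However, there are two genuine problems with your proposal.

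First, the entire content of the ``only if'' direction is concentrated in the step you merely ``expect'': the factorization of the obstruction polynomials into $\lambda_2-\lambda_3$ and $(\lambda_1-\lambda_2)(\lambda_3-\lambda_4)$. To even write these polynomials down you need closed formulas for $\ch_3(\Sigma_\lambda\mQ)$ and $\ch_4(\Sigma_\lambda\mQ)$ as functions of $\lambda$; this is not a routine splitting-principle exercise but the inductive Pieri-rule computation occupying all of Section~\ref{section:Chern} (Theorem~\ref{thm:Chern character}, with the auxiliary functions $\tau(\lambda)$ and $\xi(\lambda)$). Without that input, and without actually carrying out the factorization, the ``only if'' direction is unproved.

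Second, your proposed ``structural explanation'' of rigidity is false. On a fourfold of type $\operatorname{K3}^{[2]}$ the relation is not $\chi(E,E)=-\tilde{q}(\tilde{v},\tilde{v})$ but $\chi(E,E)=3\bigl(\tilde{q}(\tilde{v})/(\tfrac{5}{2}r^2)\bigr)^2 r^2\geq 0$ (Lemma~\ref{lemma:Bottini}(2)), and since $\chi(E,E)=2-2\operatorname{ext}^1+\operatorname{ext}^2$ for a simple bundle, no sign condition on the self-pairing can force $\Ext^1=0$. Indeed the paper computes $\tilde{v}(\Sym^m\mQ)^2=\tfrac{3m^2+12m-20}{8}r_m^2$, which is \emph{positive} for all $m>1$, even though these bundles are rigid; the paper explicitly flags this as a contrast with the K3-surface picture. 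So the Borel--Weil--Bott cohomology computation is not a convenience you could bypass --- it is the only route to rigidity here.
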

We wish to remark that up to taking dual and twisting with a line bundle, Theorem~\ref{theorem B} is saying that the only atomic bundles are the symmetric bundles $\Sym^m\mQ$, which we fully describe in Section~\ref{section:Sym Q}. Unless the case $m=1$, the rigid and stable atomic vector bundles $\Sym^m\mQ$ do not satisfy the hypothesis of \cite[Theorem~1.4]{O'Grady:Modular}, therefore they are essentially new.
\medskip

The proof of Theorem~\ref{theorem B} is based on Theorem~\ref{thm:Chern character}, where we explicitly compute the Chern character of the bundles $\Sigma_\lambda\mQ$ in function of $\lambda$ and the Chern classes of $\mQ$. To the best of our knowledge this is the first time that Chern classes of Schur functors of a bundle are shown to be related to Chern classes of the bundle itself via functions depending only on the partition. Even though all our computations take place on the Fano variety of lines, thus heavily relying on non-trivial relations in the cohomology ring, one can reproduce the argument in general for any bundle of rank $r$ on a smooth and projective variety. We will face this problem elsewhere.
\medskip


Next we move to study the Ext-algebra of the bundles $\Sigma_\lambda\mQ$. 
The strategy to compute the groups $\Ext^i(\Sigma_\lambda\mQ,\Sigma_\lambda\mQ)$ is to use the famous Borel--Weil--Bott formula combined with the Koszul complex of $X\subset\Gr(2,6)$. This machinery can be encoded in the spectral sequence (\ref{eqn:spectral sequence}), whose degeneracy is hard to control in general. 

Our third main result is an implementation of this strategy for infinitely many partitions.

\begin{theorem}\label{theorem C}
    Let $X\subset\Gr(2,V_6)$ be the Fano variety of lines of a very general cubic fourfold. 
    \begin{enumerate}
    \item $\Sigma_\lambda\mQ$ is simple (hence slope stable) for any $\lambda=(\lambda_1,\lambda_2,\lambda_3,\lambda_4)$ with $\lambda_2-\lambda_4\leq3$; moreover the following table holds:
    \FloatBarrier
    \begin{table}[!htbp]
    \begin{tabular}{lcc} \toprule
$\qquad\lambda$ & $\operatorname{ext}^1(\Sigma_\lambda\mQ,\Sigma_\lambda\mQ)$ & $\operatorname{ext}^2(\Sigma_\lambda\mQ,\Sigma_\lambda\mQ)$  \\
					\midrule
   $(m,0,0,0)$, $m\geq1$   & $0$ & \tiny{$3\left(\frac{3m^2+12m-20}{20} \right)^2 r_m^2-2$} \\  
   $(m,1,0,0)$, $m\geq1$	& $20$ & \tiny{$3\frac{27m^4+180m^3+26m^2-980m+347}{1200}r_{m,1}^2+38$} \\	
   $(m,1,1,0)$, $m\geq1$	& $20$ & \tiny{$3\frac{27m^4+144m^3-136m^2-808m+848}{1200}r_{m,1,1}^2+38$} \\
   $(m,2,0,0)$, $m\geq2$	& $20$ & \tiny{$3\frac{27m^4+144m^3+92m^2-500m-400}{1200}r_{m,2}^2+38$} \\
   $(2,2,1,0)$	& $20$ & $401$ \\
   $(3,2,1,0)$	& $40$ & $35406$ \\
   $(4,2,1,0)$	& $40$ & $554121$ \\
   $(m,2,2,0)$, $m\geq2$	& $20$ & \tiny{$3\frac{27m^4+72m^3-124m^2+4m+248}{1200}r_{m,2,2}^2+38$} \\
   $(3,3,0,0)$	& $20$ & $45338$ \\
   $(4,3,0,0)$	& $20$ & $864545$ \\
   $(5,3,0,0)$	& $20$ & $7136006$ \\
   $(3,3,1,0)$	& $20$ & $45065$ \\
   $(4,3,1,0)$	& $40$ & $996003$ \\
   $(3,3,2,0)$	& $20$ & $23771$ \\
   $(4,3,2,0)$	& $40$ & $554121$ \\
					\bottomrule
				\end{tabular}
\end{table}

\item If $\lambda=(m,t,s,0)$ is one of the partitions above, then the same conclusions hold for partitions of the form
    \[ (m+k,t+k,s+k,k)\quad\mbox{ and }\quad (m+k,m-s+k,m-t+k,k). \]
    
\item For any $\lambda=(\lambda_1,\lambda_2,\lambda_3,\lambda_4)$, we have
\[ \Ext^1(\Sigma_\lambda\mQ,\Sigma_\lambda\mQ)\supset\left\{
    \begin{array}{ll}
      \W^3V_6^\vee   & \mbox{if}\quad\lambda_1=\lambda_2>\lambda_3\,\mbox{or}\,\lambda_1>\lambda_2=\lambda_3>\lambda_4\,\mbox{or}\,\lambda_2>\lambda_3=\lambda_4 \\
      (\W^3V_6^\vee)^{\oplus 2} & \lambda_1>\lambda_2>\lambda_3>\lambda_4\, ,
    \end{array}
    \right.\]
\end{enumerate}
with equality in the cases listed in items $(1)$ and $(2)$.
\end{theorem}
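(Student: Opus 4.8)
The plan is to compute $\Ext^\bullet_X(\Sigma_\lambda\mQ,\Sigma_\lambda\mQ)=H^\bullet(X,\mathcal{E}nd(\Sigma_\lambda\mQ))$ through the spectral sequence (\ref{eqn:spectral sequence}) attached to the Koszul resolution of $\mO_X$ inside $\Gr(2,V_6)$. Writing $X=Z(s)$ for the section $s\in H^0(\Gr,\Sym^3\mU^\vee)$ cut out by the cubic and tensoring the Koszul complex $\W^\bullet\Sym^3\mU$ with $\mathcal{E}nd(\Sigma_\lambda\mQ)$, this reads
\[ E_1^{p,q}=H^q\!\bigl(\Gr,\ \mathcal{E}nd(\Sigma_\lambda\mQ)\otimes\textstyle\W^{-p}\Sym^3\mU\bigr)\ \Longrightarrow\ \Ext^{p+q}_X(\Sigma_\lambda\mQ,\Sigma_\lambda\mQ),\qquad -4\le p\le 0. \]
First I would decompose $\mathcal{E}nd(\Sigma_\lambda\mQ)=\bigoplus_\mu(\Sigma_\mu\mQ)^{\oplus c^\lambda_\mu}$ into irreducible $\GL_4$–summands (all with $|\mu|=0$) and $\W^{j}\Sym^3\mU$ into Schur functors $\Sigma_\nu\mU$, so that every $E_1$–entry is a sum of $H^q(\Gr,\Sigma_\mu\mQ\otimes\Sigma_\nu\mU)$, computable by Borel--Weil--Bott. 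The essential bookkeeping is that, in the convention where the $\mQ$–weights precede the $\mU$–weights, the Koszul factor $\W^{-p}\Sym^3\mU$ carries total weight $-3p$, so each $E_1^{p,q}$ produces only $\SL(V_6)$–representations $\Sigma_\kappa V_6$ with $|\kappa|=-3p$.

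For simplicity I would look at the antidiagonal $p+q=0$. Since $|\mu|=0$ forces $\mu=0$ to be the only summand of $\mathcal{E}nd$ with $H^0(\Gr,\Sigma_\mu\mQ)\ne0$, one has $E_1^{0,0}=\CC$, whence $\operatorname{ext}^0=1+\sum_{q\ge1}\dim E_\infty^{-q,q}$, and the whole content is the vanishing $E_\infty^{-q,q}=0$ for $1\le q\le4$. This is exactly where the hypothesis $\lambda_2-\lambda_4\le3$ enters: in this range the Borel--Weil--Bott analysis of $H^q(\Gr,\mathcal{E}nd\otimes\W^q\Sym^3\mU)$ leaves no surviving trivial–type class, forcing $\operatorname{ext}^0=1$, which together with the polystability of Theorem~\ref{theorem A} upgrades to slope stability. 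The heart of part (3) is then the identification of what carries $\Ext^1$: a weight count shows $\W^3V_6^\vee$ can only come from the Koszul term $p=-1$, and within it only from the single summand $\Sigma_{(1,1,-1,-1)}\mQ$ of $\mathcal{E}nd$, for which Borel--Weil--Bott yields $H^2(\Gr,\Sigma_{(1,1,-1,-1)}\mQ\otimes\Sym^3\mU)\cong\W^3V_6^\vee$ in $E_1^{-1,2}$, i.e.\ in total degree $1$. Its multiplicity is that of $\Sigma_{(1,1,-1,-1)}\mQ$ in $\mathcal{E}nd(\Sigma_\lambda\mQ)$, which I would evaluate by the Klimyk--Kostant formula as a signed sum over the stabiliser of $\lambda$ in the Weyl group: using that the zero–weight space of the $\SL_4$–irreducible of highest weight $(1,1,-1,-1)$ is two–dimensional and that each $\pm\alpha_i$ occurs there with multiplicity one, this equals $2$ for regular $\lambda$, equals $1$ on exactly the three wall–patterns of the statement, and equals $0$ otherwise. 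For the inclusion $\supseteq$ I would prove these classes survive: the first Koszul differential is $\mathrm{id}_{\mathcal{E}nd}\otimes(\text{contraction by }s)$, hence preserves the $\mu$–isotypic decomposition, and a short Borel--Weil--Bott computation gives $H^2(\Gr,\Sigma_{(1,1,-1,-1)}\mQ\otimes\W^2\Sym^3\mU)=0$ (those cohomologies live in degrees $0$ and $4$) and $H^2(\Gr,\Sigma_{(1,1,-1,-1)}\mQ)=0$, so $d_1$ neither hits nor leaves the class.

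Part (2) is then formal. The groups $\Ext^\bullet$ depend only on $\mathcal{E}nd(\Sigma_\lambda\mQ)$, which is unchanged under $\lambda\mapsto\lambda+(k,k,k,k)$ (twist by $(\det\mQ)^{k}$) and under $\Sigma_\lambda\mQ\mapsto(\Sigma_\lambda\mQ)^\vee$ followed by a determinant twist, the latter sending $(m,t,s,0)$ to $(m,m-s,m-t,0)$; these two operations generate precisely the two families. The $\operatorname{ext}^2$ column then needs no further cohomology: by Serre duality on the holomorphic–symplectic fourfold $X$ one has $\operatorname{ext}^0=\operatorname{ext}^4$ and $\operatorname{ext}^1=\operatorname{ext}^3$, so $\operatorname{ext}^2=\chi(\mathcal{E}nd(\Sigma_\lambda\mQ))+2\operatorname{ext}^1-2$, and $\chi$ is evaluated by Hirzebruch--Riemann--Roch from Theorem~\ref{thm:Chern character} together with the discriminant formula of Theorem~\ref{theorem A}; this reproduces the polynomial–in–$m$ entries with their $r_\lambda^2$ factors and the constants $-2$ (when $\operatorname{ext}^1=0$) and $+38$ (when $\operatorname{ext}^1=20$).

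The main obstacle, I expect, is the \emph{equality} assertion — that $\W^3V_6^\vee$ (or its square) exhausts $\Ext^1$ — and, relatedly, the vanishings underlying simplicity. Both reduce to controlling the higher differentials $d_r$ ($r\ge2$), which are no longer given by contraction with $s$ and can in principle link the relevant $E_1$–terms across different $\SL(V_6)$–types; ruling this out uniformly is exactly the degeneracy that the introduction flags as hard to control. I would treat it family by family for the partitions of the table — handling each one–parameter family once by Borel--Weil--Bott in the variable $m$, and the finitely many sporadic rows by direct computation — and then propagate the conclusions through the symmetries of part (2).
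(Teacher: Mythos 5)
Your overall strategy coincides with the paper's: the Koszul spectral sequence (\ref{eqn:spectral sequence}), Borel--Weil--Bott applied to the Littlewood--Richardson decomposition of $\mE nd(\Sigma_\lambda\mQ)$, Serre duality plus Hirzebruch--Riemann--Roch to extract $\operatorname{ext}^2$ from $\chi$ and $\operatorname{ext}^1$, and the twist/dual symmetries for item (2). One genuine variation: for the multiplicity of the $\Ext^1$-carrying summand $\Sigma_{(1,1,-1,-1)}\mQ$ (the paper's $\Sigma_{(m+1,m+1,m-1,m-1\mid m,m)}$) you invoke Klimyk--Kostant and the two-dimensional zero-weight space, whereas the paper (Proposition~\ref{prop:W3 in Ext1}) counts admissible Littlewood--Richardson tableaux directly; both give $0/1/2$ according to the degeneration pattern of $\lambda$, and your route is arguably cleaner for that step. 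Your observation that the Koszul differentials preserve the $\mu$-isotypic decomposition, so that survival of the $\W^3V_6^\vee$ class can be checked summand by summand, is also the mechanism the paper uses.

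The gap is that the parts of the theorem carrying actual content in items (1) and (2) are announced rather than proved. First, the simplicity claim for all $\lambda$ with $\lambda_2-\lambda_4\le 3$ and the equality assertions for the table rest entirely on the ``family by family'' analysis you defer to: in the paper this is Sections~\ref{section:LR for (m,1,0,0)}--\ref{section:LR for (m,3,3,0)}, where for each family one must (a) exhibit the recursive set $\mathsf{K}_\lambda$ of new Littlewood--Richardson summands, (b) certify completeness of the list by a rank count against $r_\lambda^2-r_{\lambda-\delta_1}^2$, and (c) run Borel--Weil--Bott on every summand tensored with every Koszul term. Without (a)--(c) the claim that ``no surviving trivial-type class'' remains in total degree $0$, and that nothing beyond $\W^3V_6^\vee$ survives in total degree $1$, is unsubstantiated. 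Second, and more seriously, you correctly identify the non-degeneracy of the spectral sequence as the main obstacle but do not resolve it, and at least one table entry genuinely requires it: for $\lambda=(5,3,0,0)$ the summands $\Sigma_{(10,6,2,2)}\mQ\otimes\mO(-5)$ and its dual produce a two-step exact sequence in which $\operatorname{ext}^1=20$ only follows once one proves that the induced map $\Sigma_{(6,4,4,4,0,0)}V_6^\vee\oplus\Sigma_{(6,4,3,2,0,0)}V_6^\vee\to\Sigma_{(6,3,3,0,0,0)}V_6^\vee$ is surjective. The paper does this (Lemma~\ref{lem:521}) by a separate geometric argument: dualizing, factoring the map through $u\mapsto u\otimes\sigma^t$ and using that $\Sigma_{(6,6,4,3,2,0)}V_6$ occurs with multiplicity one in $\Sigma_{(6,6,6,3,3,0)}V_6\otimes\Sym^3V_6$ together with genericity of the cubic. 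Nothing in your outline produces this step, so the row $(5,3,0,0)$ (and hence part of item (2)'s propagation) is not established by your argument as written.
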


The inclusion in item $(3)$ of Theorem~\ref{theorem C} follow from Proposition~\ref{prop:W3 in Ext1}. In particular, together with item $(1)$, it implies that there are infinitely many partitions $\lambda$ for which $\Sigma_\lambda\mQ$ is slope stable and whose $\Ext^1$-group is at least $40$-dimensional.

The listed partitions for which the dimensions of the $\Ext^1$-groups are determined
are explicitly described in Section~\ref{section:Evidences}: those are the ones for which we can have a relatively good control of the spectral sequence (\ref{eqn:spectral sequence}). In general though this becomes more and more complicated as the first entry of $\lambda$ becomes large. Section~\ref{section:example of non-degeneracy} exhibits an example where we can use explicit geometric arguments to control the differentials of (\ref{eqn:spectral sequence}) and determine the whole $\Ext$-algebra, but this approach seems to be very difficult (if not impossible) to push further. 

Finally, let us point out that the computation of the dimension of $\Ext^2(\Sigma_\lambda\mQ,\Sigma_\lambda\mQ)$ directly follows from the simplicity and the computation of the dimension of $\Ext^1(\Sigma_\lambda\mQ,\Sigma_\lambda\mQ)$ thanks to Proposition~\ref{prop:eulero}, which shows that 
\[ \chi(\Sigma_\lambda\mQ,\Sigma_\lambda\mQ)=3\mathsf{P}(\lambda)\rk(\Sigma_\lambda\mQ)^2, \]
where $\mathsf{P}\in\QQ[x_1,x_2,x_3,x_4]$ is an explicitly determined rational polynomial, see equality (\ref{eqn:chi(Sigma_lambdaQ)}).
\medskip

The proof of Theorem~\ref{theorem C} led us to believe that in general a rigid pattern underlines the structure of $\Ext^1(\Sigma_\lambda\mQ,\Sigma_\lambda\mQ)$. We set the following conjecture.

\begin{conjecture}\label{conjecture:simple+ext 1}
    Let $X\subset\Gr(2,V_6)$ be the Fano variety of lines of a smooth cubic fourfold, and let $\lambda=(\lambda_1,\lambda_2,\lambda_3,\lambda_4)$ be a partition. Then 
    \begin{enumerate}
        \item $\Sigma_\lambda\mQ$ is simple, i.e.\ $\Hom(\Sigma_\lambda\mQ,\Sigma_\lambda\mQ)=\CC$;
        \item if $\lambda$ has at most two different entries (i.e.\ $\Sigma_\lambda\mQ$ is rigid), then 
        \[ \Ext^1(\Sigma_\lambda\mQ,\Sigma_\lambda\mQ)=\left\{
    \begin{array}{ll}
      \W^3V_6^\vee   & \mbox{if}\quad\lambda_1=\lambda_2>\lambda_3\,\mbox{or}\,\lambda_1>\lambda_2=\lambda_3>\lambda_4\,\mbox{or}\,\lambda_2>\lambda_3=\lambda_4 \\
      (\W^3V_6^\vee)^{\oplus 2} & \mbox{otherwise.}
    \end{array}
    \right.\]
    \end{enumerate}    
\end{conjecture}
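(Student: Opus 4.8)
The plan is to prove both parts of Conjecture~\ref{conjecture:simple+ext 1} by pushing the Borel--Weil--Bott plus Koszul machinery behind Theorem~\ref{theorem C} to arbitrary partitions. The starting point is the identification $\Ext^i(\Sigma_\lambda\mQ,\Sigma_\lambda\mQ)=\oH^i(X,\Sigma_\lambda\mQ\otimes(\Sigma_\lambda\mQ)^\vee)$ together with the decomposition of the endomorphism bundle
\[ \Sigma_\lambda\mQ\otimes(\Sigma_\lambda\mQ)^\vee=\bigoplus_\nu N_\nu^\lambda\,\Sigma_\nu\mQ \]
into a sum of (rational) Schur functors of the rank-four bundle $\mQ$, with multiplicities $N_\nu^\lambda$ governed by the Littlewood--Richardson rule for $\GL_4$. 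Since $X\subset\Gr(2,V_6)$ is cut out by a section of $\mE=\Sym^3\mU^\vee$, resolving $\mO_X$ by the Koszul complex $\W^\bullet\mE^\vee=\W^\bullet\Sym^3\mU$ and applying Borel--Weil--Bott on the Grassmannian to each $\Sigma_\nu\mQ\otimes\W^{-p}\mE^\vee$ produces the $E_1$-page of the spectral sequence~(\ref{eqn:spectral sequence}). Everything on $E_1$ is then completely explicit; the entire content of the conjecture is the behaviour of the differentials.

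For part~(1) I would isolate the terms contributing to total degree $0$, namely $E_1^{p,q}$ with $p+q=0$ and $-4\le p\le 0$. The trivial summand $\nu=(0,0,0,0)$ contributes the scalar $\oH^0(\Gr,\mO)=\CC$ in bidegree $(0,0)$, and the goal is to show that every other contribution either vanishes already on $E_1$ or is killed by a differential. Because $\Sigma_\lambda\mQ$ is polystable by Theorem~\ref{theorem A}, establishing $\Hom(\Sigma_\lambda\mQ,\Sigma_\lambda\mQ)=\CC$ is equivalent to establishing slope stability, so the two assertions in Theorem~\ref{theorem C}(1) are really one statement; the hoped-for mechanism is that the only Schur summand $\Sigma_\nu\mQ$ with $\nu$ in the critical range producing $\oH^0$ on $X$ is the trivial one.

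For part~(2), restricted to partitions with at most two distinct entries, I would sandwich $\Ext^1$ between a lower and an upper bound. The lower bound is exactly the copy (or copies) of $\W^3V_6^\vee$ furnished by Proposition~\ref{prop:W3 in Ext1}; the matching upper bound should come from combining the Euler-characteristic identity $\chi(\Sigma_\lambda\mQ,\Sigma_\lambda\mQ)=3\mathsf{P}(\lambda)\rk(\Sigma_\lambda\mQ)^2$ of Proposition~\ref{prop:eulero} with part~(1) and a vanishing statement for the higher differentials feeding into total degree $1$. Concretely, once $\Hom=\CC$ and the relevant $E_1^{p,q}$ with $p+q=1$ are pinned down, one reads off $\Ext^1$ and then $\Ext^2$ follows formally from $\chi$; the rigid hypothesis (two distinct entries) should force the endomorphism bundle to contain very few Schur summands whose Grassmannian cohomology lands in the decisive degrees, which is what makes the answer as uniform as $\W^3V_6^\vee$ or $(\W^3V_6^\vee)^{\oplus2}$.

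The main obstacle---exactly the one flagged after Theorem~\ref{theorem C}---is controlling the degeneracy of~(\ref{eqn:spectral sequence}) uniformly in $\lambda$. As $\lambda_1$ grows the endomorphism bundle acquires more and more Schur summands, the $E_1$-page becomes large, and several \emph{a priori} nonzero differentials can connect the strands that compute a single $\Ext$-group; the explicit geometric argument of Section~\ref{section:example of non-degeneracy} does not obviously generalise. A convincing proof would need a structural reason for degeneration that is insensitive to the size of $\lambda$---for instance a representation-stability phenomenon showing that the surviving cohomology stabilises, or an identification of the differentials with a fixed geometric map (multiplication by the defining section) whose rank can be computed once and for all. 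Producing such a uniform statement, rather than verifying finitely many partitions by hand, is precisely where the difficulty lies, and is the reason the statement is only conjectural at present.
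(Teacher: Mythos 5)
This statement is a \emph{conjecture} in the paper: the authors do not prove it, and there is no proof to compare your attempt against. What the paper offers is partial evidence --- the inclusion $\W^3V_6^\vee\subset\Ext^1$ (with the stated multiplicity) in Proposition~\ref{prop:W3 in Ext1}, the case-by-case verification of Theorem~\ref{theorem C} for the families with $\lambda_2-\lambda_4\le 3$ in Section~\ref{section:Evidences}, and one hand-resolved indeterminacy in Section~\ref{section:example of non-degeneracy}. Your proposal reproduces exactly this strategy (Littlewood--Richardson decomposition of $\mE nd(\Sigma_\lambda\mQ)$, Koszul resolution, Borel--Weil--Bott, the spectral sequence~(\ref{eqn:spectral sequence}), polystability from Theorem~\ref{theorem A} to convert simplicity into stability, and the Euler characteristic of Proposition~\ref{prop:eulero} to recover $\Ext^2$), and you correctly identify the missing ingredient: a uniform-in-$\lambda$ control of the differentials of~(\ref{eqn:spectral sequence}). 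Since you explicitly leave that step open, what you have written is a proof plan with an acknowledged gap, not a proof --- which is consistent with the status of the statement.

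Two points of comparison with what the paper actually does are worth noting. First, the ``representation-stability phenomenon'' you hope for is partially realised in the paper by Proposition~\ref{prop:+ delta 1 non cambia}: adding $\delta_1=(1,0,0,0)$ to $\lambda$ embeds $\mE nd(\Sigma_\lambda\mQ)$ into $\mE nd(\Sigma_{\lambda+\delta_1}\mQ)$, and the ``new'' summands $\mathsf{K}_\lambda$ stabilise (Appendix~\ref{section:congetture verdure}); combined with a rank count this is what lets the authors handle infinite families at once. Even so, this recursion controls only the $E_1$-page, not the differentials, and already for $\lambda=(m,2,1,0)$ with $m\ge 5$ the relevant strand of the spectral sequence fails to degenerate for trivial reasons, so their verification of part~(2) stops there. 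Second, be careful with one implicit claim in your part~(1) argument: it is not automatic that the only summand $\Sigma_\nu$ of $\mE nd(\Sigma_\lambda\mQ)$ contributing to total degree $0$ is the trivial one --- this is precisely what must be checked via Borel--Weil--Bott for every $\nu$ in the decomposition, and in the paper this check is a genuinely case-by-case computation (it happens to come out clean in degree $0$ in all verified cases, which is why item~(1) of the conjecture is established for all $\lambda$ with $\lambda_2-\lambda_4\le3$ while item~(2) is not).
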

Notice also that the simplicity claim, together with Theorem~\ref{theorem A}, implies that the vector bundles $\Sigma_\lambda\mQ$ are slope stable.

Another interesting aspect to notice is the fact that $\Ext^1(\Sigma_\lambda\mQ,\Sigma_\lambda\mQ)$ is isomorphic to a $\SL(6)$-representation: this is the content of the Borel--Weil--Bott Theorem for Schur functors on $\Gr(2,6)$, but the fact that it descends to the variety $X$ is quite surprising. (We note that this is not true in general for the group $\Ext^2(\Sigma_\lambda\mQ,\Sigma_\lambda\mQ)$.)

We wish to remark once more that  Conjecture~\ref{conjecture:simple+ext 1} is confirmed in infinitely many cases by Theorem~\ref{theorem C}.
\medskip

Finally, an important aspect of moduli theory is to determine whether or not the Kuranishi space of a polystable vector bundle is smooth. The obstructions to the smoothness are contained in the $\Ext^2$-group, which in the cases considered by us grows indefinitely with $\lambda$ (see Proposition~\ref{prop:eulero}). 

We set an optimistic conjecture about smoothness of the Kuranishi space of the vector bundles $\Sigma_\lambda\mQ$ on $X$.

\begin{conjecture}\label{conjecture:Kuranishi smooth}
    Let $X\subset\Gr(2,V_6)$ be the Fano variety of lines of a smooth cubic fourfold. For any partition $\lambda=(\lambda_1,\lambda_2,\lambda_3,\lambda_4)$, let us consider the associated vector bundle $\Sigma_\lambda\mQ$ on $X$. Then the Kuranishi space 
    \[ \operatorname{Def}(\Sigma_\lambda\mQ) \]
    of infinitesimal deformations of $\Sigma_\lambda\mQ$ is smooth.
\end{conjecture}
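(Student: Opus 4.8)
The plan is to reduce the conjecture to the projective hyper-holomorphicity of $\Sigma_\lambda\mQ$ and then to the vanishing of a single obstruction, exploiting the hyper-K\"ahler symmetry of the deformation complex. First I would secure stability: by Theorem~\ref{theorem C}(1) (and, in general, by Conjecture~\ref{conjecture:simple+ext 1}(1)) the bundle $\Sigma_\lambda\mQ$ is simple, hence slope stable once combined with the modularity established in Theorem~\ref{theorem A}. Since $X$ is of type $\operatorname{K3}^{[2]}$, a slope-stable modular bundle is projectively hyper-holomorphic, which is the gateway to the deformation theory: by \cite{MeaOno} the infinitesimal deformations of such a bundle are controlled by a \emph{formal} DG Lie algebra, so the Kuranishi space is cut out inside $\Ext^1(\Sigma_\lambda\mQ,\Sigma_\lambda\mQ)$ by the quadratic part of Maurer--Cartan alone,
\[ \operatorname{Def}(\Sigma_\lambda\mQ)\;\cong\;\bigl\{\eta\in\Ext^1(\Sigma_\lambda\mQ,\Sigma_\lambda\mQ)\;:\;\eta\circ\eta=0\bigr\}, \]
the Yoneda square landing in $\Ext^2(\Sigma_\lambda\mQ,\Sigma_\lambda\mQ)$. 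Because the defining equations are purely quadratic, the Zariski tangent space at the origin is all of $\Ext^1$; the germ of the cone is therefore smooth exactly when the associated symmetric bilinear map
\[ \Ext^1(\Sigma_\lambda\mQ,\Sigma_\lambda\mQ)\otimes\Ext^1(\Sigma_\lambda\mQ,\Sigma_\lambda\mQ)\longrightarrow\Ext^2(\Sigma_\lambda\mQ,\Sigma_\lambda\mQ) \]
vanishes identically (polarisation being available in characteristic zero). The entire problem is thus concentrated in the vanishing of this Yoneda product.

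The atomic partitions of Theorem~\ref{theorem B} are immediate: there $\Sigma_\lambda\mQ$ is rigid, so $\Ext^1=0$, the cone is a reduced point, and smoothness is automatic. For the general case I would run two complementary arguments. On the one hand, the projective hyper-holomorphic structure endows $\Ext^1$ and $\Ext^2$ with an action of the quaternionic $SU(2)$ coming from the three complex structures, with respect to which the Yoneda product is equivariant; this is exactly the mechanism by which \cite{Verbitsky:JAG1996} produces smooth hyper-K\"ahler deformation spaces, and I would push it to force the image of the product into an $SU(2)$-subrepresentation that must vanish. On the other hand, $\Ext^1(\Sigma_\lambda\mQ,\Sigma_\lambda\mQ)$ is a genuine $\SL(6)$-representation --- isomorphic to $\W^3V_6^\vee$ or $(\W^3V_6^\vee)^{\oplus2}$ in the rigid range by item~(3) of Theorem~\ref{theorem C} and Proposition~\ref{prop:W3 in Ext1} --- so the composite of the Yoneda square with projection onto any $\SL(6)$-equivariant quotient of $\Ext^2$ is $\SL(6)$-equivariant, and one checks that no irreducible constituent of $\Sym^2(\W^3V_6^\vee)$ survives in that quotient. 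To make these checks explicit and uniform in $\lambda$, I would compute the Yoneda product at the level of the spectral sequence~(\ref{eqn:spectral sequence}) assembled from the Koszul complex of $X\subset\Gr(2,6)$ and Borel--Weil--Bott --- the same machine that produces $\Ext^1$ and $\Ext^2$ in Theorem~\ref{theorem C} --- tracking how the $\W^3V_6^\vee$-classes multiply into $\Ext^2$.

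The main obstacle is twofold, and both horns are already visible in the paper. First, the $\SL(6)$-equivariance is only partial: as remarked after Theorem~\ref{theorem C}, $\Ext^2(\Sigma_\lambda\mQ,\Sigma_\lambda\mQ)$ is \emph{not} an $\SL(6)$-representation and its dimension grows without bound with $\lambda$ (Proposition~\ref{prop:eulero}), so equivariance cannot by itself confine the image of the Yoneda square --- one must genuinely control the degeneration of~(\ref{eqn:spectral sequence}), which is hard precisely when $\lambda_1$ is large. Second, the conjecture is asserted for every partition and every smooth cubic fourfold, whereas the stability input (hence projective hyper-holomorphicity, hence formality) is presently secured only for the structured families of Theorem~\ref{theorem C} and for the very general cubic. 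Establishing simplicity in full generality and carrying out the obstruction computation uniformly in $\lambda$ are the two places where the real difficulty lies; the $SU(2)$-symmetry of the hyper-holomorphic structure is, in my view, the most promising lever against the second.
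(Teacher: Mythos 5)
First, note that this statement is a \emph{conjecture}: the paper does not prove it, and Section~\ref{section:Kuranishi smooth} only collects partial evidence. Your reduction coincides exactly with that evidence: by \cite[Theorem~4.8]{MeaOno} a stable modular bundle has a formal deformation DG Lie algebra, so $\operatorname{Def}(\Sigma_\lambda\mQ)$ is the quadratic cone $\{\eta:\eta\circ\eta=0\}$ in $\Ext^1$, and smoothness is equivalent to the vanishing of the symmetric part of the Yoneda pairing $\Ext^1\times\Ext^1\to\Ext^2$. Up to that point you and the paper are in step (modulo the caveat, which you flag, that simplicity --- hence stability --- is itself only known in the cases of Theorem~\ref{theorem C} and conjectural otherwise).

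The gap is in the mechanism you propose for the vanishing. Your $\SL(6)$-equivariance argument rests on the claim that ``no irreducible constituent of $\Sym^2(\W^3V_6^\vee)$ survives'' in the relevant quotient of $\Ext^2$. This is false already in the first nontrivial example: Proposition~\ref{prop:ext210} computes
\[ \Ext^2_0(\Sigma_{(2,1,0,0)}\mQ,\Sigma_{(2,1,0,0)}\mQ)\cong\End(\W^3V_6^\vee)=\Sym^2(\W^3V_6^\vee)\oplus\W^2(\W^3V_6^\vee), \]
so the full symmetric square of $\Ext^1=\W^3V_6^\vee$ is present as a subrepresentation of $\Ext^2$, and equivariance alone cannot force the symmetric part of the Yoneda square into zero. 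This is precisely why the paper stops at Proposition~\ref{prop:W^2V_6 in Ext^2} (exhibiting $\W^2(\W^3V_6^\vee)\subset\Ext^2$ with multiplicity one) and merely \emph{speculates} that the image of the Yoneda map lands in that skew summand, rather than deducing it. The $SU(2)$-equivariance from the hyper-holomorphic structure does not rescue the argument either: Verbitsky's theory and its refinement in \cite{MeaOno} already deliver formality, and what remains is exactly the quadratic cone; no further vanishing of the obstruction map follows formally from the quaternionic action. So your proposal reproduces the paper's reduction but does not close the conjecture, and the specific representation-theoretic step you would rely on is contradicted by the paper's own computations.
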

We collect in Section~\ref{section:Kuranishi smooth} some little evidences for Conjecture~\ref{conjecture:Kuranishi smooth}.
\medskip

{\bf Plan of the paper.}
 In Section~\ref{section:preliminaries} we state the main definitions, notations and preliminary results we will need in the rest of the paper. In particular, Section~\ref{section:homogeneous} contains a quick recap of the homogeneous vector bundles on the Grassmannian $\Gr(2,6)$ we will consider and the recipes to perform Pieri and Littlewood--Richardson decompositions, and the algorithm for the Borel--Weil--Bott Theorem.
 Section~\ref{section:IHS} is instead a summary of results on irreducible holomorphic symplectic fourfolds, focusing on modular and atomic sheaves. Here we also explicitly determine relations on the cohomology ring of a manifold of type $\operatorname{K3}^{[2]}$ which are probably known to experts, but for which a reference was missing in literature.

 Section~\ref{section:Sym Q} contains the first original results of the paper: we prove here that the symmetric vector bundles $\Sym^m\mQ$ are slope stable, atomic and rigid, and we compute the Ext-algebra explicitly. This will be the starting point of our investigations of more general Schur functors.

 Before tackling this more general problem though, we collect in Section~\ref{section:two worked out example} two worked-out ``easy" examples. We hope that this section will help the reader to navigate the rest of the paper. We prove here that the vector bundles $\Sigma_{(2,1,0,0)}\mQ$ and $\Sigma_{(3,2,1,0)}\mQ$ are slope stable and modular, and we determine their Ext-algebra. In particular it is important to notice that $\dim \Ext^1(\Sigma_{(3,2,1,0)}\mQ,\Sigma_{(3,2,1,0)}\mQ)=40$, providing the first example of a stable modular sheaf on a variety of type $\operatorname{K3}^{[2]}$ with $40$ moduli.

 Section~\ref{section:Chern} contains the long computation of the Chern character of $\Sigma_\lambda\mQ$ in terms of $\lambda$ and the Chern classes of $\mQ$. This is a technical, but not difficult, section based on several inductive steps on the rows and columns of $\lambda$.

 Section~\ref{section:proof of thm A} and Section~\ref{section:proof of thm B} are the first two main sections, in which we prove Theorem~\ref{theorem A} and Theorem~\ref{theorem B}, respectively. In particular, in Section~\ref{section:euler char} we compute the Euler characteristic of $\Sigma_\lambda\mQ$ as a function of $\lambda$.

 In Section~\ref{section:Ext groups} we state Conjecture~\ref{conjecture:simple+ext 1} and prove some general facts in its support; in Section~\ref{section:Evidences} we prove Theorem~\ref{theorem C} via a case-by-case analysis of the several cases.

 Finally, in Section~\ref{section:Kuranishi smooth} we make some speculations about the smoothness of the Kuranishi space of $\Sigma_\lambda\mQ$ and set Conjecture~\ref{conjecture:Kuranishi smooth}.

 To conclude the paper we also include two appendixes. In Appendix~\ref{section:U} we quickly prove that the vector bundles $\Sigma_\mu\mU$ are not modular, thus justifying our choice to concentrate on Schur functors of the tautological quotient bundle. In Appendix~\ref{section:congetture verdure} we collect several combinatorial questions and remarks about the Littlewood--Richardson decomposition of endomorphism bundles. These facts have been noticed while performing the computations in the paper, especially in Section~\ref{section:Evidences}, but a general proof is lacking, so that we leave them as open questions.

\subsection*{Acknowledgements}
The authors wish to thank Valeria Bertini, Sasha Kuznetsov, Laurent Manivel, Francesco Meazzini and Kieran O'Grady for useful conversations at several stages of this work.

This research has been partially funded by the European Union - NextGenerationEU under the National Recovery and Resilience Plan (PNRR) - Mission 4 Education and research - Component 2 From research to business - Investment 1.1 Notice Prin 2022 - DD N. 104 del 2/2/2022, from title ``Symplectic varieties: their interplay with Fano manifolds and derived categories", proposal code 2022PEKYBJ – CUP J53D23003840006.

Both authors are members of the INDAM-GNSAGA group.

\section{Preliminaries}\label{section:preliminaries}

\subsection{Homogeneous vector bundles on $\grass(2,6)$}\label{section:homogeneous}

The grassmannian $\grass(2,6)$ is the variety of two dimensional subspaces in $V_6$, a six dimensional complex vector space. It is a homogeneous variety, being the quotient of $\SL(6)$ by a parabolic subgroup (containing $\SL(4)\times\SL(2)$).
The homogeneous vector bundles we consider in this manuscript will come from representations of $\SL(4)\times\SL(2)$.

Recall that irreducible representations of $\SL(N)$ are indexed by \emph{dominant weights} $\lambda=(\lambda_1,\cdots,\lambda_N)\in\ZZ^N$ (recall that dominant means that $\lambda_1\geq\cdots\geq\lambda_N$). If $\lambda_N\geq0$, then $\lambda$ will be called a partition. The integer number $N$ is the \emph{length} of the weight. If $\lambda$ is a partition of length $N$, then its \emph{size} is 
\[ |\lambda|=\lambda_1+\cdots+\lambda_N. \]

Now, if $V_N$ is a vector space of rank $N$ where $\SL(N)$ acts naturally, then we denote by
\[ \Sigma_\lambda V_N \]
the \emph{Schur} module corresponding to the irreducible representation with maximal weight $\lambda$.

\begin{example}\label{example:sym e wedge}
   \begin{enumerate} 
    \item If $\lambda=(m,0,\cdots,0)$, then $\Sigma_\lambda V_N=\Sym^mV_N$ is the symmetric product representation;
    \item if $\lambda=(1,\cdots,1,0,\cdots,0)$, where the number of $1$'s is $n$, then $\Sigma_\lambda V_N=\W^n V_N$ is the wedge product representation.
    \end{enumerate}
\end{example}
    
With this notation, we denote by 
\[ \Sigma_{\lambda|\mu}:=\Sigma_\lambda V_4\otimes\Sigma_\mu V_2 \]
the irreducible representation of $\SL(4)\times\SL(2)$ with maximal weights $\lambda$ and $\mu$.

In the rest of the paper we use the following notation.
\begin{notation*}
Working on $\Gr(2,6)$, we have:
\begin{itemize}
    \item the partition $\lambda=(1,0,0,0)$, corresponding to the tautological quotient vector bundle, is denoted by $\widetilde{\mQ}$;
    \item for any dominant weight $\lambda$ of length $4$, we denote by 
    \[ \Sigma_\lambda\widetilde{\mQ} \] 
    the irreducible homogeneous vector bundles corresponding to the Schur module $\Sigma_{\lambda|(0,0)}$;
    \item the partition $\mu=(1,0)$, corresponding to the tautological subspace vector bundle, is denoted by $\widetilde{\mU}$;
    \item for any dominant weight $\mu$ of length $2$, we denote by 
    \[ \Sigma_\mu\widetilde{\mU} \] 
    the irreducible homogeneous vector bundles corresponding to the Schur module $\Sigma_{(0,0,0,0)|\mu}$;
    \item finally, with an abuse of notation we keep denoting by 
    \[ \Sigma_{\lambda|\mu}:=\Sigma_\lambda\widetilde{\mQ}\otimes\Sigma_\mu\widetilde{\mU} \]
    the irreducible homogeneous vector bundle associated to the corresponding Schur module.
\end{itemize}
\end{notation*}

If $\lambda=(\lambda_1,\cdots,\lambda_N)$ is a dominant weight of length $N$, then we define its \emph{dual} as
\[ \lambda^*:=(-\lambda_N,\cdots,-\lambda_1). \]
Then for any Schur module $\Sigma_{\lambda|\mu}$ on $\Gr(2,6)$ we have
\begin{equation}\label{eqn:duale}
    (\Sigma_{\lambda|\mu})^\vee=\Sigma_{\lambda^*|\mu^*}.
\end{equation}

If we put
$\lambda+n:=(\lambda_1+n,\cdots,\lambda_N+n)$
for an integer $n\in\ZZ$,
then 
\begin{equation}\label{eqn: x n} \Sigma_{\lambda+n}=\Sigma_\lambda V_N\otimes\left(\W^N V_N\right)^{\otimes n}. 
\end{equation}

Since 
\[ \Sigma_{(1,1,1,1)}\widetilde{\mQ}=\det\widetilde{\mQ}=\mO_{\Gr(2,6)}(1)\quad\mbox{ and }\quad \Sigma_{(1,1)}\widetilde{\mU}=\det\widetilde{\mU}=\mO_{\Gr(2,6)}(-1), \]
from (\ref{eqn: x n}) it follows that 
\begin{equation}\label{eqn: + n}
\Sigma_{\lambda+n|\mu+n}=\Sigma_{\lambda|\mu}
\end{equation}
for every $n\in\ZZ$.

\begin{example}\label{example:duale}
Let $\lambda$ be a partition of length $4$. Then  
\[ \Sigma_\lambda\widetilde{\mQ}^\vee=\Sigma_{\lambda'}\widetilde{\mQ}\otimes\mO_{\Gr(2,6)}(-\lambda_1), \]
where $\lambda'=(\lambda_1-\lambda_4,\lambda_1-\lambda_3,\lambda_1-\lambda_2,0)$.
In fact, combining (\ref{eqn:duale}) with (\ref{eqn: + n}) it follows that $\Sigma_\lambda\widetilde{\mQ}^\vee=\Sigma_{\lambda^*}\widetilde{\mQ}=\Sigma_{\lambda^*+\lambda_1|(\lambda_1,\lambda_1)}$.
\end{example}

\begin{remark}
    As a consequence of (\ref{eqn: + n}), up to tensoring with a suitable ample line bundle $\mO_{\Gr(2,6)}(n)$ we can restrict ourselves to considering only the bundles $\Sigma_\lambda\widetilde{\mQ}$ where $\lambda$ is a partition.
\end{remark}

Because of this remark, from now on we only work with homogeneous vector bundles $\Sigma_\lambda\widetilde{\mQ}$ where $\lambda$ is a partition. In particular we can picture $\lambda$ as a Young diagram.
\medskip

We state in the following the main technical results we will need to decompose plethysms and compute cohomology of irreducible homogeneous vector bundles on $\Gr(2,6)$. These are standard facts and can be found for example in \cite{Weyman}.
\medskip

{\bf Pieri's decomposition.}
Let $\lambda$ be a partition of length $N$, which we think as a Young diagram. Pieri's formula says that
\begin{equation}\label{eqn:Pieri} \Sigma_\lambda\widetilde{\mQ}\otimes\Sym^m\widetilde{\mQ}=\bigoplus_{\nu}\Sigma_\nu\widetilde{\mQ},
\end{equation}
where the direct sum runs over all the Young diagrams $\nu$ obtained from $\lambda$ by adding $m$ blocks subject to the following rule. 
\begin{center}
    $\bullet$ \emph{Pieri rule}: no two blocks are added on the same column.
\end{center}

The same formula holds if we replace $\widetilde{\mQ}$ with $\widetilde{\mU}$.
\medskip

{\bf Littlewood--Richardson's decomposition.}
As before, let $\lambda$ and $\mu$ be two partitions of length $N$, which we think as Young diagrams. Littlewood--Richardson's formula says that
\begin{equation}\label{eqn:LR} \Sigma_\lambda\widetilde{\mQ}\otimes\Sigma_\mu\widetilde{\mQ}=\bigoplus_{\nu}(\Sigma_\nu\widetilde{\mQ})^{\oplus m^\nu_{\lambda,\mu}}, 
\end{equation}
where the direct sum runs over all the Young diagrams $\nu$ such that $|\nu|=|\lambda|+|\mu|$, and the multiplicities $m^\nu_{\lambda,\mu}$ are determined according to the following algorithm. Let us write $\mu=(\mu_1,\mu_2,\mu_3,\mu_4)$ and $|\mu|=\mu_1+\mu_2+\mu_3+\mu_4$. Starting with the Young diagram of $\lambda$, we want to arrive to the Young diagram of $\nu$ using the following steps:
\begin{center}
    $\bullet$ \emph{Step~i}: add $\mu_i$ boxes labelled with the number $i$, following the Pieri rule;
\end{center}
for $i=1,\cdots,N$.

At the end of this process, let us list the numbers inserted starting from the top-right box and proceeding right-to-left and top-to-bottom. Let us call $S$ this sequence. Then the following rule must be satisfied.
\begin{center}
    $\bullet$ \emph{Littlewood--Richardson rule}: for every $0\leq t\leq|\mu|$, we look at the first $t$ elements of the sequence $S$ and for every $p=1,2,3,4$ we must have that the number of $p$'s is bigger or equal to the number of $(p+1)$'s. 
\end{center}

The multiplicity $m^\nu_{\lambda,\mu}$ is then the number of admissible sequences $S$ satisfying the Littlewood--Richardson rule.

\begin{remark}\label{remark:tutti 1}
    The rule in Step~5 in particular says that in the first row we must insert only boxes labelled with $1$.
\end{remark}

\begin{remark}\label{rmk:strictly increasing on colomns}
    Since we add boxes labelled progressively at each step following the Pieri rule, it follows that at end on each column we must read a (strictly) increasing sequence of integers.
\end{remark}

\subsubsection{The Borel--Weil--Bott Theorem}\label{section:BBW}
Let $\Sigma_{\lambda|\mu}$ be an irreducible homogeneous vector bundle on $\Gr(2,6)$, with $\lambda$ and $\mu$ two partitions. In the following we abuse notation and consider $\lambda|\mu$ as a $6$-tuple of positive integers. 

Let $\delta=(5,4,3,2,1,0)$ be the \emph{fundamental dominant weight} for $\SL(6)$. If $v$ is a $6$-tuple of integral numbers, then the $l$-function of $v$ is defined as  
\begin{equation}\label{eqn:l function}
    l(v):=\mbox{minimum number of transposition needed to bring $l(v)$ to a dominant partition}.
\end{equation}

Then the Borel--Weil--Bott Theorem is based on the following algorithm.
\begin{itemize}
    \item If $\lambda|\mu + \delta$ has repeated entries, then $\Sigma_{\lambda|\mu}$ is acyclic, i.e.\ $\oH^p(\Sigma_{\lambda|\mu})=0$ for every $p$.
    \item If $\lambda|\mu + \delta$ has no repeated entries, then
    \[ \oH^{p}(\Sigma_{\lambda|\mu})=\left\{
    \begin{array}{cc}
      \Sigma_{\widetilde{\lambda|\mu}} V_6   &  \mbox{if}\; p=l(\lambda|\mu+\delta) \\
       0  & \mbox{otherwise}
    \end{array}\right.\]
    where $\widetilde{\lambda|\mu}:=\operatorname{sort}(\lambda|\mu+\delta)-\delta$.
\end{itemize}

\subsubsection{The reduction trick}\label{section:reduction}
In this paper we will work with homogeneous vector bundles of the form $\Sigma_\lambda\widetilde{\mQ}$, where $\lambda=(\lambda_1,\lambda_2,\lambda_3,\lambda_4)$ is a partition. 

In we apply Example~\ref{example:duale} twice, we get the following identity
\begin{equation}\label{eqn:reduction}
    \Sigma_\lambda\widetilde{\mQ}=\Sigma_{\lambda'}\widetilde{\mQ}\otimes\mO_{\Gr(2,6)}(\lambda_4),
\end{equation}
where $\lambda'=(\lambda_1-\lambda_4,\lambda_2-\lambda_4,\lambda_3-\lambda_4,0)$. 

This simple observation will make us able to reduce most of our computations (for example of discriminants or Euler characteristic) to partitions of the form $\lambda=(\lambda_1,\lambda_2,\lambda_3,0)$, thus shortening or simplifying the proofs.


\subsection{Irreducible holomorphic symplectic fourfolds}\label{section:IHS}
The main character of this work is the Fano variety of lines of a smooth cubic fourfold. As it is well known, such a variety is an irreducible holomorphic symplectic manifold of dimension $4$, i.e.\ it is a compact (more precisely, projective) simply connected K\"ahler manifold with a unique-up-to-scalar non-degenerate holomorphic $2$-form. We refer to \cite{GHJ} for a general account about irreducible holomorphic symplectic manifolds, but we recall in the following the main facts and properties we will need later. We further restrict ourselves to fourfolds: this restriction is not important, but it simplifies a bit the discussion and it is the only case we will consider later.

\subsubsection{The Fano variety of lines of a cubic fourfold}\label{section:F(Y)}
Let $Y=\{f_3(x_0,\cdots,x_5)=0\}\subset\PP^5$ be a smooth cubic fourfold. It is well known (\cite{BD}) that the Fano variety of lines $X=F(Y)$ is an irreducible holomorphic symplectic fourfold deformation equivalent to an Hilbert scheme of $2$ points on a K3 surface. 

From now on, as it is customary, we refer to such irreducible holomorphic symplectic fourfolds as \emph{manifolds of type} $\operatorname{K3}^{[2]}$.

Let us recall (see \cite{BD}) that $\oH^4(F(Y),\QQ)=\Sym^2\oH^2(F(Y),\QQ)$ and that the Hodge diamond of $F(Y)$ is 
\[
\begin{array}{ccccccccc}
      &   &    &   &  1  &   &    &   &  \\
      &   &    & 0 &     & 0 &    &   & \\
      &   & 1  &   &  21 &   & 1  &   & \\ 
      & 0 &    & 0 &     & 0 &    & 0 & \\
    1 &   & 21 &   & 232 &   & 21 &   & 1 
\end{array}
\]

The variety $F(Y)$ can also be described as the zero locus $X\subset\Gr(2,6)$ of a general section of the homogeneous vector bundle $\Sym^3\tilde{\mU}^\vee$. Here and in the following we denote by $\tilde{\mU}$ and $\tilde{\mQ}$ the tautological bundles on $\Gr(2,6)$ of the subspaces and the quotients, respectively. 

One of the main objects of our investigations is the restriction $\mQ$ to $X$ of the tautological quotient bundle $\tilde{\mQ}$. 
We collect here the following identities, which will be very useful later. 

We denote by $\mathsf{c}_i(X)=c_i(T_X)\in\oH^{2i}(X,\ZZ)$ the Chern classes of $X$ and by $\mathsf{p}\in\oH^8(X,\ZZ)$ the class of a point.

\begin{lemma}\label{lemma:c_2(X)^2 and ch_4(Q)}
    The following relations hold.
    \begin{enumerate}
        \item $\int_X \mathsf{c}_2(X)^2=828$;
        \item $\operatorname{td}_X=\left(1,0,\frac{1}{12}\mathsf{c}_2(X),0,3\mathsf{p}\right)$;
        \item $\sqrt{\operatorname{td}_X}=\left( 1,0,\frac{1}{24}\operatorname{c}_2(X),0,\frac{25}{32}\mathsf{p}\right)$;
        \item $\Delta(\mQ)=c_1(\mQ)^2-8\ch_2(X)=\mathsf{c}_2(X)$.
    \end{enumerate}
\end{lemma}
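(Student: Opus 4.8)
The plan is to do all the intersection theory on the ambient Grassmannian $G=\Gr(2,V_6)$ and transport it to $X=Z(s)$, where $s$ is a general section of $\mE=\Sym^3\widetilde{\mU}^\vee$. Since $\rk\mE=4=\operatorname{codim}_G X$, the zero locus is regular and $[X]=c_4(\mE)\in\oH^8(G)$, so that $\int_X\alpha|_X=\int_G\alpha\cup c_4(\mE)$ for every $\alpha\in\oH^\ast(G)$. First I would record, in the basis of special Schubert classes, $c(\widetilde{\mQ})=1+\sigma_1+\sigma_2+\sigma_3+\sigma_4$ and $c(\widetilde{\mU}^\vee)=1+\sigma_1+\sigma_{1,1}$; writing $a,b$ for the Chern roots of $\widetilde{\mU}^\vee$, the roots $3a,2a+b,a+2b,3b$ of $\mE$ give $c_1(\mE)=6\sigma_1$, $c_2(\mE)=11\sigma_2+21\sigma_{1,1}$ and $c_4(\mE)=18\sigma_{3,1}+27\sigma_{2,2}$, while $T_G=\widetilde{\mU}^\vee\otimes\widetilde{\mQ}$ yields $c_1(T_G)=6\sigma_1$ and $c_2(T_G)=16\sigma_2+18\sigma_{1,1}$ (using $\sigma_1^2=\sigma_2+\sigma_{1,1}$ throughout).

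I would prove $(4)$ first, since it supports the rest. By adjunction $c(T_X)=\bigl(c(T_G)/c(\mE)\bigr)|_X$, so $\mathsf{c}_2(X)$ is the restriction of the degree-$4$ part $\gamma_2$ of $c(T_G)/c(\mE)$. Because $c_1(\mE)=c_1(T_G)=6\sigma_1$ (equivalently $c_1(X)=0$), this degree-$4$ part collapses to $\gamma_2=c_2(T_G)-c_2(\mE)=5\sigma_2-3\sigma_{1,1}$. On the other hand $\Delta(\widetilde{\mQ})=c_1(\widetilde{\mQ})^2-8\ch_2(\widetilde{\mQ})=-3\sigma_1^2+8\sigma_2=5\sigma_2-3\sigma_{1,1}$, so $\gamma_2=\Delta(\widetilde{\mQ})$ already as classes in $\oH^4(G)$; restricting to $X$ gives $\mathsf{c}_2(X)=\Delta(\mQ)$. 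That the identity holds on $G$, and not merely after restriction, is what keeps the argument clean; alternatively one could invoke the injectivity of $\oH^4(G)\to\oH^4(X)$ provided by the ampleness of $\mE$.

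For $(1)$ I would then compute $\int_X\mathsf{c}_2(X)^2=\int_G\gamma_2^2\,c_4(\mE)$. Squaring with $\sigma_2^2=\sigma_4+\sigma_{3,1}+\sigma_{2,2}$, $\sigma_2\sigma_{1,1}=\sigma_{3,1}$ and $\sigma_{1,1}^2=\sigma_{2,2}$ gives $\gamma_2^2=25\sigma_4-5\sigma_{3,1}+34\sigma_{2,2}$, and pairing against $c_4(\mE)=18\sigma_{3,1}+27\sigma_{2,2}$ with the intersection form on the middle cohomology of $\Gr(2,V_6)$ — in which $\sigma_4,\sigma_{3,1},\sigma_{2,2}$ are self-dual and mutually orthogonal, each being its own complement in the $2\times4$ box — yields $(-5)(18)+(34)(27)=-90+918=828$.

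Finally $(2)$ and $(3)$ are essentially formal. As $c_1(X)=0$, the Todd polynomial reduces to $\operatorname{td}_X=\bigl(1,\,0,\,\tfrac1{12}\mathsf{c}_2(X),\,0,\,\tfrac1{720}(3\mathsf{c}_2(X)^2-\mathsf{c}_4(X))\bigr)$: the degree-$6$ term $\tfrac1{24}c_1c_2$ vanishes, and since only the top term contributes to $\int_X$, the identity $\int_X\operatorname{td}_X=\chi(\mO_X)=3$ (read off the Hodge diamond) forces the degree-$8$ term to equal $3\mathsf{p}$, proving $(2)$. For $(3)$ I would extract the square root formally: writing $\operatorname{td}_X=1+\alpha+\beta$ with $\alpha=\tfrac1{12}\mathsf{c}_2(X)$ and $\int_X\beta=3$, one has $\sqrt{\operatorname{td}_X}=1+\tfrac12\alpha+\bigl(\tfrac12\beta-\tfrac18\alpha^2\bigr)$, and $(1)$ gives $\tfrac18\alpha^2=\tfrac{828}{1152}\mathsf{p}=\tfrac{23}{32}\mathsf{p}$, so the degree-$8$ term is $\bigl(\tfrac32-\tfrac{23}{32}\bigr)\mathsf{p}=\tfrac{25}{32}\mathsf{p}$. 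I expect the only real obstacle to be bookkeeping: getting the Pieri and Littlewood--Richardson products and the middle-dimensional intersection pairing exactly right, and making sure that $\gamma_2$ is genuinely the restriction of $\mathsf{c}_2(X)$ rather than some ambient representative.
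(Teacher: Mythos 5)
Your proof is correct, but it takes a genuinely different route from the paper's. The paper disposes of the lemma almost entirely by citation: item (1) is quoted from O'Grady, item (4) is quoted as a direct computation from the same source, and item (2) is deduced from (1) together with $e(X)=c_4(X)=324$ plugged into the degree-$8$ Todd polynomial $\frac{1}{720}(3\mathsf{c}_2^2-\mathsf{c}_4)$. You instead make everything self-contained by working on the ambient $\Gr(2,V_6)$: all your Schubert-calculus data check out ($c_2(\mE)=11\sigma_2+21\sigma_{1,1}$, $c_4(\mE)=18\sigma_{3,1}+27\sigma_{2,2}$, $c_2(T_G)=16\sigma_2+18\sigma_{1,1}$, hence $\gamma_2=5\sigma_2-3\sigma_{1,1}=\Delta(\widetilde{\mQ})$ already in $\oH^4(G)$, and $\int_G\gamma_2^2c_4(\mE)=-90+918=828$), and your replacement of $e(X)=324$ by $\int_X\operatorname{td}_X=\chi(\mO_X)=3$ in item (2) is a clean shortcut that avoids knowing $c_4(X)$ at all. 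What your approach buys is independence from the external references and a proof of (4) on the nose in $\oH^4(G)$, so no injectivity of restriction is needed. One caveat: your fallback remark that injectivity of $\oH^4(G)\to\oH^4(X)$ follows from ampleness of $\mE=\Sym^3\widetilde{\mU}^\vee$ is not right as stated --- $\widetilde{\mU}^\vee$ restricted to a line of the Grassmannian is $\mO\oplus\mO(1)$, so $\mE$ is globally generated but not ample; luckily your main argument never uses this, since the identity $\gamma_2=\Delta(\widetilde{\mQ})$ holds before restricting.
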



\begin{proof}
    Item $(1)$ is \cite[Formula~5.3.4]{O'Grady:Modular}. Item $(2)$ follows from it and from the fact that the Euler characteristic of $X$ is $324$ (\cite[Proposition~3]{BD}; see also \cite[Corollary~2.10 (b)]{Gottsche}). Item $(3)$ is a direct computation. Finally, item $(4)$ is also a direct computation, see for example \cite[Section~2.1]{O'Grady:Modular}.
\end{proof}

\begin{lemma}\label{lemma:identities ch_3(Q) e ch_4(Q)}
    Assume that $X=F(Y)$ is very general, i.e.\ $\oH^{1,1}(X,\QQ)$ has rank $1$ and is generated by $c_1(\mQ)$.
    \begin{enumerate}
    \item The group $\oH^{3,3}(X,\QQ)$ has rank $1$ and the following relations hold:
    \begin{enumerate}
        \item $c_1(\mQ)^3=-24\ch_3(\mQ)$;
        \item $c_1(\mQ).\ch_2(\mQ)=2\ch_3(\mQ)$.
    \end{enumerate}
    \item The group $\oH^{8}(X,\QQ)$ has rank $1$ and the following relations hold:
    \begin{enumerate}
        \item $c_1(\mQ)^4=144\ch_4(\mQ)$;
        \item $c_1(\mQ).\ch_3(\mQ)=-6\ch_4(\mQ)$;
        \item $c_1(\mQ)^2.\ch_2(\mQ)=-12\ch_4(\mQ)$;
        \item $\ch_2(\mQ)^2=12\ch_4(\mQ)$;
        \item $\mathsf{c}_2(X)^2=1104\ch_4(\mQ)$;
        \item $\int_X\ch_4(\mQ)=\frac{3}{4}$.
    \end{enumerate}
    \end{enumerate}
\end{lemma}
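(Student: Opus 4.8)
The statement splits by cohomological degree, and this dictates two different reduction principles. Since $X$ is connected, $\oH^8(X,\QQ)=\QQ\mathsf p$ is automatically one–dimensional, so every identity in item $(2)$ is an equality between two multiples of $\mathsf p$ and amounts to comparing two integrals over $X$. For item $(1)$ I would first record that the very general hypothesis forces $\oH^{3,3}(X,\QQ)$ to be one–dimensional: by \cite{BD} one has $\oH^4(X,\QQ)=\Sym^2\oH^2(X,\QQ)$, and hard Lefschetz says that cup product with $c_1(\mQ)^2$ is an isomorphism $\oH^2(X,\QQ)\cong\oH^6(X,\QQ)$ of Hodge structures (up to Tate twist); since $\oH^{1,1}(X,\QQ)=\QQ\,c_1(\mQ)$ by assumption, its image $\oH^{3,3}(X,\QQ)=\QQ\,c_1(\mQ)^3$ is one–dimensional. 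As $\ch_2(\mQ)$ and $\ch_3(\mQ)$ are algebraic, every class appearing in item $(1)$ is a rational multiple of $c_1(\mQ)^3$, so each identity is again a proportionality whose constant I can detect by pairing with $c_1(\mQ)$: the map $x\mapsto\int_X c_1(\mQ)\,x$ is injective on $\oH^{3,3}(X,\QQ)$ because it sends $c_1(\mQ)^3$ to $\int_X c_1(\mQ)^4\neq0$.

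Everything thus reduces to computing a handful of intersection numbers on $X$, namely $\int_X c_1(\mQ)^4$, $\int_X c_1(\mQ)^2\ch_2(\mQ)$, $\int_X\ch_2(\mQ)^2$, $\int_X c_1(\mQ)\ch_3(\mQ)$ and $\int_X\ch_4(\mQ)$. The key tool is restriction from the Grassmannian: writing $E=\Sym^3\wt{\mU}^\vee$, the fundamental class of $X=Z(s)$ is $[X]=c_4(E)\in\oH^8(\Gr(2,6),\QQ)$ and $\mQ=\wt{\mQ}|_X$, so for any $\alpha\in\oH^8(\Gr(2,6))$ one has $\int_X\alpha|_X=\int_{\Gr(2,6)}\alpha\cup c_4(E)$. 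Since $c_i(\wt{\mQ})=\sigma_i$ are the special Schubert classes, each $\ch_j(\mQ)$ is a universal polynomial in $\sigma_1,\dots,\sigma_4$, and by the splitting principle $c_4(E)=18\sigma_{3,1}+27\sigma_{2,2}$; the five integrals above then become Schubert numbers on $\Gr(2,6)$, computed by Pieri's rule together with the duality $\int_{\Gr(2,6)}\sigma_\lambda\sigma_\mu=\delta_{\mu,\lambda^\vee}$, where $\lambda^\vee$ is the complementary partition in the $2\times4$ box. As a consistency check this already returns $\int_X c_1(\mQ)^4=\deg X=108$.

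Carrying out these evaluations should yield $\int_X\ch_4(\mQ)=\tfrac34$, $\int_X c_1(\mQ)\ch_3(\mQ)=-\tfrac92$, $\int_X c_1(\mQ)^2\ch_2(\mQ)=-9$, $\int_X\ch_2(\mQ)^2=9$ and $\int_X c_1(\mQ)^4=108$. Feeding these into the two reduction principles proves every identity. Items $(2a)$--$(2d)$ and $(2f)$ follow by comparing integrals in $\oH^8$; item $(2e)$ follows from $\int_X\mathsf c_2(X)^2=828$ (Lemma~\ref{lemma:c_2(X)^2 and ch_4(Q)}$(1)$, which one may alternatively recover from Lemma~\ref{lemma:c_2(X)^2 and ch_4(Q)}$(4)$ and the numbers above, since $\mathsf c_2(X)=c_1(\mQ)^2-8\ch_2(\mQ)$). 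For item $(1)$, the equality $c_1(\mQ)^3=-24\ch_3(\mQ)$ holds because pairing both sides with $c_1(\mQ)$ gives $108=-24\cdot(-\tfrac92)$, and $c_1(\mQ)\ch_2(\mQ)=2\ch_3(\mQ)$ because pairing with $c_1(\mQ)$ gives $-9=2\cdot(-\tfrac92)$.

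I expect the only real labour to lie in the third step: expanding $c_4(\Sym^3\wt{\mU}^\vee)$ in the Schubert basis and multiplying it against the degree–$4$ classes $\sigma_1^4,\sigma_1^2\sigma_2,\sigma_1\sigma_3,\sigma_2^2,\sigma_4$. This is elementary but error–prone and must be done carefully. Conceptually the only delicate point is the rank–one statement for $\oH^{3,3}(X,\QQ)$, where the very general hypothesis is genuinely used; I would note in passing that item $(2)$, living in top degree, in fact holds for every smooth cubic fourfold, the relevant integrals being topological and computed entirely on $\Gr(2,6)$.
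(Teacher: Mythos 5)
Your proposal is correct, and the numerical output agrees with the paper at every point: I verified that $c_4(\Sym^3\wt{\mU}^\vee)=18\sigma_{3,1}+27\sigma_{2,2}$ and that the resulting Schubert numbers give $\int_X c_1(\mQ)^4=108$, $\int_X c_1(\mQ)^2\ch_2(\mQ)=-9$, $\int_X c_1(\mQ)\ch_3(\mQ)=-\tfrac92$, $\int_X\ch_2(\mQ)^2=9$ and $\int_X\ch_4(\mQ)=\tfrac34$, which reproduce all of (1a)--(2f). However, your route is genuinely different from the paper's. The paper never pushes forward to the Grassmannian: it computes $\ch(T_X)$ from the tangent--normal bundle sequence $0\to T_X\to(\wt{\mQ}\otimes\wt{\mU})|_X\to\Sym^3\mU^\vee\to 0$, extracts the linear relation in $\oH^6$ from $\ch_3(T_X)=0$, imports $\int_X c_1(\mQ)c_3(\mQ)=18$ from \cite{Fat24} and $\int_X c_1(\mQ)^4=108$, then uses $\int_X\mathsf{c}_2(X)^2=828$ and $e(X)=324$ to get $\int_X\ch_4(T_X)=15$, and finally pins down $\ch_4(\mQ)$ via Hirzebruch--Riemann--Roch applied to $\chi(\mQ)=6$. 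Your approach is more self-contained and uniform --- a single mechanism (restriction against $[X]=c_4(\Sym^3\wt{\mU}^\vee)$ plus Pieri and Poincaré duality on $\Gr(2,6)$) produces all five integrals, needs no external input from \cite{Fat24} or HRR, and makes transparent that item (2) holds for every smooth cubic fourfold. What the paper's method buys instead is that it stays intrinsic to the hyperkähler fourfold and simultaneously records the relation coming from $\ch_3(T_X)=0$, which is reused elsewhere. Your reduction of item (1) via hard Lefschetz (so that $\oH^{3,3}(X,\QQ)=\QQ\,c_1(\mQ)^3$ and pairing with $c_1(\mQ)$ is injective there) is the same structural point the paper uses implicitly, and it is where the very general hypothesis genuinely enters; your treatment of it is if anything more explicit than the paper's.
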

\begin{proof}
    Let us consider the tangent-normal bundle sequence
    \[ 0\to T_X\to (\tilde{\mQ}\otimes\tilde{\mU})|_X \to \Sym^3\mU^\vee\to 0. \]
    By a direct computation we have
    \begin{align*} 
    \ch((\tilde{\mQ}\otimes\tilde{\mU})|_X) = & \left( 8, 6 c_1(\mQ), c_1(\mQ)-2\ch_2(\mQ), -\frac{1}{6}c_1(\mQ)^3-c_1(\mQ)\ch_2(\mQ)+4\ch_3(\mQ),\right. \\
     & \left.-\frac{1}{24}c_1(\mQ)^4-\frac{1}{6}c_1(\mQ)^2\ch_2(\mQ)-\frac{5}{6}\ch_2(\mQ)^2+2c_1(\mQ)\ch_3(\mQ)\right)
    \end{align*}

and

\begin{align*} 
    \ch(\Sym^3 \mU^\vee) = & (4, 6 c_1(\mQ), 2c_1(\mQ)^2-10\ch_2(\mQ),-6c_1(\mQ)\ch_2(\mQ) +18 \ch_3(\mQ), \\
    & -\frac{3}{8}c_1(\mQ)^4+\frac{29}{12}c_1(\mQ)^2\ch_2(\mQ)+ \frac{1}{3}\ch_2(\mQ)^2-\frac{11}{2}c_1(\mQ)\ch_3(\mQ))
\end{align*}

from which we deduce
\begin{align*}
    \ch(T_X) = & (4,0,-c_1(\mQ)^2+8\ch_2(\mQ),-\frac{1}{6}c_1(\mQ)^3+5c_1(\mQ)\ch_2(\mQ)-14\ch_3(\mQ), \\
    & \frac{1}{3}c_1(\mQ)^4-\frac{31}{12}c_1(\mQ)^2\ch_2(\mQ)-\frac{7}{6}\ch_2(\mQ)^2+\frac{15}{2}c_1(\mQ)\ch_3(\mQ)).
\end{align*}

Since the odd Chern classes for $X$ vanish, we have $\ch_3(T_X)=0$, which already gives
\begin{equation}\label{eqn:rel on H 6} 
-\frac{1}{6}c_1(\mQ)^3+5c_1(\mQ)\ch_2(\mQ)-14\ch_3(\mQ)=0. 
\end{equation}
By

Now, by \cite[Proposition 2.10]{Fat24} we know that $c_1(\mQ)c_3(\mQ)=18$; using the well-known fact that $\int_X c_1(\mQ)^4=108$, this equality can be rewritten as
\begin{equation}\label{eqn:Enrico}
c_1(\mQ)^2\ch_2(\mQ)=2 c_1(\mQ)\ch_3(\mQ),
\end{equation}
which gives part $(1.b)$ of the statement. Substituting in \ref{eqn:rel on H 6} we obtain $(1.a)$.

Let us move to item $(2)$. By items $(1)$ and $(2)$ of Lemma~\ref{lemma:c_2(X)^2 and ch_4(Q)} we have $\int_X c_2(T_X)^2 = 828$ and $\int_X c_4(T_X)=324$. These imply that $\int_X \ch_4(T_X)=15$. Hence, 
\begin{equation}\label{eqn:rel on H 8}
 \frac{1}{3}\int_X c_1(\mQ)^4-\frac{31}{12}\int_X c_1(\mQ)^2\ch_2(\mQ)-\frac{7}{6}\int_X \ch_2(\mQ)^2+\frac{15}{2}\int_X c_1(\mQ)\ch_3(\mQ)=15.
\end{equation}

By $(1.a)$ we get $\int_X c_1(\mQ)\ch_3(\mQ)=-\frac{9}{2}$ and, equivalently, that $\int_X c_1(\mQ)^2\ch_2(\mQ)=-9$. Putting all together, this gives
\[ \int_X \ch_2(\mQ)^2=9. \]

Finally, by Hirzebruch--Riemann--Roch, we have
\[
6=\chi(\mQ)= \int_X \ch(\mQ)\operatorname{td}_X=12+\frac{1}{12}\mathsf{c}
_2(X)\ch_2(\mQ)+\ch_4(\mQ). \]
By item $(4)$ of Lemma~\ref{lemma:c_2(X)^2 and ch_4(Q)} $$\mathsf{c}_2(X)\ch_2(\mQ)=c_1(\mQ)^2\ch_2(\mQ)-8 \ch_2(\mQ)^2=-81,$$

from which the equality $\ch_4(\mQ)=\frac{3}{4}$ follows. All the relations in part (2) are therefore proven.
\end{proof}

\subsubsection{LLV algebra and Mukai lattice}\label{section:LLV}
Let $X$ be an irreducible holomorphic symplectic fourfold.

Let us start by recalling the definition of the Looijenga--Lunts--Verbitsky (LLV) algebra. First of all, we denote by $h\in\operatorname{End}(\oH^\ast(X,\QQ))$ the grading endomorphism, i.e.\ $h(y)=(k-4)\operatorname{id}$ for any $y\in\oH^k(X,\QQ)$. Also, for any element $x\in\oH^2(X,\QQ)$ we denote by $e_x\in\operatorname{End}(\oH^\ast(X,\QQ))$ the operator $e_x(y)=x\cup y$. Then one says that $x$ satisfies the Hard Lefschetz property if there exists another operator $f_x\in\operatorname{End}(\oH^\ast(X,\QQ))$ such that $(e_x,h,f_x)$ is a basis for the Lie algebra $\mathfrak{sl}_2$. One usually refers to a such triple $(e_x,h,f_x)$ as an $\mathfrak{sl}_2$-triple.

The \emph{LLV algebra} is the Lie algebra $\mathfrak{g}(X)\subset\operatorname{End}(\oH^\ast(X,\QQ))$ generated by all the $\mathfrak{sl}_2$-triples associated to elements of $\oH^2(X,\QQ)$ with the strong Lefschetz property (see \cite{LL}).

Let us denote by $\operatorname{SH}(X,\QQ)\subset\oH^\ast(X,\QQ)$ the image of $\Sym\oH^2(X,\QQ)$ via cup product. By \cite{Bogomolov,LLVerbitsky} we have that $\operatorname{SH}(X,\QQ)$ is an irreducible $\mathfrak{g}(X)$-module of $\oH^\ast(X,\QQ)$, called the \emph{Verbistky component}, appearing with multiplicity $1$.

Finally, let us also recall the other useful description of the LLV algebra. The \emph{extended Mukai (rational) lattice} is 
\[ \widetilde{\oH}(X,\QQ)=\left(\QQ\alpha\oplus\oH^2(X,\ZZ)\oplus\QQ\beta, \;\tilde{q}\right), \]
where $\tilde{q}(\alpha)=\tilde{q}(\beta)=0$, $\tilde{q}(\alpha,\beta)=-1$, $\tilde{q}|_{\oH^2(X,\ZZ)}$ is the Beauville--Bogomolov--Fujiki form and $\oH^2(X,\QQ)$ is $\tilde{q}$-orthogonal to $\QQ\alpha\oplus\QQ\beta$. 

For any $x\in\oH^2(X,\QQ)$ with the strong Lefschetz property, putting
\begin{equation}\label{eqn:def of e on H tilde} 
e_x(\alpha)=x,\quad e_x(y)=q_X(x,y)\beta\quad\mbox{ and }\quad e_x(\beta)=0 
\end{equation}
defines a structure of $\mathfrak{g}(X)$-module on $\widetilde{\oH}(X,\QQ)$ and moreover (\cite{LLVerbitsky})
\[ \mathfrak{g}(X)\cong\mathfrak{so}(\widetilde{\oH}(X,\QQ)). \]

\subsubsection{Modular sheaves}\label{section:Modular and atomic}
Recall that the Verbitsky component $\operatorname{SH}(X,\QQ)$ is the irreducible $\mathfrak{g}(X)$-submodule of $\oH^\ast(X,\QQ)$ generated by $\oH^2(X,\QQ)$. 

\begin{definition}
    Let $X$ be an irreducible holomorphic symplectic manifold.
    We denote by $\mathsf{q}_X\in\operatorname{SH}^4(X,\QQ)$ the BBF-class of $X$, i.e.\ $\mathsf{q}_X$ is uniquely determined by the property that 
        \[ \int_X\gamma^2\mathsf{q}_X=q_X(\gamma) \]
        for every $\gamma\in\oH^2(X,\ZZ)$.
\end{definition}

\begin{definition}[O'Grady]\label{def:modular}
    Let $F$ be a torsion free coherent sheaf on $X$.
    Then $F$ is \emph{modular} if 
        \[ \Delta(F)=c_1(F)^2-2\rk(F)\ch_2(F)\in\QQ\,\mathsf{q}_X. \]
\end{definition}
\begin{remark}
    In \cite{O'Grady:Modular}, O'Grady defines a modular sheaf as a torsion free sheaf $F$ such that the projection of $\Delta(F)$ into the Verbitsky component is a multiple of $\mathsf{q}_X$. Our definition implies his definition, but the vice versa is not true in general. On the other hand,if $X$ is of type $\operatorname{K3}^{[2]}$, then the Verbitsky component coincides with the rational cohomology ring. Since we will only work with fourfolds of type $\operatorname{K3}^{[2]}$, we choose to state the definition in this weaker form.
\end{remark}

Let us recall the following known fact.
\begin{lemma}\label{lemma:c_2=30q}
    If $X$ is of type $\operatorname{K3}^{[2]}$, then $\mathsf{c}_2(X)=30\mathsf{q}_X$. 

    In particular a torsion free sheaf $F$ on $X$ is modular if and only if $\Delta(F)\in\QQ\mathsf{c}_2(X)$.
\end{lemma}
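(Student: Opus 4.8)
The plan is to prove the identity $\mathsf{c}_2(X)=30\mathsf{q}_X$ by exploiting the fact that both classes live in the one-dimensional space $\operatorname{SH}^4(X,\QQ)\cap\QQ\,\mathsf{q}_X$ of Verbitsky-invariant degree-four classes proportional to the BBF class, so it suffices to pin down a single numerical coefficient. Concretely, I would first recall that for a manifold of type $\operatorname{K3}^{[2]}$ the second Chern class $\mathsf{c}_2(X)$ is known to be proportional to $\mathsf{q}_X$: this is a standard consequence of the fact that $\mathsf{c}_2(X)$ is monodromy-invariant and that the space of monodromy-invariant classes in $\oH^4(X,\QQ)$ proportional to $\mathsf{q}_X$ is one-dimensional (by the representation theory of the LLV algebra $\mathfrak{g}(X)\cong\mathfrak{so}(\widetilde{\oH}(X,\QQ))$ recalled in Section~\ref{section:LLV}, the Verbitsky component $\operatorname{SH}(X,\QQ)$ is irreducible and $\mathsf{q}_X$ spans the relevant isotypic piece). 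Thus we may write $\mathsf{c}_2(X)=c\,\mathsf{q}_X$ for some rational constant $c$, and the whole problem reduces to computing $c$.

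To extract $c$, I would integrate against a well-chosen test class. The cleanest choice is to pair both sides with $\mathsf{q}_X$ itself and use the numerical data already collected in Lemma~\ref{lemma:c_2(X)^2 and ch_4(Q)}. Multiplying $\mathsf{c}_2(X)=c\,\mathsf{q}_X$ by $\mathsf{q}_X$ and integrating gives
\[
\int_X \mathsf{c}_2(X)\,\mathsf{q}_X = c\int_X \mathsf{q}_X^2.
\]
The left-hand side and the Fujiki-type constant $\int_X \mathsf{q}_X^2$ are both computable from standard $\operatorname{K3}^{[2]}$ invariants: one knows the Fujiki relation $\int_X\gamma^4=3\,q_X(\gamma)^2$, which by polarization determines $\int_X\mathsf{q}_X^2$, and $\int_X\mathsf{c}_2(X)\mathsf{q}_X$ can be evaluated using $\int_X \mathsf{c}_2(X)\gamma^2=q_X(\gamma)\cdot(\text{const})$ together with the value $\int_X\mathsf{c}_2(X)^2=828$ from Lemma~\ref{lemma:c_2(X)^2 and ch_4(Q)}(1). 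Solving the resulting linear equation yields $c=30$. Alternatively, and perhaps more transparently, I would take a primitive ample class $\gamma=c_1(\mQ)$ on a very general $X$, use $q_X(\gamma)$ and the relations $\int_X\gamma^4=108$ together with Lemma~\ref{lemma:identities ch_3(Q) e ch_4(Q)}, and compare $\int_X\mathsf{c}_2(X)\gamma^2$ with $q_X(\gamma)\int_X\mathsf{q}_X\gamma^2$ to read off $c=30$.

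The second sentence of the lemma is then immediate: since $\mathsf{c}_2(X)=30\mathsf{q}_X$ with $30\neq0$, we have $\QQ\,\mathsf{c}_2(X)=\QQ\,\mathsf{q}_X$, so the modularity condition $\Delta(F)\in\QQ\,\mathsf{q}_X$ of Definition~\ref{def:modular} is equivalent to $\Delta(F)\in\QQ\,\mathsf{c}_2(X)$.

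I expect the main obstacle to be purely bookkeeping rather than conceptual: one must fix conventions for the normalization of $\mathsf{q}_X$ (the defining property $\int_X\gamma^2\mathsf{q}_X=q_X(\gamma)$) and for the Beauville--Bogomolov--Fujiki form on the $\operatorname{K3}^{[2]}$ lattice, and then carefully reconcile the various Fujiki constants so that the arithmetic produces exactly $30$ and not a stray factor. The only genuinely nontrivial input is the proportionality $\mathsf{c}_2(X)\in\QQ\,\mathsf{q}_X$, which I would either cite as a known fact for type $\operatorname{K3}^{[2]}$ or justify via the LLV representation theory already set up in Section~\ref{section:LLV}; everything after that is a one-variable linear computation.
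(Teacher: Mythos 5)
Your proposal is correct and its overall architecture matches the paper's: both arguments first establish the proportionality $\mathsf{c}_2(X)\in\QQ\,\mathsf{q}_X$ and then pin down the coefficient. The difference lies in how each step is carried out. For the proportionality, the paper simply cites \cite[Lemma~1.5]{Markman:BBF}, whereas you sketch the underlying monodromy/LLV argument (invariance of $\mathsf{c}_2(X)$ plus one-dimensionality of the invariant part of $\Sym^2\oH^2$); that is the content of Markman's lemma, so this is fine. For the coefficient, the paper invokes Fujiki's theorem to write $\int_X\gamma^2\mathsf{c}_2(X)=C(\mathsf{c}_2(X))\,q_X(\gamma)$ and then quotes $C(\mathsf{c}_2(X))=30$ from \cite[Corollary~2.7, Example~2.12]{BeckmannSong}, while you compute it by hand. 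Your second variant is the clean one: on the Fano of lines, $\int_X c_1(\mQ)^4=108$ gives $q_X(c_1(\mQ))=6$ via the Fujiki relation, and $\int_X\mathsf{c}_2(X)c_1(\mQ)^2=108-8\int_X c_1(\mQ)^2\ch_2(\mQ)=180$, so $c=180/6=30$ with the correct sign; this buys a self-contained verification in place of the external citation. Your first variant, however, has a small defect as stated: extracting $c$ from $\int_X\mathsf{c}_2(X)^2=828$ and $\int_X\mathsf{q}_X^2=23/25$ only yields $c^2=900$, i.e.\ $c=\pm30$, and the intermediate claim that $\int_X\mathsf{c}_2(X)\mathsf{q}_X$ is computable from ``$\int_X\mathsf{c}_2(X)\gamma^2=q_X(\gamma)\cdot(\text{const})$'' is circular, since that constant is exactly the unknown. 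So either fix the sign by the positivity of $\int_X\mathsf{c}_2(X)\gamma^2$ for $\gamma$ ample, or just run the second variant.
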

\begin{proof}
    First of all, by \cite[Lemma~1.5]{Markman:BBF} we know that $\mathsf{c}_2(X)$ is proportional to $\mathsf{q}_X$. Now, by a result of Fujiki (\cite{Fujiki}) there is a constant $C(\mathsf{c}_2(X))$ such that
    \[ \int_X\gamma^2\mathsf{c}_2(X)=C(\mathsf{c}_2(X))q_X(\alpha) \]
    for every $\gamma\in\oH^2(X,\ZZ)$. From \cite[Corollary~2.7, Example~2.12]{BeckmannSong} it follows that $C(\mathsf{c}_2(X))=30$. The claim follows.
\end{proof}

\begin{example}[Tautological bundles on $F(Y)$]\label{example:Q modular}
    Let $X=F(Y)$ be the Fano variety of lines of a cubic fourfold (see Section~\ref{section:F(Y)}) and $\mQ$ the restriction to $X$ of the tautological quotient bundle on $\Gr(2,6)$. We have already seen in item $(4)$ of Lemma~\ref{lemma:c_2(X)^2 and ch_4(Q)} that $\Delta(\mQ)=\mathsf{c}_2(X)$, so that $\mQ$ is modular.

    In a similar way, if $\mU$ is the restriction to $X$ of the tautological bundle of subspaces of $\Gr(2,6)$, then it is easy to see that 
    \[ \Delta(\mU)=\frac{3}{2}c_1(\mU)^2-\frac{1}{2}\mathsf{c}_2(X), \]
    so that $\mU$ is not modular.
\end{example}

\subsubsection{Atomic sheaves}\label{section:atomic}
Recall from Section~\ref{section:LLV} the definition of the LLV algebra $\mathfrak{g}(X)$ and its action on the cohomology ring $\oH^\ast(X,\QQ)$, the Verbitsky component $\operatorname{SH}(X,\QQ)$ and the extended Mukai lattice $\widetilde{\oH}(X,\QQ))$.

To these actions we also add the action of $\mathfrak{g}(X)$ on $\Sym^2\widetilde{\oH}(X,\QQ)$ by derivation. Then by \cite{Bogomolov,LLVerbitsky} 
there is a short exact sequence of $\mathfrak{g}(X)$-modules
\[ 0\longrightarrow\operatorname{SH}(X,\QQ)\stackrel{\psi}{\longrightarrow}\Sym^2\widetilde{\oH}(X,\QQ)\stackrel{\tilde{q}}{\longrightarrow}\QQ\longrightarrow 0, \]
where $\psi$ is normalised as $\psi(1)=\alpha^{(2)}/2$.

The $\QQ$-vector space $\operatorname{SH}(X,\QQ)$ has the natural bilinear product defined by cup-product and the property that $\operatorname{SH}^{2p}(X,\QQ)$ is orthogonal to $\operatorname{SH}^{2q}(X,\QQ)$ if $p+q\neq2$. On the other hand
one can define a bilinear non-degenerate pairing on $\Sym^2\widetilde{\oH}(X,\QQ)$ by the rule
\[ \tilde{q}^{(2)}((v_1,v_2),(w_1,w_2))=\tilde{q}(v_1,w_1)\tilde{q}(v_2,w_2)+\tilde{q}(v_1,w_2)\tilde{q}(v_2,w_1). \]
The morphism $\psi$ is then an isometry with respect to these products (\cite{Taelman,Beckmann:Mukai}) and we denote by 
\[ \mathsf{T}\colon\Sym^2\widetilde{\oH}(X,\QQ)\longrightarrow\operatorname{SH}(X,\QQ) \]
the corresponding orthogonal projection.
The morphism $\mathsf{T}$ is a morphism of $\mathfrak{g}(X)$-modules.

Recall that the Mukai vector of a coherent sheaf $F$ is 
\[ v(F)=\ch(F)\sqrt{\operatorname{td}_X}\in\oH^\ast(X,\QQ). \]
\begin{definition}[Beckmann, Markman]
    Let $F$ be a coherent sheaf on $X$. 
    \begin{enumerate}
        \item (\cite[Definition~4.15]{Beckmann:Mukai}) An \emph{extended Mukai vector} of $F$ is an element $\tilde{v}\in\widetilde{\oH}(X,\QQ)$ such that 
        \[ \mathsf{T}(\tilde{v}^{(2)})\in\QQ\, \bar{v}(F), \]
        where $\bar{v}(F)$ is the projection of the Mukai vector $v(F)$ to the irreducible component $\operatorname{SH}(X,\QQ)$.
        \item (\cite[Definition~1.1]{Beckmann}) $F$ is \emph{atomic} if there is a vector $0\neq\tilde{v}$ such that
        \[ \operatorname{Ann}(\tilde{v})=\operatorname{Ann}(v(F)). \]
    \end{enumerate}
\end{definition}


By \cite[Proposition~3.3]{Beckmann} (see also \cite[Theorem~1.7]{Markman:Modular}), if $F$ is an atomic sheaf and $\tilde{v}\in\widetilde{\oH}(X,\QQ)$ is an element such that $\operatorname{Ann}(\tilde{v})=\operatorname{Ann}(v(F))$, then $\tilde{v}$ is an extended Mukai vector for $F$. 

\begin{remark}
    By \cite[Theorem~1.2]{Beckmann}, a sheaf $F$ is atomic if and only if it has a rank $1$ cohomological obstruction map. Recall that the cohomological obstruction map is
    \[ \operatorname{HT}^2(X)\longrightarrow \oH^\ast(\Omega_X^\ast),\qquad \mu\mapsto \mu\lrcorner\, v(F). \]

    Conjecturally, if the sheaf $F$ is simple, this should be related to $F$ deforming with $X$ along any directions, included the gerby and non-commutative ones, see \cite[Theorem~1.4]{Beckmann}.
\end{remark}

Let us remark that the extended Mukai vector is only defined up to a (rational) constant. Nevertheless if the rank of $F$ does not vanish, then by \cite[Proposition~3.8]{Beckmann} (see also \cite[Theorem~6.13]{Markman:Modular}) there is a natural normalisation given by
\begin{equation}\label{eqn:v tilde normalizzato} 
\tilde{v}(F)=\left(\rk(F), c_1(F), s(F)\right), 
\end{equation}
for some $s(F)\in\QQ$. 

The following result is \cite[Proposition~3.1, Proposition~3.3]{Beckmann} in the particular case of an irreducible holomorphic symplectic fourfold of type $\operatorname{K3}^{[2]}$.
\begin{proposition}\label{prop:se ann=ann allora atomico}
    Let $X$ be an irreducible holomorphic symplectic fourfold of type $\operatorname{K3}^{[2]}$, and $F$ a torsion free coherent sheaf on $X$. If $F$ has an extended Mukai vector $\tilde{v}(F)$, then it is atomic.
\end{proposition}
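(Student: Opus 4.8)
The plan is to take $\tilde v(F)$ itself as the vector witnessing atomicity and to prove directly that $\operatorname{Ann}(\tilde v(F))=\operatorname{Ann}(v(F))$ as subalgebras of $\mathfrak g(X)\cong\mathfrak{so}(\widetilde{\oH}(X,\QQ))$. Two preliminary reductions make the statement clean for type $\operatorname{K3}^{[2]}$. First, since $F$ is torsion free on a fourfold we have $\rk(F)\neq0$, so the normalisation (\ref{eqn:v tilde normalizzato}) provides $\tilde v(F)=(\rk(F),c_1(F),s(F))$, a \emph{nonzero} element of $\widetilde{\oH}(X,\QQ)$. Second, because $X$ is of type $\operatorname{K3}^{[2]}$ the Verbitsky component $\operatorname{SH}(X,\QQ)$ is the whole rational cohomology ring, hence $\bar v(F)=v(F)$; it therefore suffices to establish the chain of annihilator identities below and to check that the scalar relating $\mathsf T(\tilde v(F)^{(2)})$ and $v(F)$ does not vanish.

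The argument rests on two elementary equalities. For the first, let $0\neq w\in\widetilde{\oH}(X,\QQ)$. The derivation action gives $g\cdot w^{(2)}=2\,w\cdot(gw)$ in $\Sym^2\widetilde{\oH}(X,\QQ)$ for every $g\in\mathfrak g(X)$; since a product $w\cdot u$ of symmetric tensors vanishes only if $w=0$ or $u=0$, we deduce $g\cdot w^{(2)}=0$ iff $gw=0$, i.e.\ $\operatorname{Ann}(w^{(2)})=\operatorname{Ann}(w)$. For the second, recall the short exact sequence of $\mathfrak g(X)$-modules
\[ 0\longrightarrow\operatorname{SH}(X,\QQ)\xrightarrow{\ \psi\ }\Sym^2\widetilde{\oH}(X,\QQ)\xrightarrow{\ \tilde q\ }\QQ\longrightarrow 0. \]
As $\mathfrak{so}$ is semisimple, this splits, the complement of $\psi(\operatorname{SH}(X,\QQ))$ being the trivial line $\QQ u$ spanned by the invariant element dual to $\tilde q$. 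Since $\mathfrak g(X)$ kills $u$ and $\mathsf T$ is the $\mathfrak g(X)$-equivariant projection onto $\psi(\operatorname{SH}(X,\QQ))$ composed with $\psi^{-1}$, every $\xi\in\Sym^2\widetilde{\oH}(X,\QQ)$ satisfies $g\cdot\xi=\psi\bigl(g\cdot\mathsf T(\xi)\bigr)$, whence $\operatorname{Ann}(\xi)=\operatorname{Ann}(\mathsf T(\xi))$ because $\psi$ is injective.

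It remains to ensure that the scalar relating $\mathsf T(\tilde v(F)^{(2)})$ and $v(F)$ is nonzero, and this is the only step that is not purely formal. I would argue that $\mathsf T(\tilde v(F)^{(2)})\neq0$: the kernel of $\mathsf T$ is the line $\QQ u$, and $u$ corresponds to a full-rank symmetric form on the $25$-dimensional space $\widetilde{\oH}(X,\QQ)$, whereas $\tilde v(F)^{(2)}$ corresponds to a rank-one form, so $\tilde v(F)^{(2)}\in\QQ u$ would force $\tilde v(F)^{(2)}=0$ and hence $\tilde v(F)=0$, contradicting $\rk(F)\neq0$. (Equivalently, using $\psi(1)=\alpha^{(2)}/2$ one checks that the $\oH^0$-component of $\mathsf T(\tilde v(F)^{(2)})$ is proportional to $\rk(F)^2$.) Together with the defining relation $\mathsf T(\tilde v(F)^{(2)})\in\QQ\,\bar v(F)=\QQ\,v(F)$ and with $v(F)\neq0$, this shows $\mathsf T(\tilde v(F)^{(2)})$ is a \emph{nonzero} multiple of $v(F)$, so the two share the same annihilator. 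Chaining the three equalities gives
\[ \operatorname{Ann}(\tilde v(F))=\operatorname{Ann}(\tilde v(F)^{(2)})=\operatorname{Ann}\bigl(\mathsf T(\tilde v(F)^{(2)})\bigr)=\operatorname{Ann}(v(F)), \]
with $\tilde v(F)\neq0$, which is exactly the definition of $F$ being atomic. I expect the formal annihilator identities to be routine once the module structure of $\Sym^2\widetilde{\oH}(X,\QQ)$ is recorded; the genuine obstacle is ruling out the degenerate case $\mathsf T(\tilde v(F)^{(2)})=0$, since the entire conclusion collapses if that proportionality scalar were to vanish.
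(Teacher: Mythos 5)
Your proof is correct, and it follows the same skeleton as the paper's: the chain $\operatorname{Ann}(\tilde v(F))=\operatorname{Ann}(\tilde v(F)^{(2)})=\operatorname{Ann}\bigl(\mathsf T(\tilde v(F)^{(2)})\bigr)=\operatorname{Ann}(v(F))$ together with the observation that $\bar v(F)=v(F)$ for type $\operatorname{K3}^{[2]}$. The difference is that the paper obtains the first two links and the passage to $v(F)$ by citing Beckmann's Propositions~3.1 and~3.3, whereas you prove them from scratch: the integral-domain property of $\Sym^\bullet\widetilde{\oH}(X,\QQ)$ for $\operatorname{Ann}(w)=\operatorname{Ann}(w^{(2)})$, and complete reducibility of $\mathfrak g(X)\cong\mathfrak{so}(\widetilde{\oH}(X,\QQ))$ plus triviality of the complement $\QQ u$ for $\operatorname{Ann}(\xi)=\operatorname{Ann}(\mathsf T(\xi))$. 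What your version buys is that it makes explicit the one genuinely non-formal point, namely that the scalar in $\mathsf T(\tilde v(F)^{(2)})\in\QQ\,v(F)$ is nonzero -- the definition of extended Mukai vector as stated would be vacuously satisfied by a vanishing projection, and the paper's citation glosses over this; your rank-one-versus-full-rank argument (or the observation that $\rk(F)\neq 0$ for a nonzero torsion-free sheaf) closes that gap cleanly. One small imprecision: by Lemma~\ref{lemma:Bottini}(3) the $\oH^0$-component of $\mathsf T(\tilde v(F)^{(2)})$ in the normalisation used there is $\rk(F)$ rather than a multiple of $\rk(F)^2$, but since your main argument for nonvanishing does not rely on this parenthetical, nothing is affected.
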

\begin{proof}
    First of all, by \cite[Proposition~3.1, Proposition~3.3]{Beckmann} the following equivalence holds,
    \[ T(\tilde{v}^{(2)})\in\QQ\,\bar{v}(E)\quad\Longleftrightarrow\quad \operatorname{Ann}(\mathsf{T}(\tilde{v}^{(2)}))=\operatorname{Ann}(\bar{v}(E)), \]
    and $\operatorname{Ann}(\mathsf{T}(\tilde{v}^{(2)}))=\operatorname{Ann}(\tilde{v})$.

    Now, since $X$ is of type $\operatorname{K3}^{[2]}$, there is a natural isomorphism
    \[ \operatorname{SH}(X,\QQ)\cong\oH^\ast(X,\QQ), \]
    therefore $\bar{v}(E)=v(E)$ and the claim follows.
\end{proof}
\begin{remark}
    The hypothesis in the proposition above are too strong, for example arguing as in the proof of \cite[Lemma~3.2]{Beckmann} one would not need to assume that $F$ is torsion free. 
\end{remark}

\begin{remark}
    Proposition~\ref{prop:se ann=ann allora atomico} in particular says that for symplectic varieties of type $\operatorname{K3}^{[2]}$ being atomic is equivalent to having an extended Mukai vector. We wish to point out that this is not true in general for other irreducible holomorphic symplectic manifolds.
\end{remark}

\begin{remark}
    As for modular sheaves, being atomic is a property preserved under deformations of pairs $(\mbox{variety},\mbox{sheaf})$ and under the action of the group of derived auto-equivalences (see \cite[Proposition~3.10]{Beckmann}).
\end{remark}

Finally, we will need the following computations performed in \cite[Section~3]{Bottini}. We state them here only for symplectic fourfolds of type $\operatorname{K3}^{[2]}$, but they hold more generally for any ireducible symplectic fourfold (with the due changes - see also \cite[Section~4]{O'Grady:Modular with many moduli}).

\begin{lemma}\label{lemma:Bottini}
    Let $X$ be an irreducible holomorphic symplectic fourfold of type $\operatorname{K3}^{[2]}$. If $E$ is an atomic vector bundle of rank $r$ on $X$ with extended Mukai vector $\tilde{v}(E)=(r,\ell,s)$, then:
    \begin{enumerate}
        \item (\cite[Proposition~3.12]{Bottini}) $E$ is modular and 
        \[ \Delta(E)=\frac{\tilde{q}(\tilde{v}(E))+\frac{5}{2}r^2}{30}\mathsf{c}_2(X); \]
        \item (\cite[Theorem~3.17]{Bottini}) we have 
        \[ \chi(E,E)=3\left(\frac{\tilde{q}(\tilde{v}(E))}{\frac{5}{2}r^2}\right)^2 r^2; \]
        \item (\cite[proof of Corollary~3.9]{Bottini}) we have
        \[ \mathsf{T}(\tilde{v}(E)^{(2)})=\left(r,\ell,\frac{1}{2r}\left(\ell^2-\frac{\tilde{q}(\tilde{v}(E))}{30}\mathsf{c}_2(X)\right),\frac{s}{r}\ell^\vee,\frac{s^2}{2r}\right), \]
        where $\ell^\vee\in\oH^6(X,\QQ)$ is the class such that $\int_X\ell^\vee.\eta=q_X(\ell,\eta)$ for every $\eta\in\oH^2(X,\QQ)$.
    \end{enumerate}
\end{lemma}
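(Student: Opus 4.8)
\emph{Plan of proof.} These three formulas are the specialisation to type $\operatorname{K3}^{[2]}$ of the computations of \cite[Section~3]{Bottini}, and the plan is to reprove them in a self-contained way by first making the projection $\mathsf{T}$ completely explicit (this gives item $(3)$), and then reading off the graded pieces of $v(E)$ to deduce items $(1)$ and $(2)$. The key reduction is that for manifolds of type $\operatorname{K3}^{[2]}$ one has $\operatorname{SH}(X,\QQ)=\oH^\ast(X,\QQ)$, so $\bar v(E)=v(E)$; by the defining property of the extended Mukai vector, $v(E)$ is a rational multiple of $\mathsf{T}(\tilde v(E)^{(2)})$, and comparing the $\oH^0$-components (both equal to $r\neq0$, once item $(3)$ is known and using the normalisation \eqref{eqn:v tilde normalizzato}) forces that multiple to be $1$. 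Thus $v(E)=\mathsf{T}(\tilde v(E)^{(2)})$, which is what feeds into $(1)$ and $(2)$.

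For item $(3)$ I would first determine $\psi$ explicitly. Being a morphism of $\mathfrak{g}(X)$-modules normalised by $\psi(1)=\alpha^{(2)}/2$, and since the $e_x$ act on $\widetilde{\oH}(X,\QQ)$ by \eqref{eqn:def of e on H tilde} and on $\Sym^2\widetilde{\oH}(X,\QQ)$ by derivation, applying the operators $e_x$ propagates the value of $\psi$ from $\oH^0$ to every graded piece; for instance $\psi(x)=e_x\psi(1)=x\odot\alpha$ for $x\in\oH^2(X,\QQ)$. This pins down $\operatorname{im}\psi$, and $\mathsf{T}$ is the $\tilde q^{(2)}$-orthogonal projection onto it; by exactness of $0\to\operatorname{SH}\to\Sym^2\widetilde{\oH}\to\QQ\to0$ this amounts to subtracting the single component along the one-dimensional orthogonal complement of $\operatorname{im}\psi$. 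Expanding $\tilde v(E)^{(2)}=(r\alpha+\ell+s\beta)^{\odot 2}$ and applying this projection yields the five graded components in the claimed formula. I expect this to be the technical heart and the main obstacle: one has to identify the orthogonal complement of $\operatorname{im}\psi$ precisely and track the normalising constants, the $\tfrac{1}{30}$ in the $\oH^4$-part coming exactly from $\mathsf{c}_2(X)=30\mathsf{q}_X$ of Lemma~\ref{lemma:c_2=30q}.

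Granting $(3)$, item $(1)$ is immediate. The $\oH^4$-component of $v(E)=\ch(E)\sqrt{\operatorname{td}_X}$ equals $\ch_2(E)+\tfrac{r}{24}\mathsf{c}_2(X)$, using $\sqrt{\operatorname{td}_X}=(1,0,\tfrac{1}{24}\mathsf{c}_2(X),0,\tfrac{25}{32}\mathsf{p})$ from Lemma~\ref{lemma:c_2(X)^2 and ch_4(Q)}. Equating this with the $\oH^4$-component $\tfrac{1}{2r}\bigl(\ell^2-\tfrac{\tilde q(\tilde v(E))}{30}\mathsf{c}_2(X)\bigr)$ of $(3)$ solves for $\ch_2(E)$; substituting into $\Delta(E)=c_1(E)^2-2r\,\ch_2(E)$ makes the $\ell^2=c_1(E)^2$ terms cancel and gives $\Delta(E)=\bigl(\tfrac{\tilde q(\tilde v(E))}{30}+\tfrac{r^2}{12}\bigr)\mathsf{c}_2(X)=\tfrac{\tilde q(\tilde v(E))+\frac{5}{2}r^2}{30}\mathsf{c}_2(X)$. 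In particular $\Delta(E)\in\QQ\,\mathsf{c}_2(X)$, so $E$ is modular by Lemma~\ref{lemma:c_2=30q}; the clean appearance of the factor $\tfrac{5}{2}$ is exactly the reconciliation of the $\tfrac{1}{24}$ from $\sqrt{\operatorname{td}_X}$ with the $\tfrac{1}{30}$ from $\mathsf{c}_2(X)=30\mathsf{q}_X$.

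Finally, for item $(2)$ I would compute $\chi(E,E)=\int_X\ch(E^\vee)\ch(E)\operatorname{td}_X=\int_X v(E)^\vee v(E)$, whose only surviving (degree $8$) part is $2a_0a_8-2a_2a_6+a_4^2$, where $a_{2k}$ is the $\oH^{2k}$-component of $v(E)$ read off from $(3)$. Evaluating the three resulting integrals with the Fujiki relations for type $\operatorname{K3}^{[2]}$, namely $\int_X\ell^4=3\,q_X(\ell)^2$, $\int_X\ell^2\mathsf{c}_2(X)=30\,q_X(\ell)$, $\int_X\mathsf{c}_2(X)^2=828$ and $\int_X\ell\,\ell^\vee=q_X(\ell)$, and then eliminating $q_X(\ell)$ and $s$ through $\tilde q(\tilde v(E))=q_X(\ell)-2rs$, collapses everything to $\tfrac{12}{25}\,\tilde q(\tilde v(E))^2/r^2=3\bigl(\tilde q(\tilde v(E))/(\tfrac{5}{2}r^2)\bigr)^2 r^2$, as claimed. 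The one genuine subtlety here is to resist replacing $\ell^2$ by $q_X(\ell)\mathsf{q}_X$ inside $\oH^4$ (false as cohomology classes): one must keep $\ell^2$ and $\mathsf{c}_2(X)^2$ intact and invoke the Fujiki relations only after integrating.
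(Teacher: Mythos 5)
Your route is genuinely different from the paper's: the paper proves this lemma in one line, observing that items $(1)$--$(3)$ are literally \cite[Proposition~3.12, Theorem~3.17, proof of Corollary~3.9]{Bottini} rewritten through the identity $\mathsf{c}_2(X)=30\mathsf{q}_X$ of Lemma~\ref{lemma:c_2=30q}, whereas you set out to rederive everything from the definition of $\mathsf{T}$. The parts you actually carry out are correct. Granting item $(3)$, identifying $v(E)=\mathsf{T}(\tilde v(E)^{(2)})$ by comparing $\oH^0$-components is legitimate (on $\operatorname{K3}^{[2]}$ one indeed has $\bar v(E)=v(E)$); the cancellation of $\ell^2$ in $\Delta(E)=c_1(E)^2-2r\ch_2(E)$ gives exactly $\bigl(\tfrac{\tilde q}{30}+\tfrac{r^2}{12}\bigr)\mathsf{c}_2(X)$, which is the claimed formula; and I have checked that your degree-$8$ computation for $\chi(E,E)$, using $\int_X\ell^4=3q_X(\ell)^2$, $\int_X\ell^2\mathsf{c}_2(X)=30q_X(\ell)$, $\int_X\mathsf{c}_2(X)^2=828$ and the substitution $s=(q_X(\ell)-\tilde q)/(2r)$, does collapse to $\tfrac{12}{25}\,\tilde q^2/r^2$. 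What your approach buys is independence from the external reference and a transparent account of where each constant (the $\tfrac{5}{2}$, the $\tfrac{23}{25}$ hidden in $828/900$) comes from; what it costs is that all the real content gets concentrated in item $(3)$.

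And item $(3)$ is precisely where your proposal stops being a proof. You describe the strategy — propagate $\psi$ from $\psi(1)=\alpha^{(2)}/2$ by the $\mathfrak{g}(X)$-action, identify the rank-one orthogonal complement of $\operatorname{im}\psi$, expand $(r\alpha+\ell+s\beta)^{\odot 2}$ and project — and you correctly flag it as the main obstacle, but you do not carry it out: the explicit values of $\psi$ on $\operatorname{SH}^4$ and $\operatorname{SH}^6$, the identification of the complement, and the bookkeeping of the symmetrisation constants in $\tilde q^{(2)}$ are exactly the computation of \cite[Section~3]{Bottini} that the lemma is quoting. As written, items $(1)$ and $(2)$ therefore rest on an unproven input. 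Either complete that computation, or do what the paper does and cite it; a hybrid (cite Bottini for $(3)$, then derive $(1)$ and $(2)$ as you do) would be a clean, self-contained improvement on the paper's one-line proof.
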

\begin{proof}
    Items $(1)$ and $(3)$ differ from \cite{Bottini} only up to the equality $\mathsf{c}_2(X)=30\mathsf{q}_X$ (see Lemma~\ref{lemma:c_2=30q}).
\end{proof}

\begin{example}[Tautological bundles on $F(Y)$ again]
    Let $X=F(Y)$ be the Fano variety of lines of a cubic fourfold. We have already seen that $\mU$ is not modular, so it cannot be atomic either.

    On the other hand it follows that $\mQ$ is indeed atomic: in fact by \cite[Theorem~1.4]{O'Grady:Modular}, the pair $(X,\mQ)$ is deformation of the pair $(S^{[2]},F^{[2]})$, where $S$ is a K3 surface and $F$ is a (suitable) spherical vector bundle on $S$. By \cite[Section~10]{Markman:Modular}, up to the action of the derived auto-equivalence group, the pair $(S^{[2]},F^{[2]})$ is deformation of a pair $(T^{[2]},\mO_{T^{[2]}})$, where $T$ is another K3 surface. Since $\mO_{T^{[2]}}$ is atomic, also $\mQ$ is atomic.

    In this case, by Lemma~\ref{lemma:Bottini} above and the fact that $\mQ$ is modular with $\Delta(\mQ)=\mathsf{c}_2(X)$, it follows that the extended Mukai vector of $\mQ$ is
    \[ \tilde{v}(\mQ)=\left(4,c_1(\mQ),2\right). \]
\end{example}

\subsection{The Koszul resolution}\label{section:Koszul}

Let $X\subset\Gr(2,6)$ be the Fano variety of lines of a cubic fourfold. 

Let us first look at the grassmannian side. As usual, we denote by $\widetilde{\mU}$ and $\widetilde{\mQ}$ the tautological vector bundles on $\Gr(2,6)$. For any partition $\lambda=(\lambda_1,\lambda_2,\lambda_3,\lambda_4)$ we consider the Schur module $\Sigma_\lambda\widetilde{\mQ}$, and we denote by
\[ \widetilde{\mE}_\lambda:=\mE nd(\Sigma_\lambda\widetilde{\mQ})=\Sigma_\lambda\widetilde{\mQ}^\vee\otimes\Sigma_\lambda\widetilde{\mQ} \]
the associated endomorphism bundle.
Notice that by Example~\ref{example:duale} we can write
\begin{equation}\label{eqn: E tilde} \widetilde{\mE}_\lambda=\Sigma_{\lambda'}\widetilde{\mQ}\otimes\Sigma_\lambda\widetilde{\mQ}\otimes\mO_{\Gr(2,6)}(-\lambda_1), 
\end{equation}
where $\lambda'=(\lambda_1-\lambda_,\lambda_1-\lambda_3,\lambda_1-\lambda_2,0)$.

Let us now pass to the Fano variety of lines side, so that we put
\[ \Sigma_\lambda\mQ:=(\Sigma_\lambda\widetilde{\mQ})|_X\quad\mbox{ and }\quad \mE_\lambda:=(\mE_\lambda)|_X. \]

One of the main purposes of this work is to compute the Ext-groups of $\Sigma_\lambda\mQ$ on $X$, or equivalently the cohomology groups of $\mE_\lambda$. This will be achieved by using the Borel--Bott--Weil formula and the Koszul resolution of $X$. 
\medskip

Recalling that $X$ is described as the zero locus of a general section of $\Sym^3\widetilde{\mU}^\vee$, the Koszul complex of $X$ is
\begin{equation}\label{eqn:Koszul}
0\to\W^4\Sym^3\tilde{\mU}\to\W^3\Sym^3\tilde{\mU}\to\W^2\Sym^3\tilde{\mU}\to\Sym^3\tilde{\mU}\to\mO_{\Gr(2,6)}\to\mO_X\to0   .
\end{equation}
Since all the bundles appearing in the sequence above are homogeneous, we can decompose them in irreducible ones, see Section~\ref{section:homogeneous}. It turns out that:
\begin{itemize}
    \item $\Sym^3\tilde{\mU}=\Sigma_{(3,0)}\tilde{\mU}$
    \item $\W^2\Sym^3\tilde{\mU}=\Sigma_{(5,1)}\tilde{\mU}\oplus\Sigma_{(3,3)}\tilde{\mU}$;
    \item $\W^3\Sym^3\tilde{\mU}=\Sigma_{(6,3)}\tilde{\mU}$;
    \item $\W^4\Sym^3\tilde{\mU}=\Sigma_{(6,6)}\tilde{\mU}$.
\end{itemize}

Tensoring (\ref{eqn:Koszul}) with $\widetilde{\mE}_\lambda$ allows us to compute the cohomology of $\mE_\lambda$ via the induced spectral sequence
\begin{equation}\label{eqn:spectral sequence} 
\operatorname{E}^{-p,q}_1=\oH^q\left(\widetilde{\mE}_\lambda\otimes\W^p\Sym^3\widetilde{\mU}\right)\quad\Longrightarrow\quad \oH^{q-p}(\mE_\lambda). 
\end{equation}

Let us spell out the decomposition of (\ref{eqn:Koszul}) $\otimes\,\widetilde{\mE}_\lambda$ according to (\ref{eqn: E tilde}). First of all, by the Littlewood--Richardson formula we can write
\[ \Sigma_{\lambda'}\widetilde{\mQ}\otimes\Sigma_\lambda\widetilde{\mQ}=\bigoplus_{\nu}(\Sigma_\nu\widetilde{\mQ})^{\oplus m^\nu_{\lambda',\lambda}}, \]
from which we get
\[ \widetilde{\mE}_\lambda=\bigoplus_{\nu}\left(\Sigma_{\nu|(\lambda_1,\lambda_1)}\right)^{\oplus m^\nu_{\lambda',\lambda}}. \]
Eventually we have:
\begin{itemize}
    \item $\widetilde{\mE}_\lambda\otimes\Sym^3\tilde{\mU}=\bigoplus_{\nu}\left(\Sigma_{\nu|(\lambda_1+3,\lambda_1)}\right)^{\oplus m^\nu_{\lambda',\lambda}}$;
    \item $\widetilde{\mE}_\lambda\otimes\W^2\Sym^3\tilde{\mU}=\bigoplus_{\nu}\left(\Sigma_{\nu|(\lambda_1+5,\lambda_1+1)}\right)^{\oplus m^\nu_{\lambda',\lambda}}\oplus\bigoplus_{\nu}\left(\Sigma_{\nu|(\lambda_1+3,\lambda_1+3)}\right)^{\oplus m^\nu_{\lambda',\lambda}}$;
    \item $\widetilde{\mE}_\lambda\otimes\W^3\Sym^3\tilde{\mU}=\bigoplus_{\nu}\left(\Sigma_{\nu|(\lambda_1+6,\lambda_1+3)}\right)^{\oplus m^\nu_{\lambda',\lambda}}$;
    \item $\widetilde{\mE}_\lambda\otimes\W^4\Sym^3\tilde{\mU}=\bigoplus_{\nu}\left(\Sigma_{\nu|(\lambda_1+6,\lambda_1+6)}\right)^{\oplus m^\nu_{\lambda',\lambda}}$.
\end{itemize}

The $(p+i)$-th cohomology of each irreducible summand in $\widetilde{\mE}_\lambda\otimes\W^p\Sym^3\widetilde{\mU}$ contributes to the $i$-th cohomology of $\mE_\lambda$.
\medskip

More explicitly, the spectral sequence (\ref{eqn:spectral sequence}) can be read out by splitting the Koszul exact sequence (\ref{eqn:Koszul}) $\otimes\,\widetilde{\mE}_\lambda$ into short exact sequences:
\begin{align}\label{eqn:explicit Koszul}
 0\to\widetilde{\mE}_\lambda\otimes\W^4\Sym^3\widetilde{\mU}\to\widetilde{\mE}_\lambda\otimes\W^3\Sym^3\widetilde{\mU}\to K_1\to 0   \\
   0\to K_1\to \widetilde{\mE}_\lambda\otimes\W^2\Sym^3\widetilde{\mU}\to K_2\to 0  \nonumber   \\
   0\to K_2\to \widetilde{\mE}_\lambda\otimes\Sym^3\widetilde{\mU}\to K_3\to 0  \nonumber   \\
   0\to K_3\to \widetilde{\mE}_\lambda\to \mE_\lambda \to 0.  \nonumber
\end{align}

\section{The symmetric bundle $\Sym^m\mQ$}\label{section:Sym Q}

The aim of this section is to prove Theorems \ref{theorem A}, \ref{theorem B} and \ref{theorem C} in the special case when $\lambda=(m,0,0,0)$. Recall that in this case $\Sigma_\lambda\mQ=\Sym^m\mQ$.

More precisely, we will prove the following result. Let us put
\[ r_m=\binom{m+3}{3}. \]

\begin{thm}\label{thm:Sym Q}
    For any $m\geq0$ the vector bundle $\Sym^m\mQ$ is slope stable, atomic and rigid. 
    
    Moreover we have
    \[ \ch(\Sym^m\mQ)=r_m\left(1,\frac{m}{4}c_1(\mQ),\frac{m(m-1)}{40}c_1(\mQ)^2+\frac{m(m+4)}{20}\ch_2(\mQ),\frac{m^2}{4}\ch_3(\mQ),\frac{m(7m-2)}{20}\ch_4(\mQ)\right) \]
    from which we get:
    \begin{align*} 
    \Delta(\Sym^m\mQ) & =\frac{1}{4}\frac{m(m+4)}{20}r_m^2\mathsf{c}_2(X), \\
    \chi(\Sym^m\mQ,\Sym^m\mQ) & = 3\left(\frac{3m^2+12m-20}{20}\right)^2r_m^2\quad \mbox{ and } \\
    \tilde{v}(\Sym^m\mQ) & = r_m\left(1,\frac{m}{4} c_1(\mQ),-\frac{3m-5}{4}\right).
    \end{align*}
\end{thm}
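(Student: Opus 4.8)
The plan is to proceed in four steps: first compute $\ch(\Sym^m\mQ)$, then deduce modularity together with atomicity and the extended Mukai vector, then read off $\chi(\Sym^m\mQ,\Sym^m\mQ)$, and finally establish simplicity, rigidity and stability through a direct cohomological computation.

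\emph{Chern character and modularity.} I would compute $\ch(\Sym^m\mQ)$ by the splitting principle. Writing $a_1,\dots,a_4$ for the Chern roots of $\mQ$, the Chern roots of $\Sym^m\mQ$ are the sums $a_{i_1}+\cdots+a_{i_m}$ over weakly increasing multi-indices, so that
\[
\sum_{m\ge0}\ch(\Sym^m\mQ)\,t^m=\prod_{i=1}^4\frac{1}{1-t\,e^{a_i}}.
\]
Expanding the right-hand side in cohomological degree and extracting the coefficient of $t^m$ expresses each $\ch_k(\Sym^m\mQ)$ as a universal polynomial in $m$ in the Chern characters $\ch_j(\mQ)$ and Chern classes $c_j(\mQ)$. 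This is the only genuinely lengthy part, but it is mechanical. The point is then to specialize these universal expressions to $X$ using the relations of Lemma~\ref{lemma:c_2(X)^2 and ch_4(Q)} and Lemma~\ref{lemma:identities ch_3(Q) e ch_4(Q)}; in particular $c_1(\mQ)^3=-24\ch_3(\mQ)$, $c_1(\mQ)^2\ch_2(\mQ)=-12\ch_4(\mQ)$ and $\ch_2(\mQ)^2=12\ch_4(\mQ)$ collapse every monomial into a multiple of one generator in each degree and yield the stated closed form. Feeding $\ch_1$ and $\ch_2$ into $\Delta=c_1^2-2\rk\,\ch_2$ gives $\Delta(\Sym^m\mQ)=\tfrac14\tfrac{m(m+4)}{20}r_m^2\,\mathsf{c}_2(X)$, so $\Sym^m\mQ$ is modular by Lemma~\ref{lemma:c_2=30q}.

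\emph{Atomicity and numerical invariants.} Having $\rk=r_m$ and $c_1(\Sym^m\mQ)=\tfrac{m}{4}r_m c_1(\mQ)$, I would take the candidate extended Mukai vector $\tilde v=(r_m,\tfrac{m}{4}r_m c_1(\mQ),s)$ in the normalized form (\ref{eqn:v tilde normalizzato}) and compute $\mathsf{T}(\tilde v^{(2)})$ by the explicit formula of Lemma~\ref{lemma:Bottini}(3). Matching the degree $4$, $6$ and $8$ components of $\mathsf{T}(\tilde v^{(2)})$ with the Mukai vector $v(\Sym^m\mQ)=\ch(\Sym^m\mQ)\sqrt{\operatorname{td}_X}$ (using $\sqrt{\operatorname{td}_X}$ from Lemma~\ref{lemma:c_2(X)^2 and ch_4(Q)}(3)) forces $s=-\tfrac{3m-5}{4}r_m$ and simultaneously verifies the proportionality $\mathsf{T}(\tilde v^{(2)})\in\QQ\,v(\Sym^m\mQ)$; since $X$ is of type $\operatorname{K3}^{[2]}$ this is exactly the hypothesis of Proposition~\ref{prop:se ann=ann allora atomico}, whence $\Sym^m\mQ$ is atomic with $\tilde v(\Sym^m\mQ)=r_m(1,\tfrac{m}{4}c_1(\mQ),-\tfrac{3m-5}{4})$. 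The value of $\chi(\Sym^m\mQ,\Sym^m\mQ)$ then follows either by Hirzebruch--Riemann--Roch applied to $\End(\Sym^m\mQ)$ or, more cheaply, from Lemma~\ref{lemma:Bottini}(2) with $\tilde q(\tilde v)=q_X\!\big(\tfrac{m}{4}r_m c_1(\mQ)\big)-2r_m s$.

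\emph{Simplicity, rigidity and stability.} For the homological statements I would compute $\oH^\bullet(\mE_\lambda)$ with $\lambda=(m,0,0,0)$ through the spectral sequence (\ref{eqn:spectral sequence}). Here $\lambda'=(m,m,m,0)$, so by (\ref{eqn: E tilde}) one has $\widetilde{\mE}_\lambda=\Sigma_{(m,m,m,0)}\widetilde{\mQ}\otimes\Sym^m\widetilde{\mQ}\otimes\mO_{\Gr(2,6)}(-m)$; a Littlewood--Richardson decomposition splits each $\widetilde{\mE}_\lambda\otimes\W^p\Sym^3\widetilde{\mU}$ into irreducible homogeneous bundles $\Sigma_{\nu|\mu}$, whose cohomology on $\Gr(2,6)$ is computed by the Borel--Weil--Bott algorithm of Section~\ref{section:BBW}. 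I would show that the only summand contributing to $\oH^0(\mE_\lambda)$ is the trivial one coming from the identity, giving $\Hom(\Sym^m\mQ,\Sym^m\mQ)=\CC$, and that every potential contribution to $\oH^1(\mE_\lambda)$ already vanishes on the $\operatorname{E}_1$-page, giving $\Ext^1(\Sym^m\mQ,\Sym^m\mQ)=0$. Since $\mQ$ is slope stable, hence Hermite--Einstein, and Schur functors of a Hermite--Einstein bundle are again Hermite--Einstein, $\Sym^m\mQ$ is slope polystable; being also simple, it is slope stable. Finally, on a hyperk\"ahler fourfold Serre duality gives $\chi(\Sym^m\mQ,\Sym^m\mQ)=2-2\Ext^1+\Ext^2$, so simplicity and rigidity pin down $\ext^2$ from the already computed $\chi$.

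\emph{Main obstacle.} The delicate point is the cohomological step carried out \emph{uniformly in $m$}: one must organize the Littlewood--Richardson decomposition of $\Sigma_{(m,m,m,0)}\widetilde{\mQ}\otimes\Sym^m\widetilde{\mQ}$ and the ensuing Borel--Weil--Bott vanishing so that the bookkeeping of acyclic versus contributing summands, and of the surviving low-degree $\operatorname{E}_1$-terms, holds for all $m$ at once. Controlling $\oH^0$ and $\oH^1$ across the whole family is the crux; by contrast the closed-form Chern-character computation, though long, is entirely routine once the relations on $X$ are in hand.
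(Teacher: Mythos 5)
Your proposal is correct and follows essentially the same route as the paper: splitting-principle computation of $\ch(\Sym^m\mQ)$ (the paper quotes Dragutin's closed plethysm formula, which is the same computation packaged differently), simplification via Lemma~\ref{lemma:identities ch_3(Q) e ch_4(Q)}, atomicity by pinning down $s$ and verifying $\mathsf{T}(\tilde v^{(2)})=v(\Sym^m\mQ)$ via Lemma~\ref{lemma:Bottini} and Proposition~\ref{prop:se ann=ann allora atomico}, and simplicity/rigidity from the Koszul spectral sequence applied to the Pieri decomposition $\mE nd(\Sym^m\widetilde{\mQ})=\bigoplus_n\Sigma_{(2n,n,n,0|n,n)}$ with Borel--Weil--Bott run uniformly in $n$, followed by polystability plus simplicity for stability. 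The uniform-in-$m$ bookkeeping you flag as the crux is exactly what the paper carries out in Lemma~\ref{lemma: (2n n n 0 | n n)}, where no summand contributes to $\oH^1$ already at the $E_1$-page.
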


In Proposition~\ref{prop: ext of sym} we give a deeper description of the Ext-groups of $\Sym^m\mQ$.

\begin{remark}
    Let us put 
    \[\tilde{\delta}(m):=\frac{1}{4}\frac{m(m+4)}{20}r_m^2\in\QQ[m]. \]
    The same proof of Theorem~\ref{thm:Sym Q} shows that if $E$ is a vector bundle of rank $4$ on a smooth and projective variety $Z$ (of dimension at leastt $2$), then
    \[ \Delta(\Sym^m E)=\tilde{\delta}(m)\Delta(E). \] 
    What is remarkable about the polynomial $\tilde{\delta}(m)$ is that it takes integral values when $m$ in an integer. This suggests that $\tilde{\delta}(m)$ may be counting some combinatorial quantity. 

    In fact this is the case: the function $\delta(m)$ appears as a special case of \cite[Lemma~8.14]{Mukai} (more precisely, the case $r=4$ in loc.\ cit.) and counts the number of standard tableaux of size $(m-1,m-1)$ for $\textrm{Mat}(2,6)$. These form a basis for the degree $m-1$ component of the homogeneous coordinate ring of the Grassmannian $\Gr(2,6)$ (see e.g.\ Remark~8.16 in loc. cit.), i.e.
    \[
    \tilde{\delta}(m)=h^0(\Gr(2,6), \mO(m-1)).    
    \]
    
    We point out that $\delta(m)$ has another interesting Lie-theoretical connection, being the dimension of $V^{(k)}$, the distinguished module in the Severi variety, see the case $a=4$ of \cite[Theorem~7.3]{lm06}.

    As we will see later, the function $\delta(m)$ extends to a function $\delta(\lambda)$ such that $\Delta(\Sigma_\lambda\mQ)=\delta(\lambda)\Delta(\mQ)$. The values of $\delta(\lambda)$ are still integral (when valuated on integral entries), but it is not clear what is that they count.
\end{remark}

The rest of this section is dedicated to the proof of Theorem~\ref{thm:Sym Q}.

Since the rank of $\mQ$ is $4$, it is well known that 
\[ \rk(\Sym^m\mQ)=r_m \qquad\mbox{and}\qquad c_1(\Sym^m\mQ)=\frac{m}{4}r_mc_1(\mQ). \]
The remaining Chern classes can be computed combinatorially via the splitting principle. The following formula can be found in \cite[Theorem~4.8]{Dragutin}:
\begin{equation}\label{eqn:ch of Sym}
\ch(\Sym^m\mQ)=\sum_\lambda\frac{1}{\lVert\lambda\rVert}\sum_{\mu\leq\lambda}\binom{m+3}{m-|\mu|}\overleftarrow{\mu}!S(\lambda,\mu)\ch_\lambda(\mQ). 
\end{equation}
Let us explain the formula above. The first summation is taken over all partitions $\lambda$ such that $\lambda_i\neq0$ for every $i$, and $\ch_\lambda(\tilde{\mQ})$ stands for the product $\prod_i\ch_{\lambda_i}(\tilde{\mQ})$. The norm $\lVert\lambda\rVert$ is the product $a_1!a_2!\cdots$, where $a_i=\sharp\{j\mid \lambda_j=i\}$. The second summation is taken over all the sub-partitions $\mu\leq\lambda$ such that $\ell(\mu)=\ell(\lambda)$ (in particular also $\mu_i\neq0$ for every $i$). The symbol $\overleftarrow{\mu}$ stands for the partition $(\mu_1-1,\mu_2-1,\cdots)$ and $\overleftarrow{\mu}!=\prod_i(\mu_i-1)!$. 
Finally, $S(\lambda,\mu)=\prod_iS(\lambda_i,\mu_i)$, where $S(x,y)$ is the Stirling number of the second kind.

It follows from the formula that $\ch_2(\Sym^m\mQ)$ has only contributes from $c_1(\mQ)^2$ (corresponding to $\lambda=(1,1)$) and $\ch_2(\mQ)$ (corresponding to $\lambda=(2)$). An explicit computation gives
\begin{equation*}
    \ch_2(\Sym^m\mQ)=\frac{m^2-m}{40}r_m c_1(Q)^2+\frac{m^2+4m}{20}r_m\ch_2(\mQ).
\end{equation*}
Using the definition, we get
\[ \Delta(\Sym^m\mQ)=c_1(\mQ)^2-2\rk(\Sym^m\mQ)\ch_2(\mQ)=\frac{1}{4}\frac{m^2+4m}{20}r_m^2\Delta(\mQ). \]
Since $\Delta(\mQ)=c_2(X)$ (item $(4)$ of Lemma~\ref{lemma:c_2(X)^2 and ch_4(Q)}), it follows that $\Sym^m\mQ$ is modular. 

Using again the formula (\ref{eqn:ch of Sym}) and the identities in Lemma~\ref{lemma:identities ch_3(Q) e ch_4(Q)}, we further compute
\begin{equation*}
    \ch_3(\Sym^m\mQ)=\frac{m^2}{4}r_m\ch_3(\mQ)
\end{equation*}
and
\begin{equation*}
    \ch_4(\Sym^m\mQ)=\frac{7m^2-2m}{20}r_m\ch_4(\mQ)
\end{equation*}


Now, to simplify the notation, for a vector bundle $F$ let us put
\[ \Xi(F):=\ch_2(F)^2-2c_1(F)\ch_3(F)+2\rk(F)\ch_4(F). \]
This is justified by the fact that in this way we get
\[ \ch(\mE nd(F))=\left(\rk(F)^2,0,-\Delta(F),0,\Xi(F)\right). \]

Using the computations of $\ch_j(\Sym^m\mQ)$ above,
Lemma~\ref{lemma:identities ch_3(Q) e ch_4(Q)} and the fact that $\ch_4(\mQ)=\frac{1}{1104}\mathsf{c}_2(X)^2$ (item $(3.e)$ of Lemma~\ref{lemma:identities ch_3(Q) e ch_4(Q)}), we get
\[ \Xi(\Sym^m\mQ)=\frac{m(m+4)}{110400}(9m^2+36m-5)r_m^2c_2(X)^2. \]

Since $\int_X\mathsf{c}_2(X)^2=828$ (item (1) of Lemma~\ref{lemma:c_2(X)^2 and ch_4(Q)}), we have that
\[ \int_X\Xi(\Sym^m\mQ)=\frac{3m(m+4)}{20}\frac{9m^2+36m-5}{20}r_m^2
\]

Now, by item (2) of Lemma~\ref{lemma:c_2(X)^2 and ch_4(Q)}
\[ \operatorname{td}_X=\left(1,0,\frac{1}{12}\mathsf{c}_2(X),0,3\mathsf{p}\right) \]
so that by Hirzebruch--Riemann--Roch formula we get
\begin{align} 
\chi(\Sym^m(Q),\Sym^m(Q)) & =\int_X\ch(\mE nd(\Sym^m\mQ))\operatorname{td}_X \nonumber \\
 & =\int_X\left(3r_m^2\mathsf{p}-\frac{1}{12}\Delta(\Sym^m\mQ).c_2(X)+\Xi(\Sym^m\mQ)\right) \nonumber \\
 & = 3r_m^2-\frac{69}{4}\frac{m(m+4)}{20}r_m^2+\frac{3}{20}\frac{m(m+4)}{20}(9m^2+36m-5)r_m^2 \nonumber \\
 & =\left( 3+9\frac{m(m+4)}{20}\frac{3m^2+12m-40}{20} \right)\binom{m+3}{3}^2 \nonumber \\
 & =\frac{3}{400}(9m^4+72m^3+24m^2-480m+400)r_m^2 \nonumber \\
& =3\left(\frac{3m^2+12m-20}{20} \right)^2 r_m^2. \label{eqn:chi of sym}
\end{align}

Let us now compute the dimensions of the Ext-groups of $\Sym^m\mQ$.

\begin{proposition}\label{prop: ext of sym}
    We have 
    \[ \operatorname{dim}\Ext^p(\Sym^m\mQ,\Sym^m\mQ)=\left\{
    \begin{array}{cc}
       1  &  \mbox{if } p=0,4 \\
       0  &  \mbox{if } p=1,3 \\
       3\left(\frac{3m^2+12m-20}{20} \right)^2 r_m^2-2 & \mbox{if } p=2.
    \end{array}\right. \]
    More precisely,
    \[ \Ext^2(\Sym^m\mQ,\Sym^m\mQ)=\left\{\begin{array}{ll}
       \CC  & m=0 \\
       \CC  &  m=1 \\
       \CC\oplus \Sigma_{(2,2,1,1,0,0)} & n=2 \\
       \CC\oplus \Sigma_{(2,2,1,1,0,0)}\oplus\bigoplus_{n=3}^m\left(\begin{array}{c}
         \Sigma_{(2n-2,n-2,n-2,n-3,0)} \\
         \oplus \\
         \Sigma_{(2n-2,n,n-1,n-1,n-2,0)} \\
         \oplus \\
         \Sigma_{(2n-2,n,n-1,n-1,n-2)}
         \end{array}\right) & n\geq3\, .
    \end{array}\right.
     \]
\end{proposition}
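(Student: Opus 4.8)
The plan is to compute $\oH^\bullet(\mE_m)$, where $\mE_m=\mE nd(\Sym^m\mQ)$, directly from the spectral sequence (\ref{eqn:spectral sequence}), exploiting the fact that for $\lambda=(m,0,0,0)$ the combinatorics collapses almost completely. First I would decompose the endomorphism bundle on the Grassmannian: since $\Sym^m\widetilde{\mQ}=\Sigma_{(m,0,0,0)}\widetilde{\mQ}$, Pieri's rule applied to (\ref{eqn: E tilde}) gives
\[ \widetilde{\mE}_m=\bigoplus_{j=0}^{m}\Sigma_{(m-j,\,0,\,0,\,j-m)}\widetilde{\mQ}, \]
a sum of $m+1$ irreducible summands, conveniently indexed by $n:=m-j\in\{0,\dots,m\}$. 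Tensoring each summand with the irreducible pieces of $\W^p\Sym^3\widetilde{\mU}$ recalled in Section~\ref{section:Koszul} and running the Borel--Weil--Bott algorithm of Section~\ref{section:BBW} on the resulting weights $(m-j,0,0,j-m\mid\beta)$ produces the whole $E_1$-page.

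The decisive observation, special to the symmetric case, is that the two middle entries of the $\widetilde{\mQ}$-weight are $0,0$, sitting exactly where $\delta=(5,4,3,2,1,0)$ contributes $4,3$. For the summands $\Sym^3\widetilde{\mU}=\Sigma_{(3,0)}\widetilde{\mU}$, $\Sigma_{(3,3)}\widetilde{\mU}$ and $\Sigma_{(6,3)}\widetilde{\mU}$ the $\widetilde{\mU}$-part forces, after adding $\delta$, a repeated entry ($4$ or $3$), so Borel--Weil--Bott makes \emph{every} such summand acyclic. Hence the columns $p=1$ and $p=3$ of $E_1$ vanish identically, and in $p=2$ (resp.\ $p=4$) only the summand coming from $\Sigma_{(5,1)}\widetilde{\mU}$ (resp.\ $\W^4\Sym^3\widetilde{\mU}=\mO(-6)$) can survive. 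Sorting these weights uniformly in $n$, one checks that a summand of the column $p=0$ contributes only to $\oH^0$ or $\oH^2$, one of the column $p=2$ only to $\oH^4$, and one of the column $p=4$ only to $\oH^6$ or $\oH^8$; thus every nonzero entry of $E_1$ lies in \emph{even} total degree $q-p\in\{0,2,4\}$. Since every differential shifts $q-p$ by one, all differentials vanish: the spectral sequence degenerates at $E_1$ and $\oH^i(\mE_m)=\bigoplus_{q-p=i}E_1^{-p,q}$.

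From this degeneration the statement follows cleanly. The trivial representation appears exactly once, in bidegree $(0,0)$ (from $n=0$, the identity endomorphism), so $\Hom(\Sym^m\mQ,\Sym^m\mQ)=\CC$; simplicity, together with the polystability of Theorem~\ref{theorem A}, gives slope stability. The total degrees $1$ and $3$ are empty, so $\Ext^1=\Ext^3=0$ (rigidity), consistently with Serre duality on the fourfold $X$, while the trivial representation in bidegree $(-4,8)$ gives $\Ext^4=\CC$. Finally
\[ \Ext^2(\Sym^m\mQ,\Sym^m\mQ)=E_1^{0,2}\oplus E_1^{-2,4}\oplus E_1^{-4,6}, \]
whose dimension is forced by the Euler characteristic (\ref{eqn:chi of sym}) to equal $\chi(\Sym^m\mQ,\Sym^m\mQ)-2$, matching the stated value.

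The remaining work, and the real obstacle, is the explicit identification of these three columns as $\SL(6)$-representations. For each, the Borel--Weil--Bott output is read off by sorting $(n,0,0,-n\mid\beta)+\delta$ and subtracting $\delta$: one finds that $E_1^{0,2}$ contributes one Schur functor for each $n=3,\dots,m$ (the first family), that $E_1^{-2,4}$ contributes $\CC$ at $n=0$, $\Sigma_{(2,2,1,1,0,0)}$ at $n=2$, and a second family for $n=3,\dots,m$, and that $E_1^{-4,6}$ contributes the third family for $n=3,\dots,m$. The delicate point is carrying out these length and sorting computations correctly and uniformly in $m$, keeping precise track of which summands fall into the acyclic range; assembling the three families then gives the claimed decomposition, with the dimension count against (\ref{eqn:chi of sym}) serving as a consistency check.
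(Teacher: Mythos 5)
Your proposal is correct and follows essentially the same route as the paper: the same Pieri decomposition of $\mE nd(\Sym^m\widetilde{\mQ})$ into the $m+1$ summands $\Sigma_{(n,0,0,-n)}\widetilde{\mQ}$, the same Borel--Weil--Bott acyclicity analysis carried out in Lemma~\ref{lemma: (2n n n 0 | n n)}, and the same Euler-characteristic count for the dimension of $\Ext^2$. The only cosmetic difference is that you package the conclusion as an $E_1$-degeneration via the even-total-degree parity argument, whereas the paper chases the long exact sequences coming from (\ref{eqn:explicit Koszul}); these are equivalent.
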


\begin{proof}
    We use the Koszul resolution (\ref{eqn:Koszul}) and the spectral sequence (\ref{eqn:spectral sequence}) applied to the vector bundle $\mE nd(\Sym^m\widetilde{\mQ})$.

    First of all, let us observe that by Example~\ref{example:duale} we have
    \[ \mE nd(\Sym^m\widetilde{\mQ})=\left(\Sigma_{(m,m,m,0)}\widetilde{\mQ}\otimes\Sym^m\widetilde{\mQ}\right)\otimes\mO_X(-m), \]
    and that, by Pieri's formula, we have
    \[ \Sigma_{(m,m,m,0)}\widetilde{\mQ}\otimes\Sym^m\widetilde{\mQ}=\bigoplus_{i=0}^m\Sigma_{(2m-i,m,m,i)}\widetilde{\mQ}. \]

    Therefore we get
    \begin{equation}\label{eqn:LR for Sym} 
    \mE nd(\Sym^m\widetilde{\mQ})=\bigoplus_{i=0}^m\Sigma_{(2m-i,m,m,i)|(m,m)}=\bigoplus_{n=0}^m\Sigma_{(2n,n,n,0|n,n)}, 
    \end{equation}
    where in the last equality we used (\ref{eqn: + n}) and the substitution $n=m-i$.

    In Lemma~\ref{lemma: (2n n n 0 | n n)} below we compute explicitly the cohomology groups of the bundles $\Sigma_{(2n,n,n,0|n,n)}$, from which the claim follows at once.
\end{proof}

\begin{remark}
    The claim about the dimensions of the Ext-groups of $\Sym^m\mQ$ does not need the whole analysis done in the proof of Lemma~\ref{lemma: (2n n n 0 | n n)}. In fact it would have been enough to remark that for odd $p$ the bundles $\Sigma_{(2n,n,n,0|n,n)}\otimes\W^p\Sym^3\widetilde{\mU}$ are acyclic and that for even $p$ the same bundles only contribute to $\Ext^2(\Sym^m\mQ,\Sym^m\mQ)$ (apart from the case $n=0$, which corresponds to the trivial line bundle). Then the claim would follow from the computation (\ref{eqn:chi of sym}).
\end{remark}

\begin{lemma}\label{lemma: (2n n n 0 | n n)}
    For any $n\geq0$ we have
    \[ \oH^0(\Sigma_{(2n,n,n,0|n,n)}=\left\{\begin{array}{ll}
      \CC  & \mbox{if } n=0 \\
       0  &  \mbox{if } n\neq 0
    \end{array}\right. \]
    \[ \oH^1(\Sigma_{(2n,n,n,0|n,n)}=0 \]
    \[ \oH^2(\Sigma_{(2n,n,n,0|n,n)})=\left\{\begin{array}{ll}
       \CC  &  \mbox{if } n=0 \\
        0   &  \mbox{if } n=1 \\
        \Sigma_{(2,2,1,1,0,0)} & \mbox{if } n=2 \\
        \left(\begin{array}{c}
         \Sigma_{(2n-2,n-2,n-2,n-3,n-3,0)} \\
         \oplus \\
         \Sigma_{(2n-2,n,n-1,n-1,n-2,0)} \\
         \oplus \\
         \Sigma_{(2n-2,n+1,n+1,n,n,0)}
         \end{array}\right)
         & \mbox{if } n\geq 3
    \end{array}\right.     
     \]
\end{lemma}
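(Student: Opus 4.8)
The plan is to read the asserted groups as the cohomology \emph{on $X$} of the restriction of the homogeneous bundle $\mathcal{F}:=\Sigma_{(2n,n,n,0|n,n)}$; this interpretation is forced by the way the lemma is used in Proposition~\ref{prop: ext of sym} and is confirmed already by $n=0$, where $\mathcal F|_X=\mO_X$ has $\oH^0=\oH^2=\CC$ by the Hodge diamond. I would then compute these groups with the Koszul resolution (\ref{eqn:Koszul}) of $X\subset\Gr(2,6)$ tensored by $\mathcal F$, through the spectral sequence (\ref{eqn:spectral sequence}), $\operatorname{E}_1^{-p,q}=\oH^q(\mathcal F\otimes\W^p\Sym^3\widetilde\mU)\Rightarrow\oH^{q-p}(X,\mathcal F|_X)$. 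The decisive simplification is that $\mathcal F$ has fixed $\widetilde\mQ$-part $(2n,n,n,0)$, while each $\W^p\Sym^3\widetilde\mU$ is a sum of Schur functors of $\widetilde\mU$ \emph{only} (listed in Section~\ref{section:Koszul}); since tensoring by $\Sigma_{(n,n)}\widetilde\mU=(\det\widetilde\mU)^{\otimes n}$ merely shifts the $\SL(2)$-weight, every summand of $\mathcal F\otimes\W^p\Sym^3\widetilde\mU$ has the form $\Sigma_{(2n,n,n,0|\mu_1,\mu_2)}$, with $(\mu_1,\mu_2)$ running through $(n,n)$ for $p=0$; $(n+3,n)$ for $p=1$; $(n+5,n+1)$ and $(n+3,n+3)$ for $p=2$; $(n+6,n+3)$ for $p=3$; and $(n+6,n+6)$ for $p=4$.

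Next I would run the Borel--Weil--Bott algorithm of Section~\ref{section:BBW} on each summand: form $(2n,n,n,0,\mu_1,\mu_2)+\delta$ with $\delta=(5,4,3,2,1,0)$, discard those with a repeated entry (acyclic), and for the survivors record the $l$-function and the weight $\widetilde{\lambda|\mu}=\operatorname{sort}(\cdot)-\delta$. A direct inspection shows that the $p=1$ and $p=3$ terms, and the $\Sigma_{(3,3)}\widetilde\mU$-summand of $p=2$, carry a repeated entry for every $n$ and are therefore acyclic. For $n\geq3$ the three remaining terms are non-acyclic with $l$-functions $2$, $4$, $6$ (coming from $p=0$, from the $\Sigma_{(5,1)}\widetilde\mU$-summand of $p=2$, and from $p=4$ respectively), so that the total degree $q-p$ equals $2$ in all three cases; computing $\operatorname{sort}(\cdot)-\delta$ and normalising by a power of $\det V_6$ yields exactly $\Sigma_{(2n-2,n-2,n-2,n-3,n-3,0)}$, $\Sigma_{(2n-2,n,n-1,n-1,n-2,0)}$ and $\Sigma_{(2n-2,n+1,n+1,n,n,0)}$.

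Finally I would argue degeneration and assemble the answer. Since all three surviving contributions lie in total degree $q-p=2$ while no $\operatorname{E}_1$-term lies in total degree $1$ or $3$, every differential vanishes for degree reasons, so $\oH^2(X,\mathcal F|_X)$ is the direct sum of the three representations above and $\oH^0=\oH^1=0$ for $n\geq3$. The cases $n=0,1,2$ follow from the same computation, the only change being that the relative order of the entries of $(2n,n,n,0,\mu)+\delta$ is different for small $n$: for $n=0$ one gets $\oH^0=\CC$ (from $p=0$, $l=0$) and $\oH^2=\CC$ (from $p=2$), recovering $\oH^\bullet(\mO_X)$; for $n=1$ every summand acquires a repeated entry, so all groups vanish; and for $n=2$ only the $\Sigma_{(5,1)}\widetilde\mU$-summand of $p=2$ survives, giving $\oH^2=\Sigma_{(2,2,1,1,0,0)}$.

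The computations are mechanical once the reduction to a pure $\mu$-shift is in place, so the hard part is purely the boundary bookkeeping. The steps deserving care are the comparisons that pin down regularity and the degrees: one must check \emph{where} the entry $2$ sits relative to $n$ and $n+1$, and where $2n+5$ sits relative to $n+6$ and $n+7$ — precisely the inequalities $2n+5\gtrless n+6,\,n+7$ and $2\lessgtr n,\,n+1$ — since these flips are what make the relevant six-tuples non-regular exactly at $n=1,2$ and force the three $l$-functions to be $2,4,6$ for $n\geq3$ rather than other values.
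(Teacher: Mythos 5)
Your proposal is correct and follows essentially the same route as the paper's proof: tensor with the Koszul resolution, decompose $\W^p\Sym^3\widetilde{\mU}$ into irreducible $\widetilde{\mU}$-factors, run Borel--Weil--Bott on each summand $\Sigma_{(2n,n,n,0|\mu_1,\mu_2)}$ (checking repeated entries in $v+\delta$ and the $l$-function case by case in $n$), and conclude via degeneration of the spectral sequence, with the same three surviving terms in total degree $2$ for $n\geq3$ and the same boundary cases $n=0,1,2$. Your preliminary observation that the lemma's cohomology is taken on $X$ (forced by its use in Proposition~\ref{prop: ext of sym}) is also the correct reading.
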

\begin{proof}
    First of all, we need to compute the cohomology groups of the homogeneous vector bundles on $\Gr(2,6)$ of the form 
    \[ \Sigma_{(2n,n,n,0|n,n)}\otimes\W^p\Sym^3\widetilde{\mU}. \]
    We divide this computation in several steps labelled by the exponent $p$ of the wedge product. We remind to Section~\ref{section:Koszul} for the decomposition of $\W^p\Sym^3\widetilde{\mU}$ into irreducible factors, which we will tacitly use.
    \begin{itemize}
        \item ($p=0$). In this case we need to compute the cohomology of $\Sigma_{(2n,n,n,0|n,n)}$. Let us put $v=(2n,n,n,0,n,n)$. According to the Borel--Weil--Bott algorithm (Section~\ref{section:BBW}), we first need to look at 
        \[ v+\delta=(2n+5,n+4,n+3,2,n+1,n). \]
        It is then clear that for $n=0$ we have only cohomology in degree $0$, given by the trivial partition, while for $n=1,2$ the bundle $\Sigma_{(2n,n,n,0|n,n)}$ is acyclic. Finally, for $n\geq3$ we always have $\ell(v+\delta)=2$, therefore in this case we have
        \[ \oH^2(\Sigma_{(2n,n,n,0|n,n)})=\Sigma_{(2n-2,n-2,n-2,n-3,n-3,0)}. \] 

        \item (p=1). In this case we need to compute the cohomology of $\Sigma_{(2n,n,n,0|n+3,n)}$, so that we put $v=(2n,n,n,0,n+3,n)$ and hence we need to look at
        \[ v+\delta=(2n+5,n+4,n+3,2,n+4,n). \]
        Since there are repeated entries, the bundle is acyclic for every $n\geq0$.

        \item (p=2). Here the bundle $\W^2\Sym^3\widetilde{\mU}$ has two irreducible factors, hence we divide the analysis in two sub-cases.
        \begin{enumerate}
            \item Let us first consider the vector $v=(2n,n,n,0,n+3,n+3)$.
            In this case
            \[ v+\delta=(2n+5,n+4,n+3,2,n+4,n+3) \]
            has repeated entries, therefore $\Sigma_{(2n,n,n,0|n,n)}\otimes\Sigma_{(3,3)}\widetilde{\mU}$ is acyclic.
            \item Let us then consider the vector $v=(2n,n,n,0,n+5,n+1)$. Then
            \[ v+\delta=(2n+5,n+4,n+3,2,n+6,n+1). \]
            For $n=0$ we have $\ell(v+\delta)=4$, so that there is only one non-trivial cohomology group in degree $4$, given by the trivial partition. For $n=1$ we have repeated entries, so that the bundle is acyclic. Finally, for $n\geq2$ we have $\ell(v+\delta)=4$, so that we have only one non trivial cohomology group in degree $4$, more precisely
            \[ \oH^4(\Sigma_{(2n,n,n,0|n,n)}\otimes\Sigma_{(5,1)}\widetilde{\mU})=\Sigma_{(2n-2,n,n-1,n-1,n-2,0)}. \]
        \end{enumerate}
        
        \item ($p=3$). In this case we have to consider the vector $v=(2n,n,n,0,n+6,n+3)$. Since
        \[ v+\delta=(2n+5,n+4,n+3,2,n+7,n+3), \]
        has repeated entries, the bundle $\Sigma_{(2n,n,n,0|n+6,n+3)}$ is acyclic for every $n\geq0$.

        \item ($p=4$). Let us finally consider the vector $v=(2n,n,n,0,n+6,n+6)$, and hence the vector
        \[ v+\delta=(2n+5,n+4,n+3,2,n+7,n+6). \]
        For $n=0$ we have $\ell(v+\delta)=8$ and again we have only one non-trivial cohomology group in degree $8$ given by the trivial partition. If $n=1,2$, then the bundle $\Sigma_{(2n,n,n,0|n+6,n+6)}$ is acyclic. For $n\geq3$ we have $\ell(v+\delta)=6$ so that 
        \[ \oH^6(\Sigma_{(2n,n,n,0|n+6,n+6)})=\Sigma_{(2n-2,n+1,n+1,n,n,0)}. \]
    \end{itemize}
    To get the claim is it enough to plug these results into the long exact sequences associated to the short exact sequences (\ref{eqn:explicit Koszul}).
\end{proof}

As a corollary we get the stability of $\Sym^m\mQ$.

\begin{corollary}\label{corollary:recap on sym}
    The vector bundles $\Sym^m\mQ$ are slope stable.
\end{corollary}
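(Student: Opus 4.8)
The plan is to derive slope stability from two independent inputs: \emph{simplicity}, which is an immediate consequence of the cohomology computations just carried out, and \emph{polystability}, which follows from the slope stability of $\mQ$ together with the behaviour of Schur functors under the Kobayashi--Hitchin correspondence.

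First I would record simplicity. By the $p=0$ part of Proposition~\ref{prop: ext of sym} we have $\dim\Hom(\Sym^m\mQ,\Sym^m\mQ)=1$; concretely, Lemma~\ref{lemma: (2n n n 0 | n n)} shows that, among the irreducible summands $\Sigma_{(2n,n,n,0|n,n)}$ of $\End(\Sym^m\widetilde{\mQ})$ appearing in the decomposition~(\ref{eqn:LR for Sym}), only the trivial one ($n=0$) contributes in cohomological degree $0$. Hence every endomorphism of $\Sym^m\mQ$ is a scalar, i.e.\ $\Sym^m\mQ$ is simple.

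Second I would establish polystability. Since $X$ is projective with $\mathrm{Pic}(X)\otimes\QQ$ generated by an ample class and $\mQ$ is slope stable, $\mQ$ admits a Hermitian--Einstein metric by the Kobayashi--Hitchin correspondence; the induced metric on $\Sym^m\mQ$ is again Hermitian--Einstein (the Einstein factor simply rescaling by $m$), so $\Sym^m\mQ$ is slope polystable. Equivalently, in characteristic $0$ slope semistability is preserved by Schur functors, and for a stable input the socle decomposition refines this to polystability.

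Finally I would combine the two facts: a slope polystable sheaf is a direct sum $\bigoplus_i E_i^{\oplus a_i}$ of mutually non-isomorphic stable sheaves of the same slope, whose endomorphism algebra is $\bigoplus_i \mathrm{Mat}_{a_i}(\CC)$; this is one-dimensional exactly when there is a single summand occurring with multiplicity one. Thus simplicity forces $\Sym^m\mQ$ to be stable. The step I expect to be the genuine obstacle is the polystability input, which rests on the slope stability of $\mQ$ itself (the case $m=1$): I would take this from the stability of the homogeneous bundle $\widetilde{\mQ}$ on $\Gr(2,6)$ combined with a restriction theorem, or from the same simple-plus-polystable argument applied directly to $\mQ$. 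Granting that input, the remaining implications are formal.
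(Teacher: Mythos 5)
Your argument is correct and follows essentially the same route as the paper: the paper deduces polystability of $\Sym^m\mQ$ from the slope stability of $\mQ$ (cited to \cite[Remark~8.6]{O'Grady:Modular}) via \cite[Theorem~3.2.11]{HL} — which is exactly the Kobayashi--Hitchin mechanism you spell out — and then upgrades to stability using the simplicity from Proposition~\ref{prop: ext of sym}. The only cosmetic difference is that you unwind the Hermitian--Einstein argument by hand where the paper cites the reference.
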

\begin{proof}
    First of all, by \cite[Remark~8.6]{O'Grady:Modular} the vector bundle $\mQ$ on $X$ is slope stable, hence by \cite[Theorem~3.2.11]{HL} the vector bundles $\Sym^m\mQ$ are polystable. Since by Proposition~\ref{prop: ext of sym} they are simple, the claim follows.
\end{proof}

\begin{remark}
    We have seen so far that $\Sym^m\mQ$ is a slope stable, modular and rigid vector bundle on a fourfold of type $\operatorname{K3}^{[2]}$. We point out that, unless $m=1$, these bundles do not satisfy the hypothesis of \cite[Theorem~1.4]{O'Grady:Modular}, for example because the ranks do not match, and are therefore new in literature. 
    
    Since they are rigid, in analogy with the case of stable sheaves on K3 surfaces it is natural to expect that the uniqueness statement of \cite[Theorem~1.4]{O'Grady:Modular} also applies to them.
\end{remark}

Finally, we prove that $\Sym^m\mQ$ are atomic.

\begin{proposition}\label{prop:Sym atomic}
    The bundles $\Sym^m\mQ$ are atomic with extended Mukai vector 
    \[ \tilde{v}(\Sym^m\mQ)=\left(r_m,\frac{m}{4}r_m c_1(\mQ),-\frac{3m-5}{4}r_m\right). \]
\end{proposition}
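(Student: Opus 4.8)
The plan is to exhibit the extended Mukai vector explicitly and then invoke Proposition~\ref{prop:se ann=ann allora atomico}, which on a fourfold of type $\operatorname{K3}^{[2]}$ reduces atomicity to the mere existence of such a vector. Since $\rk(\Sym^m\mQ)=r_m$ does not vanish, the normalisation (\ref{eqn:v tilde normalizzato}) forces the first two components of any extended Mukai vector to be $r_m$ and $c_1(\Sym^m\mQ)=\tfrac{m}{4}r_m c_1(\mQ)$, so that the only quantity left to pin down is the last entry $s$. I would therefore work with the candidate $\tilde v=\bigl(r_m,\tfrac{m}{4}r_m c_1(\mQ),s\bigr)$ and determine $s$ by imposing the defining condition $\mathsf{T}(\tilde v^{(2)})\in\QQ\,v(\Sym^m\mQ)$ (recall that on $\operatorname{K3}^{[2]}$ one has $\bar v=v$).

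Concretely, I would compute both sides of this identity. For the left-hand side I use the explicit formula for $\mathsf{T}(\tilde v^{(2)})$ from item (3) of Lemma~\ref{lemma:Bottini}, which only needs the value of $\tilde q(\tilde v)$; this in turn requires $q_X(c_1(\mQ))$, and the latter is extracted from the already-established extended Mukai vector $\tilde v(\mQ)=(4,c_1(\mQ),2)$ together with item (1) of Lemma~\ref{lemma:Bottini}, giving $q_X(c_1(\mQ))=6$ and hence $\tilde q(\tilde v)=\tfrac{3m^2+12m-20}{8}r_m^2$ once $s=-\tfrac{3m-5}{4}r_m$ is inserted. For the right-hand side I compute $v(\Sym^m\mQ)=\ch(\Sym^m\mQ)\sqrt{\operatorname{td}_X}$ from the Chern character in Theorem~\ref{thm:Sym Q} and the expression for $\sqrt{\operatorname{td}_X}$ in item (3) of Lemma~\ref{lemma:c_2(X)^2 and ch_4(Q)}. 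Then I match the two sides graded piece by graded piece in $\oH^0\oplus\oH^2\oplus\oH^4\oplus\oH^6\oplus\oH^8$.

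The degree-$0$ and degree-$2$ comparisons are immediate, and the degree-$4$ comparison is equivalent to the modularity statement $\Delta(\Sym^m\mQ)=\tfrac14\tfrac{m(m+4)}{20}r_m^2\mathsf{c}_2(X)$ already proved in Theorem~\ref{thm:Sym Q} (after rewriting $\mathsf{c}_2(X)=c_1(\mQ)^2-8\ch_2(\mQ)$). The real content lies in the two top degrees. In degree $6$ I must identify the class $\ell^\vee$ appearing in Lemma~\ref{lemma:Bottini}(3): since $X$ is very general, $\oH^{3,3}(X,\QQ)$ has rank $1$, so $\ell^\vee$ is a multiple of $\ch_3(\mQ)$, and using $\int_X c_1(\mQ)^4=108$ together with $c_1(\mQ)^3=-24\ch_3(\mQ)$ and $c_1(\mQ)\ch_2(\mQ)=2\ch_3(\mQ)$ from Lemma~\ref{lemma:identities ch_3(Q) e ch_4(Q)} one checks that both sides equal $\tfrac{m(3m-5)}{12}r_m\ch_3(\mQ)$; this is the computation that actually fixes the value of $s$. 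In degree $8$ the identity becomes the decisive consistency check: reducing every term to a multiple of the point class by means of the integral relations in Lemma~\ref{lemma:identities ch_3(Q) e ch_4(Q)} (in particular $\int_X c_1(\mQ)^2\ch_2(\mQ)=-9$, $\int_X\ch_2(\mQ)^2=9$ and $\int_X\ch_4(\mQ)=\tfrac34$), one finds $v(\Sym^m\mQ)_8=\tfrac{15(3m-5)^2}{480}r_m\mathsf{p}=\tfrac{(3m-5)^2}{32}r_m\mathsf{p}$, which is exactly $\tfrac{s^2}{2r_m}\mathsf{p}$. I expect this last matching to be the main obstacle, since it is the only step with no a priori guarantee of success; its agreement confirms that $\tilde v$ is a genuine extended Mukai vector, and Proposition~\ref{prop:se ann=ann allora atomico} then yields that $\Sym^m\mQ$ is atomic.
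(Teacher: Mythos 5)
Your proposal is correct and follows essentially the same route as the paper: exhibit the candidate vector, verify $\mathsf{T}(\tilde v^{(2)})=v(\Sym^m\mQ)$ degree by degree using item (3) of Lemma~\ref{lemma:Bottini} together with the relations of Lemma~\ref{lemma:identities ch_3(Q) e ch_4(Q)}, and conclude by Proposition~\ref{prop:se ann=ann allora atomico}. The only (harmless) difference is that the paper pins down $s=-\tfrac{3m-5}{4}r_m$ a priori from $\Delta(\Sym^m\mQ)$ and $\chi(\Sym^m\mQ,\Sym^m\mQ)$ via items (1) and (2) of Lemma~\ref{lemma:Bottini}, whereas you extract it from the degree-$6$ matching and treat the degree-$8$ identity as the final consistency check.
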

\begin{proof}
     First of all, let us remark that the equalities 
    \[ \Delta(\Sym^m\mQ)=\frac{1}{4}\frac{m^2+4m}{20}r_m^2\Delta(\mQ)\quad\mbox{ and }\quad \chi(\Sym^m\mQ,\Sym^m\mQ)=3\left(\frac{3m^2+12m-20}{20} \right)^2 r_m^2 \]
    from Section~\ref{section:Sym Q}, together with items $(1)$ and $(2)$ of Lemma~\ref{lemma:Bottini} forces the quantity $s(\Sym^m\mQ)$ in the normalisation (\ref{eqn:v tilde normalizzato}) to be $s(\Sym^m\mQ)=-\frac{3m-5}{4} r_m$. Therefore an extended Mukai vector for $\Sym^m\mQ$, if it exists, must be of the form    
    \[ \tilde{v}=\left(r_m,\frac{m}{4}r_m c_1(\mQ),-\frac{3m-5}{4}r_m\right)\in\widetilde{\oH}(X,\QQ). \]
    On the other hand, we claim that
    \begin{equation}\label{eqn:T(v tilde)=v}
    \operatorname{T}(\tilde{v}^{(2)})=v(\Sym^m\mQ), 
    \end{equation}
    from which it would follow that $\tilde{v}$ is indeed the extended Mukai vector of $\Sym^m\mQ$, and therefore, by Proposition~\ref{prop:se ann=ann allora atomico}, $\Sym^m\mQ$ would be atomic.

    Let us prove the claim by using item $(3)$ of Lemma~\ref{lemma:Bottini}:
    \begin{align*} 
    \operatorname{T}(\tilde{v}^{(2)})= & \left(r_m,\frac{m}{4}r_mc_1(\mQ), \right. \\
    & \frac{3m^2-3m+5}{120}r_mc_1(\mQ)^2+\frac{3m^2+12m-20}{60}r_m\ch_2(\mQ), \\
    & \left. -\frac{m(3m-5)}{288}r_mc_1(\mQ)^3,\frac{(3m-5)^2}{32}r_m \right) .
    \end{align*}
    Now, by item $(3)$ of Lemma~\ref{lemma:c_2(X)^2 and ch_4(Q)}, we have that 
    \[ \sqrt{\operatorname{td}_X}=\left( 1,0,\frac{1}{24}\operatorname{c}_2(X),0,\frac{25}{32}\mathsf{p}\right). \]
    The Mukai vector $v(\Sym^m\mQ)=\ch(\Sym^m\mQ)\sqrt{\operatorname{td}_X}$ can then be explicitly computed as
    \begin{align*}
        v(\Sym^m\mQ)= & \left( r_m, \frac{m}{4}r_mc_1(\mQ), \right. \\
         & \frac{1}{24} r_m \mathsf{c}_2(X)+\frac{m(m-1)}{40}r_mc_1(\mQ)^2+\frac{m(m+4)}{20}r_m\ch_2(\mQ), \\
         & \frac{m}{96} r_m c_1(\mQ).\mathsf{c}_2(X)+\frac{m^2}{4}r_m\ch_3(\mQ), \\
         & \left. \frac{25}{32}r_m+\frac{m(m-1)}{960}r_mc_1(\mQ)^2.\mathsf{c}_2(X)+\frac{m(m+4)}{480}r_m\ch_2(\mQ).\mathsf{c}_2(X)+\frac{m(7m-2)}{20}r_m\ch_4(\mQ)\right)
    \end{align*}
    Let us now recall that 
    \[ \mathsf{c}_2(X)=\Delta(\mQ)=c_1(\mQ)^2-8\ch_2(\mQ). \]
    Using the identities in Lemma~\ref{lemma:identities ch_3(Q) e ch_4(Q)}, it follows from this that
    \[ c_1(\mQ).\mathsf{c}_2(X)=\frac{5}{3}c_1(\mQ)^3,\quad \int_X c_1(\mQ)^2.\mathsf{c}_2(X)=180 \quad\mbox{and}\quad\int_X \ch_2(\mQ).\mathsf{c}_2(X)=-81. \]

    Substituting these equalities in the expression of $v(\Sym^m\mQ)$ above gives the  equality (\ref{eqn:T(v tilde)=v}), thus completing the proof.
\end{proof}

\begin{remark}
    Notice that 
    \[ \tilde{v}(\Sym^m\mQ)^2=\frac{3m^2+12m-20}{8} r_m^2. \]
    This number is negative for $m=1$, but it is positive for $m>1$. Since $\Sym^m\mQ$ is rigid for every $m\geq1$, this is in contrast with the case of K3 surfaces, where rigidity is equivalent to having a Mukai vector with negative square.
\end{remark}

\section{Two worked-out examples}\label{section:two worked out example}
Before diving into the proof of the main theorems, we wish to work out by hand two examples, in which some technical details are exploited: this will also serve as a leading guide for the rest of the paper.

\subsection{The vector bundle $\Sigma_{(2,1,0,0)}\mQ$}\label{section:Sigma 2 1}
Let us first consider the vector bundle $\Sigma_{(2,1,0,0)}\tilde{\mQ}$ on $\Gr(2,V_6)$. By Pieri's formula (see decomposition~(\ref{eqn:Pieri})) we have that
\[ \Sym^2\tilde{\mQ}\otimes\tilde{\mQ}=\Sym^3\tilde{\mQ}\oplus\Sigma_{(2,1,0,0)}\tilde{\mQ} \]
and hence, by restriction,
\[ \Sym^2\mQ\otimes\mQ=\Sym^3\mQ\oplus\Sigma_{(2,1,0,0)}\mQ. \]

In particular we can interpret $\Sigma_{(2,1,0,0)}\mQ$ as the kernel of the natural map of homogeneous bundles $\Sym^2\mQ\otimes\mQ\to\Sym^3\mQ$.

We start by computing the Chern character of $\Sigma_{(2,1,0,0)}\mQ$ using the decomposition above:
\[ \ch(\Sigma_{(2,1,0,0)}\mQ)=\ch(\Sym^2\mQ\otimes\mQ)-\ch(\Sym^3\mQ). \]
By the computations in Section~\ref{section:Sym Q} we know that 
\[ \ch(\Sym^2\mQ)=\left( 10, 5c_1(\mQ), \frac{1}{2}c_1(\mQ)^2+6\ch_2(\mQ), 10\ch_3(\mQ), 12\ch_4(\mQ) \right) \]
and
\[ \ch(\Sym^3\mQ)=\left( 20, 15c_1(\mQ), 3c_1(\mQ)^2+21\ch_2(\mQ), 45\ch_3(\mQ), 57\ch_4(\mQ) \right), \]
from which we deduce that (we use Lemma~\ref{lemma:identities ch_3(Q) e ch_4(Q)} to simplify the expressions)
\[ \ch(\Sym^2\mQ\otimes\mQ)=\left( 40, 30c_1(\mQ), 7c_1(\mQ)^2+34\ch_2(\mQ), 60\ch_3(\mQ), 34\ch_4(\mQ) \right). \]
We eventually get
\[ \ch(\Sigma_{(2,1,0,0)}\mQ)=\left( 20, 15c_1(\mQ), 4c_1(\mQ)^2+13\ch_2(\mQ), 15\ch_3(\mQ), -23\ch_4(\mQ) \right). \]
From this, it is easy to compute the discriminant and the Euler characteristic of $\Sigma_{(2,1,0,0)}\mQ$, which we collect in the following lemma.
\begin{lemma}
    The following equality hold:
    \[ \Delta(\Sigma_{(2,1,0,0)}\mQ)=65\mathsf{c}_2(X)\qquad\mbox{ and }\qquad \chi(\Sigma_{(2,1,0,0)}\mQ,\Sigma_{(2,1,0,0)}\mQ)=363. \]
\end{lemma}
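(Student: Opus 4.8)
The plan is to feed the Chern character
\[ \ch(\Sigma_{(2,1,0,0)}\mQ)=\left(20,\,15c_1(\mQ),\,4c_1(\mQ)^2+13\ch_2(\mQ),\,15\ch_3(\mQ),\,-23\ch_4(\mQ)\right) \]
just obtained directly into the two definitions, reducing every degree-$8$ term to a multiple of $\ch_4(\mQ)$ by means of the relations of Lemma~\ref{lemma:identities ch_3(Q) e ch_4(Q)}.

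For the discriminant I would substitute $c_1=15c_1(\mQ)$, $\rk=20$ and $\ch_2=4c_1(\mQ)^2+13\ch_2(\mQ)$ into $\Delta(F)=c_1(F)^2-2\rk(F)\ch_2(F)$. The two $c_1(\mQ)^2$-contributions combine as $225-160=65$, while the $\ch_2(\mQ)$-contribution is $-520\ch_2(\mQ)=-65\cdot8\,\ch_2(\mQ)$, so that $\Delta(\Sigma_{(2,1,0,0)}\mQ)=65\left(c_1(\mQ)^2-8\ch_2(\mQ)\right)=65\Delta(\mQ)$. The first equality then follows from item~$(4)$ of Lemma~\ref{lemma:c_2(X)^2 and ch_4(Q)}, which gives $\Delta(\mQ)=\mathsf{c}_2(X)$.

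For the Euler characteristic I would use the identity $\ch(\mE nd(F))=\left(\rk(F)^2,0,-\Delta(F),0,\Xi(F)\right)$ together with Hirzebruch--Riemann--Roch and the Todd class $\operatorname{td}_X=\left(1,0,\tfrac{1}{12}\mathsf{c}_2(X),0,3\mathsf{p}\right)$ of item~$(2)$ of Lemma~\ref{lemma:c_2(X)^2 and ch_4(Q)}. The degree-$8$ part of $\ch(\mE nd(F))\operatorname{td}_X$ is $3\rk(F)^2\mathsf{p}-\tfrac{1}{12}\Delta(F)\mathsf{c}_2(X)+\Xi(F)$, whose middle term is evaluated through $\int_X\mathsf{c}_2(X)^2=828$ (item~$(1)$ of Lemma~\ref{lemma:c_2(X)^2 and ch_4(Q)}), giving $-\tfrac{65}{12}\cdot828=-4485$, and whose first term integrates to $3\cdot400=1200$. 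The only genuine computation is that of $\Xi(\Sigma_{(2,1,0,0)}\mQ)$: expanding $\ch_2(F)^2$, $c_1(F)\ch_3(F)$ and $\rk(F)\ch_4(F)$ and then applying $c_1(\mQ)^4=144\ch_4(\mQ)$, $c_1(\mQ)^2\ch_2(\mQ)=-12\ch_4(\mQ)$, $\ch_2(\mQ)^2=12\ch_4(\mQ)$ and $c_1(\mQ)\ch_3(\mQ)=-6\ch_4(\mQ)$ collapses $\Xi$ to $4864\,\ch_4(\mQ)$, which integrates to $4864\cdot\tfrac34=3648$ by $\int_X\ch_4(\mQ)=\tfrac34$. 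Summing the three contributions yields $\chi=1200-4485+3648=363$.

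There is no conceptual obstacle: the statement is a bookkeeping consequence of the Chern character already in hand together with the cohomological relations recorded in Lemma~\ref{lemma:identities ch_3(Q) e ch_4(Q)}. The only point requiring care is the sign-and-coefficient tracking inside $\Xi$, where the three summands must each be reduced to a multiple of $\ch_4(\mQ)$ before integrating; a slip in any one of the quoted relations would propagate directly to the final numbers $65$ and $363$.
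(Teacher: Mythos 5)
Your proposal is correct and follows essentially the same route as the paper: the discriminant is read off directly from the definition using the Chern character and item~$(4)$ of Lemma~\ref{lemma:c_2(X)^2 and ch_4(Q)}, and the Euler characteristic is obtained via Hirzebruch--Riemann--Roch after reducing $\Xi(\Sigma_{(2,1,0,0)}\mQ)$ to $4864\ch_4(\mQ)$ with the relations of Lemma~\ref{lemma:identities ch_3(Q) e ch_4(Q)}. All the intermediate numbers ($65$, $1200$, $-4485$, $3648$) match the paper's computation.
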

\begin{proof}
    The first equality follows directly from the definition of the discriminant. 
    
    Let us prove the second equality. First of all, as in Section~\ref{section:Sym Q} we define the quantity
    \[ \Xi(\Sigma_{(2,1,0,0)}\mQ)=\ch_2(\Sigma_{(2,1,0,0)}\mQ)-2c_1(\Sigma_{(2,1,0,0)}\mQ).\ch_3(\Sigma_{(2,1,0,0)}\mQ)+2\rk(\Sigma_{(2,1,0,0)}\mQ)\ch_4(\Sigma_{(2,1,0,0)}\mQ) \]
    so we have
    \[ \ch(\mE nd(\Sigma_{(2,1,0,0)}\mQ))=\left(400, 0, -\Delta(\Sigma_{(2,1,0,0)}\mQ), 0, \Xi(\Sigma_{(2,1,0,0)}\mQ) \right). \]
    By an explicit computation (and the help of Lemma~\ref{lemma:identities ch_3(Q) e ch_4(Q)} to simplify the expressions) we get
    \[ \Xi(\Sigma_{(2,1,0,0)}\mQ)=4864\ch_4(\mQ). \]
    Now, using again the expression
    \[ \operatorname{td}_X=\left(1,0,\frac{1}{12}\mathsf{c}_2(X),0,3\mathsf{p}\right) \]
    and the Hirzebruch--Riemann--Roch formula, we eventually get
    \[ \chi(\Sigma_{(2,1,0,0)}\mQ,\Sigma_{(2,1,0,0)}\mQ) =\int_X\left(1200\mathsf{p}-\frac{65}{12}\mathsf{c}_2(X)^2+4864\ch_4(\mQ)\right)=363, \]
    where we used again items (1) and (4) of Lemma~\ref{lemma:c_2(X)^2 and ch_4(Q)}.
\end{proof}


In the rest of this section we want to compute and understand the Ext-algebra of $\Sigma_{(2,1,0,0)}\mQ$, namely, we want to prove the following.
\begin{proposition}\label{prop:ext210}
    Consider the vector bundle $\Sigma_{(2,1,0,0)}\mQ$ on $X$. Then
\[ \dim\Ext^p(\Sigma_{(2,1,0,0)}\mQ,\Sigma_{(2,1,0,0)}\mQ) \cong
    \begin{cases}
         1 \ p=0,4 \\
         20 \ p=1,3 \\
        401 \ \ p=2 \\
        0 \ \ \textrm{otherwise}.
    \end{cases}\]

     More precisely, if $\Ext^\ast_0(\Sigma_{(2,1,0,0)}\mQ,\Sigma_{(2,1,0,0)}\mQ)$ denotes the traceless part of the Ext-algebra, then
    \[
 \Ext^\ast_0(\Sigma_{(2,1,0,0)}\mQ,\Sigma_{(2,1,0,0)}\mQ) \cong \oH^\ast(\mE nd_0 \mQ) \oplus (\Sigma_{(2,2,1,1,0,0)} V_6^{\vee} \oplus \CC  \oplus \Sigma_{(2,1,1,1,1,0)}V_6^{\vee} \oplus \Sigma_{(2,2,2,0,0,0)} V_6^{\vee})[2]
    \]
    In particular, we have \[ \Ext^2_0(\Sigma_{(2,1,0,0)}\mQ,\Sigma_{(2,1,0,0)}\mQ)\cong\End(\W^2 \W^3 V_6^{\vee}), \] 
    which contains $\W^2 \Ext^1(\mQ_{(2,1,0,0)}, \mQ_{(2,1,0,0)})$
    as an $\operatorname{SL}(6)$-irreducible component.   
\end{proposition}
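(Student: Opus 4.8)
The plan is to compute $\Ext^p(\Sigma_{(2,1,0,0)}\mQ,\Sigma_{(2,1,0,0)}\mQ)=\oH^p(\mE_{(2,1,0,0)})$ directly from the Koszul resolution (\ref{eqn:Koszul}) tensored with the endomorphism bundle, i.e.\ from the spectral sequence (\ref{eqn:spectral sequence}) combined with the Borel--Weil--Bott algorithm of Section~\ref{section:BBW}. First I would decompose $\widetilde{\mE}_{(2,1,0,0)}=\Sigma_{(2,2,1,0)}\widetilde{\mQ}\otimes\Sigma_{(2,1,0,0)}\widetilde{\mQ}\otimes\mO_{\Gr(2,6)}(-2)$ into irreducible homogeneous bundles by the Littlewood--Richardson rule (\ref{eqn:LR}), writing $\widetilde{\mE}_{(2,1,0,0)}=\bigoplus_\nu\Sigma_{\nu|(2,2)}$. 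Tensoring each summand with the irreducible factors of $\W^p\Sym^3\widetilde{\mU}$ listed in Section~\ref{section:Koszul} then expresses every entry of the $E_1$-page as a direct sum of irreducible homogeneous bundles $\Sigma_{\nu|\mu_p}$, where $\mu_p$ runs through $(2,2),(5,2),(7,3),(5,5),(8,5),(8,8)$ as $p$ runs from $0$ to $4$.

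The second step is mechanical: apply Borel--Weil--Bott to each $\Sigma_{\nu|\mu_p}$, recording for every summand either that $\nu|\mu_p+\delta$ (with $\delta=(5,4,3,2,1,0)$) has a repeated entry, so the bundle is acyclic, or the unique nonzero cohomology group, an explicit irreducible $\SL(6)$-module $\Sigma_{\widetilde{\nu|\mu_p}}V_6$ placed in cohomological degree $l(\nu|\mu_p+\delta)$ as in (\ref{eqn:l function}). Assembling these data produces the full $E_1$-page.

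The main obstacle — flagged in the introduction as the hard point in general — is to prove that (\ref{eqn:spectral sequence}) degenerates, so that $\oH^i(\mE_{(2,1,0,0)})=\bigoplus_p E_1^{-p,\,p+i}$. For so small a partition I expect to kill every higher differential $d_r$ by observing that its source or target already vanishes on the $E_1$-page, invoking $\SL(6)$-equivariance of the differentials to exclude the few remaining candidate maps. As an independent check I would use two facts already at hand: simplicity, i.e.\ $\oH^0(\mE_{(2,1,0,0)})=\CC$ (hence $\oH^4=\CC$ by Serre duality), and the Euler characteristic $\chi(\Sigma_{(2,1,0,0)}\mQ,\Sigma_{(2,1,0,0)}\mQ)=363$ computed above. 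Since $\oH^1\cong\oH^3$ by Serre duality, the relation $\chi=2-2\dim\oH^1+\dim\oH^2$ forces the dimension vector $1,20,401,20,1$ as soon as $\oH^1$ is pinned down; the equivariant survival of all contributions (which is special, as $\Ext^2$ need not be an $\SL(6)$-module in general) is exactly what degeneration guarantees.

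Finally I would read off the $\SL(6)$-structure from the surviving Borel--Weil--Bott terms. The degree-one part is the single copy $\Ext^1=\W^3 V_6^\vee=\Sigma_{(1,1,1,0,0,0)}V_6^\vee$ predicted by Proposition~\ref{prop:W3 in Ext1} in the case $\lambda_2>\lambda_3=\lambda_4$, and equality (dimension $20$) comes from the computation. The traceless degree-two part consists of $\Sigma_{(2,2,1,1,0,0)}V_6^\vee$, $\Sigma_{(2,1,1,1,1,0)}V_6^\vee$, $\Sigma_{(2,2,2,0,0,0)}V_6^\vee$ and a trivial summand $\CC=\Sigma_{(1,1,1,1,1,1)}V_6^\vee$, of total dimension $189+35+175+1=400$. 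The identification $\Ext^2_0\cong\End(\W^3 V_6^\vee)=\W^3 V_6^\vee\otimes\W^3 V_6^\vee$ then follows by recognising these four irreducibles as the two plethysms $\Sym^2(\W^3 V_6^\vee)=\Sigma_{(2,2,2)}\oplus\Sigma_{(2,1,1,1,1)}$ and $\W^2(\W^3 V_6^\vee)=\Sigma_{(2,2,1,1)}\oplus\CC$ (the trivial summand being the image of the alternating wedge $\W^3 V_6^\vee\times\W^3 V_6^\vee\to\W^6 V_6^\vee$), whose sum is the tensor square $\W^3 V_6^\vee\otimes\W^3 V_6^\vee$. Graded-commutativity of the Yoneda product then realises $\W^2\Ext^1=\W^2\W^3 V_6^\vee$ as precisely the exterior-square summand sitting inside $\Ext^2$, which is the last assertion.
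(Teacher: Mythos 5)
Your proposal is correct and follows essentially the same route as the paper's proof: Littlewood--Richardson decomposition of $\mE nd(\Sigma_{(2,1,0,0)}\widetilde{\mQ})$ into the seven irreducible summands $\Sigma_{\nu|(2,2)}$, tensoring with the Koszul complex, Borel--Weil--Bott on each piece, and assembly (the paper records exactly the surviving contributions you describe in its Table~\ref{dimcoho (2,1,0,0)}, and degeneration is indeed automatic here since every candidate differential has vanishing source or target). Your Serre-duality/Euler-characteristic cross-check and the explicit identification of $\Ext^2_0$ with $\Sym^2\W^3V_6^\vee\oplus\W^2\W^3V_6^\vee=\End(\W^3V_6^\vee)$ match the paper's concluding remarks.
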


\begin{remark}
    We remark that a priori the cohomology groups analyzed in the Proposition above do not admit a natural $\operatorname{SL}(6)$-action. This is a peculiarity of this example and other in the rest of the paper, where the only cohomological contributions come from $\Gr(2,6)$. This is a non-trivial computation, which in this case is detailed in the proof below.
\end{remark}
\begin{proof}

First of all, we need to compute the decomposition in irreducibles of $\Sigma_{(2,1,0,0)} \tilde{\mQ}\otimes \Sigma_{(2,1,0,0)}\tilde{\mQ}^{\vee}$. We use the Littlewood--Richardson formula, and obtain that the latter is equal to
\begin{align}\label{eqn:dec21}
    \mE nd(\Sigma_{(2,1,0,0)}\mQ)= & \Sigma_{(4,3,1,0\mid 2,2)} \oplus \Sigma_{(4,2,2,0\mid 2,2)} \oplus \Sigma_{(4,2,1,1\mid 2,2)} \oplus \Sigma_{(3,3,2,0\mid 2,2)} \oplus \\
    & \Sigma_{(3,3,1,1\mid 2,2)} \oplus \Sigma_{(3,2,2,1\mid 2,2)}^{\oplus2} \oplus \Sigma_{(2,2,2,2\mid 2,2)}. \nonumber
\end{align}

We analyze each factor of the above decomposition separately. In fact, for every partition $\mu$ appearing in the decomposition (\ref{eqn:dec21}), in order to compute the cohomology of $\Sigma_\mu \mQ$, we need to tensor $\Sigma_\mu \tilde{\mQ}$ with the Koszul complex (\ref{eqn:Koszul}), and understanding the associated spectral sequence.

Therefore, the first step is a lengthy but straightforward application of the Borel--Weil--Bott rule. We refer to Section~\ref{section:BBW} for a quick recap of the Borel--Weil--Bott formula, and to the proof of Lemma~\ref{lemma: (2n n n 0 | n n)} for a keen analysis of a similar computation. In the following we leave most of the easy check to the reader. 

To give at least one example, the first $\mu$ to consider is $\mu=(4,3,1,0,2,2)$. The associated vector bundle is acyclic, since $\mu+\delta=(9,7,4,2,3,2)$ has repeated entries. The same happens for $\mu+ (0^4,3,0), \mu+ (0^4,6,3),\mu+ (0^4,6,6)$, corresponding to $\Sigma_\mu \tilde{\mQ} \otimes \W^i \Sym^3 \tilde \mU$ in the Koszul complex, for $i=0,1,3,4$.
The situation is different in the case $\Sigma_\mu \tilde{\mQ} \otimes \W^2 \Sym^3 \tilde \mU$. In fact, both the vector bundles associated to $\mu_1:=\mu + (0^4, 5,1)$ and $\mu_2:=\mu +(0^4, 3,3)$ have cohomology. In the first case, the partition has $\ell(\mu_1 +\delta)=4$, and we have 
$\tilde{\mu_1}=(2,2,2, 0^3)$. In the second case, also $\ell(\mu_1 +\delta)=4$, and we have 
$\tilde{\mu_1}=(2,1,1,1,1,0)$, corresponding to $\mathfrak{sl}(6)$. In particular,
\[
\oH^4(\Gr(2,6), \Sigma_{(4,3,1,0,7,3)}\tilde \mQ)= \Sigma_{(2,2,2,0,0,0)} V_6^{\vee}; \ \oH^4(\Gr(2,6), \Sigma_{(4,3,1,0,5,5)}\tilde \mQ)= \Sigma_{(2,1,1,1,1,0)} V_6^{\vee},
\]
where we also notice the decomposition
\[
\Sym^2 \W^3 V_6^{\vee}=  \Sigma_{(2,2,2,0,0,0)} V_6^{\vee} \oplus \Sigma_{(2,1,1,1,1,0)} V_6^{\vee}
\]

We proceed to analyze the nonzero contribution to the cohomology of each of the factors appearing in the decomposition~(\ref{eqn:dec21}). In Table~\ref{dimcoho (2,1,0,0)} below, $\mu$ denotes a partition appearing in the decomposition, $K^i$ the graded piece of the Koszul complex where the cohomology appear, $\oH^j$ in which cohomology group we find some contribution, given by the representation $\Sigma_{\tilde\mu}$, of the given dimension.
\FloatBarrier
\begin{table}[h!bt]
\caption{Ext-table for $\Sigma_{(2,1,0,0)}\mQ$}
\label{dimcoho (2,1,0,0)}
\begin{tabular}{ccccc} \toprule
$\mu$ & $K^i$& $\oH^j$& $\Sigma_{\mu}$ & dim  \\
					\midrule
				$(4,3,1,0\mid 2,2)$ & 2 & 4 & $(2,1,1,1,1)$ & 35\\
				$(4,3,1,0\mid 2,2)$ & 2 & 4 & (2,2,2)& 175\\
    $(4,2,2,0\mid 2,2)$ & 2 & 4 & (2,2,1,1)& 189\\
    $(3,3,1,1\mid 2,2)$ & 1 & 2 & (1,1,1)& 20\\
        $(3,3,1,1\mid 2,2)$ & 2 & 4 & (0)& 1\\
    $(3,3,1,1\mid 2,2)$ & 3 & 6 & (1,1,1)& 20\\
    $(0,0,0,0\mid 0,0)$ & 0 & 0 & (0)& 1\\
            $(0,0,0,0\mid 0,0)$ & 2 & 4 & (0)& 1\\
        $(0,0,0,0\mid 0,0)$ & 4 & 8 & (0)& 1\\

					\bottomrule
				\end{tabular}
\end{table}

Notice in particular how, in analogy with the symmetric case,
\[
\W^2 \W^3 V_6^{\vee} = \Sigma_{(2,2,1,1,0,0)} V_6^{\vee} \oplus \CC.
\]
Using these data, one can compute the self-ext groups of $\Sigma_{2,1}\mQ$. In particular, we have
\[
\Ext^i(\Sigma_{(2,1,0,0)}\mQ, \Sigma_{(2,1,0,0)}\mQ)= \W^3 V_6^{\vee}, \ i=1,3
\] 
and
\[
\Ext^2_0(\Sigma_{(2,1,0,0)}\mQ, \Sigma_{(2,1,0,0)}\mQ)= \oH^2(X, \mE nd_0(\Sigma_{(2,1,0,0)}\mQ))=\Sym^2 \W^3 V_6^{\vee}  \oplus \W^2 \W^3 V_6^{\vee}= \textrm{End}(\W^3 V_6^{\vee}).
\]
\end{proof}

\subsection{The vector bundle $\Sigma_{(3,2,1,0)}\mQ$}\label{section:Sigma 3 2 1}

 First of all, let us observe that the vector bundle $\Sigma_{(3,2,1,0)}\widetilde{\mQ}$ on the Grassmannian can be interpreted as the kernel of the natural morphism of vector bundles
\[ \widetilde{\mQ}\otimes\W^2\widetilde{\mQ}\otimes\W^3\widetilde{\mQ}\longrightarrow \mE nd(\widetilde{\mQ})(1)\oplus\mE nd(\widetilde{\mQ}^\vee)(2). \]

We start with the computation of the Chern character, the discriminant and the Euler characteristic of the restricted vector bundle $\Sigma_{(3,2,1,0)}\mQ$ on $X$.
\begin{proposition}
    We have
    \[ \ch(\Sigma_{(3,2,1,0)}\mQ)=(64, 96c_1(\mQ), 64c_1(\mQ)^2+64\ch_2(\mQ), -384\ch_3(\mQ), 208\ch_4(\mQ)). \]
    Moreover,
    \[ \Delta(\Sigma_{(3,2,1,0)}\mQ)=1024\mathsf{c}_2(X)\quad\mbox{and}\quad \chi(\Sigma_{(3,2,1,0)}\mQ,\Sigma_{(3,2,1,0)}\mQ)=35328. \]
\end{proposition}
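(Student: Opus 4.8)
The plan is to read off $\ch(\Sigma_{(3,2,1,0)}\mQ)$ from the kernel presentation recalled just above the statement, in complete parallel with the treatment of $\Sigma_{(2,1,0,0)}\mQ$. On $\Gr(2,V_6)$ the bundle $\Sigma_{(3,2,1,0)}\widetilde{\mQ}$ is the kernel of the natural map
\[ \widetilde{\mQ}\otimes\W^2\widetilde{\mQ}\otimes\W^3\widetilde{\mQ}\longrightarrow \mE nd(\widetilde{\mQ})(1)\oplus\mE nd(\widetilde{\mQ}^\vee)(2), \]
so, restricting to $X$ and using additivity and multiplicativity of the Chern character together with the fact that $\mE nd(\mQ)$ and $\mE nd(\mQ^\vee)$ have the same Chern character, I would write
\[ \ch(\Sigma_{(3,2,1,0)}\mQ)=\ch(\mQ)\,\ch(\W^2\mQ)\,\ch(\W^3\mQ)-\ch(\mE nd(\mQ))\bigl(\ch(\mO_X(1))+\ch(\mO_X(2))\bigr). \]
First I would assemble the factors on the right: $\ch(\mQ)=(4,c_1(\mQ),\ch_2(\mQ),\ch_3(\mQ),\ch_4(\mQ))$ is immediate; $\ch(\W^3\mQ)$ comes from $\W^3\mQ\cong\mQ^\vee(1)$, i.e.\ $\ch(\mQ^\vee)\,\ch(\mO_X(1))$; and $\ch(\W^2\mQ)$ from the splitting-principle identity $\ch(\W^2\mQ)=\tfrac12\bigl(\ch(\mQ)^2-\psi^2\ch(\mQ)\bigr)$, where $\psi^2$ multiplies $\ch_i$ by $2^i$. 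Finally $\ch(\mE nd(\mQ))=(16,0,-\Delta(\mQ),0,\Xi(\mQ))$ and $\ch(\mO_X(d))=\exp(d\,c_1(\mQ))$, since $\mO_X(1)=\det\mQ$.

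With these in hand, the second step is pure simplification: expanding the products produces monomials $c_1(\mQ)^2$, $c_1(\mQ)\ch_2(\mQ)$, $c_1(\mQ)^3$, and the degree-$8$ terms, which I would reduce to the basis $\{1,c_1(\mQ),c_1(\mQ)^2,\ch_2(\mQ),\ch_3(\mQ),\ch_4(\mQ)\}$ by repeatedly applying the relations of Lemma~\ref{lemma:identities ch_3(Q) e ch_4(Q)} valid on a very general $X$ (namely $c_1(\mQ)^3=-24\ch_3(\mQ)$, $c_1(\mQ)\ch_2(\mQ)=2\ch_3(\mQ)$, and the degree-$8$ identities). This should collapse the expression to the stated
\[ \ch(\Sigma_{(3,2,1,0)}\mQ)=(64,96c_1(\mQ),64c_1(\mQ)^2+64\ch_2(\mQ),-384\ch_3(\mQ),208\ch_4(\mQ)). \]
The rank $64$ and $c_1=96c_1(\mQ)$ are already forced by $\dim\Sigma_{(3,2,1,0)}V_4=64$ and the general formula $c_1(\Sigma_\lambda\mQ)=\tfrac{|\lambda|}{4}\dim(\Sigma_\lambda)\,c_1(\mQ)$, which is a useful consistency check. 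The discriminant is then immediate from the definition:
\[ \Delta(\Sigma_{(3,2,1,0)}\mQ)=96^2c_1(\mQ)^2-2\cdot64\,\bigl(64c_1(\mQ)^2+64\ch_2(\mQ)\bigr)=1024\bigl(c_1(\mQ)^2-8\ch_2(\mQ)\bigr)=1024\,\mathsf{c}_2(X), \]
using $\Delta(\mQ)=\mathsf{c}_2(X)$ from item $(4)$ of Lemma~\ref{lemma:c_2(X)^2 and ch_4(Q)}; this also matches Theorem~\ref{theorem A}, since $\delta(3,2,1,0)=1$ and $\tfrac{1}{4}\rk^2=1024$.

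For the Euler characteristic I would follow the same route as in the $\Sigma_{(2,1,0,0)}\mQ$ and $\Sym^m\mQ$ computations. With $\Xi(F)=\ch_2(F)^2-2c_1(F)\ch_3(F)+2\rk(F)\ch_4(F)$ one has $\ch(\mE nd(\Sigma_{(3,2,1,0)}\mQ))=(64^2,0,-\Delta,0,\Xi)$. I would compute $\Xi(\Sigma_{(3,2,1,0)}\mQ)$ from the Chern character above, reducing it to a multiple of $\ch_4(\mQ)$ through the degree-$8$ relations, and then apply Hirzebruch--Riemann--Roch with $\operatorname{td}_X=(1,0,\tfrac{1}{12}\mathsf{c}_2(X),0,3\mathsf{p})$:
\[ \chi(\Sigma_{(3,2,1,0)}\mQ,\Sigma_{(3,2,1,0)}\mQ)=\int_X\Bigl(3\cdot64^2\,\mathsf{p}-\tfrac{1}{12}\Delta.\mathsf{c}_2(X)+\Xi\Bigr). \]
Using $\int_X\mathsf{c}_2(X)^2=828$ and $\int_X\ch_4(\mQ)=\tfrac34$ from Lemmas~\ref{lemma:c_2(X)^2 and ch_4(Q)} and \ref{lemma:identities ch_3(Q) e ch_4(Q)}, this should evaluate to $35328$.

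The only genuinely delicate point is justifying the kernel presentation (equivalently, the precise $K$-theory identity) invoked in the first step; once it is granted, everything else is bookkeeping. To guard against error I would verify that identity independently: Pieri decomposes $\widetilde{\mQ}\otimes\W^2\widetilde{\mQ}\otimes\W^3\widetilde{\mQ}$ as $\Sigma_{(3,2,1,0)}\oplus\Sigma_{(2,2,2,0)}\oplus\Sigma_{(3,1,1,1)}\oplus\Sigma_{(2,2,1,1)}^{\oplus2}$, and I would check that the Chern character of the complementary summands equals $\ch(\mE nd(\mQ))\bigl(\ch(\mO_X(1))+\ch(\mO_X(2))\bigr)$; the agreement of ranks ($32=32$) and of $c_1$ (both $48\,c_1(\mQ)$) is a reassuring first test. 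The remaining difficulty is purely arithmetic: the repeated use of the cohomology relations to collapse the high-degree monomials is error-prone, so I would organise the computation degree by degree and cross-check the final value of $\chi$ against the alternating sum of the $\Ext$-dimensions obtained later for this partition, namely $1-40+35406-40+1=35328$.
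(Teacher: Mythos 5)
Your overall architecture (compute $\ch$, then read off $\Delta$ from the definition, then get $\chi$ via $\Xi$ and Hirzebruch--Riemann--Roch with $\operatorname{td}_X=(1,0,\tfrac1{12}\mathsf{c}_2(X),0,3\mathsf{p})$) matches the paper's, and the HRR step is fine. The problem is the first step: the K-theoretic identity you base the Chern character on is false. The ``kernel presentation'' displayed in the paper is only a heuristic remark and is not used in its proof (which instead invokes the closed formulas of Theorem~\ref{thm:Chern character}, proved by the Pieri induction of Section~\ref{section:strategy}). In fact no nonzero homogeneous morphism $\widetilde{\mQ}\otimes\W^2\widetilde{\mQ}\otimes\W^3\widetilde{\mQ}\to\mE nd(\widetilde{\mQ})(1)\oplus\mE nd(\widetilde{\mQ}^\vee)(2)$ exists: the source decomposes as $\Sigma_{(3,2,1,0)}\oplus\Sigma_{(3,1,1,1)}\oplus\Sigma_{(2,2,1,1)}^{\oplus2}\oplus\Sigma_{(2,2,2,0)}$ (as you correctly note), while the target is $\Sigma_{(2,1,1,0)}\oplus\Sigma_{(1,1,1,1)}\oplus\Sigma_{(3,2,2,1)}\oplus\Sigma_{(2,2,2,2)}$, and these lists share no irreducible factor, so by Schur's lemma the ``natural map'' is zero. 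More to the point for your computation, the two candidate complements do \emph{not} have the same Chern character. A direct check (e.g.\ via $\Sigma_{(3,1,1,1)}=\Sym^2\mQ(1)$, $\Sigma_{(2,2,1,1)}=\W^2\mQ(1)$, $\Sigma_{(2,2,2,0)}=\Sym^2\mQ^\vee(2)$) gives
\[ \ch_2\bigl(\Sigma_{(3,1,1,1)}\oplus\Sigma_{(2,2,1,1)}^{\oplus2}\oplus\Sigma_{(2,2,2,0)}\bigr)=34\,c_1(\mQ)^2+16\ch_2(\mQ), \]
whereas
\[ \ch_2\bigl(\mE nd(\mQ)(1)\oplus\mE nd(\mQ^\vee)(2)\bigr)=\bigl(7c_1(\mQ)^2+8\ch_2(\mQ)\bigr)+\bigl(31c_1(\mQ)^2+8\ch_2(\mQ)\bigr)=38\,c_1(\mQ)^2+16\ch_2(\mQ). \]
The ranks and $c_1$'s agree (which is why your two sanity checks passed), but the $4c_1(\mQ)^2$ discrepancy propagates: your formula yields $\ch_2(\Sigma_{(3,2,1,0)}\mQ)=60c_1(\mQ)^2+64\ch_2(\mQ)$ rather than $64c_1(\mQ)^2+64\ch_2(\mQ)$, hence $\Delta=1536c_1(\mQ)^2-8192\ch_2(\mQ)$, which is not even a multiple of $\mathsf{c}_2(X)=c_1(\mQ)^2-8\ch_2(\mQ)$. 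So the computation would not ``collapse to the stated value''; it would contradict modularity.

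The repair is exactly the fallback you sketch at the end: subtract the Chern character of the \emph{actual} complement $\Sigma_{(3,1,1,1)}\oplus\Sigma_{(2,2,1,1)}^{\oplus2}\oplus\Sigma_{(2,2,2,0)}$, computed summand by summand from the identifications above (this recovers $\ch_2=64c_1(\mQ)^2+64\ch_2(\mQ)$ and, after using Lemma~\ref{lemma:identities ch_3(Q) e ch_4(Q)}, the stated $\ch_3$ and $\ch_4$), or simply specialize Theorem~\ref{thm:Chern character} at $\lambda=(3,2,1,0)$ as the paper does ($r_\lambda=64$, $\ell=3/2$, $\tau=-4$, $\delta=1$, $\xi=13/4$). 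Once the correct $\ch$ is in hand, your derivations of $\Delta$ and of $\chi=35328$ go through verbatim, and the cross-check against $1-40+35406-40+1$ is a good one.
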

\begin{proof}
    We refer to Section~\ref{section:strategy} for the strategy of computation of the Chern character. The reader is welcome to perform the computation by hand, otherwise they can use the formulae in Theorem~\ref{thm:Chern character}. The discriminant is easy to check. The Euler characteristic can be either checked by hand (using the relations in Lemma~\ref{lemma:identities ch_3(Q) e ch_4(Q)}) or extracted from (the proof of) Proposition~\ref{prop:eulero}.
\end{proof}

Let us now compute the $\Ext$-algebra of $\Sigma_{(3,2,1,0)}\mQ$.

\begin{proposition}
    We have 
    \[ \dim\Ext^p(\Sigma_{(2,1,0,0)}\mQ,\Sigma_{(2,1,0,0)}\mQ) \cong
    \begin{cases}
         1 \ p=0,4 \\
         40 \ p=1,3 \\
        35406 \ \ p=2 \\
        0 \ \ \textrm{otherwise}.
    \end{cases}\]
    More precisely, $\Ext^1(\Sigma_{(3,2,1,0)}\mQ,\Sigma_{(3,2,1,0)}\mQ)=\left(\W^3 V_6^\vee\right)^{\oplus2}$.
\end{proposition}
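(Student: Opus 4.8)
The plan is to compute $\Ext^p(\Sigma_{(3,2,1,0)}\mQ,\Sigma_{(3,2,1,0)}\mQ)=\oH^p(X,\mE nd(\Sigma_{(3,2,1,0)}\mQ))$ with the same machine used in Proposition~\ref{prop:ext210} and Lemma~\ref{lemma: (2n n n 0 | n n)}: decompose the endomorphism bundle on $\Gr(2,6)$ into irreducibles, tensor it with the Koszul complex (\ref{eqn:Koszul}), and read off the cohomology on $X$ from the spectral sequence (\ref{eqn:spectral sequence}). A pleasant simplification is that $\lambda=(3,2,1,0)$ is self-dual: by Example~\ref{example:duale}, $\lambda'=(\lambda_1-\lambda_4,\lambda_1-\lambda_3,\lambda_1-\lambda_2,0)=(3,2,1,0)=\lambda$, so $\widetilde{\mE}_\lambda=\bigl(\Sigma_{(3,2,1,0)}\widetilde{\mQ}\bigr)^{\otimes 2}\otimes\mO_{\Gr(2,6)}(-3)$. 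First I would run the Littlewood--Richardson rule (\ref{eqn:LR}) to write $\Sigma_{(3,2,1,0)}\widetilde{\mQ}\otimes\Sigma_{(3,2,1,0)}\widetilde{\mQ}=\bigoplus_\nu(\Sigma_\nu\widetilde{\mQ})^{\oplus m^\nu}$, so that $\widetilde{\mE}_\lambda=\bigoplus_\nu(\Sigma_{\nu\mid(3,3)})^{\oplus m^\nu}$. This is pure bookkeeping but produces many summands.

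For each summand $\Sigma_{\nu\mid(3,3)}$ and each Koszul factor $\W^p\Sym^3\widetilde{\mU}$ (decomposed as in Section~\ref{section:Koszul}) I would apply the Borel--Weil--Bott algorithm of Section~\ref{section:BBW} to find the single nonzero cohomology group and its degree, then assemble through the short exact sequences (\ref{eqn:explicit Koszul}). The relevant outputs are the low-degree groups. The trace summand corresponds to $\nu=(3,3,3,3)$, for which $\Sigma_{(3,3,3,3)\mid(3,3)}=\mO_X$, contributing $\CC$ to $\oH^0$; one checks, as in the symmetric case, that every nontrivial summand has $\oH^0=0$. Hence $\Hom(\Sigma_{(3,2,1,0)}\mQ,\Sigma_{(3,2,1,0)}\mQ)=\CC$, so the bundle is simple, and combined with the polystability of Theorem~\ref{theorem A} it is slope stable. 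By Serre duality $\oH^4=\CC$.

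For $\Ext^1=\oH^1(\mE_\lambda)$ I would argue by a sandwich. Since $\lambda_1>\lambda_2>\lambda_3>\lambda_4$, Proposition~\ref{prop:W3 in Ext1} already provides the inclusion $(\W^3V_6^\vee)^{\oplus2}\subseteq\Ext^1$. For the reverse inequality I would isolate the summands $\Sigma_{\nu\mid(3,3)}$ that contribute to total degree $1$ in (\ref{eqn:spectral sequence}); as in the $(2,1,0,0)$ case these arise from pieces of the form $\Sigma_{\nu\mid(3,3)}\otimes\Sym^3\widetilde{\mU}$ landing in $\oH^2$, each producing a copy of $\Sigma_{(1,1,1,0,0,0)}V_6^\vee=\W^3V_6^\vee$, and I would verify that there are exactly two of them and that no differential of the spectral sequence adds or removes an $\oH^1$ class. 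This degeneration check is legitimate here because $\lambda_2-\lambda_4=2\leq3$, placing us in the controlled range of Theorem~\ref{theorem C}(1). The resulting bound $\dim\Ext^1\leq40$, together with the inclusion, forces $\Ext^1=(\W^3V_6^\vee)^{\oplus2}$, of dimension $40$; the $\SL(6)$-structure is inherited because these classes come entirely from Grassmannian cohomology. Serre duality then gives $\Ext^3\cong(\Ext^1)^\vee$, again of dimension $40$.

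Finally, I would not compute the large group $\Ext^2$ directly. Instead, using the value $\chi(\Sigma_{(3,2,1,0)}\mQ,\Sigma_{(3,2,1,0)}\mQ)=35328$ from the preceding proposition (equivalently Proposition~\ref{prop:eulero}) and the alternating sum $\chi=\sum_p(-1)^p\dim\Ext^p=2-2\cdot40+\dim\Ext^2$, I obtain $\dim\Ext^2=35328+78=35406$, while all $\Ext^p$ with $p>4$ vanish since $X$ is a fourfold. The main obstacle is precisely the control of the spectral sequence (\ref{eqn:spectral sequence}) in the computation of $\Ext^1$: although the Borel--Weil--Bott input is mechanical, one must rule out hidden differentials between the Koszul strata that could cancel or create the degree-$1$ contributions. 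This is exactly the step that becomes intractable as $\lambda_1$ grows, but here it is manageable thanks to the smallness of $\lambda_2-\lambda_4$.
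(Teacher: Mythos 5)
Your proposal follows essentially the same route as the paper: Littlewood--Richardson decomposition of $\mE nd(\Sigma_{(3,2,1,0)}\widetilde{\mQ})$, Borel--Weil--Bott on each summand tensored with the Koszul factors, assembly via the spectral sequence (with the only degree-$1$ contributions coming from $\Sigma_{(4,4,2,2\mid3,3)}\otimes\Sym^3\widetilde{\mU}$, multiplicity $2$), and the Euler characteristic to extract $\dim\Ext^2=35328+78=35406$. The paper simply records all the Borel--Weil--Bott outputs in a table rather than isolating the low-degree pieces, so your plan is correct and matches the intended argument.
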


\begin{proof}
First of all, we have to decompose the endomorphism bundle of $\Sigma_{3,2,1}\mQ$ in irreducible components. Such a decomposition is
\begin{align}\label{eqn:decomp 3210}
  \mE nd(\Sigma_{(3,2,1,0)}\mQ)= & \Sigma_{(6,4,2,0\mid 3,3)} \oplus \Sigma_{(6,4,1,1\mid 3,3)} \oplus \Sigma_{(6,3,3,0\mid 3,3)} \oplus \Sigma_{6,3,2,1\mid 3,3)}^{\oplus 2} \oplus \\
  &  \Sigma_{(6,2,2,2\mid 3,3)} \oplus \Sigma_{(5,5,2,0\mid 3,3))} \oplus \Sigma_{(5,5,1,1\mid 3,3)} \oplus \Sigma_{(5,4,3,0\mid 3,3)}^{\oplus 2} \oplus \nonumber \\
  &  \Sigma_{(5,4,2,1\mid 3,3)}^{\oplus 4} \oplus \Sigma_{(5,3,3,1\mid 3,3)}^{\oplus 3} \oplus \Sigma_{(5,3,2,2\mid 3,3)} \oplus \Sigma_{(4,4,4,0\mid 3,3)} \oplus \nonumber \\
  & \Sigma_{(4,4,3,1\mid 3,3)}^{\oplus 3} \oplus \Sigma_{(4,4,2,2\mid 3,3)}^{\oplus 2} \oplus \Sigma_{(4,3,3,2\mid 3,3)}^{\oplus 3} \oplus \Sigma_{3,3,3,3\mid 3,3)} . \nonumber
\end{align}

This decomposition is considerably more complicated than the previous example. Many of the irreducible factors above have cohomology when tensored with specific pieces of the Koszul complex.
We proceed as in the previous case, and we record their values (together with the multplicity) in Table~\ref{dimcoho (3,2,1,0)} below.
\end{proof}
\FloatBarrier
\begin{table}[!htbp]
\caption{Ext-table for $\Sigma_{(3,2,1,0)}\mQ$}
\label{dimcoho (3,2,1,0)}
\begin{tabular}{cccccc} \toprule
$\mu$ & $K^i$& $\oH^j$& $\Sigma_{\mu}$ & dim & mult \\
					\midrule
				$(6,4,2,0\mid 3,3)$ & 2 & 4 & (4,3,3,1,1)& 3969 &1\\
				$(6,4,2,0\mid 3,3)$ & 2 & 4 & (4,2,2,2,2)& 405 &1\\
				$(6,4,2,0\mid 3,3)$ & 4 & 6 & (4,4,4,4,2)& 1134 &1\\
				$(6,4,2,0\mid 3,3)$ & 3 & 5 & (4,4,3,2,2)& 3240 &1\\
				$(6,4,2,0\mid 3,3)$ & 1 & 3 & (4,2,2,1)& 3240 &1\\
				$(6,4,2,0\mid 3,3)$ & 1 & 3 & (4,2)& 1134 &1\\
				$(6,4,1,1\mid 3,3)$ & 4 & 6 & (3,3,3,3)& 490 &1\\
				$(6,4,1,1\mid 3,3)$ & 3 & 5 & (3,3,2,1)& 1960 &1\\
				$(6,4,1,1\mid 3,3)$ & 2 & 4 & (3,1,1,1)& 280 &1\\
				$(6,3,3,0\mid 3,3)$ & 4 & 6 & (4,4,4,3,3)& 840 &1\\
				$(6,3,3,0\mid 3,3)$ & 2 & 4 & (4,3,2,2,1)& 3675 &1\\
				$(6,3,3,0\mid 3,3)$ & 0 & 2 & (4,1,1)& 840 &1\\
				$(6,3,2,1\mid 3,3)$ & 4 & 6 & (3,3,3,2,1)& 896 &2\\
				$(6,3,2,1\mid 3,3)$ & 3 & 5 & (3,3,1,1,1)& 560 &2\\
				$(6,3,2,1\mid 3,3)$ & 2 & 4 & (3,2,1)& 896 &2\\
				$(6,2,2,2\mid 3,3)$ & 4 & 6 & (2,2,2)& 175 &1\\
				$(6,2,2,2\mid 3,3)$ & 1 & 3 & (3)& 56 &1\\
				$(5,5,2,0\mid 3,3)$ & 2 & 4 & (3,3,2,2,2)& 280 &1\\
				$(5,5,2,0\mid 3,3)$ & 1 & 3 & (3,3,2,1)& 1960 &1\\
				$(5,5,2,0\mid 3,3)$ & 0 & 2 & (3,3)& 490 &1\\
				$(5,5,1,1\mid 3,3)$ & 2 & 4 & (2,2,1,1)& 189 &1\\
				$(5,4,3,0\mid 3,3)$ & 2 & 4 & (3,3,3,2,1)& 896 &2\\
				$(5,4,3,0\mid 3,3)$ & 1 & 3 & (3,2,2,2)& 560 &2\\
				$(5,4,3,0\mid 3,3)$ & 0 & 2 & (3,2,1)& 896 &2\\
				$(5,4,2,1\mid 3,3)$ & 2 & 4 & (2,2,2)& 175 &4\\
				$(5,4,2,1\mid 3,3)$ & 2 & 4 & (2,1,1,1,1)& 35 &4\\
				$(5,3,3,1\mid 3,3)$ & 2 & 4 & (2,2,1,1)& 189 &3\\
				$(4,4,4,0\mid 3,3)$ & 3 & 5 & (3,3,3,3,3)& 56 &1\\
				$(4,4,4,0\mid 3,3)$ & 0 & 2 & (2,2,2)& 175 &1\\
				$(4,4,2,2\mid 3,3)$ & 3 & 6 & (1,1,1)& 20 &2\\
				$(4,4,2,2\mid 3,3)$ & 2 & 4 & (0)& 1 &2\\
				$(4,4,2,2\mid 3,3)$ & 1 & 2 & (1,1,1)& 20 &2\\
				$(3,3,3,3\mid 3,3)$ & 0 & 0 & (0)& 1&1\\
                $(3,3,3,3\mid 3,3)$ & 2 & 4 & (0)& 1&1\\
                $(3,3,3,3\mid 3,3)$ & 4 & 8 & (0)& 1&1 \\

					\bottomrule
				\end{tabular}
\end{table}

\section{Chern character of $\Sigma_\lambda\mQ$}\label{section:Chern}

In this section we will provide the preliminary computations needed for the proof of \autoref{theorem A}. In particular we will compute the Chern character of the vector bundles $\Sigma_\lambda\mQ$ on the irreducible symplectic fourfold $X$. 
Before stating the main result, let us introduce the following notation.

For a partition $\lambda=(m,t,s,0)$, we define the following functions:
\begin{align}
    r_\lambda & = \frac{(m+3)(t+2)(s+1)(m-t+1)(m-s+2)(t-s+1)}{12} \label{eqn:rank} \\
    \ell(\lambda) & = \frac{m+t+s}{4} \label{eqn:ell} \\
    \delta(\lambda) & = \frac{1}{5}\left(4\ell(\lambda)^2+\tau(\lambda)\right) \label{eqn:delta} \\ 
    \tau(\lambda) & = \frac{-2(mt+ms+ts)+3m+t-s}{3} \label{eqn:gamma} \\
    \xi(\lambda) & = \frac{\alpha(t,s)m^2+\beta(t,s)m+\gamma(t,s)}{60} \label{eqn:xi}
\end{align}
where
\begin{align*}
    \alpha(t,s) & = 9t^2 +21ts +9s^2 -33t -30s+21 \\
    \beta(t,s) & = 21t^2s+21ts^2-15t^2-18ts+6s^2+t-8s-6 \\
    \gamma(t,s) & = 9t^2s^2-9t^2s+9ts^2-3t^2+13ts-3s^2-14t+14s.
\end{align*}


\begin{thm}\label{thm:Chern character}
    Let $\lambda=(m,t,s,0)$ be a partition as above. Then
    \begin{align}
        \rk(\Sigma_\lambda\mQ)= & r_\lambda \label{rk}\\
        c_1(\Sigma_\lambda\mQ)= & \ell(\lambda)r_\lambda c_1(\mQ) \label{c1}\\
        \ch_2(\Sigma_\lambda\mQ)= & \frac{1}{2}\left(\ell(\lambda)^2-\frac{1}{4}\delta(\lambda)\right)r_\lambda c_1(\mQ)^2+\delta(\lambda)r_\lambda\ch_2(\mQ) \label{ch2}\\
        \ch_3(\Sigma_\lambda\mQ)= & \tau(\lambda)\ell(\lambda)r_\lambda\ch_3(\mQ) \label{ch3} \\
        \ch_4(\Sigma_\lambda\mQ)= & \xi(\lambda)r_\lambda\ch_4(\mQ). \label{ch4}
    \end{align}
\end{thm}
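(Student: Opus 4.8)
The plan is to propagate the known Chern character of $\Sym^m\mQ$ through Pieri's rule. By the reduction trick (\ref{eqn:reduction}) it suffices to treat $\lambda=(m,t,s,0)$, and the cases $t=s=0$ are exactly Theorem~\ref{thm:Sym Q}, which serves as the base of every induction. The rank formula $r_\lambda$ is the Weyl dimension of $\Sigma_\lambda V_4$, and $c_1(\Sigma_\lambda\mQ)=\tfrac{|\lambda|}{4}r_\lambda c_1(\mQ)=\ell(\lambda)r_\lambda c_1(\mQ)$ is the standard first-Chern-class identity for Schur functors; neither uses the geometry of $X$. The content of the theorem is therefore the computation of $\ch_2,\ch_3,\ch_4$, and here the very-general hypothesis is essential: by Lemma~\ref{lemma:identities ch_3(Q) e ch_4(Q)} the groups $\oH^6(X,\QQ)$ and $\oH^8(X,\QQ)$ are one-dimensional, while inside the subring generated by $\mQ$ the group $\oH^4$ is spanned by $c_1(\mQ)^2$ and $\ch_2(\mQ)$. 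Consequently $\ch_k(\Sigma_\lambda\mQ)$ is pinned down by only a handful of scalar functions of $(m,t,s)$, equivalently by $\ell(\lambda),\delta(\lambda),\tau(\lambda),\xi(\lambda)$, and the whole problem collapses to determining these scalars.

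The engine is the identity obtained by combining multiplicativity of the Chern character with Pieri's formula (\ref{eqn:Pieri}) for $m=1$:
\[
\ch(\Sigma_\lambda\mQ)\cdot\ch(\mQ)=\sum_{\nu}\ch(\Sigma_\nu\mQ),
\]
the sum running over the partitions $\nu$ obtained from $\lambda$ by adding a single box. Reducing every class to the basis above via Lemma~\ref{lemma:identities ch_3(Q) e ch_4(Q)}, this becomes a system of linear recursions for the scalar coefficients attached to the various $\Sigma_\nu\mQ$. When $\lambda_1>\lambda_2$ the partition $(m,t,s,0)$ is the unique lexicographically largest term produced from its row-$1$ predecessor $(m-1,t,s,0)$, so ordering partitions by $(|\lambda|,\mathrm{lex})$ makes the system triangular: the predecessor is known by induction on the size, the remaining box-additions are lexicographically smaller of the same size (hence already processed), and one solves for $\ch(\Sigma_{(m,t,s,0)}\mQ)$ in terms of strictly earlier data.

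The degenerate configurations, where $\lambda_1=\lambda_2$ and a box cannot be removed from the top row, must be handled separately, and this is where the bookkeeping is most delicate. Here I would fall back on duality: by Example~\ref{example:duale}, $\Sigma_{(m,m,s,0)}\mQ$ is, up to the twist $\mO(m)$, the dual of $\Sigma_{(m,m-s,0,0)}\mQ$, a partition with $\lambda_1>\lambda_2$ of strictly smaller size (for $s>0$), so its Chern character follows from the already-computed cases by dualizing (which flips the signs of the odd-degree components) and twisting. The genuinely stuck case is the self-dual $(m,m,0,0)$, for which I would instead extract $\Sigma_{(m,m,0,0)}\mQ$ as the leading term of the plethysm $\Sym^m(\W^2\mQ)$, whose Chern character is computable directly from formula (\ref{eqn:ch of Sym}) applied to the rank-$6$ bundle $\W^2\mQ$. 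Carrying out these stages row by row (the two-row partitions first, then the three-row ones) yields recursions for $\ell,\delta,\tau,\xi$ whose solutions are exactly the closed forms (\ref{eqn:ell})--(\ref{eqn:xi}).

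The final verification reduces to checking that the stated expressions $r_\lambda,\ell(\lambda),\delta(\lambda),\tau(\lambda),\xi(\lambda)$ satisfy these recursions together with the base case, i.e.\ to a family of polynomial identities in $m,t,s$. The main obstacle is organizational rather than conceptual: guaranteeing that the induction is genuinely triangular across every boundary configuration, and pushing the degree-$8$ computation through, since $\xi(\lambda)$ is a full quadratic in $m$ with $t,s$-dependent coefficients and absorbs all the relations of Lemma~\ref{lemma:identities ch_3(Q) e ch_4(Q)} at once.
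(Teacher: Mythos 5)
Your proposal is correct in substance but organizes the induction differently from the paper. Both arguments are Pieri-driven recursions anchored at the base case $\Sym^m\mQ$ (Theorem~\ref{thm:Sym Q}), both invoke the rank-one statements of Lemma~\ref{lemma:identities ch_3(Q) e ch_4(Q)} to collapse $\ch_3$ and $\ch_4$ to single scalars, and your three-row step --- tensor by $\mQ$, peel off one summand, and dispose of the $\lambda_4=1$ term by the reduction trick (\ref{eqn:reduction}) --- is exactly the paper's treatment of the cases $s=1$ and $s>1$ via (\ref{eqn:Pieri m,t,1}) and (\ref{eqn:Pieri 2}). The divergence is in the two-row case and in the bookkeeping. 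The paper uses the two-factor Pieri decomposition $\Sym^m\mQ\otimes\Sym^t\mQ=\bigoplus_{i=0}^t\Sigma_{(m+t-i,i,0,0)}\mQ$ and inducts on the second entry, which handles the diagonal $\lambda_1=\lambda_2$ (and in particular $(m,m,0,0)$) with no extra work, at the price of expanding the product $\ch(\Sym^m\mQ)\ch(\Sym^t\mQ)$ and verifying the telescoping identities of Lemmas~\ref{lemma:ch_2 s=0}--\ref{lemma:ch_4 s=0}. You instead run a uniform single-box recursion ordered by $(|\lambda|,\mathrm{lex})$, which keeps every step to at most four terms but forces you to treat $\lambda_1=\lambda_2$ as a separate boundary: duality (Example~\ref{example:duale}) does reduce $(m,m,s,0)$ with $s>0$ to the strictly smaller $(m,m-s,0,0)$, and for the self-dual $(m,m,0,0)$ you need the classical plethysm $\Sym^m\bigl(\W^2 V_4\bigr)=\bigoplus_{a\geq b,\,a+b=m}\Sigma_{(a,a,b,b)}V_4$ together with (\ref{eqn:ch of Sym}) applied to the rank-$6$ bundle $\W^2\mQ$ --- an ingredient the paper never uses but which is standard and does close the induction, since the non-leading summands twist down to smaller $(c,c,0,0)$. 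One small internal inconsistency: your closing remark about proceeding ``two-row partitions first, then three-row'' is not literally compatible with single-box Pieri, because the correction terms for $(m,t,0,0)$ include the three-row partition $(m-1,t,1,0)$ of the same size; the $(|\lambda|,\mathrm{lex})$ ordering you actually propose interleaves the two families and is well-founded, so this is a slip of phrasing rather than a gap. Both routes terminate in the same kind of polynomial-identity verification in $m,t,s$.
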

The rank formula is the very well-known Weyl formula, and the formula for the first Chern class is a particular case of \cite[Theorem~2]{Rub13}; the formulas for the higher Chern classes seems to be new in literature, even in this very special case.
\begin{remark}
    Notice that one can also write
    \[ \ch_2(\Sigma_\lambda\mQ)=\frac{1}{10}\left(4\ell(\lambda)^2-\frac{1}{4}\tau(\lambda)\right)r_\lambda c_1(\mQ)^2+\frac{1}{5}\left(4\ell(\lambda)^2+\tau(\lambda)\right)r_\lambda\ch_2(\mQ). \]
\end{remark}
The reason to state the result only for partitions with last entry equal to zero is motivated by the fact that we can always reduce to this case after a twist by a suitable line bundle. We will explain this in Section~\ref{section:reduction}. Moreover we will explain in Section~\ref{section:strategy} the strategy to prove the theorem and we will perform the proof in Section~\ref{section:proof of Chern}.

\subsection{Strategy of proof}\label{section:strategy}

The main idea is to use several strong inductions on the entries of $\lambda$. The starting point of the argument is the case $\lambda=(m,0,0,0)$: in this case $\Sigma_\lambda\mQ=\Sym^{m}\mQ$ and the computations have been performed in Section~\ref{section:Sym Q} thanks to the work of \cite{Dragutin}. 

Next we will use Pieri's rule to perform the inductive steps. We will divide the proof in three cases.
\medskip

{\bf $\bullet$ \underline{Case $s=0$}.}
Here we will deal with partitions of the form $\lambda=(m,t,0,0)$. Pieri's rule yields the decomposition
\begin{equation}\label{eqn:Pieri 1}
\Sym^m\mQ\otimes\Sym^t\mQ=\bigoplus_{i=0}^t\Sigma_{(m+t-i,i,0,0)}\mQ,
\end{equation}
which allows us to perform an induction on the index $t$, using the case $t=0$ as base of the induction.
\medskip

{\bf $\bullet$ \underline{Case $s=1$}.}
Here we will deal with the partition $\lambda=(m,t,1,0)$. The corresponding plethysm appear in the decomposition
\begin{equation}\label{eqn:Pieri m,t,1}
\Sigma_{(m,t,0,0)}\mQ\otimes\mQ=\Sigma_{(m+1,t,0,0)}\mQ\oplus\Sigma_{(m,t+1,0,0)}\mQ\oplus\Sigma_{(m,t,1,0)}\mQ
\end{equation}
(notice that if $m=t$, then the middle term does not appear). Using the case before, this will allow us to prove the theorem for all such partitions.
\medskip

{\bf $\bullet$ \underline{Case $s>1$}.}
Finally, here we will deal with the general case when $\lambda=(m,t,s,0)$. 
By Pieri's rule again, we have the decomposition
\begin{equation}\label{eqn:Pieri 2}
\Sigma_{(m,t,s,0)}\mQ\otimes\mQ=\Sigma_{(m+1,t,s,0)}\mQ\oplus\Sigma_{(m,t+1,s,0)}\mQ\oplus\Sigma_{(m,t,s+1,0)}\mQ\oplus\Sigma_{(m,t,s,1)}\mQ
\end{equation}
(as before the extremal cases in this decomposition will give less factors). By the reduction trick (see equation (\ref{eqn:reduction})) applied to the last factor we get
\[ \ch(\Sigma_{(m,t,s,1)}\mQ)=\ch(\Sigma_{(m-1,t-1,s-1,0)}\mQ)\ch(\mO_X(1)) \]
which can be computed explicitly thanks to the inductive hypothesis.

\subsection{Proof of Theorem~\ref{thm:Chern character}}\label{section:proof of Chern}

The fact that $\rk(\Sigma_\lambda\mQ)=r_\lambda$ is classical. In fact, clearly $\rk(\Sigma_\lambda\mQ)=\rk(\Sigma_\lambda\tilde{\mQ})$ and the latter is the dimension of the irreducible representation of $\SL(4)$ of maximal weight $\lambda$, see for example \cite{Weyman}. 

Also the formula for the first Chern class is known in literature and can be found as a special case of \cite[Theorem~2]{Rub13}.

Therefore we are left with proving the remaining Chern characters, which we will do in the rest of this section by following the strategy outlined in Section~\ref{section:strategy}, which in particular consists of three cases.

{\bf $\bullet$ \underline{Case $s=0$}.} Let us suppose that $\lambda=(m,t,0,0)$, with $t\geq1$, and that the claim holds for all partitions $\mu=(\mu_1,\mu_2,0,0)$ with $\mu_1\leq m+t$ and $\mu_2\leq t-1$. Using the decomposition (\ref{eqn:Pieri 1}) we get the following equality
\begin{equation}\label{eqn:step 1 s=0} \ch(\Sigma_\lambda\mQ)=\ch(\Sym^m\mQ\otimes\Sym^t\mQ)-\sum_{i=0}^{t-1}\Sigma_{(m+t-i,i,0,0)}\mQ, 
\end{equation}
where, by inductive hypothesis, all the members on the right-hand side are determined.

Let us explicitly write the term $\ch(\Sym^m\mQ\otimes\Sym^t\mQ)$. To keep the notation more compressed, we used the following notation:
\[ r_m=\binom{m+3}{3}\qquad\mbox{ and }\qquad r_t=\binom{t+3}{3}. \]
By multiplicativity of the Chern character, the inductive hypothesis and the computations in Section~\ref{section:Sym Q} we get
\begin{align*}
    \rk(\Sym^m\mQ\otimes\Sym^t\mQ) & = r_mr_t \\
    c_1(\Sym^m\mQ\otimes\Sym^t\mQ) & = \frac{m+t}{4}r_mr_t c_1(\mQ) \\
    \ch_2(\Sym^m\mQ\otimes\Sym^t\mQ) & = \frac{1}{8}\left(\frac{t(t-1)}{5}+\frac{mt}{2}+\frac{m(m-1)}{5}\right)r_mr_t c_1(\mQ)^2 + \\
     & + \frac{1}{20}\left(t(t+4)+m(m+4)\right)r_mr_t \ch_2(\mQ) \\
    \ch_3(\Sym^m\mQ\otimes\Sym^t\mQ) & = \frac{2t^2+2m^2-mt(m+t-4)}{8}r_m r_t\ch_3(\mQ) \\
    \ch_4(\Sym^m\mQ\otimes\Sym^t\mQ) & = \frac{5m(7m-2)+5t(7t-2)+3mt(3mt-13m-13t+23)}{100}r_m r_t \ch_4(\mQ)
\end{align*}

(We used the identities in Lemma~\ref{lemma:identities ch_3(Q) e ch_4(Q)} to simplify the expressions above.)

\subsubsection*{The second Chern character}


The claim is analogous to the following algebraic result.

\begin{lemma}\label{lemma:ch_2 s=0}
    With notations as above, the following relations hold.
    \begin{enumerate}
        \item $\sum_{i=0}^t\frac{1}{2}\left(\ell(m+t-i,i,0)^2-\frac{1}{4}\delta(m+t-i,i,0)\right)r_{m+t-i,i}=\frac{1}{8}\left(\frac{t(t-1)}{5}+\frac{mt}{2}+\frac{m(m-1)}{5}\right)r_mr_0$.
        \item $\sum_{i=0}^t\delta(m+t-i,i,0)r_{m+t-i,i}=\frac{1}{20}\left(t(t+4)+m(m+4)\right)r_mr_t$.
    \end{enumerate}
\end{lemma}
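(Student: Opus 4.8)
The plan is to prove the two identities together, observing first that they are not independent. The key structural remark is that for every summand the quantity $\ell(m+t-i,i,0)=\tfrac{m+t}{4}$ is \emph{independent of $i$} (by its definition (\ref{eqn:ell})). Writing $\ell:=\tfrac{m+t}{4}$ and taking ranks in the Pieri decomposition (\ref{eqn:Pieri 1}) gives the ``zeroth moment'' identity
\[ \sum_{i=0}^t r_{m+t-i,i}=r_mr_t. \]
Using these two facts, the left-hand side of $(1)$ equals $\tfrac{\ell^2}{2}r_mr_t-\tfrac18\sum_{i=0}^t\delta(m+t-i,i,0)\,r_{m+t-i,i}$; substituting the value of $\sum_i\delta_i r_i$ provided by $(2)$ and simplifying reproduces $\tfrac18\big(\tfrac{t(t-1)}{5}+\tfrac{mt}{2}+\tfrac{m(m-1)}{5}\big)r_mr_t$ (the right-hand side of $(1)$ should read $r_mr_t$ rather than $r_mr_0$). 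Hence $(1)$ is a purely algebraic consequence of $(2)$, and it suffices to prove $(2)$.

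For $(2)$ the plan is to recognise $\delta$ as a quadratic Casimir. Expanding $4\ell(\lambda)^2+\tau(\lambda)$ via $\big(\sum_j\lambda_j\big)^2=\sum_j\lambda_j^2+2\sum_{j<k}\lambda_j\lambda_k$ and rearranging (using (\ref{eqn:ell}) and (\ref{eqn:gamma})) yields
\[ 5\delta(\lambda)=4\ell(\lambda)^2+\tau(\lambda)=\tfrac13\Big(\textstyle\sum_{j=1}^4\lambda_j^2-\tfrac14\big(\sum_j\lambda_j\big)^2+3\lambda_1+\lambda_2-\lambda_3-3\lambda_4\Big)=\tfrac13\,C(\lambda), \]
where $C(\lambda)$ is the eigenvalue of the quadratic Casimir of $\mathfrak{sl}_4$ on $\Sigma_\lambda V_4$ (the term $-\tfrac14(\sum_j\lambda_j)^2$ being exactly the trace-removal correction passing from $\mathfrak{gl}_4$ to $\mathfrak{sl}_4$). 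Thus $\delta(\lambda)=\tfrac1{15}C(\lambda)$.

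Next I would invoke the trace identity for the Casimir on a tensor product. Writing $\Omega$ for the Casimir operator, one has $\Omega_{A\otimes B}=\Omega_A\otimes 1+1\otimes\Omega_B+2\sum_a X_a\otimes X^a$; since every element of $\mathfrak{sl}_4$ acts with vanishing trace, the cross terms are traceless, so taking traces over $\Sym^m V_4\otimes\Sym^t V_4$ and decomposing by Pieri gives
\[ \sum_{i=0}^t C(m+t-i,i,0)\,r_{m+t-i,i}=r_mr_t\big(C(m,0,0)+C(t,0,0)\big). \]
Dividing by $15$, using $\delta=\tfrac1{15}C$ and the case $t=s=0$ of (\ref{eqn:delta}), namely $\delta(m,0,0)=\tfrac{m(m+4)}{20}$, yields precisely $\sum_i\delta_i r_i=\tfrac1{20}\big(m(m+4)+t(t+4)\big)r_mr_t$, which is $(2)$.

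The main obstacle is not the summation itself but getting the representation-theoretic bookkeeping exactly right: one must track the $\mathfrak{gl}_4$-versus-$\mathfrak{sl}_4$ normalisation so that the eigenvalue in the rearranged formula above genuinely matches the $\Omega$ appearing in the trace identity (a consistency check is $C(m,0,0)=15\,\delta(m,0,0)=\tfrac{3m(m+4)}{4}$). If one prefers to avoid Lie theory, there is an elementary fallback: both sides of $(2)$ are polynomials in $m,t$, so after writing $r_{m+t-i,i}$ from (\ref{eqn:rank}) as an explicit degree-$5$ polynomial in $i$ and evaluating the power sums $\sum_{i=0}^t i^k$ by Faulhaber's formulas, $(2)$ reduces to a polynomial identity in $m,t$ verifiable directly. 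This route is self-contained but computationally heavy, which is exactly why the Casimir reformulation is the approach I would lead with.
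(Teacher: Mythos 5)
Your proposal is correct, and it takes a genuinely different route from the paper, whose entire proof is the sentence ``This is a direct computation'' (i.e.\ the brute-force polynomial verification you describe only as a fallback). Two things you do are worth highlighting. First, the reduction of $(1)$ to $(2)$: since $\ell(m+t-i,i,0)=\tfrac{m+t}{4}$ is independent of $i$ and $\sum_{i=0}^t r_{m+t-i,i}=r_mr_t$ by taking ranks in the Pieri decomposition (\ref{eqn:Pieri 1}), item $(1)$ is an immediate algebraic consequence of item $(2)$ — I checked that $\tfrac{(m+t)^2}{32}-\tfrac{1}{160}\bigl(m(m+4)+t(t+4)\bigr)=\tfrac18\bigl(\tfrac{t(t-1)}{5}+\tfrac{mt}{2}+\tfrac{m(m-1)}{5}\bigr)$, so the identity closes; and you are right that the $r_mr_0$ on the right-hand side of $(1)$ is a typo for $r_mr_t$ (it is the coefficient of $c_1(\mQ)^2$ in $\ch_2(\Sym^m\mQ\otimes\Sym^t\mQ)$, which carries $r_mr_t$). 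Second, the Casimir interpretation of $(2)$ is correct: your rearrangement gives $5\delta(\lambda)=\tfrac13\bigl(\sum_j\lambda_j^2-\tfrac14(\sum_j\lambda_j)^2+3\lambda_1+\lambda_2-\lambda_3-3\lambda_4\bigr)$, which is $\tfrac13\langle\lambda,\lambda+2\rho\rangle$ for $\mathfrak{sl}_4$ in the trace-form normalisation (with $2\rho=(3,1,-1,-3)$ and the trace-removal term), the cross terms in $\Omega_{A\otimes B}$ are killed by the trace because $\mathfrak{sl}_4=[\mathfrak{sl}_4,\mathfrak{sl}_4]$ acts tracelessly, and the Pieri multiplicities are all $1$, so the weighted sum $\sum_i C_i r_i$ is exactly the trace of $\Omega$ on $\Sym^mV_4\otimes\Sym^tV_4$. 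What your approach buys over the paper's is an explanation of \emph{why} the identity holds and a proof scheme that transfers verbatim to the analogous ``direct computation'' lemmas in the $s=1$ and $s>1$ cases (Lemmas \ref{lemma:ch_2 s=1} and following), and indeed to Schur functors of bundles of any rank; the cost is the representation-theoretic bookkeeping you flag, which your consistency check $C(m,0,0,0)=\tfrac{3m(m+4)}{4}=15\,\delta(m,0,0)$ adequately controls.
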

\begin{proof}
    This is a direct computation.
\end{proof}
Now, re-arranging the terms and using an inductive argument, we see that item $(1)$ of Lemma~\ref{lemma:ch_2 s=0} gives the coefficient in front of $c_1(Q)^2$, while item $(2)$ of Lemma~\ref{lemma:ch_2 s=0} gives the coefficient in front of $\ch_2(Q)$, thus proving the $\ch_2$-part of the theorem in this case.


\subsubsection*{The third Chern character}


  The claim is analogous to the following algebraic result.

\begin{lemma}\label{lemma:ch_3 s=0}
    The following equality holds:
    \[ \sum_{i=0}^t\tau(m+t-i,i,0)r_{m+t-i,i}=\frac{1}{8}(2t^2+2m^2-mt(m+t-4))r_m r_t \]
\end{lemma}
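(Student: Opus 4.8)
The plan is to treat the asserted equality as a polynomial identity in the two variables $m$ and $t$ and to verify it by a finite computation. First I would substitute the explicit expressions (\ref{eqn:gamma}) for $\tau$ and (\ref{eqn:rank}) for $r_\lambda$, specialised to $s=0$, so that $r_{m+t-i,i}=\frac{(m+t-i+3)(m+t-i+2)(i+2)(i+1)(m+t-2i+1)}{12}$ and $\tau(m+t-i,i,0)=\frac{-2(m+t-i)i+3(m+t-i)+i}{3}$. The structural observation that explains the shape of the right-hand side, and which I would record first, is that all the Pieri summands $\Sigma_{(m+t-i,i,0,0)}$ share the same first Chern slope: by (\ref{eqn:ell}) one has $\ell(m+t-i,i,0)=\frac{m+t}{4}$, independent of $i$. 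Consequently the factor $\ell$ that accompanies $\tau\,r_\lambda$ in formula (\ref{ch3}) is constant along the sum and pulls out; the identity is then precisely the statement that summing $\ch_3$ over the decomposition (\ref{eqn:Pieri 1}) reproduces $\ch_3(\Sym^m\mQ\otimes\Sym^t\mQ)$, which was already computed by multiplicativity of the Chern character in the preamble to this subsection. Concretely this is visible on the right-hand side, since the prefactor factors as $2t^2+2m^2-mt(m+t-4)=(m+t)(2m+2t-mt)=4\,\ell(m+t-i,i,0)\,(2m+2t-mt)$, exhibiting the common slope $\ell=\frac{m+t}{4}$; I would use this to double-check that the $\ell$-weighted summand of (\ref{ch3}) is the one intended.

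The verification itself I would carry out as follows. After the substitution the ($\ell$-weighted) summand is a polynomial in $i$ of degree $7$ whose coefficients are polynomials in $m$ and $t$, and applying the Faulhaber power-sum formulas for $\sum_{i=0}^t i^k$ with $0\le k\le 7$ evaluates the left-hand side in closed form as a polynomial in $(m,t)$, to be compared term by term with the expansion of the right-hand side. To sidestep the heavy bookkeeping I would instead invoke a degree bound: the summand has total degree $8$ in $(i,m,t)$, and summation over $i$ raises the total degree by at most one, so the left-hand side has total degree at most $9$ in $(m,t)$; the right-hand side is the product of the degree-$3$ prefactor with $r_m$ and $r_t$, hence also total degree $9$. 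Two polynomials in two variables of total degree at most $9$ that agree on the grid $\{0,1,\dots,9\}^2$ coincide, so the identity reduces to a finite set of genuine finite-sum evaluations. As a sanity check I would record the base case $t=0$, where the left-hand side collapses to $\tau(m,0,0)\,\ell(m,0,0)\,r_{m,0}=\frac{m^2}{4}r_m$, matching $\ch_3(\Sym^m\mQ)=\frac{m^2}{4}r_m\ch_3(\mQ)$ from Section~\ref{section:Sym Q}, together with one or two further small values to catch arithmetic slips.

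The only real difficulty here is organisational rather than conceptual: pushing the degree-$7$-in-$i$ summation through by hand, or equivalently managing the arithmetic across the grid evaluations, is error-prone, and the rational denominators coming from $\tau$ and $\ell$ must be tracked carefully. For that reason the degree-bound-plus-finite-verification route is the safest to present, and it is readily delegated to a computer algebra system; this is exactly the nature of the companion computations in Lemma~\ref{lemma:ch_2 s=0}, whose proof is likewise a direct computation. Once Lemma~\ref{lemma:ch_3 s=0} is established, combining it with (\ref{eqn:step 1 s=0}) and the inductive hypothesis for the partitions $(m+t-i,i,0,0)$ with $i<t$ yields the $\ch_3$ formula (\ref{ch3}) for $\lambda=(m,t,0,0)$, completing the $s=0$ step for the third Chern character.
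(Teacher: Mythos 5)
The paper's proof of this lemma is literally ``This is a direct computation,'' and your fleshed-out version --- substitute the closed forms for $\tau$ and $r$, exploit the constancy of $\ell(m+t-i,i,0)=\tfrac{m+t}{4}$ along the Pieri summands, and verify the resulting polynomial identity in $(m,t)$ by Faulhaber power sums or a degree-bound-plus-finite-grid argument --- is the same approach, correctly organised and with the degree bookkeeping right. One remark: as you implicitly noticed (your base case $t=0$ evaluates $\tau\,\ell\,r$, not $\tau\,r$), the identity as printed only matches the right-hand side $\tfrac18\bigl(2t^2+2m^2-mt(m+t-4)\bigr)r_mr_t=\tfrac{m+t}{4}\cdot\tfrac{2m+2t-mt}{2}\,r_mr_t$ once the constant factor $\ell=\tfrac{m+t}{4}$ is included on the left, which is exactly the $\ell$-weighted sum that formula (\ref{ch3}) requires; for instance at $(m,t)=(1,0)$ the unweighted left side is $4$ while the right side is $1$, so the version you actually prove is the correct one.
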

\begin{proof}
    This is a direct computation.
\end{proof}
Again, re-arranging the terms and using the induction argument, this proves the $\ch_3$-part of the theorem in this case.

\subsubsection*{The fourth Chern character}

The claim is analogous to the following algebraic result.

\begin{lemma}\label{lemma:ch_4 s=0}
    The following equality holds:
    \[ \sum_{i=0}^t\xi(m+t-i,i,0)r_{m+t-i,i}=\frac{5m(7m-2)+5t(7t-2)+3mt(3mt-13m-13t+23)}{100}r_m r_t.  \]
\end{lemma}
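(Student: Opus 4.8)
The plan is to prove Lemma~\ref{lemma:ch_4 s=0} as a purely algebraic identity between two rational polynomials in $m$ and $t$, exactly in the spirit of the $\ch_2$ and $\ch_3$ analogues (Lemmas~\ref{lemma:ch_2 s=0} and~\ref{lemma:ch_3 s=0}). Both sides are polynomial in $(m,t)$: on the right, $r_m r_t$ has bidegree $(3,3)$ and the numerator $5m(7m-2)+5t(7t-2)+3mt(3mt-13m-13t+23)$ has bidegree $(2,2)$, so the right-hand side has $m$-degree and $t$-degree both equal to $5$. First I would substitute the closed forms (\ref{eqn:rank}) and (\ref{eqn:xi}) into the summand, writing $a=m+t-i$ and $b=i$ so that
\[ \xi(m+t-i,i,0)=\frac{\alpha(b,0)a^2+\beta(b,0)a+\gamma(b,0)}{60},\qquad r_{m+t-i,i}=\frac{(a+3)(b+2)(a-b+1)(a+2)(b+1)}{12}. \]
Since $a$ and $b$ are linear in $i$, the factor $\xi(m+t-i,i,0)$ is a polynomial of degree $4$ in $i$ and $r_{m+t-i,i}$ one of degree $5$, so the product $P_{m,t}(i):=\xi(m+t-i,i,0)\,r_{m+t-i,i}$ is an explicit polynomial of degree $9$ in $i$ with coefficients polynomial in $m,t$.

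Next I would evaluate $\sum_{i=0}^t P_{m,t}(i)$ by expanding $P_{m,t}(i)$ in powers of $i$ and applying Faulhaber's formulas for $\sum_{i=0}^t i^k$ (each a polynomial in $t$ of degree $k+1$, up to $k=9$), collect terms, and compare the result with the right-hand side. Because both sides have $m$-degree and $t$-degree at most $5$, a rigorous and far cheaper alternative to the symbolic summation is to verify the identity on the integer grid $\{0,1,\dots,5\}\times\{0,1,\dots,5\}$: a polynomial of bidegree $(5,5)$ that vanishes on a $6\times 6$ grid is identically zero, so agreement on the grid forces the identity (one should only first confirm the bidegree bound on the left, which follows from the degree count above together with the degree of Faulhaber sums). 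Either route is a finite, mechanical check. As a conceptual sanity check, the right-hand side is by construction $\ch_4(\Sym^m\mQ\otimes\Sym^t\mQ)/\ch_4(\mQ)$, computed earlier in this subsection from multiplicativity of the Chern character and Lemma~\ref{lemma:identities ch_3(Q) e ch_4(Q)}, while the Pieri decomposition (\ref{eqn:Pieri 1}) gives $\ch_4(\Sym^m\mQ\otimes\Sym^t\mQ)=\sum_{i=0}^t\ch_4(\Sigma_{(m+t-i,i,0,0)}\mQ)$; the lemma is precisely the assertion that the closed form $\xi(\cdot)\,r_\cdot$ reproduces this sum, which is what lets the induction on $t$ isolate the $i=t$ term and conclude $\ch_4(\Sigma_{(m,t,0,0)}\mQ)=\xi(m,t,0)\,r_{m,t}\ch_4(\mQ)$.

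The main obstacle is entirely the size of the algebra rather than any conceptual difficulty: the summand has degree $9$ in $i$ and the resulting expressions are bivariate polynomials of bidegree $(5,5)$, so carrying out the symbolic summation by hand is impractical and error-prone. The grid-verification trick (or equivalently a computer-algebra confirmation) is what makes the ``direct computation'' genuinely routine, and I would present the proof in that form.
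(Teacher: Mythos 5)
Your proposal is essentially the paper's proof: the paper disposes of this lemma with the single line ``This is a direct computation,'' and your symbolic-summation route (substitute the closed forms for $\xi$ and $r$, expand in powers of $i$, apply Faulhaber, compare coefficients) is exactly that computation spelled out, so it is correct and the same approach.

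One caution on your ``cheaper alternative.'' The a priori degree count you invoke does not give $t$-degree $\leq 5$ on the left-hand side: writing the summand as $F(m+t-i,\,i)$ with $F$ of total degree $9$, a monomial $a^p b^q$ contributes, after expanding and summing $j^{u+v}$ over $0\leq j\leq t$, terms of $t$-degree up to $q+u+1\leq p+q+1\leq 10$. So before the identity is proved, the only bound you have on the $t$-degree of the difference of the two sides is $10$, not $5$, and a $6\times 6$ grid is not enough; you would need (say) $m\in\{0,\dots,5\}$, $t\in\{0,\dots,10\}$. This is a quantitative slip rather than a conceptual gap -- the enlarged grid is still a finite mechanical check, and your primary Faulhaber route needs no such bound -- but as written the $6\times 6$ verification would not constitute a proof.
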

\begin{proof}
    This is a direct computation.
\end{proof}
As before, this equality confirms the $\ch_4$-part of the theorem in this case.
\medskip

{\bf $\bullet$ \underline{Case $s=1$}.}
Let us now prove the theorem when $\lambda=(m,t,1,0)$. By the previous case, we know that the claim holds for any partition of the form $\mu=(\mu_1,\mu_2,0,0)$. By the decomposition (\ref{eqn:Pieri m,t,1}) we have the equality
\begin{equation}\label{eqn:ch m,t,1} 
\ch(\Sigma_\lambda\mQ)=\ch(\Sigma_{(m,t,0,0)}\mQ\otimes\mQ)-\ch(\Sigma_{(m+1,t,0,0)}\mQ)-\ch(\Sigma_{(m,t+1,0,0)}\mQ), 
\end{equation}
where everything on the right hand side is determined. Let us explicitly right down the Chern character of $\Sigma_{(m,t,0,0)}\mQ\otimes\mQ$. Using the notation $r_{m,t}$ for the rank of $\Sigma_{(m,t,0,0)}$ and the identities in Lemma~\ref{lemma:identities ch_3(Q) e ch_4(Q)}, we get
\begin{align*}
    \rk(\Sigma_{(m,t,0,0)}\mQ\otimes\mQ) & = 4r_{m,t} \\
    c_1(\Sigma_{(m,t,0,0)}\mQ\otimes\mQ) & = (1+4\ell(m,t))r_{m,t}c_1(\mQ)=(1+m+t)r_{m,t}c_1(\mQ) \\
    \ch_2(\Sigma_{(m,t,0,0)}\mQ\otimes\mQ) & = \left(2\ell(m,t)^2+\ell(m,t)-\frac{1}{2}\delta(m,t)\right)r_{m,t}c_1(\mQ)^2 + \\
    & +(4\delta(m,t)+1)r_{m,t}\ch_2(\mQ) \\
    \ch_3(\Sigma_{(m,t,0,0)}\mQ\otimes\mQ) & = (1+2\ell(m,t)(1-4\ell(m,t))+\tau(m,t)(1+4\ell(m,t))r_{m,t}\ch_3(\mQ) \\
    \ch_4(\Sigma_{(m,t,0,0)}\mQ\otimes\mQ) & = \left(1+6\ell(m,t)(\frac{4}{5}\ell(m,t)-1)+3\tau(m,t)(\frac{9}{10}-2\ell(m,t))+4\xi(m,t)\right)r_{m,t}\ch_4(\mQ).
\end{align*}

From now on we assume that $m>t$. The case when $m=t$ is left to the reader: the last term in the equality (\ref{eqn:ch m,t,1}) disappears, so making the computations shorter (and essentially the same). 

\subsubsection*{The second Chern character}

The claim is analogous to the following algebraic result.

\begin{lemma}\label{lemma:ch_2 s=1}
    The following equalities hold.
    \begin{enumerate}
        \item $\frac{1}{2}\left(\ell(m+1,t,0)^2-\frac{1}{4}\delta(m+1,t,0)\right)r_{m+1,t}+\frac{1}{2}\left(\ell(m,t+1,0)^2-\frac{1}{4}\delta(m,t+1,0)\right)r_{m,t+1}+ \\ 
        +\frac{1}{2}\left(\ell(m,t,1)^2-\frac{1}{4}\delta(m,t,1)\right)r_{m,t,1}=\left(2\ell(m,t)^2+\ell(m,t)-\frac{1}{2}\delta(m,t)\right)r_{m,t}$.
        \item $\delta(m+1,t,0)r_{m+1,t}+\delta(m,t+1,0)r_{m,t+1,0}+\delta(m,t,1)r_{m,t,1}=(4\delta(m,t)+1)r_{m,t}$.
    \end{enumerate}
\end{lemma}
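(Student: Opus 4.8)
Both identities come from the additivity of the Chern character across the Pieri decomposition (\ref{eqn:Pieri m,t,1}). Rewriting (\ref{eqn:ch m,t,1}) in additive form as
$\ch(\Sigma_{(m,t,0,0)}\mQ\otimes\mQ)=\ch(\Sigma_{(m+1,t,0,0)}\mQ)+\ch(\Sigma_{(m,t+1,0,0)}\mQ)+\ch(\Sigma_{(m,t,1,0)}\mQ)$
and inserting the formula (\ref{ch2}) for each of the three summands — the first two known from the case $s=0$, the third being the ansatz we wish to establish — the coefficient of $c_1(\mQ)^2$ produces item (1) and the coefficient of $\ch_2(\mQ)$ produces item (2), once both are matched against the explicit expression for $\ch_2(\Sigma_{(m,t,0,0)}\mQ\otimes\mQ)$ displayed above (whose $c_1(\mQ)^2$-coefficient is $(2\ell(m,t)^2+\ell(m,t)-\tfrac12\delta(m,t))r_{m,t}$ and whose $\ch_2(\mQ)$-coefficient is $(4\delta(m,t)+1)r_{m,t}$). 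Thus proving the lemma is equivalent to proving the $\ch_2$-part of Theorem~\ref{thm:Chern character} for $\lambda=(m,t,1,0)$ with $m>t$.

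The plan is to reduce each claim to a polynomial identity in $\QQ[m,t]$ and verify it directly. After specialising $s\in\{0,1\}$ and carrying out the argument shifts $(m,t,s)\mapsto(m+1,t,0),(m,t+1,0),(m,t,1)$, every quantity is an explicit rational polynomial: the rank $r$ is the Weyl polynomial (\ref{eqn:rank}), $\ell$ is the linear form (\ref{eqn:ell}), and $\delta$ is fixed by (\ref{eqn:delta}) and (\ref{eqn:gamma}) via $\delta=\tfrac15(4\ell^2+\tau)$. Using that last relation to eliminate $\delta$ in favour of $\ell$ and $\tau$, both sides of item (1) and of item (2) become polynomials in $m$ and $t$ of total degree at most $7$ (the rank contributes degree $5$ and the $\ell^2$ factor degree $2$). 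Each claim is therefore a single two-variable polynomial identity.

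To finish I would expand both sides symbolically and check that the difference is the zero polynomial; equivalently, since the total degree is bounded by $7$, it suffices to verify the equality at the integer grid $0\le m,t\le 7$, because a polynomial in two variables of total degree $\le 7$ that vanishes on an $8\times 8$ such grid vanishes identically. I expect the main — and essentially the only — obstacle to be the sheer algebraic bulk rather than any conceptual difficulty. There is no structural shortcut: additivity across (\ref{eqn:Pieri m,t,1}) only tells us that the true $\ch_2(\Sigma_{(m,t,1,0)}\mQ)$ equals the displayed difference, whereas the lemma asserts that this difference coincides with the closed-form ansatz, and it is exactly this coincidence that the polynomial identity records. Hence the verification is a routine (if lengthy) symbolic computation, best delegated to a computer-algebra system to avoid arithmetic slips, just as in Lemma~\ref{lemma:ch_2 s=0}; this is what justifies the "direct computation" offered as proof.
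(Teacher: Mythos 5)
Your proposal is correct and matches the paper's approach: the surrounding text of the paper already performs exactly the reduction you describe (comparing $c_1(\mQ)^2$- and $\ch_2(\mQ)$-coefficients across the Pieri decomposition), and the lemma itself is proved in the paper by the same "direct computation" of a two-variable polynomial identity that you propose, your degree-bound/grid argument being just a clean way to organize that verification.
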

\begin{proof}
    This is a direct computation.
\end{proof}
As before, these equalities and the inductive argument imply the $\ch_2$-part of the theorem in this case.

\subsubsection*{The third Chern character}

The claim is analogous to the following algebraic result.

\begin{lemma}\label{lemma:ch_3 s=1}
    The following equality holds:
\begin{align*} \tau(m+1,t,0)r_{m+1,t}+&\tau(m,t+1,0)r_{m,t+1,0}+\tau(m,t,1)r_{m,t,1}=\\=(1+2\ell(m,t)(1-& 4\ell(m,t))+\tau(m,t)(1+4\ell(m,t))r_{m,t}.
\end{align*}
\end{lemma}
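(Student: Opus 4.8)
The plan is to read Lemma~\ref{lemma:ch_3 s=1} as the $\ch_3(\mQ)$-coefficient bookkeeping forced by the Pieri decomposition (\ref{eqn:Pieri m,t,1}), and then to verify the resulting closed-form identity by brute substitution. Concretely, taking $\ch_3$ of both sides of (\ref{eqn:Pieri m,t,1}) and using that on the very general $X$ every class in $\oH^{3,3}$ is a multiple of $\ch_3(\mQ)$, I would first expand $\ch_3(\Sigma_{(m,t,0,0)}\mQ\otimes\mQ)$ by multiplicativity of the Chern character, feeding in the already-proven $s=0$ Chern data for $\Sigma_{(m,t,0,0)}\mQ$ together with the relations $c_1(\mQ)^3=-24\ch_3(\mQ)$ and $c_1(\mQ)\ch_2(\mQ)=2\ch_3(\mQ)$ of Lemma~\ref{lemma:identities ch_3(Q) e ch_4(Q)}. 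This reproduces the coefficient $\bigl(1+2\ell(m,t)(1-4\ell(m,t))+\tau(m,t)(1+4\ell(m,t))\bigr)r_{m,t}$ on the right-hand side. For the two summands $\Sigma_{(m+1,t,0,0)}\mQ$ and $\Sigma_{(m,t+1,0,0)}\mQ$ I would invoke the $s=0$ formula $\ch_3=\tau(\cdot)\ell(\cdot)r_{(\cdot)}\ch_3(\mQ)$ already established in the previous case, leaving $\ch_3(\Sigma_{(m,t,1,0)}\mQ)$ as the unknown to be matched against $\tau(m,t,1)\ell(m,t,1)r_{m,t,1}\ch_3(\mQ)$.

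The structural observation that makes the statement clean is that the three partitions $(m+1,t,0)$, $(m,t+1,0)$ and $(m,t,1)$ are each obtained from $(m,t,0)$ by adding a single box, so all three have box-sum $m+t+1$ and hence share the common value $\ell=\tfrac{m+t+1}{4}$; thus the $\ch_3$-contribution of each summand is this one factor times the corresponding $\tau\,r$, which is exactly the weighting recorded in the identity. After substituting the explicit closed forms (\ref{eqn:rank}), (\ref{eqn:ell}) and (\ref{eqn:gamma}) for $r$, $\ell$ and $\tau$, both sides become explicit rational functions of $m$ and $t$; clearing the denominators coming from the $12$ in $r_\lambda$ and the $3$ in $\tau(\lambda)$ turns the claim into an equality of two polynomials in $\ZZ[m,t]$ of total degree at most eight, which I would then confirm by expanding and comparing the finitely many monomial coefficients (by hand, or checked with a computer algebra system). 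This is precisely what the paper's recurring phrase ``a direct computation'' refers to.

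Two points deserve a line of care. First, the decomposition (\ref{eqn:Pieri m,t,1}) loses its middle term when $m=t$; but this needs no separate argument, since $r_{(m,t+1,0)}=\tfrac{(m+3)(t+3)(m-t)(m+2)(t+2)}{12}$ vanishes identically at $m=t$, so the single polynomial identity already covers both the generic range $m>t$ and the boundary case uniformly. Second, one must keep the common factor $\ell=\tfrac{m+t+1}{4}$ consistently attached to all three left-hand terms. I expect the only real obstacle to be the size of the algebra rather than any conceptual difficulty: the ranks are degree-five polynomials and the products raise the degree further, so the proof is an exercise in careful bookkeeping, entirely parallel to Lemma~\ref{lemma:ch_3 s=0} and to the $\ch_2$ identities of Lemma~\ref{lemma:ch_2 s=1}.
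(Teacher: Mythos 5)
Your proposal is correct and follows essentially the same route as the paper: the identity is extracted from the Pieri decomposition (\ref{eqn:Pieri m,t,1}) via multiplicativity of the Chern character and the already-established $s=0$ formulas, and the paper's entire proof of the lemma is likewise the phrase ``this is a direct computation,'' i.e.\ the polynomial verification you describe. Your two side remarks are both sound and in fact slightly more careful than the paper's treatment: the vanishing of $r_{(m,t+1,0)}$ at $m=t$ does subsume the boundary case the paper delegates to the reader, and insisting on carrying the common factor $\ell=\tfrac{m+t+1}{4}$ through all three left-hand terms is exactly the right point of caution, since the lemma as literally printed suppresses that factor.
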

\begin{proof}
    This is a direct computation.
\end{proof}
As before, this proves the $\ch_3$-part of the theorem in this case.

\subsubsection*{The fourth Chern character}

The claim is analogous to the following algebraic result.

\begin{lemma}\label{lemma:ch_4 s=1}
    The following equality holds:
    \[
    \begin{array}{l}
    \xi(m+1,t,0)r_{m+1,t}+\xi(m,t+1,0)r_{m,t+1,0}+\xi(m,t,1)r_{m,t,1}= \\ 
    =\left(1+6\ell(m,t)(\frac{4}{5}\ell(m,t)-1)+3\tau(m,t)(\frac{9}{10}-2\ell(m,t))+4\xi(m,t)\right)r_{m,t}. 
    \end{array}\]
\end{lemma}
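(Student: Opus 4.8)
The plan is to read the statement as a polynomial identity in the two variables $m,t$ and verify it directly, exactly in the spirit of the companion Lemmas~\ref{lemma:ch_2 s=1} and~\ref{lemma:ch_3 s=1}. Its role is as follows: by the Pieri decomposition~(\ref{eqn:Pieri m,t,1}) and additivity of the Chern character, the target formula $\ch_4(\Sigma_{(m,t,1,0)}\mQ)=\xi(m,t,1)\,r_{m,t,1}\,\ch_4(\mQ)$ is equivalent to
\[ \xi(m,t,1)\,r_{m,t,1} = C_4(m,t) - \xi(m+1,t,0)\,r_{m+1,t} - \xi(m,t+1,0)\,r_{m,t+1}, \]
where $C_4(m,t)$ is the coefficient of $\ch_4(\mQ)$ in $\ch_4(\Sigma_{(m,t,0,0)}\mQ\otimes\mQ)$ computed just above, namely $\bigl(1+6\ell(m,t)(\tfrac{4}{5}\ell(m,t)-1)+3\tau(m,t)(\tfrac{9}{10}-2\ell(m,t))+4\xi(m,t)\bigr)r_{m,t}$. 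Rearranging this reproduces the asserted equality verbatim, and from here on no geometry enters: every symbol is an explicit rational polynomial in $m,t$ via the definitions~(\ref{eqn:rank})--(\ref{eqn:xi}), with $\ell(m,t),\tau(m,t),\xi(m,t)$ abbreviating the $s=0$ values $\ell(m,t,0),\tau(m,t,0),\xi(m,t,0)$ and $r_{m,t}=r_{(m,t,0,0)}$.

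First I would specialise $s=1$ in~(\ref{eqn:rank}) and~(\ref{eqn:xi}), obtaining $r_{m,t,1}=\tfrac{1}{6}(m+3)(t+2)(m-t+1)(m+1)\,t$ and $\xi(m,t,1)=\tfrac{1}{60}\bigl(\alpha(t,1)m^2+\beta(t,1)m+\gamma(t,1)\bigr)$ with $\alpha(t,1)=3t(3t-4)$, $\beta(t,1)=6t^2+4t-8$ and $\gamma(t,1)=-3t^2+8t+11$. In the same way I would record $r_{m+1,t},r_{m,t+1},r_{m,t}$ from the $s=0$ rank formula and $\xi(m+1,t,0),\xi(m,t+1,0),\xi(m,t,0)$, and on the right-hand side substitute $\ell(m,t)=(m+t)/4$ together with the $s=0$ values of $\tau$ and $\xi$.

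Rather than expand both sides into one unwieldy polynomial by hand, I would make the verification finite and rigorous by a degree argument. Each product $\xi(\cdot)\,r_{\cdot}$ occurring has individual degree at most $5$ in $m$ and at most $5$ in $t$ (a factor of degree $\le 2$ in each variable times a rank of degree $\le 3$ in each variable), so $F(m,t):=\mathrm{LHS}-\mathrm{RHS}$ satisfies $\deg_m F\le 5$ and $\deg_t F\le 5$. A bivariate polynomial with these individual degree bounds that vanishes on the grid $\{0,1,\dots,5\}^2$ vanishes identically; hence it suffices to check the $36$ scalar equalities $F(a,b)=0$ for $a,b\in\{0,\dots,5\}$, each an elementary arithmetic identity. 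Equivalently, one may collect the coefficient of each monomial $m^{i}t^{j}$ and confirm the cancellations.

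The only genuine difficulty is bookkeeping: the leading products $\xi(m,t,1)\,r_{m,t,1}$ and $4\xi(m,t,0)\,r_{m,t}$ have individual degrees up to $5$ in each of $m$ and $t$, so a naive symbolic expansion is long and error-prone. The degree-plus-grid reduction above is the clean safeguard: it converts the claim into a short finite list of integer evaluations and avoids manipulating the large polynomials at all.
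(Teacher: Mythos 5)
Your proposal is correct and matches the paper, whose entire proof is ``This is a direct computation'': you correctly reduce the lemma to a polynomial identity in $m,t$ via the Pieri decomposition~(\ref{eqn:Pieri m,t,1}), and your specialisations ($r_{m,t,1}$, $\alpha(t,1)=3t(3t-4)$, $\beta(t,1)=6t^2+4t-8$, $\gamma(t,1)=-3t^2+8t+11$) check out. The degree-bound-plus-grid verification is a clean, rigorous way to organise the same computation and introduces no gap.
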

\begin{proof}
    This is a direct computation.
\end{proof}
As before, this proves the $\ch_4$-part of the theorem in this case.
\medskip

{\bf $\bullet$ \underline{Case $s>1$}.}
Let us prove the theorem for $\lambda=(m,t,s,0)$, by induction on $s$ (using the case $s=1$ above as base of the induction). Starting with the decomposition (\ref{eqn:Pieri 2}), we can write
\[ \ch(\Sigma_{m,t,s+1,0)}\mQ)=\ch(\Sigma_{(m,t,s,0)}\mQ\otimes\mQ)-\ch(\Sigma_{(m+1,t,s,0)}\mQ)-\ch(\Sigma_{(m,t+1,s,0)}\mQ)-\ch(\Sigma_{(m,t,s,1)}\mQ). \]

Now, as before we have
\begin{align*}
    \rk(\Sigma_{(m,t,s,0)}\mQ\otimes\mQ) & = 4r_{m,t,s} \\
    c_1(\Sigma_{(m,t,s,0)}\mQ\otimes\mQ) & = (1+4\ell(m,t,s))r_{m,t,s}c_1(\mQ)=(1+m+t+s)r_{m,t,s}c_1(\mQ) \\
    \ch_2(\Sigma_{(m,t,s,0)}\mQ\otimes\mQ) & = \left(2\ell(m,t,s)^2+\ell(m,t,s)-\frac{1}{2}\delta(m,t,s)\right)r_{m,t,s}c_1(\mQ)^2 + \\
    & +(4\delta(m,t,s)+1)r_{m,t,s}\ch_2(\mQ) \\
    \ch_3(\Sigma_{(m,t,s,0)}\mQ\otimes\mQ) & = (1+2\ell(m,t,s)(1-4\ell(m,t,s))+\tau(m,t,s)(1+4\ell(m,t,s)))r_{m,t,s}\ch_3(\mQ) \\
    \ch_4(\Sigma_{(m,t,s,0)}\mQ\otimes\mQ) & = \left(1+6\ell(m,t,s)(\frac{4}{5}\ell(m,t,s)-1)+3\tau(m,t,s)(\frac{9}{10}-2\ell(m,t,s))+4\xi(m,t,s)\right)r_{m,t,s}\ch_4(\mQ).
\end{align*}

On the other hand, by the reduction trick of Section~\ref{section:reduction}, we have
\[ \ch(\Sigma_{(m,t,s,1)}\mQ)=\ch(\Sigma_{(m-1,t-1,s-1,0)})\ch(\mO_X(1)). \]
Let us put $r_1:=r_{(m,t,s,1)}$, $m'=m-1$, $t'=t-1$ and $s'=s-1$.
Rearranging the terms and using the identities of Lemma~\ref{lemma:identities ch_3(Q) e ch_4(Q)}, we get then
\begin{align*}
    c_1(\Sigma_{(m,t,s,1)}\mQ) & = (\ell(m',t',s')+1)r_1c_1(\mQ) \\
    \ch_2(\Sigma_{(m,t,s,1)}\mQ) & = \frac{1}{2}\left(\ell(m',t',s')^2+2\ell(m',t',s')-\frac{1}{4}\delta(m',t',s')+1\right)r_{1}c_1(\mQ)^2 +\delta(m',t',s')r_{1}\ch_2(\mQ) \\
    \ch_3(\Sigma_{(m,t,s,1)}\mQ) & = -\left( 12\ell(m',t',s')^2+12\ell(m',t',s')-\ell(m',t',s')\tau(m',t',s')-5\delta(m',t',s')+4\right)r_1\ch_3(\mQ) \\
    \ch_4(\Sigma_{(m,t,s,1)}\mQ) & = \left(6+6\ell(m',t',s')(4-\tau(m',t',s'))+36\ell(m',t',s')^2-15\delta(m',t',s')+\xi(m',t',s')\right)r_1\ch_4(\mQ).
\end{align*}

We proceed as in the previous cases by analysing each Chern character separately.

\subsubsection*{The second Chern character}


The claim is analogous to the following algebraic result.

\begin{lemma}
    Using the same notations as above, the following identities hold.
    \begin{enumerate}
        \item $\frac{1}{2}\left(\ell(m+1,t,s)^2-\frac{1}{4}\delta(m+1,t,s)\right)r_{m+1,t,s}+\frac{1}{2}\left(\ell(m,t+1,s)^2-\frac{1}{4}\delta(m,t+1,s)\right)r_{m,t+1,s}+ \\ 
        +\frac{1}{2}\left(\ell(m,t,s+1)^2-\frac{1}{4}\delta(m,t,s+1)\right)r_{m,t,s+1}+ \\ +\frac{1}{2}\left(\ell(m',t',s')^2+2\ell(m',t',s')-\frac{1}{4}\delta(m',t',s')+1\right)r_{1} = \\ =\left(2\ell(m,t,s)^2+\ell(m,t,s)-\frac{1}{2}\delta(m,t,s)\right)r_{m,t,s}$.
        \item $\delta(m+1,t,s)r_{m+1,t,s}+\delta(m,t+1,s)r_{m,t+1,s}+\delta(m,t,s+1)r_{m,t,s+1}+\delta(m',t',s')r_1= \\
        =(4\delta(m,t,s)+1)r_{m,t,s}$.
    \end{enumerate}
\end{lemma}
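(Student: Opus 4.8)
The plan is to treat both equalities as identities between rational functions in the variables $m,t,s$ and to reduce them to polynomial identities verifiable by a direct, finite computation. Recall first that the reduction trick \eqref{eqn:reduction} leaves the rank unchanged, so $r_1=r_{(m,t,s,1)}=r_{(m-1,t-1,s-1,0)}=r_{m',t',s'}$, and the invariants attached to that summand are $\ell(m',t',s')$ and $\delta(m',t',s')$ with $m'=m-1$, $t'=t-1$, $s'=s-1$. Substituting the explicit expressions \eqref{eqn:rank}, \eqref{eqn:ell}, \eqref{eqn:delta}, \eqref{eqn:gamma} for $r$, $\ell$, $\delta$, $\tau$ into the two sides of (1) and (2) turns each side into a $\QQ$-valued rational function of $(m,t,s)$; clearing the common denominator (the product of the linear factors occurring in the six-fold products \eqref{eqn:rank}, together with the numerical constants $12$, $5$, $3$) recasts the claimed equalities as two polynomial identities in $\QQ[m,t,s]$.

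One then verifies the resulting polynomial identity directly. Since the difference of the two sides of (1) (resp.\ of (2)) is a polynomial in $m,t,s$ of total degree at most $8$, the cleanest rigorous route is to evaluate both sides at every point of a $9\times9\times9$ grid of integer triples and to invoke that a polynomial of degree $\le 8$ in each variable vanishing on such a grid is identically zero. If one prefers to shorten the symbolic expansion, one may first factor out the common rank polynomial $r_{m,t,s}$: each of $r_{m+1,t,s}$, $r_{m,t+1,s}$, $r_{m,t,s+1}$, $r_{m',t',s'}$ differs from $r_{m,t,s}$ by an explicit ratio of linear forms, for instance $r_{m+1,t,s}/r_{m,t,s}=\frac{(m+4)(m-t+2)(m-s+3)}{(m+3)(m-t+1)(m-s+2)}$, so that after dividing through both sides become rational functions with low-degree numerators.

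A pleasant feature that legitimises this uniform treatment is that the polynomial rank formula \eqref{eqn:rank} vanishes exactly at the boundary configurations: for example $r_{m,t,s+1}=0$ when $s=t$, which is precisely the extremal case flagged after \eqref{eqn:Pieri 2}, where the summand $\Sigma_{(m,t,s+1,0)}\mQ$ does not actually occur. Hence the two sum rules hold as genuine polynomial identities, with no need to branch into separate boundary cases.

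The only real obstacle is the sheer size of the fully expanded expressions—the ranks are products of six linear factors and $\delta,\tau$ are quadratic—so the verification is best delegated to a computer algebra system. Conceptually there is nothing delicate: the representation-theoretic Pieri decomposition \eqref{eqn:Pieri 2}, combined with the additivity of the Chern character over direct sums, predicts exactly these sum rules, with the coefficients of $c_1(\mQ)^2$ and of $\ch_2(\mQ)$ matched separately in (1) and (2) respectively, so the lemma is the expected numerical shadow of that decomposition.
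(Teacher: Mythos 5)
Your proposal is correct and coincides with the paper's approach: the paper disposes of this lemma with the single line ``This is a direct computation,'' i.e.\ exactly the substitution of the explicit formulas for $r$, $\ell$, $\delta$, $\tau$ followed by a polynomial verification that you describe (and your observations that $r_1=r_{m',t',s'}$, that the degree bound legitimises grid evaluation, and that the vanishing of the rank polynomial absorbs the extremal cases of the Pieri decomposition are all accurate).
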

\begin{proof}
    This is a direct computation.
\end{proof}

As before, this proves the $\ch_2$-part of the theorem in this case.


\subsubsection*{The third Chern character}

The claim is analogous to the following algebraic result.

\begin{lemma}
    The following equality holds:
    \[ \begin{array}{l}
     \tau(m+1,t,s)r_{m+1,t,s}+\tau(m,t+1,s)r_{m,t+1,s}+\tau(m,t,s+1)r_{m,t,s+1} - \\
     -\left( 12\ell(m',t',s')^2+12\ell(m',t',s')-\ell(m',t',s')\tau(m',t',s')-5\delta(m',t',s')+4\right)r_1= \\
     = (1+2\ell(m,t,s)(1-4\ell(m,t,s))+\tau(m,t,s)(1+4\ell(m,t,s)))r_{m,t,s}.
    \end{array} \]
\end{lemma}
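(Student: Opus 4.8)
The plan is to unwind the geometric input and reduce the claim to a single polynomial identity in $m,t,s$, then certify that identity by a finite computation. The geometric content is entirely carried by Pieri's decomposition (\ref{eqn:Pieri 2}). Applying $\ch_3$ to it and using that the Chern character is a ring homomorphism gives
\[ \ch_3\bigl(\Sigma_{(m,t,s,0)}\mQ\otimes\mQ\bigr)=\sum_{\nu}\ch_3\bigl(\Sigma_\nu\mQ\bigr), \]
where $\nu$ runs over the four summands $(m{+}1,t,s,0)$, $(m,t{+}1,s,0)$, $(m,t,s{+}1,0)$ and $(m,t,s,1)$. First I would compute the left-hand side independently: expanding $\ch(\Sigma_{(m,t,s,0)}\mQ)\cdot\ch(\mQ)$ and reducing the resulting degree-$6$ classes via the relations $c_1(\mQ)^3=-24\ch_3(\mQ)$ and $c_1(\mQ)\ch_2(\mQ)=2\ch_3(\mQ)$ of Lemma~\ref{lemma:identities ch_3(Q) e ch_4(Q)}, together with the identity $5\delta(\lambda)=4\ell(\lambda)^2+\tau(\lambda)$ of (\ref{eqn:delta}), produces exactly the quantity $\bigl(1+2\ell(m,t,s)(1-4\ell(m,t,s))+\tau(m,t,s)(1+4\ell(m,t,s))\bigr)r_{m,t,s}$ recorded just above the statement; that computation can simply be quoted, and I write $B$ for this expression.

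For the four terms on the right of the Pieri relation I would substitute the inductive formula (\ref{ch3}): each genuine partition $\nu=(a,b,c,0)$ contributes $\tau(a,b,c)\,\ell(a,b,c)\,r_{a,b,c}$. The decisive observation is that the three partitions $(m{+}1,t,s,0)$, $(m,t{+}1,s,0)$, $(m,t,s{+}1,0)$ all have size $m+t+s+1$ and hence share the common value $\ell=\tfrac{m+t+s+1}{4}$; pulling this factor to the front produces the combination $\tau(m{+}1,t,s)r_{m{+}1,t,s}+\tau(m,t{+}1,s)r_{m,t{+}1,s}+\tau(m,t,s{+}1)r_{m,t,s{+}1}$ appearing in the statement. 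The last summand $\Sigma_{(m,t,s,1)}\mQ$ has nonzero last entry, so there I would instead invoke the reduction trick (\ref{eqn:reduction}), $\ch(\Sigma_{(m,t,s,1)}\mQ)=\ch(\Sigma_{(m-1,t-1,s-1,0)}\mQ)\,\ch(\mO_X(1))$, whose $\ch_3$-coefficient is the expression $-(12\ell(m',t',s')^2+12\ell(m',t',s')-\ell(m',t',s')\tau(m',t',s')-5\delta(m',t',s')+4)r_1$ already derived; call it $C$. Collecting the contributions then expresses the Pieri relation in the form
\[ \tfrac{m+t+s+1}{4}\bigl(\tau(m{+}1,t,s)r_{m{+}1,t,s}+\tau(m,t{+}1,s)r_{m,t{+}1,s}+\tau(m,t,s{+}1)r_{m,t,s{+}1}\bigr)+C=B, \]
which is the algebraic content of the lemma; I would double-check the bookkeeping of this common factor $\ell=\tfrac{m+t+s+1}{4}$ against the $s=0$ and $s=1$ cases, since that is the one place where a stray scalar is easiest to lose.

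What remains is a purely algebraic verification. Since $r_{a,b,c}$ is the degree-$6$ Weyl product (\ref{eqn:rank}) and $\ell,\tau,\delta$ are polynomials of degree at most $2$, both sides are polynomials in $(m,t,s)$ of total degree at most $8$, and I would confirm their equality either by expanding and matching monomials in a computer-algebra system or, more economically, by evaluating at every point of the grid $\{0,\dots,8\}^3$: a polynomial of degree $\le 8$ is determined by its values there, so agreement on that grid certifies the identity, its validity at honest partitions being guaranteed by the geometric derivation itself. The argument is structurally identical to the two preceding cases, so no new idea is required. I expect the only genuine obstacle to be the bookkeeping: correctly expanding the six shifted linear factors in each of $r_{m+1,t,s}$, $r_{m,t+1,s}$, $r_{m,t,s+1}$ and $r_1$, and carrying the common $\ell$-factor through without error.
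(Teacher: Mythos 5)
Your proposal is correct in substance and follows exactly the route the paper intends: Pieri's decomposition (\ref{eqn:Pieri 2}), the inductive formula (\ref{ch3}) for the three summands with vanishing last entry, the reduction trick for $\Sigma_{(m,t,s,1)}\mQ$, and then a brute-force polynomial verification — which is all the paper's own proof ("This is a direct computation") amounts to. Your suspicion about the common factor $\ell=\tfrac{m+t+s+1}{4}$ is well founded: the identity you derive, $\tfrac{m+t+s+1}{4}\bigl(\tau(m{+}1,t,s)r_{m{+}1,t,s}+\tau(m,t{+}1,s)r_{m,t{+}1,s}+\tau(m,t,s{+}1)r_{m,t,s{+}1}\bigr)+C=B$, is the one that actually holds (e.g.\ at $(m,t,s)=(3,2,1)$ the three $\tau\cdot r$ terms sum to $-1356$, the fourth term contributes $-315$, and $\tfrac74(-1356)-315=-2688=B\,r_{3,2,1}$, whereas dropping the $\tfrac74$ gives $-1671$); the lemma as printed suppresses that prefactor, which is what the paper's phrase ``analogous to'' is quietly absorbing. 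The only slip in your write-up is the degree count: the terms $\ell'\tau'r_1$ and $\ell\,\tau\,r_{m,t,s}$ (inside $B\,r_{m,t,s}$) have total degree $9$, not $8$, so interpolation on $\{0,\dots,8\}^3$ does not certify the identity — use $\{0,\dots,9\}^3$ or expand symbolically.
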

\begin{proof}
    This is a direct computation.
\end{proof}  
As usual, this proves the $\ch_3$-part of the theorem in this case.

\subsubsection*{The fourth Chern character}

   The claim is analogous to the following algebraic result.

\begin{lemma}
    The following equality holds:
    \[ \begin{array}{l}
     \xi(m+1,t,s)r_{m+1,t,s}+\xi(m,t+1,s)r_{m,t+1,s}+\xi(m,t,s+1)r_{m,t,s+1} + \\
     \left(6+6\ell(m',t',s')(4-\tau(m',t',s'))+36\ell(m',t',s')^2-15\delta(m',t',s')+\xi(m',t',s')\right)r_1= \\
     = \left(1+6\ell(m,t,s)(\frac{4}{5}\ell(m,t,s)-1)+3\tau(m,t,s)(\frac{9}{10}-2\ell(m,t,s))+4\xi(m,t,s)\right)r_{m,t,s}.
    \end{array} \]
\end{lemma}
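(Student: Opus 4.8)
The plan is to observe that, once the Pieri decomposition (\ref{eqn:Pieri 2}) is restricted to $X$ and the degree-$8$ part is extracted, the asserted equality is nothing but a polynomial identity in the three variables $m,t,s$. The crucial geometric input making this legitimate is that on the very general $X$ the group $\oH^{8}(X,\QQ)$ has rank $1$ (Lemma~\ref{lemma:identities ch_3(Q) e ch_4(Q)}), so that $\ch_4$ of every bundle in sight is a well-defined rational multiple of $\ch_4(\mQ)$; the lemma is precisely the equality of these scalars. Since each of the functions $r_\lambda,\ell(\lambda),\delta(\lambda),\tau(\lambda),\xi(\lambda)$ is, by its defining formulas (\ref{eqn:rank})--(\ref{eqn:xi}), an explicit element of $\QQ[m,t,s]$ (after the substitutions $m'=m-1$, $t'=t-1$, $s'=s-1$ and $r_1=r_{(m,t,s,1)}=r_{(m-1,t-1,s-1)}$), the first step is simply to plug these closed forms into both sides and clear the common denominators coming from the $\tfrac{1}{60}$ in $\xi$ and the $\tfrac{1}{12}$ in $r_\lambda$.

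After this substitution both sides become honest polynomials in $\QQ[m,t,s]$, and a degree count bounds the problem: $r_\lambda$ has total degree $6$ (a product of six linear factors) and $\xi(\lambda)$ has total degree $4$, so every term $\xi(\cdot)r_{(\cdot)}$ has degree $10$, and one checks that the remaining terms (such as $\ell^2 r$ of degree $8$ and $\tau\ell\, r$ of degree $9$) have smaller degree. Hence the difference $D(m,t,s)$ of the two sides has total degree at most $10$, and in particular degree $\le 10$ in each variable separately. The second step is then to verify $D\equiv 0$, which can be done either by full expansion (immediate on a computer algebra system) or, if a finite and human-checkable certificate is wanted, by evaluating $D$ on the grid $\{0,1,\dots,10\}^3$ and invoking the standard fact that a polynomial of degree $\le 10$ in each variable that vanishes at $11$ distinct values of each variable is identically zero.

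One point worth recording, and the only genuinely conceptual ingredient as opposed to the bookkeeping, is why a single identity holds uniformly in $(m,t,s)$ even though the geometric decomposition (\ref{eqn:Pieri 2}) loses summands in the degenerate ranges $m=t$, $t=s$ or $s=0$. The reason is that the Weyl rank polynomial $r_{(\cdot)}$ vanishes automatically on the corresponding non-dominant shifts: for example when $m=t$ the factor $(m-t+1)$ occurring in $r_{(m,t+1,s)}$ becomes $0$, so that term drops out of the polynomial identity on its own. Consequently no case analysis is required, and the same mechanism already silently governed the cases $s=0$ and $s=1$ treated above.

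The main obstacle is purely the size of the expressions: expanding the sixfold products of linear factors against the degree-$4$ numerators of $\xi$ produces dense degree-$10$ polynomials in three variables that are unpleasant to handle by hand, but there is no structural difficulty whatsoever — the entire verification collapses to checking that one explicit polynomial vanishes, which is exactly the sense in which this is a \emph{direct computation}. In practice I would first record the auxiliary $\ch_4$-expansions of $\Sigma_{(m,t,s,0)}\mQ\otimes\mQ$ and of $\Sigma_{(m,t,s,1)}\mQ$ (already displayed in the text, having used the relations of Lemma~\ref{lemma:identities ch_3(Q) e ch_4(Q)} to reduce everything to multiples of $\ch_4(\mQ)$), so that the identity to be checked is literally the equality of the two $\ch_4(\mQ)$-coefficients, and only then carry out the grid evaluation.
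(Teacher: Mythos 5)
Your proposal is correct and matches the paper's approach: the paper's entire proof of this lemma is ``This is a direct computation,'' and what you describe --- substituting the closed-form polynomials (\ref{eqn:rank})--(\ref{eqn:xi}) and verifying that the difference of the two sides vanishes identically in $\QQ[m,t,s]$ --- is exactly that computation, made finite by your degree bound and grid-evaluation certificate. Your remark that the Weyl rank polynomial vanishes automatically on the non-dominant shifts (e.g.\ the factor $m-t$ in $r_{(m,t+1,s)}$ when $m=t$), so that no case analysis is needed in the degenerate ranges, is a correct and worthwhile clarification that the paper leaves implicit.
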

\begin{proof}
    This is a direct computation.
\end{proof}  

Finally, this proves the $\ch_4$-part of the theorem in this case, thus concluding the entire proof.

\section{Proof of Theorem~\ref{theorem A}}\label{section:proof of thm A}
Here $\lambda=(\lambda_1,\lambda_2,\lambda_3,\lambda_4)$ is a partition and $\Sigma_\lambda\mQ$ is the associated vector bundle on $X$. From now on we will always assume that $X$ is very general, i.e.\ $\operatorname{Pic}(X)$ has rank $1$ and is generated by the restriction of the Pl\"ucker class.

\begin{proposition}
    The vector bundle $\Sigma_\lambda\mQ$ is slope polystable.
\end{proposition}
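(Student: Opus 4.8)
The plan is to reduce the polystability of $\Sigma_\lambda\mQ$ to the slope stability of $\mQ$ itself, exactly along the lines of the symmetric case already settled in Corollary~\ref{corollary:recap on sym}. Since $X$ is assumed very general, $\operatorname{Pic}(X)$ has rank $1$ and slope stability is measured against the unique (up to scaling) polarization $c_1(\mQ)$, so the notion is unambiguous. The starting point is that $\mQ$ is slope stable by \cite[Remark~8.6]{O'Grady:Modular}; in particular it is slope semistable, indeed slope polystable.

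First I would recall the purely representation-theoretic fact that for any partition $\lambda$ the bundle $\Sigma_\lambda\mQ$ occurs as a direct summand of the tensor power $\mQ^{\otimes|\lambda|}$, cut out by the corresponding Young symmetrizer, an idempotent in the rational group algebra of $S_{|\lambda|}$ acting by permutation of the tensor factors. Next I would record the slope bookkeeping: from the classical first-Chern-class formula $c_1(\Sigma_\lambda\mQ)=\frac{|\lambda|}{4}\,r_\lambda\,c_1(\mQ)$ (cf. Theorem~\ref{thm:Chern character}) one gets $\mu(\Sigma_\lambda\mQ)=|\lambda|\,\mu(\mQ)$, which is precisely the slope of $\mQ^{\otimes|\lambda|}$. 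Thus $\Sigma_\lambda\mQ$ is a direct summand, of the same slope, of a tensor power of a slope stable bundle. (If $\lambda_4\neq0$ one may first apply the reduction trick \eqref{eqn:reduction} to write $\Sigma_\lambda\mQ=\Sigma_{\lambda'}\mQ\otimes\mO_X(\lambda_4)$; twisting by a line bundle preserves polystability, so there is no loss in assuming $\lambda_4=0$.)

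The key input is then the characteristic-zero fact that tensor products of slope semistable sheaves are slope semistable and that Schur functors of slope polystable bundles remain slope polystable, which is \cite[Theorem~3.2.11]{HL}. Applying it to $\mQ$ yields at once that $\Sigma_\lambda\mQ$ is slope polystable. I do not expect a genuine obstacle here; the one point deserving care is polystability as opposed to mere semistability. A direct summand of a slope semistable sheaf of the same slope is automatically semistable, but to know that $\Sigma_\lambda\mQ$ actually splits as a direct sum of stable bundles all of slope $|\lambda|\mu(\mQ)$ one must use that polystability (not just semistability) is preserved under the tensor and Schur operations in characteristic zero. Should one wish to avoid citing this in the form needed, the cleanest alternative is the analytic route: by the Kobayashi--Hitchin correspondence $\mQ$ carries a Hermite--Einstein metric, the induced metric on $\mQ^{\otimes|\lambda|}$ is again Hermite--Einstein, and it restricts to a Hermite--Einstein metric on the holomorphic subbundle $\Sigma_\lambda\mQ$; a bundle admitting a Hermite--Einstein metric is slope polystable. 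Either argument closes the proof, the essential hypothesis throughout being that we work in characteristic zero.
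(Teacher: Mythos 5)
Your proof is correct and rests on the same key input as the paper's, namely \cite[Theorem~3.2.11]{HL} together with the reduction trick and the fact that an equal-slope direct summand of a slope polystable bundle is polystable. The only cosmetic difference is that you realize $\Sigma_\lambda\mQ$ directly inside $\mQ^{\otimes|\lambda|}$ via Young symmetrizers, whereas the paper reaches the same conclusion by an inductive cascade of Pieri decompositions starting from the stable symmetric powers $\Sym^m\mQ$; both are valid and essentially equivalent.
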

\begin{proof}
The proof is divided in a few steps, but before proceeding, let us simplify a bit our analysis. Since tensoring with a multiple of the polarisation does not change the stability, using the reduction trick explained in Section~\ref{section:reduction} it will be enough to prove the proposition for partitions of the form $\lambda=(m,t,s,0)$. 

First of all we have already noticed in Corollary~\ref{corollary:recap on sym} that the symmetric bundles $\Sym^m\mQ$ are slope stable.

By \cite[Theorem~3.2.11]{HL} we then have that $\Sym^m\mQ\otimes\Sym^t\mQ$ is slope polystable. In particular every factor in the decomposition (\ref{eqn:Pieri 1}) is also slope polystable. 

The rest of proof proceeds in the same way, where the decompositions (\ref{eqn:Pieri m,t,1}) and (\ref{eqn:Pieri 2}) are used. We leave the details to the reader.
\end{proof}

What prevents us to claim the slope stability of $\Sigma_\lambda\mQ$ is the fact that we do not have an a priori control of $\Hom(\Sigma_\lambda\mQ,\Sigma_\lambda\mQ)$. Conjecture~\ref{conjecture:simple+ext 1} would imply that the $\Sigma_\lambda\mQ$'s are indeed stable. We exhibit in Section~\ref{section:Evidences} many cases for which the simplicty of $\Sigma_\lambda\mQ$ can be deduced.

Next we prove the modularity.
\begin{proposition}\label{prop:modular}
    The vector bundle $\Sigma_\lambda\mQ$ is modular. More precisely,
     \[ \Delta(\Sigma_\lambda\mQ)=\frac{\delta(\lambda)}{4}\operatorname{rk}(\Sigma_\lambda\mQ)^2\mathsf{c}_2(X), \]
    where 
    \[ \delta(\lambda)=\frac{3(\sum_{i=1}^4\lambda_i^2)-2(\sum_{1\leq i<j\leq4}\lambda_i\lambda_j)+12\lambda_1+4\lambda_2-4\lambda_3-12\lambda_4}{60}. \]
\end{proposition}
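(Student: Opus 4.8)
The plan is to deduce the statement entirely from the Chern character formulas of Theorem~\ref{thm:Chern character}, using two structural reductions: that the discriminant depends only on the endomorphism bundle, and that the function $\delta(\lambda)$ appearing in the statement is invariant under the shift $\lambda\mapsto\lambda+n$. First I would reduce to partitions of the form $\lambda=(m,t,s,0)$. By the reduction trick (\ref{eqn:reduction}) one has $\Sigma_\lambda\mQ=\Sigma_{\lambda'}\mQ\otimes\mO_X(\lambda_4)$ with $\lambda'=(\lambda_1-\lambda_4,\lambda_2-\lambda_4,\lambda_3-\lambda_4,0)$, so $\mE nd(\Sigma_\lambda\mQ)=\mE nd(\Sigma_{\lambda'}\mQ)$. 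Since $\Delta(F)=-\ch_2(\mE nd(F))$ and $\rk(\Sigma_\lambda\mQ)=\rk(\Sigma_{\lambda'}\mQ)$, both sides of the claimed identity are unchanged under passing from $\lambda$ to $\lambda'$, provided one checks separately that $\delta(\lambda)=\delta(\lambda')$. That check is a short direct computation on the statement's formula: the quadratic part $3\sum_i\lambda_i^2-2\sum_{i<j}\lambda_i\lambda_j$ is invariant under $\lambda_i\mapsto\lambda_i+c$ (the $c$- and $c^2$-contributions cancel, using $\sum_{i<j}(\lambda_i+\lambda_j)=3\sum_i\lambda_i$ and $\binom{4}{2}=6$), while the linear part is invariant because its coefficient sum is $12+4-4-12=0$. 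Hence it suffices to prove the proposition for $\lambda=(m,t,s,0)$.

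For such $\lambda$ I would substitute the formulas (\ref{c1}) and (\ref{ch2}) directly into $\Delta(\Sigma_\lambda\mQ)=c_1(\Sigma_\lambda\mQ)^2-2\rk(\Sigma_\lambda\mQ)\ch_2(\Sigma_\lambda\mQ)$. The coefficient of $c_1(\mQ)^2$ arising from $c_1(\Sigma_\lambda\mQ)^2$ is $\ell(\lambda)^2r_\lambda^2$, whereas the one arising from $2\rk(\Sigma_\lambda\mQ)\ch_2(\Sigma_\lambda\mQ)$ is $(\ell(\lambda)^2-\tfrac14\delta(\lambda))r_\lambda^2$; their difference collapses to exactly $\tfrac14\delta(\lambda)r_\lambda^2$. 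The coefficient of $\ch_2(\mQ)$ is $-2\delta(\lambda)r_\lambda^2$. Therefore
\[ \Delta(\Sigma_\lambda\mQ)=\frac{\delta(\lambda)}{4}r_\lambda^2\bigl(c_1(\mQ)^2-8\ch_2(\mQ)\bigr)=\frac{\delta(\lambda)}{4}r_\lambda^2\,\Delta(\mQ), \]
and $\Delta(\mQ)=\mathsf{c}_2(X)$ by item~$(4)$ of Lemma~\ref{lemma:c_2(X)^2 and ch_4(Q)}, which yields both the proportionality constant and, via Lemma~\ref{lemma:c_2=30q}, modularity. It then remains to reconcile the two expressions for $\delta$: expanding $4\ell(\lambda)^2=\tfrac14(m+t+s)^2$ together with $\tau(\lambda)$ from (\ref{eqn:gamma}) over the common denominator $60$ turns $\tfrac15(4\ell(\lambda)^2+\tau(\lambda))$ of (\ref{eqn:delta}) into $\tfrac1{60}\bigl(3(m^2+t^2+s^2)-2(mt+ms+ts)+12m+4t-4s\bigr)$, which is precisely the statement's formula specialised to $\lambda_4=0$.

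I do not anticipate a genuine obstacle: all the substantial work is already contained in Theorem~\ref{thm:Chern character}, and what remains is bookkeeping. The two points requiring care are the clean cancellation of the $c_1(\mQ)^2$ terms (so that only $\tfrac14\delta(\lambda)$ survives) and the identification of the two superficially different expressions for $\delta(\lambda)$; both are elementary algebraic identities. The slope polystability is already handled in the preceding proposition, so this proposition only needs the discriminant computation, and modularity follows formally once $\Delta(\Sigma_\lambda\mQ)\in\QQ\,\mathsf{c}_2(X)$ is established.
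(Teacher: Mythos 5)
Your proposal is correct and follows essentially the same route as the paper: reduce to $\lambda_4=0$ via the reduction trick, substitute the formulas of Theorem~\ref{thm:Chern character} into $\Delta=c_1^2-2\rk\cdot\ch_2$ to get $\frac{\delta(\lambda)}{4}r_\lambda^2\Delta(\mQ)$, and conclude with $\Delta(\mQ)=\mathsf{c}_2(X)$. The only difference is that you explicitly verify the shift-invariance of the closed formula for $\delta$ and reconcile it with $\tfrac15(4\ell^2+\tau)$, whereas the paper simply \emph{defines} $\delta(\lambda):=\delta(\lambda')$; your extra bookkeeping is correct and arguably makes the statement's displayed formula better justified.
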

\begin{proof}
    First of all, as we recalled in Section~\ref{section:reduction}, there is an equality
    \[ \Sigma_\lambda\mQ=\Sigma_{\lambda'}\mQ\otimes\mO_X(\lambda_4), \]
where $\lambda'=(\lambda_1-\lambda_4,\lambda_2-\lambda_4,\lambda_3-\lambda_4,0)$, and clearly 
\[ \Delta(\Sigma_\lambda\mQ)=\Delta(\Sigma_{\lambda'}\mQ). \]
Since $\lambda'_4=0$, we can apply Theorem~\ref{thm:Chern character} to explicitly compute the discriminant:
\[ \Delta(\Sigma_{\lambda'}\mQ)=\frac{\delta(\lambda')}{4}\operatorname{rk}(\Sigma_{\lambda'}\mQ)^2\Delta(\mQ). \]
Finally it is enough to define
\[ \delta(\lambda):=\delta(\lambda'), \]
notice that $\operatorname{rk}(\Sigma_{\lambda'}\mQ)=\operatorname{rk}(\Sigma_{\lambda}\mQ)$ and recall that $\Delta(\mQ)=\mathsf{c}_2(X)$ (see for example \cite[Section~2.1]{O'Grady:Modular}).
\end{proof}

\section{Proof of Theorem~\ref{theorem B}}\label{section:proof of thm B}

In this section we want to study which of the modular vector bundles $\Sigma_\lambda\mQ$ are atomic. 

We start by computing the Euler characteristic of the bundles $\Sigma_\lambda\mQ$ via Hirzebruch--Riemann--Roch.

\subsection{The Euler characteristic of $\Sigma_\lambda\mQ$}\label{section:euler char}

We have already explicitly computed the discriminant of $\Sigma_\lambda\mQ$ in Proposition~\ref{prop:modular}, so that we compute now the Euler characteristic of $\Sigma_\lambda\mQ$ in order to compare it with their values in case they were atomic. 

\begin{proposition}\label{prop:eulero}
    There exists a degree $4$ polynomial $\mathsf{P}\in\QQ[x_1,x_2,x_3,x_4]$ such that the Euler characteristic of the vector bundle $\Sigma_\lambda\mQ$ is
    \[ \chi(\Sigma_\lambda\mQ,\Sigma_\lambda\mQ)= 3 \mathsf{P}(\lambda) r_\lambda^2. \]
\end{proposition}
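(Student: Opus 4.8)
The plan is to compute $\chi(\Sigma_\lambda\mQ,\Sigma_\lambda\mQ)=\chi(\mE nd(\Sigma_\lambda\mQ))$ directly via Hirzebruch--Riemann--Roch, reading off the input from the Chern character formulas of Theorem~\ref{thm:Chern character}. First I would reduce to partitions with vanishing last entry. By the reduction trick of Section~\ref{section:reduction} we have $\Sigma_\lambda\mQ=\Sigma_{\lambda'}\mQ\otimes\mO_X(\lambda_4)$ with $\lambda'=(\lambda_1-\lambda_4,\lambda_2-\lambda_4,\lambda_3-\lambda_4,0)$, so that $\mE nd(\Sigma_\lambda\mQ)=\mE nd(\Sigma_{\lambda'}\mQ)$ and hence $\chi(\Sigma_\lambda\mQ,\Sigma_\lambda\mQ)=\chi(\Sigma_{\lambda'}\mQ,\Sigma_{\lambda'}\mQ)$; since moreover $r_\lambda=r_{\lambda'}$, it suffices to produce the polynomial on partitions of the form $(m,t,s,0)$ and then define $\mathsf{P}(x_1,x_2,x_3,x_4):=\mathsf{P}_0(x_1-x_4,x_2-x_4,x_3-x_4)$, a linear substitution that preserves total degree.

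Next I would use $\ch(\mE nd(F))=(\rk(F)^2,0,-\Delta(F),0,\Xi(F))$ together with $\operatorname{td}_X=(1,0,\tfrac1{12}\mathsf{c}_2(X),0,3\mathsf{p})$ from Lemma~\ref{lemma:c_2(X)^2 and ch_4(Q)}, which by Hirzebruch--Riemann--Roch yields
\[ \chi(\Sigma_\lambda\mQ,\Sigma_\lambda\mQ)=3r_\lambda^2-\tfrac1{12}\int_X\Delta(\Sigma_\lambda\mQ)\cdot\mathsf{c}_2(X)+\int_X\Xi(\Sigma_\lambda\mQ). \]
The middle term is immediate from Proposition~\ref{prop:modular} and $\int_X\mathsf{c}_2(X)^2=828$ (Lemma~\ref{lemma:c_2(X)^2 and ch_4(Q)}): it equals $-\tfrac{69}{4}\delta(\lambda)r_\lambda^2$. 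For the last term I would substitute the formulas of Theorem~\ref{thm:Chern character} into $\Xi=\ch_2^2-2c_1\ch_3+2\rk\,\ch_4$. The point is that every degree-$8$ monomial in the Chern classes of $\mQ$ that arises --- namely $c_1(\mQ)^4$, $c_1(\mQ)^2\ch_2(\mQ)$, $\ch_2(\mQ)^2$, $c_1(\mQ)\ch_3(\mQ)$ and $\ch_4(\mQ)$ --- is a rational multiple of $\ch_4(\mQ)$ by the relations of Lemma~\ref{lemma:identities ch_3(Q) e ch_4(Q)}(2), and $\int_X\ch_4(\mQ)=\tfrac34$. This collapses $\int_X\Xi(\Sigma_\lambda\mQ)$ to $r_\lambda^2$ times an explicit polynomial in $\ell(\lambda),\delta(\lambda),\tau(\lambda),\xi(\lambda)$, exactly as in the symmetric case of Section~\ref{section:Sym Q}.

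Assembling the three contributions gives $\chi(\Sigma_\lambda\mQ,\Sigma_\lambda\mQ)=3r_\lambda^2\,\mathsf{P}(\lambda)$, where $\mathsf{P}$ is a fixed polynomial expression in $\ell,\delta,\tau,\xi$ whose top-degree contributions are $(\ell^2-\tfrac14\delta)^2$, $\ell^2\tau$ and $\xi$. Since $\ell$ has degree $1$, $\delta$ and $\tau$ have degree $2$, and $\xi$ has degree $4$ in the entries of $\lambda$, and since $\mathsf{P}$ involves $\xi$ only linearly and the other functions in combinations of total degree at most $4$, the polynomial $\mathsf{P}$ has degree exactly $4$, as claimed. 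I do not expect a genuine obstacle here: the only real work is the bookkeeping in the $\Xi$ computation, and this is purely formal once one uses that $\oH^8(X,\QQ)$ is one-dimensional for very general $X$ --- precisely the hypothesis that makes every degree-$8$ product reduce to a multiple of $\ch_4(\mQ)$ through Lemma~\ref{lemma:identities ch_3(Q) e ch_4(Q)}.
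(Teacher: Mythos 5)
Your proposal is correct and follows the same route as the paper's proof: reduce to $\lambda_4=0$ via the reduction trick, apply Hirzebruch--Riemann--Roch to $\mE nd(\Sigma_{\lambda'}\mQ)$ using $\operatorname{td}_X=(1,0,\tfrac1{12}\mathsf{c}_2(X),0,3\mathsf{p})$, evaluate the $\Delta$-term via Proposition~\ref{prop:modular} and $\int_X\mathsf{c}_2(X)^2=828$, and collapse $\int_X\Xi$ to a multiple of $\int_X\ch_4(\mQ)=\tfrac34$ through the relations of Lemma~\ref{lemma:identities ch_3(Q) e ch_4(Q)}. The resulting expression for $\mathsf{P}$ in terms of $\ell,\tau,\xi$ matches the paper's formula (\ref{eqn:chi(Sigma_lambdaQ)}).
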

The polynomial $\mathsf{P}(\lambda)$ is explicitly determined at the end of the proof as a combination of the polynomials $\ell$, $\tau$ and $\xi$ introduced at the beginning of Section~\ref{section:Chern}.
\begin{proof}
    As in the proof of Proposition~\ref{prop:modular}, we use the reduction trick of Section~\ref{section:reduction} to get
    \[ \chi(\Sigma_\lambda\mQ,\Sigma_\lambda\mQ)=\chi(\Sigma_{\lambda'}\mQ,\Sigma_{\lambda'}\mQ), \]
    where $\lambda'=(\lambda_1-\lambda_4,\lambda_2-\lambda_4,\lambda_3-\lambda_4,0)$.

    As usual, if we put 
    \[ \Xi(\Sigma_{\lambda'}\mQ)=\ch_2(\Sigma_{\lambda'}\mQ)^2-2c_1(\Sigma_{\lambda'}\mQ)\ch_3(\Sigma_{\lambda'}\mQ)+2\rk(\Sigma_{\lambda'}\mQ)\ch_4(\Sigma_{\lambda'}\mQ) \]
    then 
    \[ \ch(\Sigma_{\lambda'}\mQ)=\left(r_{\lambda'}^2,0,-\Delta(\Sigma_{\lambda'}\mQ),0,\Xi(\Sigma_{\lambda'}\mQ)\right). \]
    Since 
    \[ \operatorname{td}_X=\left(1,0,\frac{1}{12}c_2(X),0,3\mathsf{p}\right) \]
    (where $\mathsf{p}$ is the class of a point)
    by Hirzebruch--Riemann--Roch formula we get
    \begin{align*} \chi(\Sigma_{\lambda'}\mQ,\Sigma_{\lambda'}\mQ) & =\int_X\ch(\mE nd(\Sigma_{\lambda'}\mQ)\operatorname{td}_X \\
    & =3r_{\lambda'}^2-\frac{1}{12}\int_X \Delta(\Sigma_{\lambda'}\mQ).\mathsf{c}_2(X)+\int_X\Xi(\Sigma_{\lambda'}\mQ). 
\end{align*}
By Proposition~\ref{prop:modular} we have that 
\[ \Delta(\Sigma_{\lambda'}\mQ)=\frac{\delta(\lambda')}{4}r_{\lambda'}^2\mathsf{c}_2(X). \]
Using the equality $\int_X\mathsf{c}_2(X)^2=828$ (see \cite[Equality~(5.3.4)]{O'Grady:Modular}), we get
\[ -\frac{1}{12}\int_X\Delta(\Sigma_{\lambda'}\mQ).\mathsf{c}_2(X)=-\frac{69}{4}\delta(\lambda')r_{\lambda'}^2=-\frac{69}{20}(4\ell(\lambda')^2+\tau(\lambda'))r_{\lambda'}^2, \]
where the last equality is the definition $\delta(\lambda')=\frac{1}{5}(4\ell(\lambda')^2+\tau(\lambda'))$.
Now, by Theorem~\ref{thm:Chern character} and Lemma~\ref{lemma:identities ch_3(Q) e ch_4(Q)} we get
\[ \Xi(\Sigma_{\lambda'}\mQ)=\left( 36\ell(\lambda')^4-30\ell(\lambda')^2\delta(\lambda')+\frac{69}{4}\delta(\lambda')^2+12\ell(\lambda')^2\tau(\lambda')+2\xi(\lambda') \right) r_{\lambda'}^2\ch_4(\mQ). \]
Since by Lemma~\ref{lemma:c_2(X)^2 and ch_4(Q)} we know that $\int_X\ch_4(\mQ)=\frac{3}{4}$, we finally get
\begin{align*} \int_X\Xi(\Sigma_{\lambda'}\mQ) & =\left( 27\ell(\lambda')^4-\frac{45}{2}\ell(\lambda')^2\delta(\lambda')+\frac{207}{16}\delta(\lambda')^2+9\ell(\lambda')^2\tau(\lambda')+\frac{3}{2}\xi(\lambda') \right)r_{\lambda'}^2 \\
 & =\frac{3}{100}\left( 576\ell(\lambda')^4+288\ell(\lambda')^2\tau(\lambda')+\frac{69}{4}\tau(\lambda')^2+50\xi(\lambda')\right) r_{\lambda'}^2,
\end{align*}
where again the last equality follows from $\delta(\lambda')=\frac{1}{5}(4\ell(\lambda')^2+\tau(\lambda'))$.
Putting all together, we have
\[ \chi(\Sigma_{\lambda'}\mQ,\Sigma_{\lambda'}\mQ)=3\left\{ 1-\frac{23(4\ell(\lambda')^2+\tau(\lambda'))}{20}+\frac{ 576\ell(\lambda')^4+288\ell(\lambda')^2\tau(\lambda')+\frac{69}{4}\tau(\lambda')^2+50\xi(\lambda')}{100}\right\} r_{\lambda'}^2. \]
Therefore the claim holds with 
\begin{equation}\label{eqn:chi(Sigma_lambdaQ)} 
\mathsf{P}(\lambda):=1-\frac{23(4\ell(\lambda')^2+\tau(\lambda'))}{20}+\frac{ 576\ell(\lambda')^4+288\ell(\lambda')^2\tau(\lambda')+\frac{69}{4}\tau(\lambda')^2+50\xi(\lambda')}{100}. 
\end{equation}
\end{proof}

\begin{remark}
    If $\lambda=(m,0,0,0)$ or $\lambda=(2,1,0,0)$, we have seen in Section~\ref{section:Sym Q} and Section~\ref{section:Sigma 2 1} that $\mathsf{P}(\lambda)$ is a perfect square. On the other hand, already in the case $\lambda=(3,2,1,0)$ considered in Section~\ref{section:Sigma 3 2 1} we have $\mathsf{P}(3,2,1,0)=\frac{23}{8}$, which is not a perfect square.
\end{remark}

\subsection{Proof of Theorem~\ref{theorem B}}
Let us recall the statement of Theorem~\ref{theorem B}.

\begin{thm}[Theorem~\ref{theorem B}]
    Let $\lambda=(\lambda_1,\lambda_2,\lambda_3,\lambda_4)$ be a partition. Then $\Sigma_\lambda\mQ$ is atomic if and only if $\lambda_1=\lambda_2=\lambda_3$ or $\lambda_2=\lambda_3=\lambda_4$.
\end{thm}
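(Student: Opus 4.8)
The plan is to prove both implications by reducing everything to the symmetric bundles $\Sym^m\mQ$ of Theorem~\ref{thm:Sym Q} and to a single scalar comparison between the discriminant and the Euler characteristic.

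\textbf{Reductions and the ``if'' direction.} By the reduction trick (Section~\ref{section:reduction}) we may twist by a line bundle and assume $\lambda=(m,t,s,0)$; since tensoring by $\mO_X(n)$ and taking duals are derived auto-equivalences, they preserve atomicity (Remark following Proposition~\ref{prop:se ann=ann allora atomico}). Under this normalization the two families become $m=t=s$ and $t=s=0$. In the second case $\Sigma_{(m,0,0,0)}\mQ=\Sym^m\mQ$, and in the first case Example~\ref{example:duale} gives $\Sigma_{(m,m,m,0)}\mQ^\vee=\Sym^m\mQ\otimes\mO_X(-m)$; since $\Sym^m\mQ$ is atomic by Theorem~\ref{thm:Sym Q}, both families consist of atomic bundles.

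\textbf{The necessary condition.} Suppose $\Sigma_\lambda\mQ$ is atomic. By Proposition~\ref{prop:se ann=ann allora atomico} it admits a normalized extended Mukai vector $\tilde v=(r_\lambda,c_1(\Sigma_\lambda\mQ),s)$. The relations of Lemma~\ref{lemma:identities ch_3(Q) e ch_4(Q)} together with $\mathsf c_2(X)=30\mathsf q_X$ give $q\bigl(c_1(\mQ)\bigr)=6$, and Theorem~\ref{thm:Chern character} gives $c_1(\Sigma_\lambda\mQ)=\ell(\lambda)r_\lambda c_1(\mQ)$, so $\tilde q(\tilde v)=6\ell(\lambda)^2r_\lambda^2-2r_\lambda s$. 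Comparing item $(1)$ of Lemma~\ref{lemma:Bottini} with Proposition~\ref{prop:modular} forces $\tilde q(\tilde v)=\tfrac52(3\delta(\lambda)-1)r_\lambda^2$, hence $s=\tfrac{r_\lambda}{4}(5-3\tau(\lambda))$ (which specializes to the symmetric value of Proposition~\ref{prop:Sym atomic}). Substituting this into item $(2)$ of Lemma~\ref{lemma:Bottini} and comparing with Proposition~\ref{prop:eulero} yields the scalar necessary condition
\[ \mathsf P(\lambda)=\bigl(3\delta(\lambda)-1\bigr)^2. \]
One checks that the degree-$6$ comparison $\mathsf T(\tilde v^{(2)})_6=v(\Sigma_\lambda\mQ)_6$ becomes an identity once $s$ is fixed, so the displayed equality (equivalently the degree-$8$ comparison) is the only genuine constraint.

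\textbf{The key sign computation.} Set $G(\lambda)=\mathsf P(\lambda)-\bigl(3\delta(\lambda)-1\bigr)^2$. Using $\delta=\tfrac15(4\ell^2+\tau)$ and the expression (\ref{eqn:chi(Sigma_lambdaQ)}) for $\mathsf P$, a direct simplification collapses this to
\[ G=\tfrac14\Bigl(\delta+2\xi-\tfrac34\tau^2\Bigr). \]
I would then pass to the chamber coordinates $a=m-t,\ b=t-s,\ c=s$ (all $\geq 0$ on partitions), in which $m=t=s$ corresponds to $a=b=0$ and $t=s=0$ to $b=c=0$. A direct computation shows that on the wall $b=0$ one has $240\,G=-2\,ac\,(a+3)(c+3)$, while on the axis $a=c=0$ one has $240\,G=-2\,b(b+1)(b+3)(b+4)$; in general $-240\,G$, expanded in $a,b,c$, is a combination with nonnegative coefficients all of whose monomials lie in the ideal $(b,\,ac)$. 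Consequently $G\leq 0$ on the partition chamber, with $G=0$ if and only if $b=0$ and $ac=0$, i.e. if and only if $m=t=s$ or $t=s=0$. Since atomicity forces $G=0$, every atomic $\Sigma_\lambda\mQ$ lies in one of the two families, which closes the argument.

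The main obstacle is precisely this last paragraph. That $G$ vanishes on the two families and is negative at sample points is immediate, but proving that these are the \emph{only} zeros on the chamber requires the explicit coordinate change and the verification that $-240\,G$ has nonnegative coefficients and belongs to $(b,ac)$; this is the genuinely computational heart of the proof, and everything else is bookkeeping built on Lemma~\ref{lemma:Bottini}, Proposition~\ref{prop:modular} and Proposition~\ref{prop:eulero}.
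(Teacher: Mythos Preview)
Your argument is correct and follows the same route as the paper: both implications are reduced, via Lemma~\ref{lemma:Bottini}(1)--(2), Proposition~\ref{prop:modular} and Proposition~\ref{prop:eulero}, to the single scalar constraint $\mathsf P(\lambda)=(3\delta(\lambda)-1)^2$, and the ``if'' direction is handled by twisting/dualising back to $\Sym^m\mQ$ exactly as in the paper.

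The only real difference is in the treatment of that scalar constraint. The paper simply asserts that $(3\delta(\lambda)-1)^2-\mathsf P(\lambda)=0$ has no integral solutions outside the two families, leaving this as a direct check. You go further: you simplify $G=\mathsf P-(3\delta-1)^2$ to the closed form $\tfrac14(\delta+2\xi-\tfrac34\tau^2)$ (which is correct), pass to the chamber coordinates $a=m-t$, $b=t-s$, $c=s$, and argue positivity of $-240G$ via nonnegativity of its coefficients together with membership in the monomial ideal $(b,ac)$. Your wall formulas $-2ac(a+3)(c+3)$ and $-2b(b+1)(b+3)(b+4)$ check out numerically, and combined with the nonnegative-coefficients claim they do force $G<0$ off the locus $\{b=0,\,ac=0\}$: the pure-$b$ monomials guarantee strict negativity whenever $b>0$, and the $b=0$ wall formula handles the remaining case. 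So your positivity argument is logically sound once the coefficient claim is verified; this is indeed the computational heart, and it is the same computation the paper hides behind ``one can directly see''. Your extra verifications (the value of $s$ and the degree-$6$ identity) are correct but not needed for the ``only if'' direction.
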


We will prove each implication in a different proposition. 

\begin{proposition}\label{prop:not atomic}
    Let $\lambda=(\lambda_1,\lambda_2,\lambda_3,\lambda_4)$ be a partition. If at most two entries are equal, then $\Sigma_\lambda\mQ$ is not atomic.
\end{proposition}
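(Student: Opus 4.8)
The plan is to convert atomicity into a single numerical identity and then show that this identity fails off the locus predicted by Theorem~\ref{theorem B}. First I would reduce to the case $\lambda=(m,t,s,0)$. Since tensoring by a line bundle is a derived auto-equivalence, atomicity is preserved under it (\cite[Proposition~3.10]{Beckmann}); combined with the reduction trick $\Sigma_\lambda\mQ=\Sigma_{\lambda'}\mQ\otimes\mO_X(\lambda_4)$ of Section~\ref{section:reduction} (equation (\ref{eqn:reduction})), where $\lambda'=(\lambda_1-\lambda_4,\lambda_2-\lambda_4,\lambda_3-\lambda_4,0)$, this lets me assume $\lambda=(m,t,s,0)$. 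Under this reduction the hypothesis ``at most two entries are equal'' becomes: $(m,t,s)$ is neither of the form $(c,c,c)$ nor of the form $(m,0,0)$, i.e.\ $\lambda$ lies off the locus $V=\{m=t=s\}\cup\{t=s=0\}$.

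Next I would extract the constraint that atomicity imposes. Suppose $\Sigma_\lambda\mQ$ is atomic. As $\rk(\Sigma_\lambda\mQ)=r_\lambda\neq0$, by (\ref{eqn:v tilde normalizzato}) it carries a normalised extended Mukai vector $\tilde{v}=(r_\lambda,c_1(\Sigma_\lambda\mQ),s)$. Comparing item $(1)$ of Lemma~\ref{lemma:Bottini} with the discriminant computed in Proposition~\ref{prop:modular} (both multiples of $\mathsf{c}_2(X)$) forces
\[ \tilde{q}(\tilde{v})=\tfrac{5}{2}\bigl(3\delta(\lambda)-1\bigr)r_\lambda^2, \]
so that item $(2)$ of Lemma~\ref{lemma:Bottini} predicts $\chi(\Sigma_\lambda\mQ,\Sigma_\lambda\mQ)=3\bigl(3\delta(\lambda)-1\bigr)^2r_\lambda^2$. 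On the other hand Proposition~\ref{prop:eulero} gives $\chi(\Sigma_\lambda\mQ,\Sigma_\lambda\mQ)=3\mathsf{P}(\lambda)r_\lambda^2$. Hence a necessary condition for atomicity is the identity $\mathsf{P}(\lambda)=\bigl(3\delta(\lambda)-1\bigr)^2$, which I call $(\star)$. It therefore suffices to prove that $D(\lambda):=\mathsf{P}(\lambda)-\bigl(3\delta(\lambda)-1\bigr)^2$ is nonzero off $V$.

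The core is then a computation. Using $5\delta=4\ell^2+\tau$ in (\ref{eqn:chi(Sigma_lambdaQ)}), the quartic and cubic terms in $\ell,\tau$ cancel against those of $(3\delta-1)^2$, leaving the compact form
\[ D(\lambda)=\frac{\ell(\lambda)^2}{5}+\frac{\tau(\lambda)}{20}-\frac{3\tau(\lambda)^2}{16}+\frac{\xi(\lambda)}{2}. \]
Substituting the formulas (\ref{eqn:ell}), (\ref{eqn:gamma}) and (\ref{eqn:xi}) and passing to the coordinates $a=m-t$, $b=t-s$, $c=s$ (all $\geq0$, with $V=\{a=b=0\}\cup\{b=c=0\}$), a direct expansion should give
\[ -240\,D=2\,ac\,(a+3)(c+3)+b\,G(a,b,c), \]
where $G$ has all coefficients nonnegative and constant term $24$. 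Since $G\geq24>0$ for $a,b,c\geq0$, the right-hand side is a sum of two nonnegative terms and vanishes exactly when $b=0$ and $a c(a+3)(c+3)=0$, i.e.\ when $b=0$ and ($a=0$ or $c=0$) — precisely the equations cutting out $V$. Thus $D(\lambda)<0$ off $V$, condition $(\star)$ fails, and $\Sigma_\lambda\mQ$ is not atomic.

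The main obstacle is this last identity: one must carry out the full expansion of $D$ in $a,b,c$ and verify both that every monomial is divisible by $b$ or by $ac$ and that $G$ has the stated positive constant term. This is elementary but lengthy; the two slices
\[ D|_{s=0}=-\tfrac{1}{120}\,t(t+3)(m+1)(m+4),\qquad D|_{t=s}=-\tfrac{1}{120}\,(m-s)(m-s+3)\,s(s+3) \]
already exhibit the mechanism and provide useful consistency checks (they vanish exactly where $s=0$ meets $t=0$, and where $t=s$ meets $m=s$, respectively). A more conceptual alternative would compare the degree-six components of $v(\Sigma_\lambda\mQ)$ and $\mathsf{T}(\tilde{v}^{(2)})$ via item $(3)$ of Lemma~\ref{lemma:Bottini}, but the $\chi$-comparison above is the most economical since $\chi(\Sigma_\lambda\mQ,\Sigma_\lambda\mQ)$ is already available from Proposition~\ref{prop:eulero}.
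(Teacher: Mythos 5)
Your proposal follows the paper's proof essentially verbatim: both derive the necessary condition $\mathsf{P}(\lambda)=\bigl(3\delta(\lambda)-1\bigr)^2$ by playing items $(1)$ and $(2)$ of Lemma~\ref{lemma:Bottini} against Proposition~\ref{prop:modular} and Proposition~\ref{prop:eulero}, and then check that this identity fails exactly off the locus of Theorem~\ref{theorem B}. In fact you supply more detail than the paper does on the last step (which the paper dismisses with ``one can directly see''): your compact expression $D=\frac{\ell^2}{5}+\frac{\tau}{20}-\frac{3\tau^2}{16}+\frac{\xi}{2}$ and the two slices $D|_{s=0}$, $D|_{t=s}$ check out numerically against the worked examples $(2,1,0,0)$, $(3,2,1,0)$ and $(4,2,1,0)$, so the only remaining task is the routine verification of the claimed sign of $G$.
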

\begin{proof}
    First of all, by Proposition~\ref{prop:eulero} and Lemma~\ref{lemma:Bottini} we get that if $\mathsf{P}(\lambda)$ is not the square of a rational number, then $\Sigma_\lambda\mQ$ cannot be atomic.

    Let us then consider those partitions $\lambda$ for which $\mathsf{P}(\lambda)$ is of the form $q_\lambda^2$ for some $q_\lambda\in\QQ$. If $\Sigma_\lambda\mQ$ were atomic, then let us denote by $\tilde{v}(\Sigma_\lambda\mQ)$ its extended Mukai vector. By Lemma~\ref{lemma:Bottini}.(1) and Theorem~\ref{theorem A} then we must have
    \[ \tilde{q}(\tilde{v}(\Sigma_\lambda\mQ))=\frac{5}{2}q_\lambda r_\lambda^2. \]
    
   On the other hand, by Lemma~\ref{lemma:Bottini}.(2) and Proposition~\ref{prop:modular} we must also have
    \[ \tilde{q}(\tilde{v}(\Sigma_\lambda\mQ))=\frac{5}{2}(3\delta(\lambda)-1) r_\lambda^2. \]
    Putting these two equality together we get that
    \[ q_\lambda=3\delta(\lambda)-1. \]
    One can directly see that the equation
    \[ \left(3\delta(\lambda)-1\right)^2-\mathsf{P}(\lambda)=0 \]
    has no integral solutions as soon as at most two entries are equal.
\end{proof}


Let us then prove the reverse implication.

\begin{proposition}
    If $\lambda_1=\lambda_2=\lambda_3$ or $\lambda_2=\lambda_3=\lambda_4$, then $\Sigma_\lambda\mQ$ is atomic.
\end{proposition}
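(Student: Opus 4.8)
The plan is to reduce both cases to the symmetric bundle, which was already shown to be atomic in Proposition~\ref{prop:Sym atomic}, exploiting the fact that atomicity is preserved under tensoring with line bundles and under the derived dual. First I would invoke the reduction trick of Section~\ref{section:reduction}: by identity~(\ref{eqn:reduction}) we have
\[ \Sigma_\lambda\mQ=\Sigma_{\lambda'}\mQ\otimes\mO_X(\lambda_4),\qquad \lambda'=(\lambda_1-\lambda_4,\lambda_2-\lambda_4,\lambda_3-\lambda_4,0), \]
and since twisting by a line bundle is a derived auto-equivalence, atomicity of $\Sigma_\lambda\mQ$ is equivalent to atomicity of $\Sigma_{\lambda'}\mQ$ (see the Remark following Proposition~\ref{prop:se ann=ann allora atomico}, or \cite[Proposition~3.10]{Beckmann}). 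Under this reduction the hypothesis $\lambda_2=\lambda_3=\lambda_4$ becomes $\lambda'=(m,0,0,0)$, while $\lambda_1=\lambda_2=\lambda_3$ becomes $\lambda'=(m,m,m,0)$, for a suitable $m\geq0$. So I only need to treat these two normalised shapes.

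The case $\lambda'=(m,0,0,0)$ is immediate, since $\Sigma_{(m,0,0,0)}\mQ=\Sym^m\mQ$ is atomic by Proposition~\ref{prop:Sym atomic}. For $\lambda'=(m,m,m,0)$ I would use Example~\ref{example:duale}, which for this partition yields the dual partition $(m,0,0,0)$ and hence
\[ \Sigma_{(m,m,m,0)}\widetilde{\mQ}^\vee=\Sym^m\widetilde{\mQ}\otimes\mO_{\Gr(2,6)}(-m). \]
Restricting to $X$ and dualising, this reads $\Sigma_{(m,m,m,0)}\mQ=(\Sym^m\mQ)^\vee\otimes\mO_X(m)$. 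Thus the statement follows once I know that the derived dual of the atomic bundle $\Sym^m\mQ$ is again atomic, after which a final line-bundle twist finishes the argument as in the first paragraph.

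The main obstacle is precisely this last point: the duality functor $R\mathcal{H}om(-,\mO_X)$ is an \emph{anti}-equivalence rather than a member of the group of derived auto-equivalences literally quoted in the Remark, so I cannot cite preservation of atomicity verbatim. The cleanest justification I would give is that $\mathbb{D}=R\mathcal{H}om(-,\mO_X)$ acts on $\oH^\ast(X,\QQ)$ through an isometry of the extended Mukai lattice that normalises the LLV-algebra $\mathfrak{g}(X)$; consequently it sends $\operatorname{Ann}(v(\Sym^m\mQ))$ to $\operatorname{Ann}(v((\Sym^m\mQ)^\vee))$ and preserves the equality of annihilators defining atomicity (Section~\ref{section:atomic}). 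Should one prefer a fully self-contained route avoiding this, the alternative is a direct verification in the style of Proposition~\ref{prop:Sym atomic}: compute $\ch(\Sigma_{(m,m,m,0)}\mQ)$ from Theorem~\ref{thm:Chern character} with $t=s=m$, determine the normalised candidate $\tilde v$ from the discriminant and Euler characteristic via items~$(1)$ and~$(2)$ of Lemma~\ref{lemma:Bottini}, and then check $\operatorname{T}(\tilde v^{(2)})=v(\Sigma_{(m,m,m,0)}\mQ)$ using item~$(3)$ of Lemma~\ref{lemma:Bottini} together with the relations of Lemma~\ref{lemma:identities ch_3(Q) e ch_4(Q)}; Proposition~\ref{prop:se ann=ann allora atomico} then gives atomicity. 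I expect the duality argument to be the intended, shorter one, with the explicit computation serving as a cross-check.
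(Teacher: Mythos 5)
Your proposal is correct and follows essentially the same route as the paper: reduce via the twist/duality identities to $\Sym^m\mQ$ and invoke Proposition~\ref{prop:Sym atomic}. The paper handles your worry about the dual by simply citing \cite[Proposition~3.10]{Beckmann} for both line-bundle twists and dualisation, so your extra justification (and the computational cross-check) is more care than the paper itself takes.
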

\begin{proof}
    Let us first reduce the proof to the case of symmetric bundles. Of course if $\lambda_1=\lambda_2=\lambda_3=\lambda_4$, then $\Sigma_\lambda\mQ=\mO_X$ is atomic, therefore we can suppose that at least two entries are different.

    Under this assumption we have only two cases, namely $\lambda=(\lambda_1,\lambda_2,\lambda_2,\lambda_2)$ and $\lambda=(\lambda_1,\lambda_1,\lambda_1,\lambda_4)$.

    In the first case, by the reduction trick of Section~\ref{eqn:reduction}, we have
    \[ \Sigma_{(\lambda_1,\lambda_2,\lambda_2,\lambda_2)}\mQ=\Sym^{\lambda_1-\lambda_2}\mQ\otimes\mO_X(\lambda_2). \]
    In the second case, by Example~\ref{example:duale} we have
    \[ \Sigma_{(\lambda_1,\lambda_1,\lambda_1,\lambda_4)}\mQ=\Sym^{\lambda_1-\lambda_4}\mQ^\vee\otimes\mO_X(\lambda_1). \]
    Since tensoring with a line bundle and passing to the dual are operations that preserve the atomicity (\cite[Proposition~3.10]{Beckmann}), without loss of generality we can reduce to the case $\lambda=(m,0,0,0)$, so that $\Sigma_\lambda\mQ=\Sym^m\mQ$ and the claim follows at once from Proposition~\ref{prop:Sym atomic}.   
\end{proof}

\section{Behaviour of the Ext-groups}\label{section:Ext groups}
Computing the Ext-groups of the vector bundles $\Sigma_\lambda\mQ$ can be very difficult. In fact the spectral sequence (\ref{eqn:spectral sequence}) can have non-trivial differentials, and a systematic and a priori analysis seems out of reach as for now. 

Nevertheless in many cases the spectral sequence has enough zero differentials to allow us to determine at least the groups $\Ext^0$ and $\Ext^1$. 

For example we have the following result.

 \begin{proposition}\label{prop:sym simple and rigid}
        If $\lambda_1=\lambda_2=\lambda_3$ or $\lambda_2=\lambda_3=\lambda_4$, then $\Sigma_\lambda\mQ$ is simple and rigid.
    \end{proposition}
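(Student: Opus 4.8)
The plan is to observe that both simplicity and rigidity are governed entirely by the cohomology of the endomorphism bundle, since $\Ext^i(\Sigma_\lambda\mQ,\Sigma_\lambda\mQ)=\oH^i(X,\mE nd(\Sigma_\lambda\mQ))$, and then to reduce this endomorphism bundle to that of a symmetric power, where everything has already been computed.

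First I would record the two elementary invariances of the functor $\mE nd(-)=(-)^\vee\otimes(-)$: tensoring by a line bundle leaves it unchanged, $\mE nd(F\otimes L)=\mE nd(F)$, and dualizing leaves it unchanged up to isomorphism, $\mE nd(F^\vee)=F\otimes F^\vee\cong\mE nd(F)$. Consequently $\oH^i(X,\mE nd(\Sigma_\lambda\mQ))$ is unaffected by any twist or dual applied to $\Sigma_\lambda\mQ$, and so are simplicity ($\dim\oH^0=1$) and rigidity ($\oH^1=0$).

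Next I would run exactly the reduction already used to prove that these bundles are atomic. In the case $\lambda_2=\lambda_3=\lambda_4$ the identity~(\ref{eqn:reduction}) gives $\Sigma_\lambda\mQ=\Sym^{\lambda_1-\lambda_2}\mQ\otimes\mO_X(\lambda_2)$, and in the case $\lambda_1=\lambda_2=\lambda_3$ Example~\ref{example:duale} gives $\Sigma_\lambda\mQ=\Sym^{\lambda_1-\lambda_4}\mQ^\vee\otimes\mO_X(\lambda_1)$. Setting $m=\lambda_1-\lambda_2$ in the first case and $m=\lambda_1-\lambda_4$ in the second, the two invariances above yield $\mE nd(\Sigma_\lambda\mQ)\cong\mE nd(\Sym^m\mQ)$, and hence $\Ext^i(\Sigma_\lambda\mQ,\Sigma_\lambda\mQ)\cong\Ext^i(\Sym^m\mQ,\Sym^m\mQ)$ for every $i$.

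Finally I would conclude by quoting Proposition~\ref{prop: ext of sym}, which gives $\dim\Ext^0(\Sym^m\mQ,\Sym^m\mQ)=1$ and $\dim\Ext^1(\Sym^m\mQ,\Sym^m\mQ)=0$: the former says $\Sigma_\lambda\mQ$ is simple and the latter that it is rigid. There is no genuine obstacle here, as all the analytic work has already been carried out in the study of $\Sym^m\mQ$ through the Koszul spectral sequence~(\ref{eqn:spectral sequence}) and the Borel--Weil--Bott computation of Lemma~\ref{lemma: (2n n n 0 | n n)}. The only point requiring a moment's care is the duality invariance of the endomorphism bundle in the case $\lambda_1=\lambda_2=\lambda_3$, but this is the standard isomorphism $\mE nd(F^\vee)\cong\mE nd(F)$ noted above.
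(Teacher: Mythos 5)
Your proposal is correct and follows essentially the same route as the paper: reduce via the twist identity (\ref{eqn:reduction}) and Example~\ref{example:duale} to the symmetric power $\Sym^m\mQ$ (up to dual and line-bundle twist, which do not affect the endomorphism bundle), and then quote the already-computed Ext-groups of $\Sym^m\mQ$. The paper phrases the reduction as the two cases $(m,m,m,0)$ and $(m,0,0,0)$ being dual to each other and cites Theorem~\ref{thm:Sym Q} rather than Proposition~\ref{prop: ext of sym}, but the content is identical.
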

    \begin{proof}
        First of all, using the reduction trick in Section~\ref{section:reduction} we can assume that $\lambda_4=0$. We have then only two cases to consider: $\lambda=(m,m,m,0)$ and $\lambda=(m,0,0,0)$. These two cases are equivalent: in fact by Example~\ref{example:duale} they are dual to each other, up to a twist by a line bundle.

        On the other hand, $\Sigma_{(m,0,0,0)}\mQ=\Sym^m\mQ$ and the result follows from Theorem~\ref{thm:Sym Q}.
    \end{proof}
    Notice in particular that in this cases $\Sigma_\lambda\mQ$ is slope stable.

Based on the examination of many cases, we are led to state the following conjectural behaviour.

\begin{conjA}
    Let $X\subset\Gr(2,V_6)$ be the Fano variety of lines of a smooth cubic fourfold, and let $\lambda=(\lambda_1,\lambda_2,\lambda_3,\lambda_4)$ be a partition. Then 
    \begin{enumerate}
        \item $\Sigma_\lambda\mQ$ is simple;
        \item if $\lambda$ has at most two different entries, then 
        \[ \Ext^1(\Sigma_\lambda\mQ,\Sigma_\lambda\mQ)=\left\{
    \begin{array}{ll}
      \W^3V_6^\vee   & \mbox{if}\quad\lambda_1=\lambda_2>\lambda_3\quad\mbox{or}\quad\lambda_1>\lambda_2=\lambda_3>\lambda_4\quad\mbox{or}\quad\lambda_2>\lambda_3=\lambda_4 \\
      (\W^3V_6^\vee)^{\oplus 2} & \mbox{otherwise.}
    \end{array}
    \right.\]
    \end{enumerate}    
\end{conjA}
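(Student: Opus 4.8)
The plan is to reduce everything to the cohomology of the endomorphism bundle $\mE_\lambda=\mE nd(\Sigma_\lambda\mQ)$ and to read it off the spectral sequence (\ref{eqn:spectral sequence}) attached to the Koszul resolution (\ref{eqn:Koszul}). By the reduction trick of Section~\ref{section:reduction} one may assume $\lambda_4=0$, since tensoring by $\mO_X(\lambda_4)$ leaves $\mE_\lambda$ unchanged. For part~(1), recall that $\Sigma_\lambda\mQ$ is slope polystable by Theorem~\ref{theorem A}; hence it is simple if and only if it is stable, which is equivalent to $\oH^0(\mE nd_0(\Sigma_\lambda\mQ))=0$, where $\mE nd_0$ is the traceless part. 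The trace summand $\mO$ of $\widetilde{\mE}_\lambda$ already accounts for the canonical $\CC=\oH^0(\mO_X)$ inside $\Ext^0$, so the whole problem is to prove that the traceless part contributes nothing in total degree $0$.

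First I would set up part~(1) as follows. Decompose $\widetilde{\mE}_\lambda=\bigoplus_\nu(\Sigma_{\nu|(\lambda_1,\lambda_1)})^{\oplus m^\nu_{\lambda',\lambda}}$ by Littlewood--Richardson and, for each summand and each $p=0,\dots,4$, compute $\oH^\bullet(\Sigma_{\nu|(\lambda_1,\lambda_1)}\otimes\W^p\Sym^3\widetilde{\mU})$ through the Borel--Weil--Bott algorithm of Section~\ref{section:BBW}. Only the antidiagonal $q=p$ of the $E_1$-page, namely $\oH^p(\widetilde{\mE}_\lambda\otimes\W^p\Sym^3\widetilde{\mU})$, can feed $\oH^0(\mE_\lambda)$; the task is to show that, away from the trace, every class landing there is either absent on $E_1$ or killed by a differential. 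The clean model is the symmetric case: in the proof of Lemma~\ref{lemma: (2n n n 0 | n n)} each summand $\Sigma_{(2n,n,n,0\mid n,n)}$ of $\mE nd(\Sym^m\widetilde{\mQ})$ is shown to be acyclic in the relevant degrees, so that no traceless contribution reaches $\Ext^0$ or $\Ext^1$. I would try to organise the general $\widetilde{\mE}_\lambda$ into analogous families, regular in the entries of $\lambda$, and run Borel--Weil--Bott uniformly.

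For part~(2), after the reduction $\lambda_4=0$ the partitions with at most two distinct entries are exactly $(m,0,0,0)$, $(m,m,m,0)$ and $(m,m,0,0)$. The first two are the atomic bundles, dual to one another up to a twist by Example~\ref{example:duale}, for which $\Ext^1=0$ is already established in Theorem~\ref{thm:Sym Q} and Proposition~\ref{prop:sym simple and rigid}. The genuinely new family is $(m,m,0,0)$: here $\Sigma_{(m,m,0,0)}\widetilde{\mQ}$ is self-dual up to the twist $\mO(-m)$, so $\mE nd$ equals $\Sigma_{(m,m,0,0)}\widetilde{\mQ}^{\otimes 2}\otimes\mO(-m)$ and the Littlewood--Richardson decomposition again yields a family regular in $m$. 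The inclusion $\W^3V_6^\vee\subseteq\Ext^1$ is supplied by Proposition~\ref{prop:W3 in Ext1}, while the reverse inclusion would follow from the same uniform Borel--Weil--Bott analysis, exactly as the $\Ext^1$ entries are pinned down for the small partitions of Theorem~\ref{theorem C}. Combined with part~(1) and the Euler-characteristic formula of Proposition~\ref{prop:eulero}, this then fixes $\Ext^1$ and $\Ext^2$ along each family.

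The hard part is controlling the differentials of (\ref{eqn:spectral sequence}) once $\lambda_1$ is large. For $\lambda_2-\lambda_4\le 3$ the $E_1$-page is sparse and the antidiagonals feeding $\Ext^0$ and $\Ext^1$ carry at most the expected terms, so degeneracy can be checked by hand; this is precisely the range of Theorem~\ref{theorem C}. In general, however, the number of Littlewood--Richardson summands and the spread of their Borel--Weil--Bott degrees grow with $\lambda_1$, and higher differentials can be nonzero, as in the explicit instance of Section~\ref{section:example of non-degeneracy}. A full proof therefore seems to require either a representation-theoretic mechanism forcing the antidiagonal contributions to cancel for every $\lambda$, or a geometric identification of the surviving classes with the tautological deformations of $X$ parametrised by $\W^3V_6^\vee$; proving such a uniform vanishing-or-cancellation statement is the essential obstacle, and is what keeps the statement at the level of a conjecture.
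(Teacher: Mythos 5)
You have correctly recognised that this statement is Conjecture~\ref{conjecture:simple+ext 1}, which the paper does not prove, and what you outline is essentially the paper's own programme of partial evidence: reduce to $\lambda_4=0$ by the twist of Section~\ref{section:reduction}, use polystability from Theorem~\ref{theorem A} to convert simplicity into $\oH^0(\mE nd_0(\Sigma_\lambda\mQ))=0$, decompose $\widetilde{\mE}_\lambda$ by Littlewood--Richardson, run Borel--Weil--Bott through the Koszul spectral sequence (\ref{eqn:spectral sequence}), obtain the lower bound on $\Ext^1$ from Proposition~\ref{prop:W3 in Ext1}, and recover $\Ext^2$ from Proposition~\ref{prop:eulero}. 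This is exactly how Theorem~\ref{theorem C} and Sections~\ref{section:Evidences}--\ref{section:example of non-degeneracy} proceed, and you identify the same obstacle that keeps the statement conjectural: there is no uniform mechanism controlling the differentials of (\ref{eqn:spectral sequence}) once $\lambda_1$ grows. So the method agrees with the paper's, and neither side contains a proof.

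Two points deserve correction. First, your restriction of part (2) to the families $(m,0,0,0)$, $(m,m,0,0)$, $(m,m,m,0)$ follows the letter of the hypothesis ``at most two different entries'', but under that literal reading $(m,0,0,0)$ satisfies none of the three listed conditions and falls into the ``otherwise'' clause, forcing $\Ext^1=(\W^3V_6^\vee)^{\oplus 2}$ --- contradicting the rigidity of $\Sym^m\mQ$ that you yourself invoke. Comparison with item (3) of Theorem~\ref{theorem C}, with Proposition~\ref{prop:W3 in Ext1}, and with the three-case formula for $\dim\Ext^2$ displayed after the conjecture shows the intended statement is a trichotomy over \emph{all} partitions: $\Ext^1=0$ in the rigid cases $\lambda_1=\lambda_2=\lambda_3$ or $\lambda_2=\lambda_3=\lambda_4$, one copy of $\W^3V_6^\vee$ under the three listed conditions, and two copies when all entries are distinct; so the substantive content of part (2) is not confined to two-valued partitions. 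Second, the claim that for $\lambda_2-\lambda_4\le 3$ ``degeneracy can be checked by hand'' overstates what is actually achieved: in that range the paper controls $\Ext^0$ uniformly (hence simplicity), but pins down $\Ext^1$ only for the specific partitions in the table, and already for $(5,3,0,0)$ the $E_1$-page has a genuine indeterminacy in degree $1$ that must be resolved by the ad hoc surjectivity argument of Lemma~\ref{lem:521}. The essential missing idea --- a representation-theoretic or geometric reason forcing the unwanted antidiagonal contributions to cancel for every $\lambda$ --- is the same one the paper lacks.
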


If the conjecture holds true, then the vector bundles $\Sigma_\lambda\mQ$ are slope stable. Moreover, using Serre duality and the explicit computation of the Euler characteristic in Proposition~\ref{prop:eulero} we would have
    \[ \operatorname{dim}\Ext^2(\Sigma_\lambda\mQ,\Sigma_\lambda\mQ)=\left\{
    \begin{array}{ll}
      3\mathsf{P}(\lambda)r_\lambda^2-2              & \mbox{if}\quad \lambda_1=\lambda_2=\lambda_3\;\mbox{or}\;\lambda_2=\lambda_3=\lambda_4 \\
      3\mathsf{P}(\lambda)r_\lambda^2+38   & \mbox{if}\quad\lambda_1=\lambda_2>\lambda_3\;\mbox{or}\;\lambda_1>\lambda_2=\lambda_3>\lambda_4\;\mbox{or}\;\lambda_2>\lambda_3=\lambda_4 \\
      3\mathsf{P}(\lambda)r_\lambda^2+78 & \mbox{otherwise}
    \end{array}
    \right.\]
    where $\mathsf{P}(\lambda)$ is the polynomial in Proposition~\ref{prop:eulero}.
    Notice that in this case the dimension of $\Ext^2(\Sigma_\lambda\mQ,\Sigma_\lambda\mQ)$  
    diverges to infinity as the first component $\lambda_1$ of $\lambda$ goes to infinity.


    As a first evidence we notice that the $\SL(6)$-representation 
    $\W^3 V_6^\vee$
    is always contained in $\Ext^1(\Sigma_\lambda\mQ,\Sigma_\lambda\mQ)$, with the prescribed multiplicity.

    \begin{proposition}\label{prop:W3 in Ext1}
        For every partition $\lambda$ we have 
       \[ \Ext^1(\Sigma_\lambda\mQ,\Sigma_\lambda\mQ)\supset\left\{
    \begin{array}{ll}
      \W^3V_6^\vee   & \mbox{if}\quad\lambda_1=\lambda_2>\lambda_3\quad\mbox{or}\quad\lambda_1>\lambda_2=\lambda_3>\lambda_4\quad\mbox{or}\quad\lambda_2>\lambda_3=\lambda_4 \\
      (\W^3V_6^\vee)^{\oplus 2} & \lambda_1>\lambda_2>\lambda_3>\lambda_4\, .
    \end{array}
    \right.\]
    \end{proposition}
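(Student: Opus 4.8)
The plan is to extract the $\W^3V_6^\vee$-isotypic part of $\Ext^1(\Sigma_\lambda\mQ,\Sigma_\lambda\mQ)=\oH^1(\mE_\lambda)$ directly from the Koszul spectral sequence (\ref{eqn:spectral sequence}). All the groups $\operatorname{E}_1^{-p,q}=\oH^q(\widetilde{\mE}_\lambda\otimes\W^p\Sym^3\widetilde{\mU})$ are $\SL(6)$-representations and every differential is $\SL(6)$-equivariant, so the $\W^3V_6^\vee$-isotypic component forms a subcomplex that can be treated on its own. First I would reduce to $\lambda=(\lambda_1,\lambda_2,\lambda_3,\lambda_4)$ with $\lambda_1\geq1$, recall that $\widetilde{\mE}_\lambda=\bigoplus_\nu\Sigma_{\nu|(\lambda_1,\lambda_1)}^{\oplus m^\nu_{\lambda',\lambda}}$ with $\lambda'=(\lambda_1-\lambda_4,\lambda_1-\lambda_3,\lambda_1-\lambda_2,0)$, and note that since irreducible homogeneous bundles on $\Gr(2,6)$ are rigid we have $\operatorname{E}_1^{0,1}=\Ext^1_{\Gr}(\Sigma_\lambda\widetilde{\mQ},\Sigma_\lambda\widetilde{\mQ})=0$. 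Hence no $\W^3V_6^\vee$ enters $\Ext^1$ from the $p=0$ column, and the candidate source is the term $\operatorname{E}_1^{-1,2}=\oH^2(\widetilde{\mE}_\lambda\otimes\Sym^3\widetilde{\mU})$.

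Next I would pin down the summand that produces the class. On the rank-two factor Pieri gives $\Sigma_{(\lambda_1,\lambda_1)}\widetilde{\mU}\otimes\Sym^3\widetilde{\mU}=\Sigma_{(\lambda_1+3,\lambda_1)}\widetilde{\mU}$, so $\widetilde{\mE}_\lambda\otimes\Sym^3\widetilde{\mU}=\bigoplus_\nu\Sigma_{\nu|(\lambda_1+3,\lambda_1)}^{\oplus m^\nu_{\lambda',\lambda}}$. Running the Borel--Weil--Bott algorithm of Section~\ref{section:BBW} on $v=(\nu|\lambda_1+3,\lambda_1)$ and imposing that $\operatorname{sort}(v+\delta)-\delta=(c+1,c+1,c+1,c,c,c)$ (the weight of $\W^3V_6^\vee$ up to a determinant twist) for some $c$, together with $l(v+\delta)=2$, forces $c=\lambda_1$ and the unique solution
\[ \nu_0=(\lambda_1+1,\lambda_1+1,\lambda_1-1,\lambda_1-1). \]
Indeed $v+\delta=(\lambda_1+6,\lambda_1+5,\lambda_1+2,\lambda_1+1,\lambda_1+4,\lambda_1)$ sorts with exactly two transpositions and yields $\oH^2=\Sigma_{(\lambda_1+1,\lambda_1+1,\lambda_1+1,\lambda_1,\lambda_1,\lambda_1)}V_6\cong\W^3V_6^\vee$. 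The remaining weights solving the $\W^3$-equation for $p=1$ have $l$-function $4$ or $6$, so they feed $\Ext^3$ (or higher) rather than $\Ext^1$; thus the $\W^3V_6^\vee$-multiplicity of $\operatorname{E}_1^{-1,2}$ contributing to $\Ext^1$ is exactly $m^{\nu_0}_{\lambda',\lambda}$.

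The multiplicity is then a Littlewood--Richardson count $m^{\nu_0}_{\lambda',\lambda}=c^{\nu_0}_{\lambda',\lambda}$. I would evaluate this coefficient according to the coincidence pattern of the entries of $\lambda$, checking that it equals $2$ when $\lambda_1>\lambda_2>\lambda_3>\lambda_4$ and $1$ in each of the three boundary cases $\lambda_1=\lambda_2>\lambda_3$, $\lambda_1>\lambda_2=\lambda_3>\lambda_4$, $\lambda_2>\lambda_3=\lambda_4$ (and, as a consistency check, $0$ in the remaining cases $\lambda_1=\lambda_2=\lambda_3$ and $\lambda_2=\lambda_3=\lambda_4$, where $\Sigma_\lambda\mQ$ is rigid by Proposition~\ref{prop:sym simple and rigid}). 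This is the combinatorial core, but it is routine once the fillings of the skew shape $\nu_0/\lambda'$ with content $\lambda$ are written out.

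Finally --- and this is the main obstacle --- I must show these classes survive to $\operatorname{E}_\infty$. By $\SL(6)$-equivariance it suffices to prove that the $\W^3V_6^\vee$-isotypic parts of the four neighbouring terms that could interact with $\operatorname{E}_1^{-1,2}$ all vanish: the target $\oH^2(\widetilde{\mE}_\lambda)$ of the outgoing $d_1$, and the sources $\oH^2(\widetilde{\mE}_\lambda\otimes\W^2\Sym^3\widetilde{\mU})$, $\oH^3(\widetilde{\mE}_\lambda\otimes\W^3\Sym^3\widetilde{\mU})$, $\oH^4(\widetilde{\mE}_\lambda\otimes\W^4\Sym^3\widetilde{\mU})$ of the incoming $d_1$, $d_2$, $d_3$. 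Each is again a Borel--Weil--Bott computation with the corresponding $\widetilde{\mU}$-weight $(\lambda_1,\lambda_1)$, $(\lambda_1+5,\lambda_1+1)$ or $(\lambda_1+3,\lambda_1+3)$, $(\lambda_1+6,\lambda_1+3)$, $(\lambda_1+6,\lambda_1+6)$; solving the same sorting equation in each fixed cohomological degree shows that no $\nu$ produces $\W^3V_6^\vee$ there, since the weights that do solve it land in the wrong cohomological degree. Once these vanishings are established, $d_1,d_2,d_3$ cannot touch the $\W^3V_6^\vee$-part of $\operatorname{E}_1^{-1,2}$, so it injects into $\Ext^1(\Sigma_\lambda\mQ,\Sigma_\lambda\mQ)$ and yields the asserted inclusion with the stated multiplicities. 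The delicate point throughout is to carry out these Borel--Weil--Bott case analyses uniformly in $\lambda_1$ rather than example-by-example.
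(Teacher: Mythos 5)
Your proposal is correct and follows essentially the same route as the paper: both arguments isolate the summand $\Sigma_{(\lambda_1+1,\lambda_1+1,\lambda_1-1,\lambda_1-1\mid\lambda_1,\lambda_1)}=\Sigma_{(2,2,0,0\mid 1,1)}$ of $\mE nd(\Sigma_\lambda\mQ)$, observe via Borel--Weil--Bott and the Koszul resolution that its cohomology on $X$ is $\W^3V_6^\vee$ concentrated in degree $1$, and reduce the multiplicity to the Littlewood--Richardson coefficient $m^{\nu_0}_{\lambda',\lambda}$. Two remarks. First, the tableau count you dismiss as routine is in fact the bulk of the paper's proof: it is carried out by exhibiting the admissible fillings explicitly in the three cases $(m,t,0,0)$, $(m,t,t,0)$ and $m>t>s>0$, and this is precisely where the case distinction in the statement comes from, so a complete write-up would have to include it. Second, your degeneration argument via $\SL(6)$-isotypic components of the whole complex is heavier than necessary: since the Koszul differentials act only on the $\widetilde{\mU}$-factors, the spectral sequence splits as a direct sum over the irreducible $\widetilde{\mQ}$-summands $\nu$, so survival only has to be checked inside the subcomplex attached to $\nu_0$ itself, where the only other nonzero $E_1$-entries sit in total degrees $2$ and $3$ and no differential can reach or leave $E_1^{-1,2}$; this is what the paper does implicitly by simply recording $\oH^1(X,\Sigma_{(2,2,0,0\mid 1,1)})=\W^3V_6^\vee$.
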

    \begin{proof}
        First of all, using again the reduction trick in Section~\ref{section:reduction} we can reduce to study the case $\lambda=(m,t,s,0)$. 

        We claim that in the Littlewood--Richardson decomposition of $\mE nd(\Sigma_\lambda\mQ)$ we can always find the factor $\Sigma_{(m+1,m+1,m-1,m-1|m,m)}$ with multiplicity $1$ if $m=t$ or $t=s$, and with multiplicity $2$ if $m>t>s>0$.

        Let us show how the claim finishes the proof. First of all, by equality (\ref{eqn: + n}) we have 
        \[ \Sigma_{(m+1,m+1,m-1,m-1|m,m)}=\Sigma_{(2,2,0,0|1,1)}. \]
        Now, by the Borel--Weil--Bott Theorem (Section~\ref{section:BBW}) it is easy to see that 
        \[ \oH^1(X,\Sigma_{(2,2,0,0|1,1)})=\Sigma_{(1,1,1,0,0,0)}=\W^3 V_6^\vee, \]
        from which the proposition follows.

        Let us then prove the claim. First of all we notice that by Example~\ref{example:duale} it is enough to prove that the Schur module
        $\Sigma_{(m+1,m+1,m-1,m-1)}$ appears in the Littlewood--Richardson decomposition of 
        \begin{equation}\label{eqn:end meno m} 
    \Sigma_{(m,t,s,0)}\mQ\otimes\Sigma_{(m,m-s,m-t,0)}\mQ 
        \end{equation}
        with the prescribed multiplicity. Moreover, being the partition $(m+1,m+1,m-1,m-1)$ fixed, we only need to check that it's multiplicity is $1$ or $2$. We proceed by a case-by-case inspection.
        \begin{itemize}
            \item {\bf Case $\lambda=(m,t,0,0)$.} Starting from $(m,m,m-t,0)$, we need to arrive to the shape $(m+1,m+1,m-1,m-1)$ in two steps: in the first step we need to add $m$ boxes labelled with $1$; in the second step we need to add $t$ boxes labelled with $2$. At each step we follow Pieri's rule and eventually the Littlewood--Richardson rule (see decomposition~(\ref{eqn:LR})) must be satisfied. This can be done in a unique way, i.e.
            \[
            \begin{tabular}{l}
            \begin{Young}
               & $\cdots$ &  &  & $\cdots$ &  &  & 1 \cr
               & $\cdots$ &  &  & $\cdots$ &  &  & 2 \cr
               & $\cdots$ &  & 1 & $\cdots$ & 1  \cr
           1 & $\cdots$ & 1 & 2 & $\cdots$ & 2  \cr 
            \end{Young}
            \end{tabular}
            \]

            \item {\bf Case $\lambda=(m,t,t,0)$.} Without loss of generality we can suppose that $t>m-t$. Then we need to arrive to the desired shape starting from $(m,t,t,0)$ and adding at each step: $m$ boxes labelled with $1$; $m-t$ boxes labelled with $2$; $m-t$ boxes labelled with $3$. As before, there is a unique way to do so, i.e.
            \[
            \begin{tabular}{l}
            \begin{Young}
               & $\cdots$ &  &  &  & $\cdots$ &  &  & 1 \cr
               & $\cdots$ &  &  & 1 & $\cdots$ & 1 & 1 & 2 \cr
               & $\cdots$ &  &  & 2 & $\cdots$ & 2  \cr
           1 & $\cdots$ & 1 & 3 & 3 & $\cdots$ & 3  \cr 
            \end{Young}
            \end{tabular}
            \]

            \item {\bf Case $\lambda=(m,t,s,0)$ with $m>t>s>0$.} As before, without loss of generality, we can suppose that $t>m-t$. Then we need to reach the shape $(m+1,m+1,m-1,m-1)$ starting from $(m,t,s,0)$ and adding at each step: $m$ boxes labelled with $1$; $m-s$ boxes labelled with $2$; $m-t$ boxes labelled with $3$. Let us start with some easy remarks. First of all, by Remark~\ref{remark:tutti 1} on the first row we must have one box labelled with $1$. By the Pieri and Littlewood--Richardson rules (and Remark~\ref{rmk:strictly increasing on colomns}) the second row must be filled with $m-t$ boxes labelled with $1$ and one last box labelled with $2$. On the third row, the Pieri and Littlewood--Richardson rules force all the boxes after the $t^{\operatorname{th}}$ to be labelled with $2$ (one needs to consider separately the case in which the last box is labelled with $3$: this choice forces the remaining of the labelling, which can be seen violating the Littlewood--Richardson rule). Similarly, the boxes before the $m-t-1$ boxes before the $t^{\operatorname{th}}$ box on the third row must be labelled with $1$. The only indeterminacy is then on the $t^{\operatorname{th}}$ box on the third row. Labelling this box with $1$ or $2$ fixes all the remaining labelling, and both outcomes are easily seen to be admissible, i.e.
             \[
            \begin{tabular}{l}
            \begin{Young}
              & $\cdots$ &  &  &  & $\cdots$ &  &  &  & $\cdots$ &  &  & 1 \cr
              & $\cdots$ &  &  &  & $\cdots$ &  &  & 1 & $\cdots$ & 1 & 1 & 2 \cr 
             & $\cdots$ &  &  & 1 & $\cdots$ & 1 & 1 & 2 & $\cdots$ & 2 \cr 
            1 & $\cdots$ & 1 & 2 & 2 & $\cdots$ & 2 & 3 & 3 & $\cdots$ & 3 \cr  
            \end{Young}
            \end{tabular}
            \mbox{ and }
            \begin{tabular}{l}
            \begin{Young}
              & $\cdots$ &  &  &  & $\cdots$ &  &  &  & $\cdots$ &  &  & 1 \cr
              & $\cdots$ &  &  &  & $\cdots$ &  &  & 1 & $\cdots$ & 1 & 1 & 2 \cr 
             & $\cdots$ &  &  & 1 & $\cdots$ & 1 & 2 & 2 & $\cdots$ & 2 \cr 
            1 & $\cdots$ & 1 & 1 & 2 & $\cdots$ & 2 & 3 & 3 & $\cdots$ & 3 \cr  
            \end{Young}
            \end{tabular}
            \]
            (To help reading the diagrams: in the last row of the left diagram we have the first $s-1$ boxes labelled with $1$, the next $t-s$ boxes labelled with $2$, and the last $m-t$ boxes labelled with $3$; in the last row of the right diagram we have the first $s$ boxes labelled with $1$, the next $t-s-1$ labelled with $2$, and the last $m-t$ boxes labelled with $3$.)
        \end{itemize}

        Since the case $\lambda=(m,m,s,0)$ is equivalent to the case $(m,t,0,0)$ by equation (\ref{eqn:end meno m}), this completes the proof.
    \end{proof}

\section{Proof of Theorem~\ref{theorem C}}\label{section:Evidences}

If we write $\lambda=(m,t,s,0)$, in this section we analyse Conjecture~\ref{conjecture:simple+ext 1} for small values of $t$ and $s$. 
The following statement resume the results of the next sections.
\begin{thm}\label{thm:garden of happiness}
    Keep the notation as above.
    \begin{enumerate}
        \item Conjecture~\ref{conjecture:simple+ext 1} holds for partitions of the form:
        \begin{itemize}
            \item $\lambda=(m,1,0,0)$, for any $m\geq1$;
            \item $\lambda=(m,1,1,0)$. for any $m\geq1$;
            \item $\lambda=(m,2,0,0)$, for any $m\geq2$;
            \item $\lambda=(3,2,1,0),(4,2,1,0)$;
            \item $\lambda=(m,2,2,0)$, for any $m\geq2$;
            \item $\lambda=(3,3,0,0),(4,3,0,0),(5,3,0,0)$;
            \item $\lambda=(4,3,1,0)$.
        \end{itemize}
        \item Item $(1)$ of Conjecture~\ref{conjecture:simple+ext 1} holds for partitions of the form $\lambda=(m,t,s,0)$ with $t,s\leq3$.   
    \end{enumerate}
    Moreover, if $\lambda=(m,t,s,0)$ is one of the partitions above, then the same conclusions hold for partitions of the form
    \[ (m+k,t+k,s+k,k)\quad\mbox{ and }\quad (m+k,m-s+k,m-t+k,k). \]
\end{thm}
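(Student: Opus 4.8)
The plan is to prove Theorem~\ref{thm:garden of happiness} by first disposing of the \emph{Moreover} clause through the formal symmetries of the endomorphism bundle, and then attacking the listed base partitions $\lambda=(m,t,s,0)$ directly through the spectral sequence (\ref{eqn:spectral sequence}).

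\textbf{Reduction via the symmetries of $\mE nd(\Sigma_\lambda\mQ)$.} The two families in the \emph{Moreover} clause are generated by two operations that leave the endomorphism bundle---and hence every group $\Ext^i(\Sigma_\lambda\mQ,\Sigma_\lambda\mQ)$---unchanged. First, passing from $(m,t,s,0)$ to $(m+k,t+k,s+k,k)$ is by (\ref{eqn:reduction}) nothing but tensoring $\Sigma_{(m,t,s,0)}\mQ$ with $\mO_X(k)$, and $\mE nd(E\otimes L)=\mE nd(E)$ for any line bundle $L$. Second, by Example~\ref{example:duale} one has $\Sigma_{(m,m-s,m-t,0)}\mQ=\Sigma_{(m,t,s,0)}\mQ^\vee\otimes\mO_X(m)$, and since $\mE nd(E^\vee)\cong\mE nd(E)$ the assignment $(m,t,s,0)\mapsto(m,m-s,m-t,0)$ again preserves the endomorphism bundle (one checks that the three conditions distinguishing the two $\Ext^1$-cases are merely permuted, and that $\W^3V_6^\vee$ is self-dual as an $\SL(6)$-module). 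Composing the two operations yields exactly the family $(m+k,m-s+k,m-t+k,k)$. Thus it suffices to prove Conjecture~\ref{conjecture:simple+ext 1} for the base partitions with $\lambda_4=0$ in the list, and its Item~(1) for all $(m,t,s,0)$ with $t,s\leq3$; the two infinite families follow verbatim.

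\textbf{The machine, and the two quantities to control.} For a fixed base partition we run (\ref{eqn:spectral sequence}). The input is the Littlewood--Richardson decomposition of $\widetilde{\mE}_\lambda=\Sigma_{\lambda'}\widetilde{\mQ}\otimes\Sigma_\lambda\widetilde{\mQ}\otimes\mO(-\lambda_1)$ into irreducibles $\Sigma_{\nu|(\lambda_1,\lambda_1)}$, exactly as in the worked examples of Section~\ref{section:two worked out example}; tensoring each summand with the four pieces $\Sym^3\widetilde{\mU}$, $\W^2\Sym^3\widetilde{\mU}=\Sigma_{(5,1)}\widetilde{\mU}\oplus\Sigma_{(3,3)}\widetilde{\mU}$, $\W^3\Sym^3\widetilde{\mU}$, $\W^4\Sym^3\widetilde{\mU}$ and applying Borel--Weil--Bott (Section~\ref{section:BBW}) produces the $\operatorname{E}_1$-page, whose relevant entries are organised precisely as in Tables~\ref{dimcoho (2,1,0,0)} and \ref{dimcoho (3,2,1,0)}. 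For simplicity we split off the trivial summand, $\widetilde{\mE}_\lambda=\mO\oplus\widetilde{\mE}^0_\lambda$, so that the trace already forces $\oH^0(\mE_\lambda)\supseteq\CC$; it then suffices to show $\oH^0(\mE^0_\lambda)=0$, i.e.\ that no summand $\Sigma_{\nu|(\lambda_1,\lambda_1)}\otimes\W^p\Sym^3\widetilde{\mU}$ contributes in total degree $q-p=0$ and that the incoming differentials to the $(0,0)$-spot (which originate in total degree $-1$) vanish. For the $\Ext^1$ statement one isolates the total-degree-$1$ entries instead; Proposition~\ref{prop:W3 in Ext1} already furnishes the lower bound $\W^3V_6^\vee$ (resp.\ its square), so the task reduces to showing that nothing else survives. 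Finally $\Ext^2$ is recovered from $\Ext^0$ and $\Ext^1$ by Serre duality and the Euler characteristic of Proposition~\ref{prop:eulero}, which also yields the table of $\operatorname{ext}^2$-values.

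\textbf{Case analysis, uniformity in $m$, and the obstacle.} For each partition in the finite list the nonzero $\operatorname{E}_1$-entries lying in total degrees $0$ and $1$ turn out to be \emph{isolated}: for these small $t,s$ the source and target of every potential differential abutting on them vanish (either the relevant bundle is acyclic, or its $l$-function places the cohomology in a degree off by more than one), so $\operatorname{E}_1=\operatorname{E}_\infty$ there and one simply reads off $\Ext^0=\CC$ and the claimed $\Ext^1$. For Item~(2), i.e.\ simplicity for all $m$ with $t,s\leq3$, the same Borel--Weil--Bott bookkeeping is performed with $m$ carried as a parameter: the weights of the summands of $\widetilde{\mE}^0_\lambda$ depend affinely on $m$, so the vanishing $\oH^0(\mE^0_\lambda)=0$ can be verified uniformly once $t,s$ are bounded. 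The genuine difficulty---and the reason the $\Ext^1$ computation is confined to the explicit short list---is controlling the higher differentials of (\ref{eqn:spectral sequence}): as $\lambda_1=m$ grows the decomposition of $\widetilde{\mE}_\lambda$ acquires many more summands, several with cohomology in adjacent total degrees, so the vanishing of the differentials abutting on the total-degree-$0$ and $1$ spots is no longer forced by degree reasons alone. Establishing the equalities in full generality would require either an independent geometric grip on these differentials (as in the isolated computation of Section~\ref{section:example of non-degeneracy}) or a structural explanation of the pattern recorded in Conjecture~\ref{conjecture:simple+ext 1}, which is exactly what remains open.
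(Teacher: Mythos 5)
Your outline follows the paper's strategy (duality and twist for the \emph{Moreover} clause; Littlewood--Richardson plus Borel--Weil--Bott plus the Koszul spectral sequence for the base cases), but it contains one concrete error and one substantive under-specification.

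The error: you assert that for every partition in the list the $\operatorname{E}_1$-entries in total degrees $0$ and $1$ are isolated, so that degeneration at those spots is forced by degree reasons. This is false for $\lambda=(5,3,0,0)$, which \emph{is} in the list of item (1). There the summands $\Sigma_{(10,6,2,2)}\widetilde{\mQ}\otimes\mO(-5)$ and $\Sigma_{(8,8,4,0)}\widetilde{\mQ}\otimes\mO(-5)$ of $\widetilde{\mE}_\lambda$ produce cohomology in adjacent total degrees when tensored with different Koszul pieces, and the resulting potential differential abuts exactly on the $\Ext^1$-spot. The paper resolves this (Lemma~\ref{lem:521}) by a non-formal argument: one must prove that the map $\Sigma_{(6,4,4,4,0,0)}V_6^\vee\oplus\Sigma_{(6,4,3,2,0,0)}V_6^\vee\to\Sigma_{(6,3,3,0,0,0)}V_6^\vee$ arising from the Koszul complex is surjective, which is done by dualising, multiplying by the cubic equation $\sigma$, and using that the target appears with multiplicity one in $\Sigma_{(6,6,6,3,3,0)}V_6\otimes\Sym^3V_6$. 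You cite Section~\ref{section:example of non-degeneracy} as if it lay beyond the theorem's scope, whereas it is an indispensable step in proving the theorem as stated. Without it your argument yields only an upper bound of $\dim\Ext^1\leq 20+(\text{something})$ for $(5,3,0,0)$, not the claimed equality.

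The under-specification: for item (2) you say the Borel--Weil--Bott bookkeeping "can be verified uniformly once $t,s$ are bounded" because the weights depend affinely on $m$. The paper makes this checkable by a different device: for each fixed $(t,s)$ it proves a recursive decomposition $\mE nd(\Sigma_{(m,t,s,0)}\mQ)=\mE nd(\Sigma_{(m-1,t,s,0)}\mQ)\oplus\bigoplus_{\mu\in\mathsf{K}_{(m,t,s,0)}}\Sigma_\mu$, where $\mathsf{K}_{(m,t,s,0)}$ is an explicit finite set of parametric partitions whose cardinality stabilises; the inclusion comes from Proposition~\ref{prop:+ delta 1 non cambia} and equality is certified by comparing $r_{(m,t,s,0)}^2-r_{(m-1,t,s,0)}^2$ with $\sum_{\mu\in\mathsf{K}}r_\mu$. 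One then only has to check vanishing of $\oH^0$ (and, where claimed, $\oH^1$) for the finitely many new parametric families. Your affine-dependence remark points in the same direction but does not by itself guarantee that the list of summands, with multiplicities, has been exhausted for all $m$; the rank count is what closes that loop.
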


\begin{remark}
    Let us explain the last part of the statement above. 

    By Example~\ref{example:duale} we know that $\Sigma_{m,t,s,0)}\mQ=\Sigma_{(m,m-s,m-t,0)}\mQ^\vee\otimes\mO_X(-m)$. Therefore if Conjecture~\ref{conjecture:simple+ext 1} holds for $(m,t,s,0)$, then it also holds for $(m,m-s,m-t,0)$, and vice versa.

    In particular, for example, Conjecture~\ref{conjecture:simple+ext 1} also holds for 
    \[ \lambda=(2,2,1,0), (3,3,1,0), (3,3,2,0), (4,3,2,0), (4,3,3,0), (5,3,3,0). \]

    The partitions in item (1) then should be interpreted as the "minimum set" of partitions for which the conjecture holds (and similarly for item (2)).
\end{remark}

Theorem~\ref{thm:garden of happiness} will be proved by a case-by-case analysis, each case being performed in a different section. In each case the strategy will be the same and it will follow the same lines outlined in Section~\ref{section:two worked out example}: we first find a set of partitions that "generate" the Littlewood--Richardson decomposition in a precise sense (that will be clear later); then we use Borel--Weil--Bott Theorem and the Koszul resolution to compute the cohomology of each factor. 

Before diving into the case by case analysis, we collect in the next section some remarks and notations that will allow us to simplify the computations.

\subsection{Preliminary remarks}

In the next sections we want to highlight the recursive nature of the Littlewood--Richardson decomposition of $\mE nd(\Sigma_\lambda\mQ)$ for $\lambda=(m,t,s,0)$, as $m$ increases of value. 

First of all, let us introduce the notation
\[ \delta_1=(1,0,0,0). \]
Notice that $\delta_1$ is the first fundamental weight for $\SL(4)$. Recall also that if $\mu=(\mu_1,\mu_2,\mu_3,\mu_4\mid \mu_5,\mu_6)$, then 
\[ \mu+1=(\mu_1+1,\mu_2+1,\mu_3+1,\mu_4+1\mid \mu_5+1,\mu_6+1) \]
and $\Sigma_{\mu+1}=\Sigma_\mu$ (see Section~\ref{section:homogeneous}).

\begin{proposition}\label{prop:+ delta 1 non cambia}
    If $\Sigma_{\mu}$ appears as a  factor in the Littlewood--Richardson decomposition of $\mE nd(\Sigma_\lambda\mQ)$, then $\Sigma_{\mu+1}$ appears as a factor in the decomposition of $\mE nd(\Sigma_{\lambda+\delta_1}\mQ)$.

    In particular, we have an abstract inclusion
    \[ \mE nd(\Sigma_\lambda\mQ)\subset\mE nd(\Sigma_{\lambda+\delta_1}\mQ). \]
\end{proposition}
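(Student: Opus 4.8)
The plan is to reduce the statement to a positivity property of Littlewood--Richardson coefficients and then to close the argument with the semigroup (Cartan-product) property of tensor-product multiplicities. First I would recall the explicit shape of the decomposition from Section~\ref{section:Koszul}: writing $\lambda'=(\lambda_1-\lambda_4,\lambda_1-\lambda_3,\lambda_1-\lambda_2,0)$ one has $\widetilde{\mE}_\lambda=\Sigma_{\lambda'}\widetilde{\mQ}\otimes\Sigma_\lambda\widetilde{\mQ}\otimes\mO_{\Gr(2,6)}(-\lambda_1)$, so that by Littlewood--Richardson
\[ \mE nd(\Sigma_\lambda\widetilde{\mQ})=\bigoplus_\nu\left(\Sigma_{\nu\mid(\lambda_1,\lambda_1)}\right)^{\oplus m^\nu_{\lambda',\lambda}}. \]
Thus a factor $\Sigma_\mu$ of $\mE nd(\Sigma_\lambda\mQ)$ is exactly one of the form $\mu=(\nu\mid\lambda_1,\lambda_1)$ with $m^\nu_{\lambda',\lambda}>0$, and since $\mu+1=(\nu+(1,1,1,1)\mid\lambda_1+1,\lambda_1+1)$ the proposition becomes the purely combinatorial assertion
\[ m^\nu_{\lambda',\lambda}>0\quad\Longrightarrow\quad m^{\nu+(1,1,1,1)}_{(\lambda+\delta_1)',\,\lambda+\delta_1}>0. \]
A direct computation gives $\lambda+\delta_1=\lambda+(1,0,0,0)$ and $(\lambda+\delta_1)'=\lambda'+(1,1,1,0)$, so the goal is to produce a summand $\Sigma_{\nu+(1,1,1,1)}\widetilde{\mQ}$ in $\Sigma_{\lambda'+(1,1,1,0)}\widetilde{\mQ}\otimes\Sigma_{\lambda+(1,0,0,0)}\widetilde{\mQ}$ out of the given summand $\Sigma_\nu\widetilde{\mQ}\subset\Sigma_{\lambda'}\widetilde{\mQ}\otimes\Sigma_\lambda\widetilde{\mQ}$.

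Next I would invoke the semigroup property of Littlewood--Richardson coefficients: if $\Sigma_{\nu_1}\widetilde{\mQ}$ is a summand of $\Sigma_{\alpha_1}\widetilde{\mQ}\otimes\Sigma_{\beta_1}\widetilde{\mQ}$ and $\Sigma_{\nu_2}\widetilde{\mQ}$ is a summand of $\Sigma_{\alpha_2}\widetilde{\mQ}\otimes\Sigma_{\beta_2}\widetilde{\mQ}$, then $\Sigma_{\nu_1+\nu_2}\widetilde{\mQ}$ is a summand of $\Sigma_{\alpha_1+\alpha_2}\widetilde{\mQ}\otimes\Sigma_{\beta_1+\beta_2}\widetilde{\mQ}$. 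This is standard and elementary: the Cartan product of the two corresponding highest-weight vectors is again a nonzero highest-weight vector, of weight $\nu_1+\nu_2$. I would apply it to the given triple $(\alpha_1,\beta_1;\nu_1)=(\lambda',\lambda;\nu)$ together with the base triple $(\alpha_2,\beta_2;\nu_2)=\big((1,1,1,0),(1,0,0,0);(1,1,1,1)\big)$, whose validity follows from Pieri's rule,
\[ \Sigma_{(1,1,1,0)}\widetilde{\mQ}\otimes\widetilde{\mQ}=\W^3\widetilde{\mQ}\otimes\widetilde{\mQ}=\Sigma_{(2,1,1,0)}\widetilde{\mQ}\oplus\Sigma_{(1,1,1,1)}\widetilde{\mQ}, \]
so that $m^{(1,1,1,1)}_{(1,1,1,0),(1,0,0,0)}=1>0$. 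Adding the two triples yields exactly $m^{\nu+(1,1,1,1)}_{\lambda'+(1,1,1,0),\,\lambda+(1,0,0,0)}>0$, i.e. $m^{\nu+(1,1,1,1)}_{(\lambda+\delta_1)',\,\lambda+\delta_1}>0$; restricting to $X$ gives that $\Sigma_{\mu+1}$ is a factor of $\mE nd(\Sigma_{\lambda+\delta_1}\mQ)$. The closing ``abstract inclusion'' is then just the reformulation that every constituent of $\mE nd(\Sigma_\lambda\mQ)$ reappears, via $\mu\mapsto\mu+1$, as a constituent of $\mE nd(\Sigma_{\lambda+\delta_1}\mQ)$.

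I do not expect a genuine obstacle: the substance is a one-line application of the semigroup property, and the only delicate points are bookkeeping. The first is verifying that the dual partition transforms as $(\lambda+\delta_1)'=\lambda'+(1,1,1,0)$ rather than $\lambda'+(1,1,1,1)$, so that the box ``missing'' from $\lambda'$ is precisely supplied by the $(1,0,0,0)$ added to $\lambda$; this is exactly what singles out the base triple $\W^3\widetilde{\mQ}\otimes\widetilde{\mQ}\supset\det\widetilde{\mQ}$. The second is ensuring all partitions remain of length $\le 4$ after adding $(1,1,1,1)$, so that we stay in the range where $\Sigma_\nu\widetilde{\mQ}\neq 0$ and the $\GL(4)$ coefficients coincide with the stable ones; this is automatic. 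As a cleaner alternative that avoids the dual-partition computation altogether, one may instead write $\mE nd(\Sigma_\lambda\widetilde{\mQ})=\Sigma_\lambda\widetilde{\mQ}\otimes\Sigma_{\lambda^*}\widetilde{\mQ}$ and apply the semigroup property to this triple together with the trivial summand $\mO_{\Gr(2,6)}\subset\widetilde{\mQ}\otimes\widetilde{\mQ}^\vee$, using that $\lambda^*+\delta_1^*=(\lambda+\delta_1)^*$.
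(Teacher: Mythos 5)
Your argument is correct, but it reaches the conclusion by a genuinely different route than the paper. The paper's proof is an explicit combinatorial construction: starting from an admissible Littlewood--Richardson labelling $\mathcal{L}_\mu$ of the Young diagram of $\mu$, it glues a new column onto the left of the diagram and labels its second, third and fourth boxes with $1$, $2$, $3$ respectively, checking by hand that the Pieri and Littlewood--Richardson rules are still satisfied; this produces an admissible labelling $\mathcal{L}_{\mu+1}$ witnessing $\Sigma_{\mu+1}$ in $\mE nd(\Sigma_{\lambda+\delta_1}\mQ)$. You instead reduce everything to the positivity implication $m^{\nu}_{\lambda',\lambda}>0\Rightarrow m^{\nu+(1,1,1,1)}_{\lambda'+(1,1,1,0),\lambda+(1,0,0,0)}>0$ and close it with the semigroup property of Littlewood--Richardson coefficients, seeded by the Pieri identity $\W^3\widetilde{\mQ}\otimes\widetilde{\mQ}\supset\det\widetilde{\mQ}$. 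Your bookkeeping is right: $(\lambda+\delta_1)'=\lambda'+(1,1,1,0)$, and the base triple is exactly the one needed to absorb the discrepancy between the two shifts. What your approach buys is brevity and conceptual clarity (the statement is an instance of a standard general fact, valid on any Grassmannian, cf.\ the remark in Appendix~B); what the paper's hands-on construction buys is an explicit injection on the set of labellings, which is what it actually uses downstream.

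That last point is the one place where your write-up is thinner than it should be. The ``abstract inclusion'' $\mE nd(\Sigma_\lambda\mQ)\subset\mE nd(\Sigma_{\lambda+\delta_1}\mQ)$ --- and, more importantly, the way the proposition is used in Section~9, where ranks of the complements $\mathsf{K}_\lambda$ are counted --- requires that the multiplicity of $\Sigma_{\mu+1}$ in the larger decomposition be at least that of $\Sigma_\mu$ in the smaller one, not merely that it be positive. The bare semigroup property gives only positivity. You should therefore invoke the multiplicative form
\[
c^{\nu_1+\nu_2}_{\alpha_1+\alpha_2,\,\beta_1+\beta_2}\;\geq\;c^{\nu_1}_{\alpha_1\beta_1}\,c^{\nu_2}_{\alpha_2\beta_2},
\]
which is equally standard (e.g.\ via injectivity of the product map on the relevant spaces of sections/invariants, or because combining a fixed LR tableau for the second triple with distinct LR tableaux for the first yields distinct LR tableaux for the sum) and, with $c^{(1,1,1,1)}_{(1,1,1,0),(1,0,0,0)}=1$, gives exactly the needed inequality. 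The paper obtains the same inequality from its explicit construction via the remark that distinct labellings $\mathcal{L}_\mu^{(1)},\mathcal{L}_\mu^{(2)}$ produce distinct labellings $\mathcal{L}_{\mu+1}^{(1)},\mathcal{L}_{\mu+1}^{(2)}$.
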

\begin{proof}
    First of all, from Example~\ref{example:duale} it follows that 
    \[ \mE nd(\Sigma_\lambda\mQ)=\Sigma_{(m,t,s,0)}\mQ\otimes\Sigma_{(m,m-s,m-t,0)}\mQ\otimes\mO_X(-m). \]
    Therefore each factor $\Sigma_\mu$ in the Littlewood--Richardson decomposition of $\mE nd(\Sigma_\lambda\mQ)$ must be of the form
    $\mu=(\mu_1,\mu_2,\mu_3,\mu_4\mid m,m)$ such that $\Sigma_{(\mu_1,\mu_2,\mu_3,\mu_4)}\mQ$ is a factor in the Littlewood--Richardson decomposition of 
    \[ \Sigma_{(m,t,s,0)}\mQ\otimes\Sigma_{(m,m-s,m-t,0)}\mQ. \] 
    In particular there must be an admissible labelling $\mathcal{L}_\mu$ on the Young diagram of $(\mu_1,\mu_2,\mu_3,\mu_4)$, where necessarily we have added $\mu_1-m$ boxes labelled with $1$ on the first row (cf.\ Remark~\ref{remark:tutti 1}). Notice that since we must add a total of $m+(m-s)+(m-t)$ labelled boxes, we must have $\mu_1+\mu_2+\mu_3+\mu_4=4m$.

    It will be enough to prove that $\Sigma_{(\mu_1+1,\mu_2+1,\mu_3+1,\mu_4+1)}$ is a factor in the Littlewood--Richardson decomposition of 
    \[ \Sigma_{(m+1,t,s,0)}\mQ\otimes\Sigma_{(m+1,m-s+1,m-t+1,0)}\mQ. \]
    Let us recall that starting from $(m+1,t,s,0)$ we need to arrive to $(\mu_1+1,\mu_2+1,\mu_3+1,\mu_4+1)$ in three steps: at the first step we add $m+1$ boxes labelled with $1$; at the second step we add $m-s+1$ boxes labelled with $2$; at the hird step we add $m-t+1$ boxes labelled with $3$. Eventually, we need to add $3m+(t-s)+3$ labelled boxes, so exactly three more boxes than the case $(m,t,s,0)$ we started with. More precisely, we need to add exactly one more box labelled with $1$, one more box labelled with $2$ and one more box labelled with $3$.
    
    To this end, let us take the Young diagram associated to $(\mu_1,\mu_2,\mu_3,\mu_4)$, and let us add to it a column to the left, i.e.
    \[  
    \begin{tabular}{l}
            \begin{Young}
            $\ast$ &  & $\cdots$  & & $\cdots$ & & $\cdots$ & & $\cdots$  & \cr
            $\ast$ &  & $\cdots$ & & $\cdots$ & & $\cdots$ &  \cr
            $\ast$ &  & $\cdots$ & & $\cdots$ &  \cr
            $\ast$ &  & $\cdots$ &  \cr
            \end{Young}
            \end{tabular}
    \]
    Now we need to label this new Young diagram. First of all, on the first row we must add $\mu_1+1-(m+1)=\mu_1-m$ boxes labelled with $1$ (cf.\ Remark~\ref{remark:tutti 1} again): this is the same row of the labelling $\mathcal{L}_\mu$ we started with. On the second row we must add $\mu_2+1-t$ labelled boxes: labelling the first added box with $1$ and keeping the remaining $\mu_2-t$ with the same labelling $\mathcal{L}_\mu$ violates no rules, hence it is admissible. Similarly for the third row, we must add $\mu_3+1-s$ labelled boxes: we label the first added box with $2$ and keep for the remaining $\mu_3-s$ boxes the labelling $\mathcal{L}_\mu$. Again this does not violate any rule. Finally, on the fourth row we must add $\mu_4+1$ labelled boxes and again we label the first added box with $3$ and keep the remaining $\mu_4$ boxes labelled as in $\mathcal{L}_\mu$. 

    At the end of this process we have produced a labelling $\mathcal{L}_{\mu+1}$ that is easily seen to be admissible. This finishes the proof.
\end{proof}

\begin{remark}
    In the proof above we have produced a labelling $\mathcal{L}_{\mu+1}$ starting from a labelling $\mathcal{L}_\mu$. It is easy to see that to different labelling $\mathcal{L}_\mu^{(1)},\mathcal{L}_\mu^{(2)}$ correspond different labelling $\mathcal{L}_{\mu+1}^{(1)},\mathcal{L}_{\mu+1}^{(2)}$. In particular the multiplicity of $\Sigma_{\mu+1}$ in $\mE nd(\Sigma_{(m+1,t,s,0)}\mQ)$ is at least equal to the multiplicity of $\Sigma_\mu$ in $\mE nd(\Sigma_{(m,t,s,0)}\mQ)$. On the other hand there can be more labelling on $\Sigma_{\mu+1}$ not coming from labelling of $\Sigma_\mu$, as we will see in several situations in the rest of the sections.
\end{remark}

As we have seen in equality (\ref{eqn: + n}) in Section~\ref{section:homogeneous}, there is a natural identification
\[ \Sigma_{\mu+1}=\Sigma_\mu. \]
Therefore Proposition~\ref{prop:+ delta 1 non cambia} implies that we can approach the study of $\mE nd(\Sigma_\lambda\mQ)$ in a inductive way. 
\medskip



{\bf Convention.} The following sections contain a case-by-case proof of Theorem~\ref{thm:garden of happiness}. In order to simplify the notations we will write parametric partitions such as, for example, $\mu=(2m,m,m-1,1\mid m,m)$. Since the values $m=0$ and $m=1$ do not produce partitions, we should specify the range $m\geq2$.

On the other hand, due to the elevated level of difficulty in the notations, we decided to abuse notation and tacitly ignore the values of the parametric $6$-tuples that do not produce a partition.

\newpage
\subsection{The case $\lambda=(m,1,0,0)$}\label{section:LR for (m,1,0,0)}
For any $m\geq 1$ let us put
\[ \mathsf{K}_{(m,1,0,0)}:=\left\{ 
\begin{array}{l}
(2m,m+i,m-i,0\mid m,m) \mbox{ for } i=0,1 \\ 
(2m,m,m-1,1\mid m,m) \\
(2m-1,m+1,m,0\mid m,m) \\
(2m-1,m,m,1\mid m,m)
\end{array}
\right\} \]
where, as by convention, a partition is considered empty if it fails to be non-decreasing. 

\begin{proposition}\label{prop:LR di End(m,1,0,0)}
    For every $m\geq1$, the following recursive Littlewood--Richardson decompositions hold
    \[ \mE nd(\Sigma_{(m,1,0,0)}\mQ)=\mE nd(\Sigma_{(m-1,1,0,0)}\mQ)\oplus\bigoplus_{\mu\in\mathsf{K}_{(m,1,0,0)}}\Sigma_\mu, \]
    where the term $\mE nd(\Sigma_{(m-1,1,0,0)}\mQ)$ does not appear if $m=1$.
\end{proposition}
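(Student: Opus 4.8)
The plan is to reduce the statement to a single Littlewood--Richardson computation and then read off the recursion. Exactly as in the proof of Proposition~\ref{prop:+ delta 1 non cambia}, Example~\ref{example:duale} gives
\[ \mE nd(\Sigma_{(m,1,0,0)}\mQ)=\Sigma_{(m,1,0,0)}\widetilde{\mQ}\otimes\Sigma_{(m,m,m-1,0)}\widetilde{\mQ}\otimes\mO_X(-m), \]
so the whole content is the decomposition of $\Sigma_{(m,1,0,0)}\widetilde{\mQ}\otimes\Sigma_{(m,m,m-1,0)}\widetilde{\mQ}$ into irreducibles $\Sigma_\nu\widetilde{\mQ}$ with $|\nu|=4m$ and at most four rows; tensoring by $\mO_X(-m)$ turns each such $\Sigma_\nu\widetilde{\mQ}$ into the factor $\Sigma_{\nu\mid(m,m)}$ written in the statement. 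Since $\lambda=(m,1,0,0)$ carries a single box in its second row, by the symmetry of the Littlewood--Richardson coefficients this is a near-Pieri situation: starting from the diagram of $(m,m,m-1,0)$ one first adds a horizontal strip of $m$ boxes labelled $1$ and then one single box labelled $2$.

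First I would enumerate the intermediate shapes $\rho$ obtained by adding the horizontal $m$-strip to $(m,m,m-1,0)$, subject to the constraint that $\rho$ have at most four rows (this is precisely where the rank-$4$ truncation enters, killing the tableaux that would survive for $\widetilde{\mQ}$ of higher rank). A direct check yields two one-parameter families, $\rho=(2m-j,m,m-1,j)$ and $\rho=(2m-1-j,m,m,j)$ for $0\le j\le m-1$. Then for each $\rho$ I would add the single box labelled $2$, discarding the placements that violate the column-strictness of Remark~\ref{rmk:strictly increasing on colomns} or the lattice rule attached to (\ref{eqn:LR}). Because there is only one $2$, column-strictness is automatic, and the unique forbidden configuration is the one in which the $2$ is the first letter of the reading word, i.e.\ the top-right box of the skew filling. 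This produces a completely explicit finite list of factors $\Sigma_\nu\widetilde{\mQ}$ with their multiplicities.

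With the full decomposition in hand, the recursion follows by comparison. By Proposition~\ref{prop:+ delta 1 non cambia} and its following remark the assignment $\mu\mapsto\mu+1$ embeds every factor of $\mE nd(\Sigma_{(m-1,1,0,0)}\mQ)$ into $\mE nd(\Sigma_{(m,1,0,0)}\mQ)$ with at least its previous multiplicity, and since $\Sigma_{\mu+1}=\Sigma_\mu$ by (\ref{eqn: + n}) this realises the summand $\mE nd(\Sigma_{(m-1,1,0,0)}\mQ)$. It then remains to verify the multiset identity (full list) $=$ (shifted $(m-1)$-list) $+\;\mathsf{K}_{(m,1,0,0)}$, using induction to know the $(m-1)$-decomposition explicitly; this simultaneously shows that the shift contributes \emph{exactly} the old multiplicities. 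The three elements of $\mathsf{K}_{(m,1,0,0)}$ with vanishing fourth entry, namely $(2m,m,m,0)$, $(2m,m+1,m-1,0)$ and $(2m-1,m+1,m,0)$, are forced to be new, since the shift only produces shapes with positive fourth entry. The base case $m=1$ is immediate: $\Sigma_{(1,1,0,0)}\widetilde{\mQ}=\W^2\widetilde{\mQ}$ and $\W^2\widetilde{\mQ}\otimes\W^2\widetilde{\mQ}=\Sigma_{(2,2,0,0)}\oplus\Sigma_{(2,1,1,0)}\oplus\Sigma_{(1,1,1,1)}$, which matches $\mathsf{K}_{(1,1,0,0)}$ (recall the two $6$-tuples that fail to be partitions are discarded), while the term $\mE nd(\Sigma_{(0,1,0,0)}\mQ)$ is absent.

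The main obstacle is the multiplicity bookkeeping for the two elements of $\mathsf{K}_{(m,1,0,0)}$ with fourth entry equal to $1$, namely $(2m,m,m-1,1)$ and $(2m-1,m,m,1)$: these coincide with shapes $\mu+1$ already produced by the shift from the $(m-1)$-decomposition, so one must show that their total multiplicity in $\mE nd(\Sigma_{(m,1,0,0)}\mQ)$ exceeds the shifted contribution by exactly one. Concretely, $(2m-1,m,m,1)$ arises twice in the direct enumeration (once from each of the two families of $\rho$), and exactly one of these two occurrences is accounted for by the shift of the factor $(2m-2,m-1,m-1,0)\in\mathsf{K}_{(m-1,1,0,0)}$. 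Keeping track of which pair (shape $\rho$, placement of the $2$) yields each $\nu$ is the only step that is not purely formal, and it is what pins down the precise form of $\mathsf{K}_{(m,1,0,0)}$.
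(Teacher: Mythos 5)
Your proposal is correct in substance but takes a genuinely different route in the decisive step. The paper never computes the full Littlewood--Richardson decomposition of $\Sigma_{(m,1,0,0)}\widetilde{\mQ}\otimes\Sigma_{(m,m,m-1,0)}\widetilde{\mQ}$: it only verifies the \emph{inclusion} $\mE nd(\Sigma_{(m-1,1,0,0)}\mQ)\oplus\bigoplus_{\mu\in\mathsf{K}_{(m,1,0,0)}}\Sigma_\mu\subset\mE nd(\Sigma_{(m,1,0,0)}\mQ)$ (via Proposition~\ref{prop:+ delta 1 non cambia} plus exhibiting one admissible tableau per element of $\mathsf{K}_{(m,1,0,0)}$), and then upgrades the inclusion to an equality by the rank identity $r^2_{(m,1,0,0)}-r^2_{(m-1,1,0,0)}=\sum_{\mu\in\mathsf{K}_{(m,1,0,0)}}r_\mu=\frac{(m+2)^2}{4}(6m^3+11m^2-1)$. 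This dimension count is precisely what spares the authors from proving \emph{completeness} of any tableau enumeration. You instead carry out the complete enumeration — which is indeed feasible here because $(m,1,0,0)$ is a near-Pieri shape: your two families $\rho=(2m-j,m,m-1,j)$ and $\rho=(2m-1-j,m,m,j)$ are the correct intermediate shapes, and your lattice-word criterion for the single box labelled $2$ is right — and then compare multisets. Your route is more explicit but shifts all the burden onto showing no factor has been missed; the paper's rank count does that for free. If you keep your approach, a cleaner finish would be to \emph{also} invoke the rank identity rather than the multiset comparison, since you then only need the lower bound.

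One imprecision in your bookkeeping paragraph: $(2m,m,m-1,1)$ does \emph{not} coincide with a shifted shape. Every factor of $\mE nd(\Sigma_{(m-1,1,0,0)}\mQ)$ has first entry at most $2(m-1)$, so the shift $\mu\mapsto\mu+1$ only produces shapes with first entry at most $2m-1$; hence all three elements of $\mathsf{K}_{(m,1,0,0)}$ with first entry $2m$ (including $(2m,m,m-1,1)$), as well as $(2m-1,m+1,m,0)$ (whose predecessor $(2m-2,m,m-1,-1)$ is not a partition), are automatically new. The only genuine overlap is $(2m-1,m,m,1)=(2m-2,m-1,m-1,0)+1$, and there your analysis — multiplicity $2$ in the full list, one copy from each family of $\rho$, one of which is absorbed by the shift — is correct. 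This slip does not damage the argument, since your proposed multiset verification would detect it, but it overstates the amount of overlap that actually needs to be tracked.
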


\begin{example}\label{example:LR di End(m,1,0,0)}
    Let us explicitly write down the first two cases.
    \begin{itemize}
        \item ($m=1$) We have $\Sigma_{(1,1,0,0)}\mQ=\W^2\mQ$ and Proposition~\ref{prop:LR di End(m,1,0,0)} gives
    \[ \mE nd\left(\W^2\mQ\right)=\Sigma_{(2,2,0,0\mid 1,1)} \oplus \Sigma_{(2,1,1,0\mid 1,1)} \oplus \Sigma_{(1,1,1,1\mid 1,1)}, \]
    which can be checked by hand (or one can look at \cite{Fat24}). \\
    \item ($m=2$) Proposition~\ref{prop:LR di End(m,1,0,0)} gives (see also Section~\ref{section:Sigma 2 1})
    \begin{align*} 
    \mE nd(\Sigma_{(2,1,0,0)}\mQ)= & \Sigma_{(4,3,1,0\mid 2,2)} \oplus \Sigma_{(4,2,2,0\mid 2,2)} \oplus \Sigma_{(4,2,1,1\mid 2,2)} \oplus \Sigma_{(3,3,2,0\mid 2,2)} \oplus \\
    & \Sigma_{(3,2,2,1\mid 2,2)}^{\oplus 2} \oplus    \Sigma_{(3,3,1,1\mid 2,2)} \oplus \Sigma_{(2,2,2,2\mid 2,2)} .
    \end{align*}
    \end{itemize}
\end{example}

\begin{proof}
    We proceed by induction on $m$, the base of the induction being Example~\ref{example:LR di End(m,1,0,0)}. If $m\geq2$, then by Proposition~\ref{prop:+ delta 1 non cambia} there is an inclusion $\mE nd(\Sigma_{(m-1,1,0,0)}\mQ)\subset\mE nd(\Sigma_{(m,1,0,0)}\mQ)$ given by mapping each irreducible factor $\Sigma_\mu$ of $\mE nd(\Sigma_{(m-1,1,0,0)}\mQ)$ to the irreducible factor $\Sigma_{\mu+1}$ of $\mE nd(\Sigma_{(m,1,0,0)}\mQ)$. Notice that for $m\geq2$ the cardinality of $\mathsf{K}_m$ is constant.
    It is now a long and boring but straightforward computation (see for example the proof of Proposition~\ref{prop:W3 in Ext1}) to check that 
    \begin{equation}\label{eqn:inclusione m 1 0 0 } 
    \mE nd(\Sigma_{(m,1,0,0)}\mQ)\supset\mE nd(\Sigma_{(m-1,1,0,0)}\mQ)\oplus\bigoplus_{\mu\in\mathsf{K}_{(m,1,0,0)}}\Sigma_\mu. 
    \end{equation}
    On the other hand one can check that the following equality holds
    \[ r^2_{(m,1,0,0)}-r^2_{(m-1,1,0,0)}=\frac{(m+2)^2}{4}(6m^3+11m^2-1), \]
    where $r_\lambda$ is the rank function (\ref{eqn:rank}).

    Similarly one can check that also 
    \[ \sum_{\mu\in\mathsf{K}_{(m,1,0,0)}}r_\mu=\frac{(m+2)^2}{4}(6m^3+11m^2-1), \]
    from which it follows that the inclusion (\ref{eqn:inclusione m 1 0 0 }) is an equality, hence the claim.
\end{proof}

\begin{remark}
    Notice that the summand that contributes to the groups  $\Ext^1(\Sigma_{(m,1,0,0)}\mQ,\Sigma_{(m,1,0,0)}\mQ)$ and $\Ext^3(\Sigma_{(m,1,0,0)}\mQ,\Sigma_{(m,1,0,0)}\mQ)$ is $\Sigma_{(2m,m+1,m-1,0\mid m,m)}$ for $m=1$ (and then it propagates for any $m$ thanks to Proposition~\ref{prop:+ delta 1 non cambia}). Similarly, the summand corresponding to $\mO_X$ is $\Sigma_{(2m-1,m,m,1\mid m,m)}$ for $m=1$.
\end{remark}

In order to prove Theorem~\ref{thm:garden of happiness} in this case, it is enough to compute the $0$-th and $1$-st cohomology groups of the vector bundles $\Sigma_\mu$ for every $\mu\in\mathsf{K}_{(m,1,0,0)}$.

\begin{proposition}
    For any $m\geq1$ and any $\mu\in\mathsf{K}_{(m,1,0,0)}$, the following hold:
    \begin{enumerate}
        \item $\oH^0(X, \Sigma_\mu)=0$ unless $\mu=(1,1,1,1\mid 1,1)$, in which case $\oH^0(X, \Sigma_{(1,1,1,1\mid 1,1)})=\CC$;
        \item $\oH^1(X, \Sigma_\mu)=0$ unless $\mu=(2,2,0,0\mid 1,1)$, in which case $\oH^1(X, \Sigma_{(2,2,0,0\mid 1,1)})=\W^3 V_6^\vee$.
    \end{enumerate}
\end{proposition}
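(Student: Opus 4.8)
The plan is to compute $\oH^\bullet(X,\Sigma_\mu)$ for each $\mu\in\mathsf{K}_{(m,1,0,0)}$ through the Koszul resolution (\ref{eqn:Koszul}) and the induced spectral sequence (\ref{eqn:spectral sequence}), exactly as in the proof of Lemma~\ref{lemma: (2n n n 0 | n n)}. Every $\mu$ has the form $(\mu_1,\mu_2,\mu_3,\mu_4\mid m,m)$, so its $\widetilde{\mU}$-part is $\Sigma_{(m,m)}\widetilde{\mU}=(\det\widetilde{\mU})^{\otimes m}$; tensoring with an irreducible factor $\Sigma_{(a,b)}\widetilde{\mU}$ of $\W^p\Sym^3\widetilde{\mU}$ therefore merely shifts the $\widetilde{\mU}$-weight to $(m+a,m+b)$. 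Hence the cohomology of $\Sigma_\mu$ is governed by the six homogeneous bundles on $\Gr(2,6)$ whose $6$-tuples are obtained from $v=(\mu_1,\mu_2,\mu_3,\mu_4,m,m)$ by replacing the last two entries with $(m,m)$, $(m+3,m)$, $(m+5,m+1)$, $(m+3,m+3)$, $(m+6,m+3)$, $(m+6,m+6)$, corresponding to $\W^p\Sym^3\widetilde{\mU}$ for $p=0,1$, the two summands at $p=2$, and $p=3,4$.

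For each of the five parametric families in $\mathsf{K}_{(m,1,0,0)}$ and each of these six $6$-tuples I would add $\delta=(5,4,3,2,1,0)$ and apply the Borel--Weil--Bott algorithm of Section~\ref{section:BBW}: record when $v+\delta$ has a repeated entry (the piece is then acyclic) and otherwise compute $\ell(v+\delta)$ and the resulting $\SL(6)$-module $\Sigma_{\widetilde v}V_6$. Since $\oH^q$ of the $p$-th piece contributes to $\oH^{q-p}(X,\Sigma_\mu)$, only pieces with $\ell=p$ (for $\oH^0$) or $\ell=p+1$ (for $\oH^1$) are relevant. The decisive feature is that the fourth entry of $v+\delta$ equals $\mu_4+2\in\{2,3\}$, a fixed small value, whereas the fifth and sixth entries grow linearly in $m$; as $m$ increases, the inversions of this small entry against the growing ones switch on one by one, so the $\ell$-function of each non-acyclic piece increases with $m$. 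In particular the pieces that could a priori feed $\oH^0$ or $\oH^1(X)$ do so only for the smallest admissible $m$: for $m\ge 3$ their $\ell$-value has already crossed the threshold and they contribute only to $\oH^{\ge 2}(X)$, while for $m=2$ a coincidence among the entries of $v+\delta$ makes them acyclic. Consequently $\oH^0(X,\Sigma_\mu)=\oH^1(X,\Sigma_\mu)=0$ for every $m\ge 2$ and every $\mu\in\mathsf{K}_{(m,1,0,0)}$.

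For $m=1$ the convention on partitions collapses $\mathsf{K}_{(1,1,0,0)}$ to the three bundles $(2,1,1,0\mid1,1)$, $(2,2,0,0\mid1,1)$ and $(1,1,1,1\mid1,1)$ of Example~\ref{example:LR di End(m,1,0,0)}. A direct Borel--Weil--Bott check then gives: the line bundle $(1,1,1,1\mid1,1)=\mO_X$ contributes $\CC$ to $\oH^0$ (from its $p=0$ piece, with $v+\delta=(5,4,3,2,1,0)$ and $\ell=0$); the bundle $(2,2,0,0\mid1,1)$ contributes to $\oH^1$ the module $\oH^2(\Gr(2,6),\Sigma_{(2,2,0,0\mid1,1)}\otimes\Sym^3\widetilde{\mU})=\Sigma_{(2,2,2,1,1,1)}V_6\cong\W^3V_6^\vee$ (from its $p=1$ piece, with $v+\delta=(7,6,3,2,5,1)$ and $\ell=2$); and $(2,1,1,0\mid1,1)=\mE nd_0(\mQ)$ is acyclic after tensoring with every Koszul piece, hence contributes nothing in degrees $\le 1$. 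This matches the asserted answers, and is consistent with the computation of $\mE nd_0(\mQ)$ implicit in Section~\ref{section:Sigma 2 1}.

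Finally I would verify that these two surviving classes are not touched by the differentials of (\ref{eqn:spectral sequence}): the $E_1$-entries neighbouring the relevant ones (for instance $E_1^{0,2}$ and $E_1^{-2,2}$, adjacent to the term $E_1^{-1,2}\cong\W^3V_6^\vee$ responsible for $\oH^1$ of $(2,2,0,0\mid1,1)$) vanish by the very same Borel--Weil--Bott computation, so each class survives to $E_\infty$. The only genuine difficulty is bookkeeping: the analysis must be carried out uniformly in $m$ over all five families and all six pieces, with careful attention to the small values of $m$ at which either $\mu$ fails to be a partition (and is discarded) or $v+\delta$ develops a coincidence. The technical heart is precisely the monotonicity in $m$ of the $\ell$-function of the $p=0$ and $p=1$ pieces, which confines the classes $\CC$ and $\W^3V_6^\vee$ to $m=1$; once this is established, every individual computation is routine.
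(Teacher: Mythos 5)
Your proposal is correct and follows exactly the route the paper intends: its proof of this proposition is simply ``a long but easy computation,'' namely the Borel--Weil--Bott analysis of each $\Sigma_\mu\otimes\W^p\Sym^3\widetilde{\mU}$ through the Koszul spectral sequence, precisely as in the paper's proof of the analogous Lemma for $\Sigma_{(2n,n,n,0\mid n,n)}$, and your identification of the two surviving classes ($\CC$ from $(1,1,1,1\mid1,1)$ at $p=0$ and $\W^3V_6^\vee$ from $(2,2,0,0\mid1,1)$ at $p=1$) together with the vanishing of the neighbouring $E_1$-terms is accurate. One small caution: the ``monotonicity in $m$ of the $\ell$-function'' is not literally true as stated, since inversions of the first entry $2\lambda_1{+}5$ against the fifth and sixth entries switch \emph{off} as $m$ grows (in these families the $\ell$-value merely stabilises, e.g.\ at $2$ for the $p=0$ pieces), so the conclusion must rest on the explicit case-by-case check you describe rather than on that heuristic --- but that check does confirm that for $m\geq2$ every non-acyclic piece contributes only in total degree $\geq2$.
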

\begin{proof}
    This is a long but easy computation.
\end{proof}

\begin{corollary}
    For every $m\geq1$ we have
    \[ \Hom(\Sigma_{(m,1,0,0)}\mQ,\Sigma_{(m,1,0,0)}\mQ)=\CC\quad\mbox{ and }\quad \Ext^1(\Sigma_{(m,1,0,0)}\mQ,\Sigma_{(m,1,0,0)}\mQ)=\W^3 V_6^\vee. \]
\end{corollary}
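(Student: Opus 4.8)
The plan is to reduce everything to the identification
\[ \Ext^i(\Sigma_{(m,1,0,0)}\mQ,\Sigma_{(m,1,0,0)}\mQ)\cong\oH^i(X,\mE nd(\Sigma_{(m,1,0,0)}\mQ)), \]
valid because $\Sigma_{(m,1,0,0)}\mQ$ is locally free, so the local-to-global spectral sequence degenerates and higher local $\mathcal{E}xt$-sheaves vanish (this is already the working principle of Section~\ref{section:Koszul}). With this in hand the corollary becomes a purely bookkeeping consequence of the two preceding propositions, and no spectral-sequence differential or extra geometric argument is needed.

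First I would combine the recursive Littlewood--Richardson decomposition of Proposition~\ref{prop:LR di End(m,1,0,0)} with additivity of sheaf cohomology over direct sums to get, for $i=0,1$,
\[ \oH^i(X,\mE nd(\Sigma_{(m,1,0,0)}\mQ))=\oH^i(X,\mE nd(\Sigma_{(m-1,1,0,0)}\mQ))\oplus\bigoplus_{\mu\in\mathsf{K}_{(m,1,0,0)}}\oH^i(X,\Sigma_\mu), \]
where the first summand is absent when $m=1$. Here the summand labelled $\mE nd(\Sigma_{(m-1,1,0,0)}\mQ)$ is a genuine isomorphic copy inside $\mE nd(\Sigma_{(m,1,0,0)}\mQ)$ via Proposition~\ref{prop:+ delta 1 non cambia}, since $\Sigma_{\mu+1}=\Sigma_\mu$ as bundles by (\ref{eqn: + n}); thus its cohomology really is that of the $(m-1)$-th endomorphism bundle, and an induction on $m$ is set up.

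Then I would carry out that induction. For the base case $m=1$, Example~\ref{example:LR di End(m,1,0,0)} gives $\mE nd(\W^2\mQ)=\Sigma_{(2,2,0,0\mid1,1)}\oplus\Sigma_{(2,1,1,0\mid1,1)}\oplus\Sigma_{(1,1,1,1\mid1,1)}$, and the previous proposition yields $\oH^0=\CC$ (from $\Sigma_{(1,1,1,1\mid1,1)}=\mO_X$) and $\oH^1=\W^3V_6^\vee$ (from $\Sigma_{(2,2,0,0\mid1,1)}$), all other factors being acyclic in degrees $0,1$. For the inductive step $m\geq2$, the previous proposition shows that every $\mu\in\mathsf{K}_{(m,1,0,0)}$ satisfies $\oH^0(X,\Sigma_\mu)=\oH^1(X,\Sigma_\mu)=0$, because the two exceptional partitions $(1,1,1,1\mid1,1)$ and $(2,2,0,0\mid1,1)$ occur only for $m=1$; hence the new factors contribute nothing in degrees $0$ and $1$, and the groups are inherited verbatim from the $(m-1)$ case. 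The only point requiring care is the bookkeeping: one must make sure the two exceptional contributions are counted exactly once, i.e. that they are never reproduced (up to the $+n$ identification) inside the larger sets $\mathsf{K}_{(m,1,0,0)}$ for $m\geq2$; this is exactly what the phrasing of the previous proposition guarantees. Finally, $\Hom=\CC$ gives simplicity, which together with Theorem~\ref{theorem A} upgrades polystability to slope stability of $\Sigma_{(m,1,0,0)}\mQ$.
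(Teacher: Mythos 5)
Your proposal is correct and follows essentially the same route as the paper: the corollary is obtained by combining the recursive Littlewood--Richardson decomposition of Proposition~\ref{prop:LR di End(m,1,0,0)} with the cohomology computation of the factors in $\mathsf{K}_{(m,1,0,0)}$, using additivity of cohomology over direct sums and induction on $m$ via the identification $\Sigma_{\mu+1}=\Sigma_\mu$. Your extra care about the exceptional partitions $(1,1,1,1\mid 1,1)$ and $(2,2,0,0\mid 1,1)$ appearing exactly once is exactly the bookkeeping the paper leaves implicit, and the final remark on upgrading polystability to slope stability matches the paper's discussion.
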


\begin{remark}
    Notice that the factor contributing to $\Ext^1(\Sigma_{(m,1,0,0)}\mQ,\Sigma_{(m,1,0,0)}\mQ)$ appears already for $m=1$ (coinciding with the one in Proposition~\ref{prop:W3 in Ext1}) and then it propagates to the rest of the sequence thanks to Proposition~\ref{prop:+ delta 1 non cambia}.
\end{remark}

\newpage
\subsection{The case $\lambda=(m,1,1,0)$}\label{section:LR for (m,1,1,0)}

For $m\geq1$, let us put
\[ \mathsf{K}_{(m,1,1,0)}:=\left\{ 
\begin{array}{l} 
(2m,m,m,0\mid m,m) \\
(2m,m,m-1,1\mid m,m) \\
(2m-1,m+1,m,0\mid m,m) \\
(2m-1,m,m,1\mid m,m) \\
(2m-1,m+1,m-1,1\mid m,m)
\end{array}
\right\}.  \]

\begin{proposition}\label{prop:LR di End(m,1,1,0)}
    For every $m\geq1$, the following recursive Littlewood--Richardson decompositions hold
    \[ \mE nd(\Sigma_{(m,1,1,0)}\mQ)=\mE nd(\Sigma_{(m-1,1,0,0)}\mQ)\oplus\bigoplus_{\mu\in\mathsf{K}_{(m,1,1,0)}}\Sigma_\mu, \]
    where the term $\mE nd(\Sigma_{(m-1,1,1,0)}\mQ)$ does not appear if $m=1$.
\end{proposition}

\begin{example}\label{example: LR di End (m,1,1,0)}
    Let us again write down the first two cases.
    \begin{itemize}
        \item ($m=1$) By Example~\ref{example:duale} we have that $\Sigma_{(1,1,1,0)}\mQ=\mQ^\vee(-1)$ so that $\mE nd(\Sigma_{(1,1,1,0)}\mQ)=\mE nd(\mQ)$ was already computed in Section~\ref{section:Sym Q} (see equality (\ref{eqn:LR for Sym})). In this case the bundle is simple and rigid (see Proposition~\ref{prop:sym simple and rigid}) and Proposition~\ref{prop:LR di End(m,1,1,0)} says that
    \[ \mE nd(\Sigma_{(1,1,1,0))}\mQ)=\mathsf{K}_{(1,1,1,0)}=\Sigma_{(2,1,1,0\mid 1,1)}\oplus\Sigma_{(1,1,1,1\mid 1,1)}. \]
    \item ($m=2$) Proposition~\ref{prop:LR di End(m,1,1,0)} gives
    \begin{align*} 
    \mE nd(\Sigma_{(2,1,1,0))}\mQ)= & \Sigma_{(4,2,2,0\mid 2,2)} \oplus \Sigma_{(4,2,1,1\mid 2,2)} \oplus \Sigma_{(3,3,2,0\mid 2,2)} \oplus \\
    & \Sigma_{(3,2,2,1\mid 2,2)}^{\oplus 2} \oplus \Sigma_{(3,3,1,1\mid 2,2)} \oplus \Sigma_{(2,2,2,2\mid 2,2)}. 
    \end{align*}
    (Notice that the multiplicity of the partition $(3,2,2,1\mid 2,2)$ is $2$ since it appears both as a member of $\mathsf{K}_{(2,1,1,0)}$ and as $(2,1,1,0\mid 1,1)+1$, with $(2,1,1,0\mid 1,1)\in\mathsf{K}_{(1,1,1,0)}$.)
    \end{itemize}
\end{example}

\begin{proof}
    As before, we proceed by induction on $m$, the base of the induction being Example~\ref{example: LR di End (m,1,1,0)}. Notice that for $m\geq2$ the cardinality of $\mathsf{K}_{(m,1,1,0)}$ is constant. Thanks to Proposition~\ref{prop:+ delta 1 non cambia} and a direct check, one can see that
    \begin{equation}\label{eqn:inclusione m 1 1 0 } 
    \mE nd(\Sigma_{(m,1,1,0)}\mQ)\supset\mE nd(\Sigma_{(m-1,1,1,0)}\mQ)\oplus\bigoplus_{\mu\in\mathsf{K}_{(m,1,1,0)}}\Sigma_\mu. 
    \end{equation}
    Now we see that the following equality holds
    \[ r^2_{(m,1,1,0)}-r^2_{(m-1,1,1,0)}=\frac{m^2}{4}(3m+5)(2m^2+5m+1), \]
    where $r_\lambda$ is the rank function (\ref{eqn:rank}), and that similarly the following equality holds 
    \[ \sum_{\mu\in\mathsf{K}_{(m,1,1,0)}}r_\mu=\frac{m^2}{4}(3m+5)(2m^2+5m+1). \]
    Combining the two equality above, it follows that the inclusion (\ref{eqn:inclusione m 1 1 0 }) is an equality, hence the claim.
\end{proof}


\begin{remark}
    Notice that the summand that contributes to the groups  $\Ext^1(\Sigma_{(m,1,1,0)}\mQ,\Sigma_{(m,1,1,0)}\mQ)$ and $\Ext^3(\Sigma_{(m,1,1,0)}\mQ,\Sigma_{(m,1,1,0)}\mQ)$ is $\Sigma_{(2m-1,m+1,m-1,1\mid m,m)}$ for $m=2$ (and then it propagates for any $m$ thanks to Proposition~\ref{prop:+ delta 1 non cambia}) - if $m=1$ then $\Sigma_{(m,1,1,0)}\mQ$ is rigid. Similarly, the summand corresponding to $\mO_X$ is $\Sigma_{(2m-1,m,m,1\mid m,m)}$ for $m=1$.
\end{remark}

\begin{proposition}
    For every $m\geq1$ and every $\mu\in\mathsf{K}_{(m,1,1,0)}$ we have
    \begin{enumerate}
        \item $\oH^0(X, \Sigma_\mu)=0$ unless $\mu=(2,2,2,2\mid2,2)$, in which case $\oH^0(X, \Sigma_{(2,2,2,2\mid 2,2)})=\CC$;
        \item $\oH^1(X, \Sigma_\mu)=0$ unless $\mu=(3,3,1,1\mid 2,2)$, in which case $\oH^1(X, \Sigma_{(3,3,1,1\mid 2,2)})=\W^3 V_6^\vee$.
    \end{enumerate}
\end{proposition}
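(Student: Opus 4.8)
The plan is to compute each $\oH^0(X,\Sigma_\mu)$ and $\oH^1(X,\Sigma_\mu)$ by the machinery already used in Lemma~\ref{lemma: (2n n n 0 | n n)} and in the case $\lambda=(m,1,0,0)$: tensor the Koszul resolution of $\mO_X$ by $\Sigma_\mu$, split it into the short exact sequences (\ref{eqn:explicit Koszul}), and feed in the cohomology on $\Gr(2,6)$ of the homogeneous bundles $\Sigma_\mu\otimes\W^p\Sym^3\widetilde{\mU}$ via Borel--Weil--Bott (Section~\ref{section:BBW}). Recall from Section~\ref{section:Koszul} that $\W^p\Sym^3\widetilde{\mU}$ splits into the irreducible $\SL(2)$-factors $\Sigma_{(3,0)}\widetilde{\mU}$ for $p=1$, $\Sigma_{(5,1)}\widetilde{\mU}\oplus\Sigma_{(3,3)}\widetilde{\mU}$ for $p=2$, $\Sigma_{(6,3)}\widetilde{\mU}$ for $p=3$, and $\Sigma_{(6,6)}\widetilde{\mU}$ for $p=4$. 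Since every $\mu\in\mathsf{K}_{(m,1,1,0)}$ has $\SL(2)$-part $(m,m)$, which is a power of $\det\widetilde{\mU}$, tensoring by these factors merely adds the relevant highest weight to the last two entries of the $6$-tuple $\mu$. Thus for each of the five families and each $p$ one writes down a single parametric $6$-tuple, adds $\delta=(5,4,3,2,1,0)$, discards the tuples with a repeated entry (acyclic bundles), and for the rest reads off the cohomological degree $l(\mu+\delta)$ of (\ref{eqn:l function}) together with the representation $\operatorname{sort}(\mu+\delta)-\delta$.

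Concretely, $\oH^q(\Gr,\Sigma_\mu\otimes\W^p\Sym^3\widetilde{\mU})$ contributes to $\oH^{q-p}(X,\Sigma_\mu)$, so $\oH^0$ receives exactly the terms with $l(\mu+\delta)=p$ and $\oH^1$ the terms with $l(\mu+\delta)=p+1$. First I would treat the generic range of $m$: for $m$ large the positions of the small entries in $\mu+\delta$ stabilise, and a uniform count of $l$ shows that every nonzero Borel--Weil--Bott contribution of each family lands in total degree $\ge 2$, whence $\oH^0=\oH^1=0$ there. Then I would run by hand the finitely many small $m$ for which a shifted tuple acquires a repeated entry or an exceptionally small $l$. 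This finite search yields precisely two exceptions: the trivial summand $\mO_X$, appearing as $(1,1,1,1\mid 1,1)\equiv(2,2,2,2\mid 2,2)$ in the family $(2m-1,m,m,1\mid m,m)$ at $m=1$ and contributing $\oH^0=\CC$ through its $p=0$ term; and $(3,3,1,1\mid 2,2)$, appearing in the family $(2m-1,m+1,m-1,1\mid m,m)$ at $m=2$, whose $p=1$ twist $\Sigma_{(3,3,1,1\mid 5,2)}$ has $\mu+\delta=(8,7,4,3,6,2)$ with $l=2$ and $\operatorname{sort}(\mu+\delta)-\delta=(3,3,3,2,2,2)\equiv\W^3 V_6^\vee$, contributing to $\oH^1$. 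The family $(2m,m,m,0\mid m,m)$ is already covered by Lemma~\ref{lemma: (2n n n 0 | n n)}, which gives $\oH^0=\oH^1=0$ for $m\ge1$.

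It then remains to check that these two contributions actually survive to $\oH^0(X,\Sigma_\mu)$ and $\oH^1(X,\Sigma_\mu)$, i.e.\ that the differentials of the spectral sequence (\ref{eqn:spectral sequence}) do not cancel them. In these low degrees this is automatic: for each exceptional term $\operatorname{E}_1^{-p,q}$ the neighbouring entries $\operatorname{E}_1^{-p\pm1,q}$ on the same row $q$ vanish (the adjacent Koszul twists are acyclic or live in a different cohomological degree), so $d_1$ acts trivially on it, and the higher differentials enter or leave groups that are likewise zero. Equivalently, one chases the exceptional contribution through the sequences (\ref{eqn:explicit Koszul}) exactly as in the proof of Lemma~\ref{lemma: (2n n n 0 | n n)}; the output $\W^3 V_6^\vee$ matches the one already isolated in Proposition~\ref{prop:W3 in Ext1}.

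The main obstacle is purely organisational: tracking the length $l(\mu+\delta)$ of the parametric $6$-tuples uniformly in $m$, simultaneously over all five families and all five values of $p$, while correctly isolating the finitely many small-$m$ coincidences (this is exactly what the phrase ``long but easy computation'' hides). I expect to streamline it using Proposition~\ref{prop:+ delta 1 non cambia} together with the identity $\Sigma_{\mu+1}=\Sigma_\mu$: since the recursive decomposition of $\mE nd(\Sigma_{(m,1,1,0)}\mQ)$ adjoins a constant-size family at each step, the cohomological behaviour of the members of $\mathsf{K}_{(m,1,1,0)}$ propagates from its first occurrence, so the genuinely new verifications reduce to the base cases $m=1,2$, after which the statement for all $m\ge1$ follows formally.
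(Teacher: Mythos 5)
Your main argument is correct and is exactly the computation the paper's proof (``a long but easy computation'') leaves implicit: run Borel--Weil--Bott on each of the five parametric families in $\mathsf{K}_{(m,1,1,0)}$ twisted by the irreducible factors of $\W^p\Sym^3\widetilde{\mU}$, check that for generic $m$ every non-acyclic term has $l(\mu+\delta)\geq p+2$, and isolate the two small-$m$ exceptions $(1,1,1,1\mid1,1)\equiv(2,2,2,2\mid2,2)$ and $(3,3,1,1\mid2,2)$, whose survival in the spectral sequence follows because the adjacent $E_1$-entries vanish. Your explicit check for $(3,3,1,1\mid2,2)\otimes\Sym^3\widetilde{\mU}$ ($l=2$, output $\W^3V_6^\vee$ in $\oH^1$) is right.

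One caveat: the streamlining proposed in your last paragraph does not work. Proposition~\ref{prop:+ delta 1 non cambia} propagates the \emph{old} summands of $\mE nd(\Sigma_{(m-1,1,1,0)}\mQ)$ into $\mE nd(\Sigma_{(m,1,1,0)}\mQ)$ via the shift $\mu\mapsto\mu+1$, under which $\Sigma_{\mu+1}=\Sigma_\mu$; but the members of $\mathsf{K}_{(m,1,1,0)}$ are the \emph{new} summands at step $m$, and for different $m$ they are genuinely different homogeneous bundles (e.g.\ $(2(m+1),m+1,m+1,0\mid m+1,m+1)\neq(2m,m,m,0\mid m,m)+1$). So their cohomology does not ``propagate from its first occurrence,'' and the verification cannot be reduced to $m=1,2$; you must carry out the uniform-in-$m$ count of $l(\mu+\delta)$ for each family, which is precisely what your second paragraph already does. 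Drop the final paragraph and the proof stands.
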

\begin{proof}
    This is a long but easy computation.
\end{proof}

\begin{corollary}
    For every $m\geq1$ we have
    \[ \Hom(\Sigma_{(m,1,1,0)}\mQ,\Sigma_{(m,1,1,0)}\mQ)=\CC\quad\mbox{ and }\quad \Ext^1(\Sigma_{(m,1,1,0)}\mQ,\Sigma_{(m,1,1,0)}\mQ)=\W^3 V_6^\vee. \]
\end{corollary}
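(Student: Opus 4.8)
The plan is to read off both cohomology groups from the identification $\Ext^i(\Sigma_{(m,1,1,0)}\mQ,\Sigma_{(m,1,1,0)}\mQ)=\oH^i(X,\mE nd(\Sigma_{(m,1,1,0)}\mQ))$, exploiting that cohomology is additive along the direct-sum decomposition of the endomorphism bundle and that restriction of homogeneous bundles from $\Gr(2,6)$ to $X$ preserves direct sums. Concretely, I would feed Proposition~\ref{prop:LR di End(m,1,1,0)} together with the cohomology computation immediately preceding this corollary into an induction on $m$.

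First I would apply $\oH^i(X,-)$ to the recursive decomposition of Proposition~\ref{prop:LR di End(m,1,1,0)}, obtaining
\[ \oH^i(X,\mE nd(\Sigma_{(m,1,1,0)}\mQ))=\oH^i(X,\mE nd(\Sigma_{(m-1,1,1,0)}\mQ))\oplus\bigoplus_{\mu\in\mathsf{K}_{(m,1,1,0)}}\oH^i(X,\Sigma_\mu), \]
where each summand $\oH^i(X,\Sigma_\mu)$ is exactly the group computed, via the Koszul spectral sequence (\ref{eqn:spectral sequence}), in the preceding proposition. This reduces the statement to detecting which weights $\mu$ carry nonzero $\oH^0$ or $\oH^1$.

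For the base of the induction I would isolate $m=1$, where $\Sigma_{(1,1,1,0)}\mQ=\mQ^\vee(-1)$ is simple and rigid by Proposition~\ref{prop:sym simple and rigid}; thus $\oH^0=\CC$ and $\oH^1=0$, the $\CC$ coming from the trivial summand $\Sigma_{(1,1,1,1\mid1,1)}=\mO_X\in\mathsf{K}_{(1,1,1,0)}$. The class $\W^3V_6^\vee$ then enters at $m=2$, through the single factor $\Sigma_{(3,3,1,1\mid2,2)}\in\mathsf{K}_{(2,1,1,0)}$ singled out by the cohomology proposition. For the inductive step with $m\geq3$, I would verify by direct inspection that none of the five weights in $\mathsf{K}_{(m,1,1,0)}$ is, up to the normalisation $\Sigma_{\mu+1}=\Sigma_\mu$ of (\ref{eqn: + n}), equal either to the trivial weight or to $(3,3,1,1\mid2,2)$: indeed all these weights carry the block $(\cdots\mid m,m)$ with first $\SL(4)$-entry $2m$ or $2m-1$, which for $m\geq3$ can be brought neither to $(m,m,m,m)$ nor to $(m+1,m+1,m-1,m-1)$. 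Hence, by the cohomology proposition, every $\Sigma_\mu$ with $\mu\in\mathsf{K}_{(m,1,1,0)}$ is acyclic in degrees $0$ and $1$, so all of $\oH^0$ and $\oH^1$ is carried by the recursive summand $\mE nd(\Sigma_{(m-1,1,1,0)}\mQ)$, and the inductive hypothesis closes the argument with $\oH^0=\CC$ and $\oH^1=\W^3V_6^\vee$.

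The only genuinely substantial input is the preceding cohomology proposition itself — a case-by-case Borel--Weil--Bott analysis of each $\Sigma_\mu$ against the Koszul resolution, in the spirit of Lemma~\ref{lemma: (2n n n 0 | n n)}; granting it, the corollary is a purely formal consequence of additivity and the recursion, the only care being the bookkeeping that ensures exactly one copy of $\mO_X$ (accounting for simplicity, $\Hom=\CC$) and, for $m\geq2$, exactly one copy of the $\W^3V_6^\vee$-factor survive, while all remaining factors in $\bigcup_{j\leq m}\mathsf{K}_{(j,1,1,0)}$ are acyclic in these degrees. I would also record the minor caveat that the $\Ext^1$ equality is to be read for $m\geq2$: at $m=1$ the bundle is rigid and $\Ext^1$ vanishes, consistently with the remark above.
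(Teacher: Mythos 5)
Your argument is correct and is essentially the paper's own (implicit) proof: the corollary follows by applying cohomology to the recursive decomposition of Proposition~\ref{prop:LR di End(m,1,1,0)} and invoking the preceding Borel--Weil--Bott computation, so that only the trivial summand (entering at $m=1$ as $\Sigma_{(1,1,1,1\mid 1,1)}$) and the single factor $\Sigma_{(3,3,1,1\mid 2,2)}$ (entering at $m=2$) contribute in degrees $0$ and $1$, all later elements of $\mathsf{K}_{(m,1,1,0)}$ being acyclic there. Your caveat that the $\Ext^1$ equality should be read for $m\geq2$ --- since $\Sigma_{(1,1,1,0)}\mQ$ is a twist of $\mQ$ and hence rigid --- is a fair observation and agrees with the paper's own remark following the decomposition.
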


\newpage
\subsection{The case $\lambda=(m,2,0,0)$}\label{section:LR for (m,2,0,0)}

For $m\geq2$, let us put
\[ \mathsf{K}_{(m,2,0,0)}:=\left\{ 
\begin{array}{l} 
(2m,m+i,m-i,0\mid m,m) \mbox{ for } i=0,1,2 \\
(2m,m+i,m-i-1,1\mid m,m) \mbox{ for } i=0,1\, (m\geq3) \\
(2m,m,m-2,2\mid m,m)\, (m\geq4) \\
(2m-1,m+1+i,m-i,0\mid m,m) \mbox{ for } i=0,1 \, (m\geq3) \\
(2m-1,m+i,m-i,1\mid m,m) \mbox{ for } i=0,1 \\
(2m-1,m,m-1,2\mid m,m)\, (m\geq3) \\
(2m-2,m+2-i,m,i\mid m,m) \mbox{ for } i=0,1,2 \, (m\geq 4-i) \\
\end{array}
\right\}.  \]

\begin{proposition}\label{prop:LR di End(m,2,0,0)}
    For every $m\geq1$, the following recursive Littlewood--Richardson decomposition holds,
    \[ \mE nd(\Sigma_{(m,2,0,0)}\mQ)=\mE nd(\Sigma_{(m-1,2,0,0)}\mQ)\oplus\bigoplus_{\mu\in\mathsf{K}_{(m,2,0,0)}}\Sigma_\mu. \]
\end{proposition}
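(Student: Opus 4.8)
The plan is to argue by induction on $m$, following verbatim the template of the proofs of Proposition~\ref{prop:LR di End(m,1,0,0)} and Proposition~\ref{prop:LR di End(m,1,1,0)}. Since $(m,2,0,0)$ is a partition only for $m\geq2$, and since the various threshold conditions ($m\geq3$, $m\geq4$) in the definition of $\mathsf{K}_{(m,2,0,0)}$ mean that its cardinality stabilises only at $m=4$ (growing $6\to12\to14$ as $m$ runs through $2,3,4$), the base of the induction is the explicit verification of the cases $m=2,3,4$ by a direct Littlewood--Richardson computation. For $m=2$ the summand $\mE nd(\Sigma_{(m-1,2,0,0)}\mQ)$ is absent, because $(1,2,0,0)$ is not a partition, so there one checks $\mE nd(\Sigma_{(2,2,0,0)}\mQ)=\bigoplus_{\mu\in\mathsf{K}_{(2,2,0,0)}}\Sigma_\mu$ outright. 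For the inductive step with $m\geq5$, Proposition~\ref{prop:+ delta 1 non cambia} furnishes the injection $\mu\mapsto\mu+1$ exhibiting $\mE nd(\Sigma_{(m-1,2,0,0)}\mQ)$ as a direct summand of $\mE nd(\Sigma_{(m,2,0,0)}\mQ)$.

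The heart of the argument is then to promote the resulting inclusion
\[ \mE nd(\Sigma_{(m,2,0,0)}\mQ)\supset\mE nd(\Sigma_{(m-1,2,0,0)}\mQ)\oplus\bigoplus_{\mu\in\mathsf{K}_{(m,2,0,0)}}\Sigma_\mu \]
to an equality. First I would establish this inclusion itself. Recall from Example~\ref{example:duale} that $\mE nd(\Sigma_{(m,2,0,0)}\mQ)=\Sigma_{(m,2,0,0)}\mQ\otimes\Sigma_{(m,m,m-2,0)}\mQ\otimes\mO_X(-m)$, so that each factor $\Sigma_{\nu\mid m,m}$ corresponds to a summand $\Sigma_\nu$ of the Littlewood--Richardson product $\Sigma_{(m,2,0,0)}\mQ\otimes\Sigma_{(m,m,m-2,0)}\mQ$. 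One then checks, by the same box-filling bookkeeping used in the proof of Proposition~\ref{prop:W3 in Ext1}, that every $\mu\in\mathsf{K}_{(m,2,0,0)}$ is obtained from the diagram of $(m,m,m-2,0)$ by adding $m$ boxes labelled $1$ and then $2$ boxes labelled $2$, subject to the Pieri and Littlewood--Richardson rules (Remark~\ref{remark:tutti 1} and Remark~\ref{rmk:strictly increasing on colomns}), and with exactly the multiplicity recorded by its membership in $\mathsf{K}_{(m,2,0,0)}$. The seven families of $\mathsf{K}_{(m,2,0,0)}$ and their threshold conditions are precisely the shapes and non-degeneracy constraints produced by this finite enumeration.

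The inclusion is upgraded to an equality by a rank count, exactly as in the earlier cases. Using the rank formula (\ref{eqn:rank}), which here gives $r_{(m,2,0,0)}=(m+3)(m+2)(m-1)$, I would verify the polynomial identity
\[ r^2_{(m,2,0,0)}-r^2_{(m-1,2,0,0)}=\sum_{\mu\in\mathsf{K}_{(m,2,0,0)}}r_\mu, \]
a degree-$5$ identity in $m$. Since both sides of the displayed inclusion are then semisimple homogeneous bundles of the same total rank, the inclusion must be an equality, completing the induction.

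The main obstacle is not conceptual but combinatorial: relative to the $(m,1,0,0)$ and $(m,1,1,0)$ cases, here $\mathsf{K}_{(m,2,0,0)}$ comprises fourteen families (for $m\geq4$), several carrying their own lower bounds on $m$, and one must carefully track which shapes already occur as $(+1)$-shifts of factors of $\mE nd(\Sigma_{(m-1,2,0,0)}\mQ)$ and which are genuinely new — allowing, as in the $(m,1,1,0)$ example, for a partition to receive an extra copy from $\mathsf{K}_{(m,2,0,0)}$ on top of a shifted one. Getting the multiplicities and the boundary cases $m=2,3,4$ exactly right, so that the rank identity closes, is where the bulk of the care is required; the rank identity itself then doubles as a consistency check on the entire enumeration.
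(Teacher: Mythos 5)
Your proposal is correct and follows essentially the same route as the paper: induction on $m$ anchored in the explicit low-$m$ decompositions, the inclusion $\mE nd(\Sigma_{(m-1,2,0,0)}\mQ)\oplus\bigoplus_{\mu\in\mathsf{K}_{(m,2,0,0)}}\Sigma_\mu\subset\mE nd(\Sigma_{(m,2,0,0)}\mQ)$ obtained from Proposition~\ref{prop:+ delta 1 non cambia} plus a direct Littlewood--Richardson check, and the upgrade to equality via the rank identity $r^2_{(m,2,0,0)}-r^2_{(m-1,2,0,0)}=(m+2)^2(6m^3+m^2-16m+5)=\sum_{\mu\in\mathsf{K}_{(m,2,0,0)}}r_\mu$. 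The only (immaterial) difference is bookkeeping: the paper takes $m\leq3$ as the base and runs the step for $m\geq4$, whereas you also treat $m=4$ as a base case; your count $6\to12\to14$ of $|\mathsf{K}_{(m,2,0,0)}|$ and the identification $r_{(m,2,0,0)}=(m+3)(m+2)(m-1)$ are both consistent with the paper.
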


\begin{example}\label{example: LR di End (m,2,0,0)}
    Let us write down the first two cases.
    \begin{itemize}
        \item ($m=2$) 
        \begin{align*} 
        \mE nd(\Sigma_{(2,2,0,0)}\mQ)=\mathsf{K}_{(2,2,0,0)}= &
       \Sigma_{(4,4,0,0\mid 2,2)} \oplus \Sigma_{(4,3,1,0\mid 2,2)} \oplus \Sigma_{(4,2,2,0\mid 2,2)} \oplus \\
        & \Sigma_{(3,3,1,1\mid 2,2)} \oplus 
        \Sigma_{(3,2,2,1\mid 2,2)} \oplus  \Sigma_{(2,2,2,2\mid 2,2)} . 
    \end{align*}
        \item ($m=3$) 
        \begin{align*}
        \mE nd(\Sigma_{(3,2,0,0)}\mQ)= & \Sigma_{(6,5,1,0\mid 3,3)} \oplus \Sigma_{(6,4,2,0\mid 3,3)} \oplus \Sigma_{(6,3,3,0\mid 3,3)} \oplus \Sigma_{(6,4,1,1\mid 3,3)} \oplus \Sigma_{(6,3,2,1\mid 3,3)} \oplus \\
        & \Sigma_{(5,5,2,0\mid 3,3)} \oplus \Sigma_{(5,4,3,0\mid 3,3)} \oplus \Sigma_{(5,4,2,1\mid 3,3)}^{\oplus 2} \oplus 
        \Sigma_{(5,3,3,1\mid 3,3)}^{\oplus2} \oplus \Sigma_{(5,3,2,2\mid 3,3)} \oplus \\
        & \Sigma_{(4,3,3,2\mid 3,3)}^{\oplus 2} \oplus \Sigma_{(4,4,3,1\mid 3,3)} \oplus \Sigma_{(5,5,1,1\mid 3,3)} \oplus \Sigma_{(4,4,2,2\mid 3,3)} \oplus \Sigma_{(3,3,3,3\mid 3,3)} .
        \end{align*}
    \end{itemize}
\end{example}


\begin{proof}
    We proceed by induction on $m$, the base of the induction being Example~\ref{example: LR di End (m,2,0,0)}, from which the claim follows for $m\leq3$. Let us then assume that $m\geq4$. Notice that the cardinality of $\mathsf{K}_{(m,2,0,0)}$ is constant for $m\geq4$. Thanks to Proposition~\ref{prop:+ delta 1 non cambia} and a direct check, one can see that
    \begin{equation}\label{eqn:inclusione m 2 0 0 } 
    \mE nd(\Sigma_{(m,2,0,0)}\mQ)\supset\mE nd(\Sigma_{(m-1,2,0,0)}\mQ)\oplus\bigoplus_{\mu\in\mathsf{K}_{(m,2,0,0)}}\Sigma_\mu. 
    \end{equation}
    Now we see that the following equality holds
    \[ r^2_{(m,2,0,0)}-r^2_{(m-1,2,0,0)}=(m+2)^2(6m^3+m^2-16m+5), \]
    where $r_\lambda$ is the rank function (\ref{eqn:rank}), and that similarly the following equality holds 
    \[ \sum_{\mu\in\mathsf{K}_{(m,2,0,0)}}r_\mu=(m+2)^2(6m^3+m^2-16m+5). \]
    Combining the two equality above, it follows that the inclusion (\ref{eqn:inclusione m 2 0 0 }) is an equality, hence the claim.
\end{proof}

\begin{remark}
    Notice that the summand that contributes to the groups  $\Ext^1(\Sigma_{(m,2,0,0)}\mQ,\Sigma_{(m,2,0,0)}\mQ)$ and $\Ext^3(\Sigma_{(m,2,0,0)}\mQ,\Sigma_{(m,2,0,0)}\mQ)$ is $\Sigma_{(2m-1,m+1,m-1,1\mid m,m)}$ for $m=2$ (and then it propagates for any $m$ thanks to Proposition~\ref{prop:+ delta 1 non cambia}). Similarly, the summand corresponding to $\mO_X$ is $\Sigma_{(2m-2,m,m,2\mid m,m)}$ for $m=2$.
\end{remark}

\begin{proposition}
    For every $m\geq2$ and every $\mu\in\mathsf{K}_{(m,2,0,0)}$ we have
    \begin{enumerate}
        \item $\oH^0(X, \Sigma_\mu)=0$ unless $\mu=(2,2,2,2\mid 2,2)$, in which case $\oH^0(X, \Sigma_{(2,2,2,2\mid 2,2)})=\CC$;
        \item $\oH^1(X, \Sigma_\mu)=0$ unless $\mu=(3,3,1,1\mid 2,2)$, in which case $\oH^1(X, \Sigma_{(3,3,1,1\mid 2,2)})=\W^3 V_6^\vee$.
    \end{enumerate}
\end{proposition}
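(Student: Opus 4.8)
The plan is to compute $\oH^0(X,\Sigma_\mu)$ and $\oH^1(X,\Sigma_\mu)$ for each $\mu\in\mathsf{K}_{(m,2,0,0)}$ by means of the Koszul spectral sequence (\ref{eqn:spectral sequence}), exactly as in the proof of Lemma~\ref{lemma: (2n n n 0 | n n)} and in the worked examples of Section~\ref{section:two worked out example}. Writing $\mu=(\mu_1,\mu_2,\mu_3,\mu_4\mid m,m)$, one tensors the Koszul resolution (\ref{eqn:Koszul}) with $\Sigma_\mu\widetilde{\mQ}$ and reads off the two groups from the terms $\operatorname{E}_1^{-p,q}=\oH^q(\Sigma_\mu\otimes\W^p\Sym^3\widetilde{\mU})$ lying on the antidiagonals $q-p=0$ and $q-p=1$, respectively. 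The inputs are the decomposition of $\W^p\Sym^3\widetilde{\mU}$ recalled in Section~\ref{section:Koszul} together with the Borel--Weil--Bott algorithm of Section~\ref{section:BBW}: for each $p=0,\dots,4$ one forms the $6$-tuple obtained from $\mu$ by the corresponding shift on the $\widetilde{\mU}$-entries, adds $\delta=(5,4,3,2,1,0)$, and either detects a repeated entry (acyclicity) or records the single surviving group in cohomological degree $l(\,\cdot\,)$. Since a fixed irreducible $\Sigma_\mu\otimes\W^p\Sym^3\widetilde{\mU}$ has at most one nonzero cohomology group, there is at most one nonzero $\operatorname{E}_1$-term on each antidiagonal per value of $p$.

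For statement~(1) this makes the argument differential-free: to prove $\oH^0(X,\Sigma_\mu)=0$ it suffices to show that every term on the antidiagonal $q=p$ vanishes, i.e.\ $\oH^p(\Sigma_\mu\otimes\W^p\Sym^3\widetilde{\mU})=0$ for all $p$. Because every $\mu\in\mathsf{K}_{(m,2,0,0)}$ has $\mu_4\le 2\le m$, the tuple $\mu+\delta$ (and its shifted variants) typically has a repeated entry, or else its $l$-function is $\ge 2$, so that the only surviving contributions land in $\oH^{\ge 2}(X)$. The unique exception is $\mu=(2,2,2,2\mid 2,2)$, for which $\Sigma_\mu=\mO_X$ by (\ref{eqn: + n}); then $\oH^0(X,\mO_X)=\CC$ and $\oH^1(X,\mO_X)=0$ by the Hodge diamond of Section~\ref{section:F(Y)}. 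I would organise this so the dependence on $m$ is uniform for $m$ large (the sorted order of the entries of each family of $\mathsf{K}_{(m,2,0,0)}$ stabilises once $m$ exceeds a small threshold), reducing to a finite check, with $m=2,3$ handled directly from the explicit decompositions in Example~\ref{example: LR di End (m,2,0,0)}.

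For statement~(2) the only positive contribution comes from $\mu=(3,3,1,1\mid 2,2)$. Here $\Sigma_\mu$ is acyclic on the Grassmannian, since $(3,3,1,1,2,2)+\delta=(8,7,4,3,3,2)$ has a repeated $3$; but the $p=1$ Koszul piece gives $\Sigma_{(3,3,1,1\mid 5,2)}$ with $(3,3,1,1,5,2)+\delta=(8,7,4,3,6,2)$, whose entries are distinct, whose $l$-function equals $2$, and whose sorted weight yields $\oH^2=\Sigma_{(3,3,3,2,2,2)}V_6=\W^3V_6^\vee$. This term sits in $\operatorname{E}_1^{-1,2}$, on the antidiagonal $q-p=1$. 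One then checks that the two adjacent terms $\operatorname{E}_1^{0,2}=\oH^2(\Sigma_\mu)$ and $\operatorname{E}_1^{-2,3}=\oH^3(\Sigma_\mu\otimes\W^2\Sym^3\widetilde{\mU})$ vanish, so that no differential can touch the class and $\oH^1(X,\Sigma_\mu)=\W^3V_6^\vee$. For every other $\mu\in\mathsf{K}_{(m,2,0,0)}$ (in particular every fresh summand with $m\ge 3$) I would show that the whole antidiagonal $q=p+1$ vanishes by the same Borel--Weil--Bott bookkeeping.

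The main obstacle is not any single step but the combinatorial bulk: $\mathsf{K}_{(m,2,0,0)}$ comprises seven families of weights, each depending on $m$ and subject to the convention of discarding non-partitions, and for several of them the shifted tuples change their sorted order as $m$ crosses the thresholds $m=3,4$. The acyclicity/degree analysis must therefore be stratified by these thresholds, and the real care lies in confirming that, apart from the two distinguished weights, no family contributes on the antidiagonals $q=p$ or $q=p+1$ for \emph{any} $m$. By contrast, the differential vanishing needed to pin down the $(3,3,1,1\mid 2,2)$ class is minor, as it reduces to the two Borel--Weil--Bott vanishing checks indicated above.
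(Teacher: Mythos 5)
Your proposal follows essentially the same route as the paper: the paper's own proof of this proposition is the one-line ``long but easy computation,'' and the intended computation is precisely the Koszul-plus-Borel--Weil--Bott bookkeeping you describe, modelled on Lemma~\ref{lemma: (2n n n 0 | n n)} and the worked examples of Section~\ref{section:two worked out example}; your identification of $(2,2,2,2\mid2,2)=\mO_X$ and of $(3,3,1,1\mid2,2)$ contributing $\W^3V_6^\vee$ via the $p=1$ Koszul piece (with $(3,3,1,1,5,2)+\delta=(8,7,4,3,6,2)$, $l=2$) is correct and matches the paper's Table~\ref{dimcoho (2,1,0,0)}. One small slip: the terms that could interfere with the class in $\operatorname{E}_1^{-1,2}$ are the outgoing target $\operatorname{E}_1^{0,2}=\oH^2(\Sigma_\mu)$ and the incoming sources $\operatorname{E}_r^{-1-r,\,r+1}$ (i.e.\ $\oH^2$ of the $p=2$ piece, $\oH^3$ of the $p=3$ piece, $\oH^4$ of the $p=4$ piece), not $\operatorname{E}_1^{-2,3}$ as written; all of these do vanish by Borel--Weil--Bott (each irreducible summand has its single nonzero cohomology in degree $4$, $6$, or none), so the conclusion stands, but the check should be stated for the correct terms.
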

\begin{proof}
    This is a long but easy computation.
\end{proof}

\begin{corollary}
    For every $m\geq2$ we have
    \[ \Hom(\Sigma_{(m,2,0,0)}\mQ,\Sigma_{(m,2,0,0)}\mQ)=\CC\quad\mbox{ and }\quad \Ext^1(\Sigma_{(m,2,0,0)}\mQ,\Sigma_{(m,2,0,0)}\mQ)=\W^3 V_6^\vee. \]
\end{corollary}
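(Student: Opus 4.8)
The plan is to deduce the Corollary directly from the two preceding Propositions by an induction on $m$, using that sheaf cohomology is additive over direct sums together with the identifications $\Hom(\Sigma_{(m,2,0,0)}\mQ,\Sigma_{(m,2,0,0)}\mQ)=\oH^0(X,\mE nd(\Sigma_{(m,2,0,0)}\mQ))$ and $\Ext^1(\Sigma_{(m,2,0,0)}\mQ,\Sigma_{(m,2,0,0)}\mQ)=\oH^1(X,\mE nd(\Sigma_{(m,2,0,0)}\mQ))$.

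For the base case $m=2$ I would read off the six irreducible summands of $\mE nd(\Sigma_{(2,2,0,0)}\mQ)=\bigoplus_{\mu\in\mathsf{K}_{(2,2,0,0)}}\Sigma_\mu$ from Example~\ref{example: LR di End (m,2,0,0)} and apply the preceding Proposition termwise. Among these, only $\mu=(2,2,2,2\mid 2,2)$ has nonzero $\oH^0$ (contributing $\CC$) and only $\mu=(3,3,1,1\mid 2,2)$ has nonzero $\oH^1$ (contributing $\W^3V_6^\vee$), each occurring with multiplicity one. This settles $m=2$.

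For the inductive step ($m\geq 3$) I would invoke Proposition~\ref{prop:LR di End(m,2,0,0)}, which gives
\[ \mE nd(\Sigma_{(m,2,0,0)}\mQ)=\mE nd(\Sigma_{(m-1,2,0,0)}\mQ)\oplus\bigoplus_{\mu\in\mathsf{K}_{(m,2,0,0)}}\Sigma_\mu, \]
where the first summand is embedded via Proposition~\ref{prop:+ delta 1 non cambia} under the identification $\Sigma_{\mu+1}=\Sigma_\mu$ of (\ref{eqn: + n}). Taking cohomology and using additivity, the inductive hypothesis contributes $\CC$ in degree $0$ and $\W^3V_6^\vee$ in degree $1$ from $\mE nd(\Sigma_{(m-1,2,0,0)}\mQ)$. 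For the new summands I would observe that every $\mu\in\mathsf{K}_{(m,2,0,0)}$ with $m\geq 3$ has first entry at least $2m-2\geq 4$, whereas the exceptional tuples $(2,2,2,2\mid 2,2)$ and $(3,3,1,1\mid 2,2)$ have first entries $2$ and $3$; hence neither occurs in $\mathsf{K}_{(m,2,0,0)}$ for $m\geq 3$, and the preceding Proposition forces $\oH^0(X,\Sigma_\mu)=\oH^1(X,\Sigma_\mu)=0$ for all of them. Adding up gives $\oH^0=\CC$ and $\oH^1=\W^3V_6^\vee$, completing the induction.

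The step I would double-check most carefully — the main obstacle within the Corollary itself — is the multiplicity-one appearance of the two exceptional summands across the whole iterated decomposition. Concretely, I would verify that the members $(2m-2,m,m,2\mid m,m)$ and $(2m-1,m+1,m-1,1\mid m,m)$ of $\mathsf{K}_{(m,2,0,0)}$, which specialize to $\mO_X=(2,2,2,2\mid 2,2)$ and to $(3,3,1,1\mid 2,2)$ only at $m=2$, do not normalize via (\ref{eqn: + n}) to $\mO_X$ or to $\Sigma_{(2,2,0,0\mid 1,1)}$ for any $m\geq 3$: they reduce to the nontrivial weights $(m-2,0,0,2-m)$ and $(m-1,1,-1,1-m)$, consistent with their vanishing $\oH^0$ and $\oH^1$. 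Everything else is the termwise application of the preceding Proposition, whose content — the Borel--Weil--Bott plus Koszul computation of $\oH^0$ and $\oH^1$ of each $\Sigma_\mu$ — is where the genuine labor sits and is taken as given here.
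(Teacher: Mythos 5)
Your proposal is correct and follows essentially the same route as the paper: the corollary is obtained by combining the recursive Littlewood--Richardson decomposition of Proposition~\ref{prop:LR di End(m,2,0,0)} with the termwise cohomology computation of the preceding proposition, summing contributions by additivity and the identification $\Sigma_{\mu+1}=\Sigma_\mu$. Your extra check that the parametric members $(2m-2,m,m,2\mid m,m)$ and $(2m-1,m+1,m-1,1\mid m,m)$ specialize to the exceptional tuples only at $m=2$ is exactly the point that guarantees multiplicity one, and it is consistent with the paper's statement that the exceptions occur only for those two specific $6$-tuples.
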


\newpage
\subsection{The case $\lambda=(m,2,1,0)$}\label{section:LR for (m,2,1,0)}

For $m\geq2$, let us put

\[ \mathsf{K}_{(m,2,1,0)}:=\left\{ 
\begin{array}{l}
(2m,m+1,m-1-i,i\mid m,m) \mbox{ for } i=0,1 \\
(2m,m,m,0\,|\,m,m) \\
(2m,m,m-1,1\,|\,m,m)^{\oplus2} \\ 
(2m,m,m-2,2\mid m,m)\, (m\geq4) \\
(2m,m-1,m-1,2\,|\,m,m) \\
(2m-1,m+2,m-1,0\mid m,m) \\
(2m-1,m+2,m-2,1\mid m,m) \\
(2m-1,m+1,m,0\mid m,m)^{\oplus2} \\
(2m-1,m+1,m-1,1\mid m,m)^{\oplus j}\, (j=1\,\mbox{if }m=2\mbox{ and }j=3\,\mbox{if }m\geq3) \\
(2m-1,m+1,m-2,2\mid m,m) \\
(2m-1,m,m,1\mid m,m)^{\oplus2} \\
(2m-1,m,m-1,2\mid m,m)^{\oplus2} \\
(2m-2,m+2,m,0\mid m,m) \\
(2m-2,m+2,m-1,1\mid m,m) \\
(2m-2,m+1,m+1,0\mid m,m) \\
(2m-2,m+1,m,1\mid m,m)^{\oplus2} \\
(2m-2,m+1,m-1,2\mid m,m) \\
(2m-2,m,m,2\mid m,m)
\end{array}
\right\}.  \]

\begin{proposition}\label{prop:LR di End(m,2,1,0)}
    For every $m\geq1$, the following recursive Littlewood--Richardson decomposition holds,
    \[ \mE nd(\Sigma_{(m,2,1,0)}\mQ)=\mE nd(\Sigma_{(m-1,2,1,0)}\mQ)\oplus\bigoplus_{\mu\in\mathsf{K}_{(m,2,1,0)}}\Sigma_\mu. \]
\end{proposition}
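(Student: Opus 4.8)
The plan is to follow verbatim the inductive template already used for $\lambda=(m,1,0,0)$, $\lambda=(m,1,1,0)$ and $\lambda=(m,2,0,0)$ in Propositions~\ref{prop:LR di End(m,1,0,0)}, \ref{prop:LR di End(m,1,1,0)} and \ref{prop:LR di End(m,2,0,0)}, proceeding by induction on $m$. The base of the induction is the explicit computation of $\mE nd(\Sigma_{(2,2,1,0)}\mQ)$, which is already available: by Example~\ref{example:duale} one has $\Sigma_{(2,2,1,0)}\mQ=\Sigma_{(2,1,0,0)}\mQ^\vee\otimes\mO_X(-2)$, so $\mE nd(\Sigma_{(2,2,1,0)}\mQ)=\mE nd(\Sigma_{(2,1,0,0)}\mQ)$ was decomposed in (\ref{eqn:dec21}). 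To be safe I would also compute $m=3$ by hand, since the multiplicity of the summand $(2m-1,m+1,m-1,1\mid m,m)$ jumps from $1$ to $3$ between $m=2$ and $m=3$. By the convention on parametric tuples, the recursive summand $\mE nd(\Sigma_{(m-1,2,1,0)}\mQ)$ is simply absent when $m-1<2$, so the base case reads $\mE nd(\Sigma_{(2,2,1,0)}\mQ)=\bigoplus_{\mu\in\mathsf{K}_{(2,2,1,0)}}\Sigma_\mu$.

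For the inductive step (with $m$ large enough that the cardinality and the multiplicity pattern of $\mathsf{K}_{(m,2,1,0)}$ have stabilised) I would first establish the inclusion $\supset$. By Example~\ref{example:duale},
\[ \mE nd(\Sigma_{(m,2,1,0)}\mQ)=\Sigma_{(m,2,1,0)}\mQ\otimes\Sigma_{(m,m-1,m-2,0)}\mQ\otimes\mO_X(-m), \]
so the whole question is a Littlewood--Richardson computation. On one hand, every $\Sigma_\mu$ occurring in $\mE nd(\Sigma_{(m-1,2,1,0)}\mQ)$ contributes $\Sigma_{\mu+1}$ to $\mE nd(\Sigma_{(m,2,1,0)}\mQ)$ by Proposition~\ref{prop:+ delta 1 non cambia}. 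On the other hand, a direct check — following Remark~\ref{remark:tutti 1} and Remark~\ref{rmk:strictly increasing on colomns} — that each $6$-tuple $\mu\in\mathsf{K}_{(m,2,1,0)}$ admits at least the prescribed number of admissible labellings shows that these new summands also occur with the stated multiplicities. Together these give
\[ \mE nd(\Sigma_{(m,2,1,0)}\mQ)\supset\mE nd(\Sigma_{(m-1,2,1,0)}\mQ)\oplus\bigoplus_{\mu\in\mathsf{K}_{(m,2,1,0)}}\Sigma_\mu. \]

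The reverse inclusion, and hence equality, would then follow from a rank count, exactly as in the earlier cases. Using the Weyl rank formula (\ref{eqn:rank}) one verifies the polynomial identity
\[ r^2_{(m,2,1,0)}-r^2_{(m-1,2,1,0)}=\sum_{\mu\in\mathsf{K}_{(m,2,1,0)}}r_\mu, \]
where the right-hand side is summed with the multiplicities recorded in the definition of $\mathsf{K}_{(m,2,1,0)}$. Since both sides are polynomials in $m$, it suffices to check this symbolically (or at finitely many values of $m$ once the degree is bounded). As the total rank of $\mE nd(\Sigma_{(m,2,1,0)}\mQ)$ is $r^2_{(m,2,1,0)}$ and the displayed inclusion already accounts for exactly that many dimensions, the inclusion of completely reducible homogeneous bundles is forced to be an equality.

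The main obstacle I anticipate is bookkeeping rather than conceptual: the set $\mathsf{K}_{(m,2,1,0)}$ is far larger than in the previous cases, several summands occur with multiplicity $2$ or $3$, and many carry lower bounds on $m$ (the conditions $m\geq3$, $m\geq4$, and the multiplicity that equals $1$ when $m=2$ but $3$ when $m\geq3$). Getting these small-$m$ corrections right is exactly what makes the stabilised pattern valid for large $m$, and it is why $m=2$ and $m=3$ must be treated by hand before invoking Proposition~\ref{prop:+ delta 1 non cambia}. The delicate point of the Littlewood--Richardson check is to confirm that for large $m$ \emph{no extra} labellings appear beyond those listed; the clean rank identity above is what ultimately certifies that nothing has been omitted or double-counted.
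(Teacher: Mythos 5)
Your proposal is correct and follows essentially the same route as the paper: induction on $m$ with the base cases $m=2,3$ handled explicitly via Example~\ref{example: LR di End (m,2,1,0)}, the inclusion obtained from Proposition~\ref{prop:+ delta 1 non cambia} together with a direct Littlewood--Richardson check of the new summands, and equality forced by the rank identity $r^2_{(m,2,1,0)}-r^2_{(m-1,2,1,0)}=\sum_{\mu\in\mathsf{K}_{(m,2,1,0)}}r_\mu$ (which the paper evaluates explicitly as $\tfrac{16}{3}(2m+1)(m^2+m-3)(m^2+m-1)$). Your attention to the small-$m$ multiplicity jumps and the stabilisation of $\mathsf{K}_{(m,2,1,0)}$ for $m\geq4$ matches the paper's treatment exactly.
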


\begin{example}\label{example: LR di End (m,2,1,0)}
    Let us write down the first three cases.
    \begin{itemize}
        \item ($m=2$) By Example~\ref{example:duale} we have $\Sigma_{(2,2,1,0)}\mQ=\Sigma_{(2,1,0,0)}\mQ^\vee\otimes\mO_X(-2)$, therefore 
        \[ \mE nd(\Sigma_{(2,2,1,0)}\mQ)=\mE nd(\Sigma_{(2,1,0,0)}\mQ) \] 
        and the decomposition can be read in Section~\ref{section:Sigma 2 1} (see decomposition~(\ref{eqn:dec21})). \\
        \item ($m=3$) The case $\Sigma_{(3,2,1,0)}\mQ$ has been explicitly worked out in Section~\ref{section:Sigma 3 2 1} and the decomposition of the endomorphism bundle is the decomposition~(\ref{eqn:decomp 3210}).
        
    \end{itemize}
\end{example}


\begin{proof}
    We proceed by induction on $m$, the base of the induction being Example~\ref{example: LR di End (m,2,1,0)}, from which the claim follows for $m\leq3$. Let us then assume that $m\geq4$. Notice that the cardinality of $\mathsf{K}_{(m,2,1,0)}$ is constant for $m\geq4$. Thanks to Proposition~\ref{prop:+ delta 1 non cambia} and a direct check, one can see that
    \begin{equation}\label{eqn:inclusione m 2 1 0 } 
    \mE nd(\Sigma_{(m,2,1,0)}\mQ)\supset\mE nd(\Sigma_{(m-1,2,1,0)}\mQ)\oplus\bigoplus_{\mu\in\mathsf{K}_{(m,2,1,0)}}\Sigma_\mu. 
    \end{equation}
    Now we see that the following equality holds
    \[ r^2_{(m,2,1,0)}-r^2_{(m-1,2,1,0)}=\frac{16}{3}(2m+1)(m^2+m-3)(m^2+m-1), \]
    where $r_\lambda$ is the rank function (\ref{eqn:rank}), and that similarly the following equality holds 
    \[ \sum_{\mu\in\mathsf{K}_{(m,2,1,0)}}r_\mu=\frac{16}{3}(2m+1)(m^2+m-3)(m^2+m-1). \]
    Combining the two equality above, it follows that the inclusion (\ref{eqn:inclusione m 2 1 0 }) is an equality, hence the claim.
\end{proof}

\begin{remark}
    The summand that contributes to the groups  $\Ext^{1,3}(\Sigma_{(m,2,1,0)}\mQ,\Sigma_{(m,2,1,0)}\mQ)$ is more subtle here:
    if $m=2$ then it is $\Sigma_{(2m-1,m+1,m-1,1\mid m,m)}$, which appears with multiplicity $1$ in this case; for $m=3$ it is $\Sigma_{(2m-2,m+1,m-1,2\mid m,m)}$.     
    Notice that for $m\geq3$ the factor $\Sigma_{(2m-1,m+1,m-1,1\mid m,m)}$ (appearing with multiplicity 3) does not contribute to the $\Ext^{1,3}$-groups (a direct check). Therefore the two copies of $\W^3 V_6^\vee$ in $\Ext^1(\Sigma_{(m,2,1,0)}\mQ,\Sigma_{(m,2,1,0)}\mQ)$ for $m\geq3$ are the propagations of the two summands above.
    
    Similarly, the summand corresponding to $\mO_X$ is $\Sigma_{(2m-2,m,m,2\mid m,m)}$ for $m=2$.
\end{remark}

\begin{proposition}
    For every $m\geq2$ and every $\mu\in\mathsf{K}_{(m,2,1,0)}$ we have
    \begin{enumerate}
        \item $\oH^0(X, \Sigma_\mu)=0$ unless $\mu=(2,2,2,2\mid 2,2)$, in which case $\oH^0(X, \Sigma_{(2,2,2,2\mid 2,2)})=\CC$;
        \item For every $m\leq4$ we have that $\oH^1(X, \Sigma_\mu)=0$ unless $\mu=(3,3,1,1\mid 2,2)$, in which case $\oH^1(X, \Sigma_{(3,3,1,1\mid 2,2)})=\W^3 V_6^\vee$.
    \end{enumerate}
\end{proposition}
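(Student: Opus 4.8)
The plan is to read off both cohomology groups from the hypercohomology spectral sequence (\ref{eqn:spectral sequence}) attached to the Koszul resolution (\ref{eqn:Koszul}) of $X\subset\Gr(2,6)$. For a fixed $\mu\in\mathsf{K}_{(m,2,1,0)}$ its first page is
\[ \operatorname{E}_1^{-p,q}=\oH^q\!\left(\Gr(2,6),\,\Sigma_\mu\otimes\W^p\Sym^3\widetilde{\mU}\right)\ \Longrightarrow\ \oH^{q-p}(X,\Sigma_\mu). \]
Every $\mu\in\mathsf{K}_{(m,2,1,0)}$ has $\mu_5=\mu_6=m$, so tensoring with the irreducible factors of $\W^p\Sym^3\widetilde{\mU}$ recalled in Section~\ref{section:Koszul} only raises the last two entries by one of $(0,0)$, $(3,0)$, $(5,1)$, $(3,3)$, $(6,3)$, $(6,6)$. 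The first step is therefore purely mechanical: for each of the finitely many families $\mu=\mu(m)$ and each of these six shifted $6$-tuples, add $\delta=(5,4,3,2,1,0)$, test for repeated entries (acyclicity), and otherwise compute the $l$-function and the sorted output, following the Borel--Weil--Bott algorithm of Section~\ref{section:BBW}.

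For part $(1)$ I would argue uniformly in $m$. The group $\oH^0(X,\Sigma_\mu)$ is fed only by the antidiagonal $q=p$, so $\oH^0(X,\Sigma_\mu)\cong\operatorname{E}_\infty^{0,0}\subseteq\operatorname{E}_1^{0,0}$ together with the higher pieces $\operatorname{E}_\infty^{-p,p}\subseteq\operatorname{E}_1^{-p,p}$ ($p\geq1$). The corner term $\operatorname{E}_1^{0,0}=\oH^0(\Gr(2,6),\Sigma_\mu)$ is nonzero exactly when $\mu$ is a dominant $6$-tuple, which forces $\mu_4\geq\mu_5=m$; since every member of $\mathsf{K}_{(m,2,1,0)}$ satisfies $\mu_4\leq 2$, this can happen only for $m=2$ and $\mu=(2,2,2,2\mid 2,2)=\mO_X$, where it yields the single copy $\CC$. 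For the remaining antidiagonal entries ($p\geq 1$) a direct inspection of the $l$-function—using that $\mu_4\leq2$ while $\mu_5=\mu_6=m$ creates a large inversion between the fourth and fifth coordinates of $\mu+(\mathrm{shift})+\delta$—shows it always exceeds $p$ (uniformly for $m\geq3$, the case $m=2$ being checked directly); hence these twisted bundles have their Grassmannian cohomology in a degree $\neq p$ and $\operatorname{E}_1^{-p,p}=0$. Finally one checks that no differential hits the corner (the possible sources $\oH^{r-1}(\Gr(2,6),\Sigma_\mu\otimes\W^r\Sym^3\widetilde{\mU})$, $r\geq1$, all vanish, e.g.\ $\Sym^3\widetilde{\mU}=\Sigma_{(0,0,0,0\mid 3,0)}$ is acyclic), so $\oH^0(X,\Sigma_\mu)=\operatorname{E}_1^{0,0}$, which is $\CC$ for $\mO_X$ and $0$ otherwise, proving $(1)$.

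For part $(2)$ the range is finite, $m\in\{2,3,4\}$, so I would simply carry out the Borel--Weil--Bott bookkeeping above for every $\mu\in\mathsf{K}_{(m,2,1,0)}$ and splice the outcome through the short exact sequences (\ref{eqn:explicit Koszul}). Contributions to $\oH^1(X,\Sigma_\mu)$ sit on the antidiagonal $q=p+1$; one checks that every twisted bundle has its cohomology off this antidiagonal except for $\Sigma_{(3,3,1,1\mid 2,2)}=\Sigma_{(2,2,0,0\mid1,1)}$ (which for $m=3$ enters $\mathsf{K}_{(3,2,1,0)}$ under its identification $(4,4,2,2\mid 3,3)$, cf.\ (\ref{eqn: + n}), and does not occur at all for $m=4$). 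For that bundle the unique surviving entry comes from the $\Sym^3\widetilde{\mU}$-term ($p=1$), where $\Sigma_{(2,2,0,0\mid4,1)}$ has $\oH^2=\W^3V_6^\vee$ contributing to $\oH^{2-1}(X)$, exactly the class already exhibited in Proposition~\ref{prop:W3 in Ext1}; every other entry is either acyclic or lands in a degree incompatible with $q=p+1$.

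The main obstacle is twofold. First, $\mathsf{K}_{(m,2,1,0)}$ is long—about eighteen parametric families with delicate multiplicities and $m$-dependent ranges—so although each Borel--Weil--Bott evaluation is routine, the accounting is lengthy and error-prone, and one must track the acyclicity thresholds that only switch on for small $m$. Second, one must certify that the few nonzero $\operatorname{E}_1$-entries landing on the antidiagonals $q-p=0,1$ genuinely survive to $\operatorname{E}_\infty$, i.e.\ that no differential cancels them; in these low total degrees the antidiagonals are short and the surviving terms are isolated, which makes the verification feasible, but for part $(1)$ it rests on the uniform control of the $l$-function across all $m$, which is the delicate point to pin down rigorously.
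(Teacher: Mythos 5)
Your proposal is correct and is essentially the paper's own argument: the paper proves this proposition by exactly the Koszul-resolution/Borel--Weil--Bott bookkeeping you describe (it records the proof only as ``a long but easy computation''), and your identification of the two exceptional summands --- $\mO_X=(2m-2,m,m,2\mid m,m)$ at $m=2$ for $\oH^0$, and $\Sigma_{(2,2,0,0\mid 1,1)}$ appearing as $(3,3,1,1\mid 2,2)$ at $m=2$ and as $(4,4,2,2\mid 3,3)$ at $m=3$ for $\oH^1$ --- matches the paper's accounting, including the shift identification $\Sigma_{\mu+1}=\Sigma_\mu$. The only added value in your write-up is the uniform-in-$m$ inversion-count argument for part $(1)$ and the explicit check that the surviving $\operatorname{E}_1$-entries are not hit by differentials, both of which are implicit in the paper's computation.
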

The reason why on item (2) there is the assumption $m\leq4$ is that as soon as $m\geq5$ the Koszul spectral sequence (\ref{eqn:Koszul}) does not degenerate and there may be a priori non-trivial first cohomology groups. This is the first time we encounter this problem -- we remind to Section~\ref{section:example of non-degeneracy} for a case where this is solved by hand, showing also the difficulty to approach this kind of problems in a systematic way.
\begin{proof}
    This is a long but easy computation.
\end{proof}

\begin{corollary}
    For every $m\geq2$ we have $\Hom(\Sigma_{(m,2,1,0)}\mQ,\Sigma_{(m,2,1,0)}\mQ)=\CC$.
    
    Moreover, $\Ext^1(\Sigma_{(2,2,1,0)}\mQ,\Sigma_{(2,2,1,0)}\mQ)=\W^3 V_6^\vee$ and $\Ext^1(\Sigma_{(m,2,1,0)}\mQ,\Sigma_{(m,2,1,0)}\mQ)=(\W^3 V_6^\vee)^{\oplus 2}$ for $m=3,4$.
\end{corollary}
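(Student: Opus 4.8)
The plan is to read both assertions off the irreducible decomposition of the endomorphism bundle furnished by Proposition~\ref{prop:LR di End(m,2,1,0)}, combined with the summand-by-summand cohomology computation of the preceding Proposition. Since $\Hom(\Sigma_{(m,2,1,0)}\mQ,\Sigma_{(m,2,1,0)}\mQ)=\oH^0(X,\mE nd(\Sigma_{(m,2,1,0)}\mQ))$ and $\Ext^1(\Sigma_{(m,2,1,0)}\mQ,\Sigma_{(m,2,1,0)}\mQ)=\oH^1(X,\mE nd(\Sigma_{(m,2,1,0)}\mQ))$, and cohomology is additive over the direct sum $\mE nd(\Sigma_{(m,2,1,0)}\mQ)=\bigoplus_\mu\Sigma_\mu$, it suffices to record which irreducible summands $\Sigma_\mu$ carry $\oH^0$ or $\oH^1$ and with what multiplicity. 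First I would unwind the recursion $\mE nd(\Sigma_{(m,2,1,0)}\mQ)=\mE nd(\Sigma_{(m-1,2,1,0)}\mQ)\oplus\bigoplus_{\mu\in\mathsf{K}_{(m,2,1,0)}}\Sigma_\mu$ down to the base case $m=2$, so that every summand is a translate (by a multiple of $(1,1,1,1\mid1,1)$, which by (\ref{eqn: + n}) leaves the underlying bundle unchanged) of some element of $\mathsf{K}_{(j,2,1,0)}$ with $2\le j\le m$.

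For the simplicity statement I would invoke part~(1) of the preceding Proposition: the only generating $\mu$ with $\oH^0(X,\Sigma_\mu)\neq0$ is the trivial bundle $\Sigma_{(2,2,2,2\mid2,2)}=\mO_X$, with $\oH^0=\CC$. Scanning the families in $\mathsf{K}_{(m,2,1,0)}$, the only one whose six indices can all coincide is $(2m-2,m,m,2\mid m,m)$, which equals $(2,2,2,2\mid2,2)$ only at $m=2$ and thereafter merely propagates. Hence the trivial summand occurs with total multiplicity one, so $\oH^0(X,\mE nd(\Sigma_{(m,2,1,0)}\mQ))=\CC$ and $\Sigma_{(m,2,1,0)}\mQ$ is simple for every $m\ge2$. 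This part is insensitive to the behaviour of the spectral sequence (\ref{eqn:spectral sequence}), since $\oH^0$ sits at the bottom of the Koszul filtration.

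For the $\Ext^1$-statement, valid for $m\le4$, part~(2) of the Proposition says that the unique generator with nonzero $\oH^1$ is $\Sigma_{(3,3,1,1\mid2,2)}=\Sigma_{(2,2,0,0\mid1,1)}$, contributing one copy of $\W^3V_6^\vee$. Thus $\Ext^1=(\W^3V_6^\vee)^{\oplus k_m}$, where $k_m$ is the multiplicity of the bundle $\Sigma_{(2,2,0,0\mid1,1)}$ in $\mE nd(\Sigma_{(m,2,1,0)}\mQ)$. The crux is the bookkeeping of $k_m$: a summand equals $\Sigma_{(2,2,0,0\mid1,1)}$ exactly when it has the form $(2+c,2+c,c,c\mid1+c,1+c)$, and inspecting $\mathsf{K}_{(m,2,1,0)}$ shows that this happens for the family $(2m-1,m+1,m-1,1\mid m,m)$ only at $m=2$ and for the family $(2m-2,m+1,m-1,2\mid m,m)$ only at $m=3$, with no new occurrences for $m\ge4$. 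Propagating through the recursion then gives $k_2=1$, $k_3=1+1=2$ and $k_4=2+0=2$, exactly the claimed answers. The subtle point to stress is that these two parametric families agree, as bundles, only at their first value of $m$: for $m\ge3$ the continuation $(2m-1,m+1,m-1,1\mid m,m)$ (which appears with multiplicity $3$) represents a genuinely different bundle, and one checks directly that it contributes nothing to $\oH^1$.

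The main obstacle does not lie in the corollary itself—once the two preceding propositions are granted, it is pure bookkeeping—but in the cohomological input of part~(2) of the preceding Proposition. Establishing $\oH^1(X,\Sigma_\mu)=0$ for all remaining generators requires running Borel--Weil--Bott termwise on the Koszul resolution (\ref{eqn:Koszul})$\,\otimes\,\widetilde{\mE}_{(m,2,1,0)}$ and controlling the spectral sequence (\ref{eqn:spectral sequence}); for $m\le4$ this degenerates in the relevant range, but from $m\ge5$ onward it need not, so a priori extra $\oH^1$-classes could survive. This is precisely the obstruction that forces the hypothesis $m\le4$ and that is treated by the explicit geometric analysis of Section~\ref{section:example of non-degeneracy}.
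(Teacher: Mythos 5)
Your proposal is correct and follows essentially the same route as the paper: the corollary is deduced from Proposition~\ref{prop:LR di End(m,2,1,0)} together with the summand-by-summand cohomology computation, by tracking which generators in $\mathsf{K}_{(j,2,1,0)}$ ($2\le j\le m$) are translates of $\mO_X$ or of $\Sigma_{(2,2,0,0\mid1,1)}$. Your multiplicity bookkeeping ($k_2=1$, $k_3=k_4=2$, with the contributions coming from $(2m-1,m+1,m-1,1\mid m,m)$ at $m=2$ and $(2m-2,m+1,m-1,2\mid m,m)$ at $m=3$, and the multiplicity-$3$ continuation of the first family not contributing) is exactly the content of the paper's accompanying remark.
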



\newpage
\subsection{The case $\lambda=(m,2,2,0)$}\label{section:LR for (m,2,2,0)}

For $m\geq2$, let us put
\[ \mathsf{K}_{(m,2,2,0)}:=\left\{ 
\begin{array}{l} 
(2m,m,m-i,i\,|\,m,m) \mbox{ for } i=0,1,2 \\
(2m-1,m+1,m-i,i\,|\,m,m) \mbox{ for } i=0,1,2\, (\mbox{if } i=1\mbox{ then } m\geq3) \\
(2m-1,m,m-i,i+1\,|\,m,m) \mbox{ for } i=0,1 \\
(2m-2,m+2,m-i,i\,|\,m,m) \mbox{ for } i=0,1,2 \\
(2m-2,m+1,m-i,i+1\,|\,m,m) \mbox{ for } i=0,1 \\
(2m-2,m,m,2\,|\,m,m)
\end{array}
\right\}.  \]


\begin{proposition}\label{prop:LR di End(m,2,2,0)}
    For every $m\geq2$, the following recursive Littlewood--Richardson decomposition holds,
    \[ \mE nd(\Sigma_{(m,2,2,0)}\mQ)=\mE nd(\Sigma_{(m-1,2,2,0)}\mQ)\oplus\bigoplus_{\mu\in\mathsf{K}_{(m,2,2,0)}}\Sigma_\mu. \]
\end{proposition}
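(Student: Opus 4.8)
The plan is to prove the statement by strong induction on $m$, following verbatim the template already established for the partitions $(m,1,0,0)$, $(m,1,1,0)$, $(m,2,0,0)$ and $(m,2,1,0)$ in the preceding sections. The base of the induction is the case $m=2$: here one decomposes $\mE nd(\Sigma_{(2,2,2,0)}\mQ)$ by a direct application of the Littlewood--Richardson rule of Section~\ref{section:homogeneous}, and checks by hand that the result coincides with $\bigoplus_{\mu\in\mathsf{K}_{(2,2,2,0)}}\Sigma_\mu$; the term $\mE nd(\Sigma_{(1,2,2,0)}\mQ)$ does not occur, since $(1,2,2,0)$ is not a partition. It is also worth recording that by Example~\ref{example:duale} we have $\Sigma_{(m,2,2,0)}\mQ=\Sigma_{(m,m-2,m-2,0)}\mQ^\vee\otimes\mO_X(-m)$, so that $\mE nd(\Sigma_{(m,2,2,0)}\mQ)=\mE nd(\Sigma_{(m,m-2,m-2,0)}\mQ)$; this symmetry serves as a useful consistency check on the list $\mathsf{K}_{(m,2,2,0)}$.

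For the inductive step I would assume $m\geq3$ and that the claim holds for $m-1$, and first establish the inclusion
\[ \mE nd(\Sigma_{(m,2,2,0)}\mQ)\supset\mE nd(\Sigma_{(m-1,2,2,0)}\mQ)\oplus\bigoplus_{\mu\in\mathsf{K}_{(m,2,2,0)}}\Sigma_\mu. \]
The summand $\mE nd(\Sigma_{(m-1,2,2,0)}\mQ)$ embeds by Proposition~\ref{prop:+ delta 1 non cambia}: each irreducible factor $\Sigma_\mu$ of $\mE nd(\Sigma_{(m-1,2,2,0)}\mQ)$ produces a factor $\Sigma_{\mu+1}$ of $\mE nd(\Sigma_{(m,2,2,0)}\mQ)$ of at least the same multiplicity. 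The factors indexed by $\mathsf{K}_{(m,2,2,0)}$ are the genuinely new ones; to see that each appears with the stated multiplicity I would exhibit, for every $\mu\in\mathsf{K}_{(m,2,2,0)}$, the admissible labelling(s) of the corresponding Young diagram, exactly in the spirit of the proof of Proposition~\ref{prop:W3 in Ext1}. Concretely one writes $\mE nd(\Sigma_{(m,2,2,0)}\mQ)=\Sigma_{(m,2,2,0)}\mQ\otimes\Sigma_{(m,m-2,m-2,0)}\mQ\otimes\mO_X(-m)$ and counts the Littlewood--Richardson fillings producing each prescribed shape.

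To upgrade the inclusion to an equality I would compare total ranks. Using the rank formula (\ref{eqn:rank}) one obtains the closed form $r_{(m,2,2,0)}=m(m-1)(m+3)$, and a direct polynomial computation verifies the identity
\[ r^2_{(m,2,2,0)}-r^2_{(m-1,2,2,0)}=\sum_{\mu\in\mathsf{K}_{(m,2,2,0)}}r_\mu. \]
Since the right-hand side of the proposed decomposition is contained in the left-hand side and the two carry the same total rank, the inclusion is forced to be an equality, completing the induction.

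The main obstacle is the bookkeeping in the inclusion step: one must confirm that $\mathsf{K}_{(m,2,2,0)}$ is precisely the collection of new factors, with the correct multiplicities, and in particular that no factor of $\mE nd(\Sigma_{(m-1,2,2,0)}\mQ)$ acquires copies beyond those predicted by Proposition~\ref{prop:+ delta 1 non cambia}. Pinning down the multiplicities of the conditional families — notably the $i=1$ case of the second line of $\mathsf{K}_{(m,2,2,0)}$, which requires $m\geq3$ — is the delicate part; the rank identity above is exactly the global consistency check that certifies the hand-count of fillings has neither missed nor double-counted any summand.
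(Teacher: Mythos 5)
Your proposal is correct and follows essentially the same route as the paper: induction on $m$, the inclusion $\mE nd(\Sigma_{(m-1,2,2,0)}\mQ)\oplus\bigoplus_{\mu\in\mathsf{K}_{(m,2,2,0)}}\Sigma_\mu\subset\mE nd(\Sigma_{(m,2,2,0)}\mQ)$ via Proposition~\ref{prop:+ delta 1 non cambia} plus a direct check of fillings, and the rank identity $r^2_{(m,2,2,0)}-r^2_{(m-1,2,2,0)}=(m-1)^2(6m^3+17m^2-16)=\sum_{\mu\in\mathsf{K}_{(m,2,2,0)}}r_\mu$ to force equality. The only (harmless) difference is that the paper disposes of the cases $m\leq3$ by duality, identifying $\Sigma_{(2,2,2,0)}\mQ=\Sym^2\mQ^\vee(-2)$ and $\Sigma_{(3,2,2,0)}\mQ=\Sigma_{(3,1,1,0)}\mQ^\vee(-3)$ with already-computed decompositions, and runs the rank argument only for $m\geq4$ where the cardinality of $\mathsf{K}_{(m,2,2,0)}$ has stabilised, whereas you compute $m=2$ directly and induct from $m\geq3$, which works provided you drop the conditional entries of $\mathsf{K}_{(m,2,2,0)}$ that do not yet give partitions at $m=3$.
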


\begin{example}\label{example: LR di End (m,2,2,0)}
    Let us write down the first two cases.
    \begin{itemize}
        \item ($m=2$) In this case $\Sigma_{(2,2,2,0)}\mQ=\Sym^2\mQ^\vee(-2)$ and the decomposition is
        \begin{align*} 
        \mE nd(\Sigma_{(2,2,0,0)}\mQ)= &
       \Sigma_{(4,2,2,0\mid 2,2)} \oplus         \Sigma_{(3,2,2,1\mid 2,2)} \oplus  \Sigma_{(2,2,2,2\mid 2,2)}, 
        \end{align*}
        which coincides with the decomposition~(\ref{eqn:LR for Sym}). \\
        \item ($m=3$) In this case $\Sigma_{(3,2,2,0)}\mQ=\Sigma_{(3,1,1,0)}\mQ^\vee(-3)$ and the decomposition agrees with the $m=3$ case of Proposition~\ref{prop:LR di End(m,1,1,0)}.
    \end{itemize}
\end{example}

\begin{proof}
    We proceed by induction on $m$, the base of the induction being Example~\ref{example: LR di End (m,2,2,0)}, from which the claim follows for $m\leq3$. Let us then assume that $m\geq4$. Notice that the cardinality of $\mathsf{K}_{(m,2,2,0)}$ is constant for $m\geq4$. Thanks to Proposition~\ref{prop:+ delta 1 non cambia} and a direct check, one can see that
    \begin{equation}\label{eqn:inclusione m 2 2 0 } 
    \mE nd(\Sigma_{(m,2,2,0)}\mQ)\supset\mE nd(\Sigma_{(m-1,2,2,0)}\mQ)\oplus\bigoplus_{\mu\in\mathsf{K}_{(m,2,2,0)}}\Sigma_\mu. 
    \end{equation}
    Now we see that the following equality holds
    \[ r^2_{(m,2,2,0)}-r^2_{(m-1,2,2,0)}=(m-1)^2(6m^3+17m^2-16), \]
    where $r_\lambda$ is the rank function (\ref{eqn:rank}), and that similarly the following equality holds 
    \[ \sum_{\mu\in\mathsf{K}_{(m,2,2,0)}}r_\mu=(m-1)^2(6m^3+17m^2-16). \]
    Combining the two equality above, it follows that the inclusion (\ref{eqn:inclusione m 2 2 0 }) is an equality, hence the claim.
\end{proof}

\begin{remark}
    The summand that contributes to the groups  $\Ext^{1,3}(\Sigma_{(m,2,2,0)}\mQ,\Sigma_{(m,2,2,0)}\mQ)$ is, for $m=3$, the Schur functor $\Sigma_{(2m-2,m+1,m-1,2\mid m,m)}$. 
    The summand corresponding to $\mO_X$ is $\Sigma_{(2m-2,m,m,2\mid m,m)}$ for $m=2$.
\end{remark}

\begin{proposition}
    For every $m\geq2$ and every $\mu\in\mathsf{K}_{(m,2,2,0)}$ we have
    \begin{enumerate}
        \item $\oH^0(X, \Sigma_\mu)=0$ unless $\mu=(2,2,2,2\mid 2,2)$, in which case $\oH^0(X, \Sigma_{(2,2,2,2\mid 2,2)})=\CC$;
        \item $\oH^1(X, \Sigma_\mu)=0$ unless $\mu=(4,4,2,2\mid 3,3)$, in which case $\oH^1(X, \Sigma_{(4,4,2,2\mid 3,3)})=\W^3 V_6^\vee$.
    \end{enumerate}
\end{proposition}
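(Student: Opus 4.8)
The plan is to compute $\oH^0(X,\Sigma_\mu)$ and $\oH^1(X,\Sigma_\mu)$ one factor at a time, by feeding the Koszul resolution (\ref{eqn:Koszul}) into the spectral sequence (\ref{eqn:spectral sequence}). Every $\mu\in\mathsf{K}_{(m,2,2,0)}$ has $\widetilde{\mU}$-part equal to $(m,m)$, so tensoring $\Sigma_\mu$ with the irreducible summands of $\W^p\Sym^3\widetilde{\mU}$ (recalled in Section~\ref{section:Koszul}) only shifts the last two entries of the associated $6$-tuple. Writing $\mu=(\mu_1,\mu_2,\mu_3,\mu_4\mid m,m)$, the bundles $\Sigma_\mu\otimes\W^p\Sym^3\widetilde{\mU}$ correspond to the $6$-tuples
\[ (\mu_1,\mu_2,\mu_3,\mu_4,m,m),\quad (\mu_1,\mu_2,\mu_3,\mu_4,m+3,m),\quad (\mu_1,\mu_2,\mu_3,\mu_4,m+5,m+1), \]
\[ (\mu_1,\mu_2,\mu_3,\mu_4,m+3,m+3),\quad (\mu_1,\mu_2,\mu_3,\mu_4,m+6,m+3),\quad (\mu_1,\mu_2,\mu_3,\mu_4,m+6,m+6), \]
for $p=0,1,2,2,3,4$ respectively. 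Since the group $\oH^q$ of the summand sitting in the $p$-th exterior power contributes to $\oH^{q-p}(\Sigma_\mu|_X)$, to read off $\oH^0$ and $\oH^1$ it suffices to locate the Borel--Weil--Bott output in cohomological degrees $q=p$ and $q=p+1$.

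First I would run the Borel--Weil--Bott algorithm of Section~\ref{section:BBW} on each of these $6$-tuples: add $\delta=(5,4,3,2,1,0)$, discard the acyclic ones (those with a repeated entry), and for the remaining ones record the degree $l(\cdot+\delta)$ together with the resulting $\SL(6)$-module. The assertion to verify is that, with the two stated exceptions, each of the six tuples is either acyclic or contributes only to $\oH^j(\Sigma_\mu|_X)$ with $j\geq 2$ (a contribution in negative total degree being automatically zero). Because the entries of the partitions in $\mathsf{K}_{(m,2,2,0)}$ depend linearly on $m$, each $l$-function is eventually constant in $m$; one checks the generic value and then treats separately the finitely many small values of $m$ for which an entry collision occurs, which are precisely the tuples discarded by the convention that a non-dominant $6$-tuple is ignored.

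The two exceptions are pinned down by comparison with earlier computations. The summand $\mu=(2,2,2,2\mid 2,2)$, which by (\ref{eqn: + n}) is the trivial bundle $\mO_{\Gr(2,6)}$, occurs in $\mathsf{K}_{(m,2,2,0)}$ only at $m=2$ (from the family $(2m-2,m,m,2\mid m,m)$) and restricts to $\mO_X$, yielding $\oH^0=\CC$. The summand $\mu=(4,4,2,2\mid 3,3)$ occurs only at $m=3$ (from $(2m-2,m+1,m-i,i+1\mid m,m)$ with $i=1$); by (\ref{eqn: + n}) it equals $\Sigma_{(2,2,0,0\mid 1,1)}$, whose first cohomology was already shown in the proof of Proposition~\ref{prop:W3 in Ext1} to be $\oH^1(X,\Sigma_{(2,2,0,0\mid 1,1)})=\W^3V_6^\vee$. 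For every other $\mu$, the Borel--Weil--Bott bookkeeping gives $\oH^0=\oH^1=0$.

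Finally, to promote these $E_1$-computations to actual cohomology groups one must check that the relevant entries survive the spectral sequence, but in total degrees $0$ and $1$ this is immediate: the only nonzero $E_1$-terms in total degrees $-1,0,1,2$ are the isolated ones exhibited above, so every differential into or out of them vanishes for degree reasons and $E_1=E_\infty$ at those spots. The main obstacle is therefore not conceptual but organizational: carrying out the sorting step of Borel--Weil--Bott for all fourteen parametric families constituting $\mathsf{K}_{(m,2,2,0)}$, each with up to six Koszul pieces, and verifying that the sort is uniform in $m$ away from the few degenerate values. This is exactly the content abbreviated as a ``long but easy computation''.
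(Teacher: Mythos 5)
Your proposal is correct and follows exactly the route the paper takes (its own proof is condensed to ``a long but easy computation''): run Borel--Weil--Bott on the six Koszul twists of each parametric family in $\mathsf{K}_{(m,2,2,0)}$, observe that only the two listed summands produce anything in total degree $0$ or $1$, and identify the exceptional contributions via the shift identity (\ref{eqn: + n}) with $\mO_X$ and $\Sigma_{(2,2,0,0\mid 1,1)}$. One small imprecision: a differential leaving a total-degree-$1$ entry lands in total degree $2$, which is not empty in general, so degeneration at that spot is not purely ``for degree reasons''; it holds here because the unique degree-$1$ entry sits at Koszul index $p=1$, so its only possible target is the acyclic $p=0$ piece.
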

\begin{proof}
    This is a long but easy computation.
\end{proof}

\begin{corollary}
    For every $m\geq2$ we have $\Hom(\Sigma_{(m,2,2,0)}\mQ,\Sigma_{(m,2,2,0)}\mQ)=\CC$.
    Moreover, for every $m\geq3$ we have $\Ext^1(\Sigma_{(m,2,2,0)}\mQ,\Sigma_{(m,2,2,0)}\mQ)=\W^3 V_6^\vee$.
\end{corollary}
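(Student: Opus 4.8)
The plan is to reduce the statement to the cohomology computation of the preceding Proposition by exploiting the recursive decomposition of the endomorphism bundle. Since $\Sigma_{(m,2,2,0)}\mQ$ is a vector bundle, one has $\Hom(\Sigma_{(m,2,2,0)}\mQ,\Sigma_{(m,2,2,0)}\mQ)=\oH^0(X,\mE nd(\Sigma_{(m,2,2,0)}\mQ))$ and $\Ext^1(\Sigma_{(m,2,2,0)}\mQ,\Sigma_{(m,2,2,0)}\mQ)=\oH^1(X,\mE nd(\Sigma_{(m,2,2,0)}\mQ))$, so it is enough to determine these two cohomology groups.

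First I would unwind the recursion of Proposition~\ref{prop:LR di End(m,2,2,0)}. Iterating it down to the base case $m=2$, where $\Sigma_{(2,2,2,0)}\mQ=\Sym^2\mQ^\vee(-2)$ (Example~\ref{example: LR di End (m,2,2,0)}) so that $\mE nd(\Sigma_{(2,2,2,0)}\mQ)=\mE nd(\Sym^2\mQ)$, yields a decomposition
\[ \mE nd(\Sigma_{(m,2,2,0)}\mQ)=\mE nd(\Sym^2\mQ)\oplus\bigoplus_{k=3}^m\bigoplus_{\mu\in\mathsf{K}_{(k,2,2,0)}}\Sigma_\mu, \]
where, thanks to Proposition~\ref{prop:+ delta 1 non cambia} and the identification $\Sigma_{\mu+1}=\Sigma_\mu$ coming from (\ref{eqn: + n}), every factor inherited from a lower level is literally one of the bundles listed in some $\mathsf{K}_{(k,2,2,0)}$. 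Since cohomology commutes with finite direct sums, I would then evaluate $\oH^0$ and $\oH^1$ factor by factor: the base term contributes $\oH^0(\mE nd(\Sym^2\mQ))=\CC$ and $\oH^1(\mE nd(\Sym^2\mQ))=0$ by the simplicity and rigidity of $\Sym^2\mQ$ (Theorem~\ref{thm:Sym Q}), while each $\Sigma_\mu$ with $\mu\in\mathsf{K}_{(k,2,2,0)}$ is controlled by the preceding Proposition.

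Reading off that Proposition, the only factor with nonzero $\oH^0$ is the trace summand $\mO_X=\Sigma_{(2,2,2,2\mid 2,2)}$, which occurs with multiplicity one (the multiplicity of the trivial $\SL(4)$-representation in $\Sigma_\lambda V_4^\vee\otimes\Sigma_\lambda V_4$ equals $\dim\Hom(\Sigma_\lambda,\Sigma_\lambda)=1$) and gives $\oH^0=\CC$; hence $\Hom(\Sigma_{(m,2,2,0)}\mQ,\Sigma_{(m,2,2,0)}\mQ)=\CC$ for all $m\geq2$. Likewise the only factor with nonzero $\oH^1$ is $\Sigma_{(4,4,2,2\mid 3,3)}$, which first appears (with multiplicity one) at level $k=3$ and then propagates by Proposition~\ref{prop:+ delta 1 non cambia}; each such copy contributes $\W^3V_6^\vee$, giving $\Ext^1(\Sigma_{(m,2,2,0)}\mQ,\Sigma_{(m,2,2,0)}\mQ)=\W^3V_6^\vee$ for $m\geq3$. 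For $m=2$ the bundle is $\Sym^2\mQ$ up to dual and twist, hence rigid by Theorem~\ref{thm:Sym Q}, which is precisely why the $\Ext^1$ assertion is made only for $m\geq3$.

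The main obstacle I expect is the bookkeeping of multiplicities: one must be sure that $\Sigma_{(2,2,2,2\mid 2,2)}$ and $\Sigma_{(4,4,2,2\mid 3,3)}$ each occur exactly once in the full iterated decomposition, and that no inherited factor secretly carries extra $\oH^0$ or $\oH^1$. This is resolved by the fact that the cohomology of a factor depends only on its isomorphism class and that the preceding Proposition already covers every element of every $\mathsf{K}_{(k,2,2,0)}$; the multiplicity-one statements then follow from the explicit description of the sets $\mathsf{K}_{(k,2,2,0)}$ (checking that neither distinguished tuple reappears as a new factor at a higher level) together with the trace splitting $\mE nd=\mO_X\oplus\mE nd_0$. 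Crucially, no differential of the Koszul spectral sequence (\ref{eqn:spectral sequence}) needs to be analysed, since the regime $t=s=2$ keeps us inside the degenerate range already handled in the preceding Proposition.
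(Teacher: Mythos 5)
Your argument is correct and follows essentially the same route as the paper: the recursive Littlewood--Richardson decomposition of $\mE nd(\Sigma_{(m,2,2,0)}\mQ)$, the identification $\Sigma_{\mu+1}=\Sigma_\mu$ to propagate factors, and the factor-by-factor Borel--Weil--Bott/Koszul computation showing that only $\Sigma_{(2,2,2,2\mid 2,2)}$ contributes to $\oH^0$ and only $\Sigma_{(4,4,2,2\mid 3,3)}$ contributes to $\oH^1$. Your extra care about multiplicities and about the base case $\Sym^2\mQ$ matches the remarks the paper makes after its decomposition proposition, so nothing further is needed.
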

In the statement above we did not include the case $\Ext^1(\Sigma_{(2,2,2,0)}\mQ,\Sigma_{(2,2,2,0)}\mQ)=0$ since it already appeared in Proposition~\ref{prop:sym simple and rigid}.

\newpage
\subsection{The case $\lambda=(m,3,0,0)$}\label{section:LR for (m,3,0,0)}

For $m\geq3$, let us put

\[  
\mathsf{K}_{(m,3,0,0)}:=\left\{ 
\begin{array}{l}
(2m,m+i,m-i,0\mid m,m) \mbox{ for } i=0,1,2,3 \\
(2m,m+i,m-i-1,1\mid m,m) \mbox{ for } i=0,1,2\, (m\geq4) \\
(2m,m+i,m-2-i,2\mid m,m) \mbox{ for } i=0,1\,(m\geq5) \\
(2m,m,m-3,3\mid m,m)\, (m\geq6) \\
(2m-1,m+i+1,m-i,0\mid m,m) \mbox{ for } i=0,1,2\,(m\geq4) \\
(2m-1,m+i,m-i,1\mid m,m) \mbox{ for } i=0,1 \\
(2m-1,m+i,m-i-1,2\mid m,m) \mbox{ for } i=0,1\,(m\geq4) \\
(2m-1,m,m-2,3\mid m,m)\,(m\geq5) \\
(2m-2,m,m,2\mid m,m) \\
(2m-2,m+1,m,1\mid m,m)\,(m\geq4) \\
(2m-2,m+2,m,0\mid m,m)\,(m\geq5) \\
(2m-2,m,m-1,3\mid m,m)\,(m\geq4) \\
(2m-2,m+1,m-1,2\mid m,m) \\
(2m-2,m+2,m-1,1\mid m,m)\,(m\geq4) \\
(2m-2,m+3,m-1,0\mid m,m)\,(m\geq5) \\
(2m-3,m,m,3\mid m,m) \\
(2m-3,m+1,m,2\mid m,m)\,(m\geq4) \\
(2m-3,m+2,m,1\mid m,m)\,(m\geq5) \\
(2m-3,m+3,m,0\mid m,m)\,(m\geq6) \\
\end{array}
\right\}.
\]


\begin{proposition}\label{prop:LR di End(m,3,0,0)}
    For every $m\geq3$, the following recursive Littlewood--Richardson decomposition holds,
    \[ \mE nd(\Sigma_{(m,3,0,0)}\mQ)=\mE nd(\Sigma_{(m-1,3,0,0)}\mQ)\oplus\bigoplus_{\mu\in\mathsf{K}_{(m,3,0,0)}}\Sigma_\mu. \]
\end{proposition}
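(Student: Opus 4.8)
The plan is to follow verbatim the inductive template already used for $\lambda=(m,1,0,0),(m,1,1,0),(m,2,0,0),(m,2,1,0),(m,2,2,0)$, adapting it to the wider partition $(m,3,0,0)$. I would argue by strong induction on $m$, taking as base of the induction the values $m=3,4,5$, which can be verified directly (e.g.\ via Borel--Weil--Bott together with a machine-assisted Littlewood--Richardson expansion). The reason the base must extend up to $m=5$ is that the range conditions attached to the elements of $\mathsf{K}_{(m,3,0,0)}$ (the constraints $m\geq4,5,6$) only all become active once $m\geq6$; from that point on the cardinality of $\mathsf{K}_{(m,3,0,0)}$ is constant and the recursion is uniform.

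For the inductive step with $m\geq6$, recall from Example~\ref{example:duale} that
\[ \mE nd(\Sigma_{(m,3,0,0)}\mQ)=\Sigma_{(m,3,0,0)}\widetilde{\mQ}\otimes\Sigma_{(m,m,m-3,0)}\widetilde{\mQ}\otimes\mO_X(-m), \]
so the decomposition is governed by the Littlewood--Richardson expansion of $\Sigma_{(m,3,0,0)}\widetilde{\mQ}\otimes\Sigma_{(m,m,m-3,0)}\widetilde{\mQ}$, whose summands $\Sigma_\nu$ all satisfy $|\nu|=4m$. Proposition~\ref{prop:+ delta 1 non cambia} furnishes the inclusion $\mE nd(\Sigma_{(m-1,3,0,0)}\mQ)\subset\mE nd(\Sigma_{(m,3,0,0)}\mQ)$, realised by sending each irreducible factor $\Sigma_\mu$ to $\Sigma_{\mu+1}$. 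The first task is then to exhibit, for each $\mu\in\mathsf{K}_{(m,3,0,0)}$, an admissible Littlewood--Richardson filling producing $\Sigma_\mu$ that does \emph{not} arise from the $+1$-shift of a factor of $\mE nd(\Sigma_{(m-1,3,0,0)}\mQ)$, with the stated multiplicity. As in the proof of Proposition~\ref{prop:W3 in Ext1}, these fillings are produced by the usual tableau bookkeeping, using Remark~\ref{remark:tutti 1} and Remark~\ref{rmk:strictly increasing on colomns}; I would organise the enumeration by the value of the first entry $\mu_1\in\{2m,2m-1,2m-2,2m-3\}$, which is exactly how the list $\mathsf{K}_{(m,3,0,0)}$ is blocked. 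This produces the inclusion
\[ \mE nd(\Sigma_{(m,3,0,0)}\mQ)\supset\mE nd(\Sigma_{(m-1,3,0,0)}\mQ)\oplus\bigoplus_{\mu\in\mathsf{K}_{(m,3,0,0)}}\Sigma_\mu. \]

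To upgrade this inclusion to an equality I would close the argument by a rank count, exactly as in the earlier cases. Using the rank formula (\ref{eqn:rank}) one has $r_{(m,3,0,0)}=\tfrac{5}{3}(m+3)(m+2)(m-2)$, and the inductive hypothesis identifies the total rank of the shifted copy of $\mE nd(\Sigma_{(m-1,3,0,0)}\mQ)$ with $r^2_{(m-1,3,0,0)}$. It then suffices to verify the polynomial identity
\[ \sum_{\mu\in\mathsf{K}_{(m,3,0,0)}}r_\mu=r^2_{(m,3,0,0)}-r^2_{(m-1,3,0,0)}, \]
each side being an explicit polynomial in $m$ obtained from (\ref{eqn:rank}); since the total rank of $\mE nd(\Sigma_{(m,3,0,0)}\mQ)$ is $r^2_{(m,3,0,0)}$, matching ranks forces the displayed inclusion to be an equality. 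This last step is a routine (if lengthy) computation.

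The main obstacle is the combinatorial bookkeeping of $\mathsf{K}_{(m,3,0,0)}$ itself. Because $t=3$, there are now four labels contributing to the lower rows, many more borderline partitions appear, and each carries its own lower bound on $m$ together with nontrivial multiplicities (for instance the triple-weighted repetitions visible already in the $(m,2,1,0)$ case). Enumerating all admissible fillings without omission or duplication, and correctly separating the genuinely new factors from the shifted ones, is where errors are most likely to creep in. The rank identity is a valuable global consistency check, but it only constrains the \emph{total} rank and does not by itself certify the individual multiplicities or the range conditions; hence the tableau enumeration must be carried out carefully and independently. I would cross-check the resulting list against the directly verified cases $m=3,4,5$ and against the already established pattern of Proposition~\ref{prop:LR di End(m,2,0,0)} (of which $(m,3,0,0)$ is the natural continuation) to guard against bookkeeping mistakes.
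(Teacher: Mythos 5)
Your proposal is correct and follows essentially the same route as the paper: induction on $m$ via Proposition~\ref{prop:+ delta 1 non cambia}, exhibiting Littlewood--Richardson fillings for the factors in $\mathsf{K}_{(m,3,0,0)}$ to obtain the inclusion, and then closing it to an equality by verifying the rank identity $\sum_{\mu\in\mathsf{K}_{(m,3,0,0)}}r_\mu=r^2_{(m,3,0,0)}-r^2_{(m-1,3,0,0)}=\frac{25}{3}(m-1)(m+2)^2(2m^2-m-9)$. The only (harmless) difference is that you extend the base case through $m=5$ to account for the range conditions, whereas the paper starts the inductive step at $m\geq4$ with base $m=3$ and absorbs those conditions into the rank computation.
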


\begin{example}\label{example: LR di End (m,3,0,0)}
    Let us write down the first case, namely the case $m=3$. Explicitly we have
    \begin{align*}
        \mE nd(\Sigma_{(3,3,0,0)}\mQ)= & \Sigma_{(6,6,0,0\mid 3,3)}\oplus \Sigma_{(6,5,1,0\mid 3,3)} \oplus \Sigma_{(6,4,2,0\mid 3,3)} \oplus \Sigma_{(6,3,3,0\mid 3,3)} \oplus \Sigma_{(5,5,1,1\mid 3,3)}\oplus \\
        & \Sigma_{(5,4,2,1\mid 3,3)} \oplus \Sigma_{(5,3,3,1\mid 3,3)} \oplus \Sigma_{(4,4,2,2\mid 3,3)} \oplus \Sigma_{(4,3,3,2\mid 3,3)} \oplus \Sigma_{(3,3,3,3\mid 3,3)}
    \end{align*}
\end{example}

\begin{proof}
    We proceed by induction on $m$, the base of the induction being Example~\ref{example: LR di End (m,3,0,0)}. Let us then assume that $m\geq4$. Thanks to Proposition~\ref{prop:+ delta 1 non cambia} and a direct check, one can see that
    \begin{equation}\label{eqn:inclusione m 3 0 0 } 
    \mE nd(\Sigma_{(m,3,0,0)}\mQ)\supset\mE nd(\Sigma_{(m-1,3,0,0)}\mQ)\oplus\bigoplus_{\mu\in\mathsf{K}_{(m,3,0,0)}}\Sigma_\mu. 
    \end{equation}
    Now we see that the following equality holds
    \[ r^2_{(m,3,0,0)}-r^2_{(m-1,3,0,0)}=\frac{25}{3}(m-1)(m+2)^2(2m^2-m-9), \]
    where $r_\lambda$ is the rank function (\ref{eqn:rank}), and that similarly the following equality holds 
    \[ \sum_{\mu\in\mathsf{K}_{(m,3,0,0)}}r_\mu=\frac{25}{3}(m-1)(m+2)^2(2m^2-m-9). \]
    Combining the two equality above, it follows that the inclusion (\ref{eqn:inclusione m 3 0 0 }) is an equality, hence the claim.
\end{proof}

\begin{remark}
    The summand that contributes to the groups  $\Ext^{1,3}(\Sigma_{(m,3,0,0)}\mQ,\Sigma_{(m,3,0,0)}\mQ)$ is, for $m=3$, the Schur functor $\Sigma_{(2m-2,m+1,m-1,2\mid m,m)}$. 
    The summand corresponding to $\mO_X$ is $\Sigma_{(2m-3,m,m,3\mid m,m)}$ for $m=3$.
\end{remark}

\begin{proposition}
    For every $m\geq3$ and every $\mu\in\mathsf{K}_{(m,3,0,0)}$ we have
    \begin{enumerate}
        \item $\oH^0(X, \Sigma_\mu)=0$ unless $\mu=(3,3,3,3\mid 3,3)$, in which case $\oH^0(X, \Sigma_{(3,3,3,3\mid 3,3)})=\CC$;
        \item for every $m\leq4$ we have that $\oH^1(X, \Sigma_\mu)=0$ unless $\mu=(4,4,2,2\mid 3,3)$, in which case $\oH^1(X, \Sigma_{(4,4,2,2\mid 3,3)})=\W^3 V_6^\vee$.
    \end{enumerate}
\end{proposition}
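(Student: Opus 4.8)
The plan is to compute, for each individual bundle $\Sigma_\mu$ with $\mu\in\mathsf{K}_{(m,3,0,0)}$, the two groups $\oH^0(X,\Sigma_\mu)$ and $\oH^1(X,\Sigma_\mu)$ by running the spectral sequence (\ref{eqn:spectral sequence}) with $\widetilde{\mE}_\lambda$ replaced by the single homogeneous bundle $\Sigma_\mu$ on $\Gr(2,6)$, i.e.\ by tensoring the Koszul resolution (\ref{eqn:Koszul}) with $\Sigma_\mu$ and splitting it as in (\ref{eqn:explicit Koszul}). The crucial simplification is that every $\mu\in\mathsf{K}_{(m,3,0,0)}$ has the shape $(\mu_1,\mu_2,\mu_3,\mu_4\mid m,m)$, so its $\SL(2)$-part $(m,m)$ is a pure power of $\det\widetilde{\mU}$. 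Since the bundles $\W^p\Sym^3\widetilde{\mU}$ involve only $\widetilde{\mU}$, tensoring leaves the $\SL(4)$-weight $(\mu_1,\mu_2,\mu_3,\mu_4)$ untouched and merely shifts the last two entries: using the decompositions of Section~\ref{section:Koszul} one obtains $\Sigma_{(\mu_1,\mu_2,\mu_3,\mu_4\mid m,m)}$ for $p=0$, then $(\cdots\mid m+3,m)$, then $(\cdots\mid m+5,m+1)\oplus(\cdots\mid m+3,m+3)$, then $(\cdots\mid m+6,m+3)$, then $(\cdots\mid m+6,m+6)$, for $p=1,2,3,4$ respectively.

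First I would run the Borel--Weil--Bott algorithm (Section~\ref{section:BBW}) on each of these $6$-tuples: add $\delta=(5,4,3,2,1,0)$, discard as acyclic those with a repeated entry, and for the remainder read off the degree $q=l(\mu+\delta)$ in which cohomology sits together with the resulting representation $\Sigma_{\widetilde{\mu}}V_6$. A nonzero group at Koszul step $p$ and grassmannian degree $q$ feeds into $\oH^{q-p}(X,\Sigma_\mu)$, so for $\oH^0$ only the antidiagonal $q=p$ matters and for $\oH^1$ only $q=p+1$. This is entirely parallel to the step-by-step analysis in the proof of Lemma~\ref{lemma: (2n n n 0 | n n)}, and for every $\mu\in\mathsf{K}_{(m,3,0,0)}$ one checks directly that these two antidiagonals are empty apart from the two exceptional weights. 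The exceptions are pinned down via (\ref{eqn: + n}): since $(3,3,3,3\mid 3,3)=(0,0,0,0\mid 0,0)+3$ we get $\Sigma_\mu=\mO_X$ with $\oH^0=\CC$ and $\oH^1=0$, while $(4,4,2,2\mid 3,3)=(2,2,0,0\mid 1,1)+3$ yields $\oH^1(X,\Sigma_{(2,2,0,0\mid 1,1)})=\W^3V_6^\vee$, exactly as computed in the proof of Proposition~\ref{prop:W3 in Ext1}. (Both exceptional weights occur in $\mathsf{K}_{(m,3,0,0)}$ only at $m=3$, which is why the statement is consistent for all $m\geq3$.)

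For item $(1)$ the argument goes through for every $m\geq3$: the $p=0$ contribution $\oH^0(\Gr(2,6),\Sigma_\mu)$ is nonzero only when the $6$-tuple is already dominant, i.e.\ $\mu_4\geq m$, which among the listed weights happens solely for $\mathO_X$; and a Borel--Weil--Bott check shows the remaining terms on the diagonal $q=p$ vanish, so no Koszul differential can disturb the count. The main obstacle — and the reason item $(2)$ is restricted to $m\leq4$ — is the degeneracy of the spectral sequence on the total-degree-one part. The Koszul differential $d_1$ couples the diagonal $q=p$ to the diagonal $q=p+1$, so once two or more of these $\operatorname{E}_1$-terms are simultaneously nonzero a genuine (possibly higher) differential must be analyzed, and the naive $\operatorname{E}_1$-count ceases to be conclusive. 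For $m\leq4$ one verifies case by case that at most one relevant term survives for each $\mu$, forcing degeneration and hence the stated vanishing; for $m\geq5$ extra terms appear and controlling the differentials seems to require the kind of ad hoc geometric input illustrated in Section~\ref{section:example of non-degeneracy}. I therefore expect the Borel--Weil--Bott computations themselves to be purely mechanical, and the only delicate point to be this degeneracy bookkeeping.
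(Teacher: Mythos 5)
Your proposal is correct and follows essentially the same route as the paper: the paper's own proof is just ``a long but easy computation'', which is exactly the Borel--Weil--Bott plus Koszul antidiagonal check you describe (carried out in detail for the analogous Lemma~\ref{lemma: (2n n n 0 | n n)} and Table~\ref{dimcoho (2,1,0,0)}), including your correct diagnosis that the $m\leq4$ restriction in item $(2)$ is forced by the new weights entering $\mathsf{K}_{(5,3,0,0)}$ whose degree-one antidiagonal is no longer empty (cf.\ Section~\ref{section:example of non-degeneracy}). One small slip: $(4,4,2,2\mid 3,3)=(2,2,0,0\mid 1,1)+2$, not $+3$; the conclusion $\oH^1=\W^3V_6^\vee$ is unaffected.
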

\begin{proof}

    This is a long but easy computation.
\end{proof}

\begin{remark}
    Because of the size of $\mathsf{K}_{(m,3,0,0)}$, the proof of this proposition seems an incredibly long computation. On the other hand we wish to remark that many of the partitions appearing in $\mathsf{K}_{(m,3,0,0)}$ already appeared in the previous cases: there remains only 12 ``new" partitions to check.

    This remark is especially important in the next three sections, where the size of $\mathsf{K}_\lambda$ grows even more.
\end{remark}

\begin{corollary}
    For every $m\geq3$ we have $\Hom(\Sigma_{(m,3,0,0)}\mQ,\Sigma_{(m,3,0,0)}\mQ)=\CC$.
    
    Moreover, $\Ext^1(\Sigma_{(3,3,0,0)}\mQ,\Sigma_{(3,3,0,0)}\mQ)=\W^3 V_6^\vee$ and $\Ext^1(\Sigma_{(4,3,0,0)}\mQ,\Sigma_{(4,3,0,0)}\mQ)=\W^3 V_6^\vee$.
\end{corollary}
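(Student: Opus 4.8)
The plan is to derive the corollary from the recursive decomposition of Proposition~\ref{prop:LR di End(m,3,0,0)} together with the cohomology vanishing established in the preceding Proposition. The first step is the standard translation
\[ \Ext^i(\Sigma_{(m,3,0,0)}\mQ,\Sigma_{(m,3,0,0)}\mQ)=\oH^i\bigl(X,\mE nd(\Sigma_{(m,3,0,0)}\mQ)\bigr), \]
which reduces both assertions to computing $\oH^0$ and $\oH^1$ of the endomorphism bundle.

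Next I would unwind the recursion. Applying Proposition~\ref{prop:LR di End(m,3,0,0)} repeatedly down to the base case $m=3$ of Example~\ref{example: LR di End (m,3,0,0)}, and recalling that the inclusion $\mE nd(\Sigma_{(j-1,3,0,0)}\mQ)\hookrightarrow\mE nd(\Sigma_{(j,3,0,0)}\mQ)$ of Proposition~\ref{prop:+ delta 1 non cambia} carries each irreducible factor $\Sigma_\nu$ to $\Sigma_{\nu+1}$, which is the same bundle by (\ref{eqn: + n}), one obtains a splitting of $\mE nd(\Sigma_{(m,3,0,0)}\mQ)$ into restrictions $\Sigma_\mu|_X$ with $\mu$ running over $\mathsf{K}_{(j,3,0,0)}$ for $3\le j\le m$. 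Since cohomology commutes with finite direct sums and is invariant under the shift $\mu\mapsto\mu+1$, this yields
\[ \oH^i\bigl(X,\mE nd(\Sigma_{(m,3,0,0)}\mQ)\bigr)=\bigoplus_{j=3}^m\ \bigoplus_{\mu\in\mathsf{K}_{(j,3,0,0)}}\oH^i(X,\Sigma_\mu). \]

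The computation then reduces to feeding in the preceding Proposition. Its part $(1)$, valid for all $j\ge3$, shows that every summand on the right has $\oH^0=0$ except the single factor $(3,3,3,3\mid3,3)$ occurring in $\mathsf{K}_{(3,3,0,0)}$ (the trace copy of $\mO_X$, which by the preceding Remark appears with multiplicity one); hence $\Hom(\Sigma_{(m,3,0,0)}\mQ,\Sigma_{(m,3,0,0)}\mQ)=\CC$ for every $m\ge3$, i.e.\ the bundle is simple, and therefore slope stable since it is already slope polystable by Theorem~\ref{theorem A}. For the $\Ext^1$ statement I would restrict to $m\in\{3,4\}$, the range in which part $(2)$ of the Proposition applies: there every summand has $\oH^1=0$ except the factor $(4,4,2,2\mid3,3)$, whose first cohomology is $\W^3V_6^\vee$, giving $\Ext^1(\Sigma_{(m,3,0,0)}\mQ,\Sigma_{(m,3,0,0)}\mQ)=\W^3V_6^\vee$ for $m=3,4$.

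The genuine difficulty sits entirely inside part $(2)$ of the preceding Proposition, and is precisely the reason the $\Ext^1$ conclusion stops at $m=4$: computing $\oH^1(X,\Sigma_\mu)$ requires running the Koszul spectral sequence (\ref{eqn:spectral sequence}) on each graded piece via Borel--Weil--Bott, and for $j\ge5$ this spectral sequence is no longer guaranteed to degenerate at the first page, so further contributions to $\Ext^1$ cannot be excluded by this method alone (cf.\ Conjecture~A and the non-degeneracy phenomenon discussed in Section~\ref{section:example of non-degeneracy}). The bookkeeping above is otherwise routine; as noted in the preceding Remark, most $\mu\in\mathsf{K}_{(j,3,0,0)}$ already occurred in the earlier cases, so in practice only a handful of genuinely new partitions need to be checked.
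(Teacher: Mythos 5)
Your proposal is correct and is exactly the argument the paper intends: the corollary is read off from the recursive decomposition of Proposition~\ref{prop:LR di End(m,3,0,0)} combined with the cohomology vanishing of the preceding Proposition, with the trace factor $\Sigma_{(3,3,3,3\mid3,3)}$ supplying $\Hom=\CC$ and the factor $\Sigma_{(4,4,2,2\mid3,3)}$ supplying $\W^3V_6^\vee$ in degree one for $m=3,4$. You also correctly identify why the $\Ext^1$ claim stops at $m=4$, namely the possible non-degeneracy of the Koszul spectral sequence for larger $m$ (resolved by hand for $(5,3,0,0)$ in Section~\ref{section:example of non-degeneracy}).
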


\newpage
\subsection{The case $\lambda=(m,3,1,0)$}\label{section:LR for (m,3,1,0)}
For $m\geq3$, let us put
\[ \mathsf{K}_{(m,3,1,0)}:=\left\{ 
\begin{array}{l}
  (2m,m+i,m-i,0\mid m,m) \mbox{ for } i=0,1,2 \\
  (2m,m,m-1,1\mid m,m)^{\oplus j} \mbox{ for } j=1,2 \,(m\geq 2+j) \\
  (2m, m+1, m-2,1\mid m,m)^{\oplus j} \mbox{ for } j=1,2 \, (m\geq 2+j) \\
  (2m,m+2,m-3,1\mid m,m) \, (m\geq4) \\
  (2m,m-1,m-1,2\mid m,m) \, (m\geq4) \\
  (2m,m,m-2,2\mid m,m)^{\oplus j} \mbox{ for } j=1,2 \, (m\geq 3+j) \\
  (2m,m+1,m-3,2\mid m,m) \, (m\geq 5) \\
  (2m,m-1,m-2,3\mid m,m) \, (m\geq5) \\
  (2m,m,m-3,3\mid m,m) \, (m\geq6) \\
  (2m-1,m+3,m-2,0\mid m,m)  \, (m\geq4) \\
  (2m-1,m+2,m-1,0\mid m,m)^{\oplus j} \mbox{ for } j=1,2 \, (m\geq 2+j) \\
  (2m-1,m+1,m,0\mid m,m)^{\oplus j} \mbox{ for } j=1,2 \, (m\geq 2+j) \\ 
  (2m-1,m+3,m-3,1 \mid m,m) \, (m\geq4) \\
  (2m-1,m+2,m-2,1 \mid m,m)^{\oplus j} \, (j=1\mbox{ if } m=3\mbox{ and } j=3\mbox{ if } m\geq4) \\
  (2m-1,m+1,m-1,1 \mid m,m)^{\oplus j} \, (j=2\mbox{ if } m=3\mbox{ and } j=4\mbox{ if } m\geq4) \\
  (2m-1,m,m,1\mid m,m)^{\oplus2} \\
  (2m-1,m,m-1,2 \mid m,m)^{\oplus j} \, (j=1\mbox{ if } m=3\mbox{ and } j=3\mbox{ if } m\geq4) \\
  (2m-1,m+1,m-2,2\mid m,m)^{\oplus j} \, (j=2\mbox{ if } m=4\mbox{ and } j=3\mbox{ if } m\geq5) \\
  (2m-1,m-1,m-1,3 \mid m,m) \, (m\geq4) \\
  (2m-1,m+2,m-3,2 \mid m,m) \, (m\geq5) \\
  (2m-1,m,m-2,3 \mid m,m)^{\oplus2} \, (m\geq5) \\
  (2m-1,m+1,m-3,3 \mid m,m) \, (m\geq6) \\
  (2m-2,m+1,m,1 \mid m,m)^{\oplus j}\, (j=1\mbox{ if } m=3\mbox{ and } j=3\mbox{ if } m\geq4) \\
  (2m-2,m+1,m-1,2 \mid m,m)^{\oplus j}\, (j=1\mbox{ if } m=3\mbox{ and } j=3\mbox{ if } m\geq4) \\
  (2m-2,m,m,2\mid m,m)^{\oplus2} \\
  (2m-2,m+2,m,0 \mid m,m)^{\oplus j}\, (j=1\mbox{ if } m=4\mbox{ and } j=2\mbox{ if } m\geq5) \\
  (2m-2,m+2,m-1,1 \mid m,m)^{\oplus j}\, (j=2\mbox{ if } m=4\mbox{ and } j=3\mbox{ if } m\geq5) \\
  (2m-2,m+2,m-2,2 \mid m,m) \, (m\geq4) \\
  (2m-2,m+1,m+1,0\mid m,m) \, (m\geq4) \\
  (2m-2,m,m-1,3 \mid m,m)^{\oplus2} \, (m\geq4) \\
  (2m-2,m+3,m-1,0\mid m,m) \, (m\geq5) \\
  (2m-2,m+3,m-2,1\mid m,m) \, (m\geq5) \\
  (2m-2,m+1,m-2,3\mid m,m) \, (m\geq5) \\
  (2m-3,m,m,3\mid m,m) \\
  (2m-3,m+1,m+1,1\mid m,m) \, (m\geq4) \\
  (2m-3,m+1,m,2\mid m,m)^{\oplus2}\, (m\geq4) \\
  (2m-3,m+1,m-1,3\mid m,m) \, (m\geq4) \\
  (2m-3,m+2,m+1,0\mid m,m) \, (m\geq5) \\
  (2m-3,m+2,m,1\mid m,m)^{\oplus2}\, (m\geq5) \\
  (2m-3,m+2,m-1,2\mid m,m) \, (m\geq5) \\
  (2m-3,m+3,m,0\mid m,m) \, (m\geq6) \\
  (2m-3,m+3,m-1,1\mid m,m) \, (m\geq6)
 \end{array}
\right\}.  \]


\begin{proposition}\label{prop:LR di End(m,3,1,0)}
    For every $m\geq3$, the following recursive Littlewood--Richardson decomposition holds,
    \[ \mE nd(\Sigma_{(m,3,1,0)}\mQ)=\mE nd(\Sigma_{(m-1,3,1,0)}\mQ)\oplus\bigoplus_{\mu\in\mathsf{K}_{(m,3,1,0)}}\Sigma_\mu. \]
\end{proposition}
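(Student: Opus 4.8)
The plan is to follow verbatim the inductive scheme already used in the previous cases (Propositions~\ref{prop:LR di End(m,1,0,0)}, \ref{prop:LR di End(m,1,1,0)}, \ref{prop:LR di End(m,2,0,0)}, \ref{prop:LR di End(m,2,1,0)}, \ref{prop:LR di End(m,2,2,0)} and \ref{prop:LR di End(m,3,0,0)}), carrying out a strong induction on $m$. First I would settle the base of the induction by computing $\mE nd(\Sigma_{(3,3,1,0)}\mQ)$ explicitly, and — because several families listed in $\mathsf{K}_{(m,3,1,0)}$ only switch on above the thresholds $m\geq4,5,6$ — also the transitional cases $m=4,5$, via a direct Littlewood--Richardson decomposition of $\Sigma_{(m,3,1,0)}\mQ\otimes\Sigma_{(m,m-1,m-3,0)}\mQ$ (recall from Example~\ref{example:duale} that $\mE nd(\Sigma_{(m,3,1,0)}\mQ)$ is this product twisted by $\mO_X(-m)$).

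For the inductive step I would invoke Proposition~\ref{prop:+ delta 1 non cambia}: the assignment $\Sigma_\mu\mapsto\Sigma_{\mu+1}$ embeds every irreducible factor of $\mE nd(\Sigma_{(m-1,3,1,0)}\mQ)$ into $\mE nd(\Sigma_{(m,3,1,0)}\mQ)$, which accounts for the ``old'' summand. The ``new'' factors collected in $\mathsf{K}_{(m,3,1,0)}$ are then produced by exhibiting, for each listed $\mu$ and each prescribed multiplicity, an admissible Littlewood--Richardson tableau (in the style of the proof of Proposition~\ref{prop:W3 in Ext1}) that does not arise from a $+\delta_1$ shift. Since the cardinality of $\mathsf{K}_{(m,3,1,0)}$ stabilizes once $m\geq6$, it suffices to establish this containment at one value and let Proposition~\ref{prop:+ delta 1 non cambia} propagate it, yielding
\[ \mE nd(\Sigma_{(m,3,1,0)}\mQ)\supset\mE nd(\Sigma_{(m-1,3,1,0)}\mQ)\oplus\bigoplus_{\mu\in\mathsf{K}_{(m,3,1,0)}}\Sigma_\mu. \]

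To upgrade this inclusion to an equality I would compare total ranks, exactly as before. Using the rank formula (\ref{eqn:rank}) one computes a closed form for $r^2_{(m,3,1,0)}-r^2_{(m-1,3,1,0)}$ and, separately, the sum $\sum_{\mu\in\mathsf{K}_{(m,3,1,0)}}r_\mu$ (being careful to weight each summand by its multiplicity and to discard the families whose stated range excludes the given $m$); a direct computation shows the two polynomials in $m$ agree. Since the right-hand side of the displayed containment is then a subrepresentation of the same rank as the left-hand side, the inclusion is forced to be an equality, completing the induction.

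The hard part will not be any single conceptual point but the sheer bulk of the combinatorics: $\mathsf{K}_{(m,3,1,0)}$ is by far the largest family encountered, with summands of multiplicity up to $4$ and numerous distinct thresholds in $m$, so both the tableaux bookkeeping for the $\supset$ inclusion and the rank summation demand careful organization. In practice I would cut down the labor as suggested in the remark following Proposition~\ref{prop:LR di End(m,3,0,0)}, isolating only the genuinely ``new'' partitions not already treated in the $s\leq2$ and $(m,3,0,0)$ cases, and I would verify the polynomial identity of ranks symbolically rather than by hand expansion.
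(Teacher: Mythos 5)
Your proposal matches the paper's argument essentially verbatim: induction on $m$ with an explicitly computed base case, the inclusion $\mE nd(\Sigma_{(m-1,3,1,0)}\mQ)\oplus\bigoplus_{\mu\in\mathsf{K}_{(m,3,1,0)}}\Sigma_\mu\subset\mE nd(\Sigma_{(m,3,1,0)}\mQ)$ obtained from Proposition~\ref{prop:+ delta 1 non cambia} plus a direct tableau check, and the upgrade to equality by verifying that $\sum_{\mu\in\mathsf{K}_{(m,3,1,0)}}r_\mu$ equals $r^2_{(m,3,1,0)}-r^2_{(m-1,3,1,0)}=\tfrac{25}{4}(3m^2+m-6)(2m^3+m^2-11m-6)$. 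The only cosmetic difference is that the paper takes $m=4$ as the base (handling $m=3$ via the duality $\Sigma_{(3,3,1,0)}\mQ=\Sigma_{(3,2,0,0)}\mQ^\vee(-3)$) and lets the stated convention on degenerate parametric $6$-tuples absorb the thresholds you propose to treat as extra base cases.
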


\begin{example}\label{example: LR di End (m,3,1,0)}
    As usual, since $\Sigma_{(3,3,1,0)}\mQ=\Sigma_{(3,2,0,0)}\mQ^\vee(-3)$, let us write down the first new case, namely the case $m=4$. Explicitly we have
    \begin{align*}
        \mE nd(\Sigma_{(4,3,1,0)}\mQ)= & \Sigma_{(8,6,2,0\mid 4,4)} \oplus \Sigma_{(8,6,1,1\mid 4,4)} \oplus \Sigma_{(8,5,3,0\mid 4,4)} \oplus \Sigma_{(8,5,2,1\mid 4,4)}^{\oplus2} \oplus \Sigma_{(8,4,4,0\mid 4,4)} \oplus \\
        & \Sigma_{(8,4,3,1\mid 4,4)}^{\oplus2} \oplus \Sigma_{(8,4,2,2\mid 4,4)} \oplus \Sigma_{(8,3,3,2\mid 4,4)} \oplus \Sigma_{(7,7,2,0\mid 4,4)} \oplus \Sigma_{(7,7,1,1\mid 4,4)} \oplus \\
        & \Sigma_{(7,6,3,0\mid 4,4)}^{\oplus2} \oplus \Sigma_{(7,6,2,1\mid 4,4)}^{\oplus4} \oplus \Sigma_{(7,5,4,0\mid 4,4)}^{\oplus2} \oplus \Sigma_{(7,5,3,1\mid 4,4)}^{\oplus5} \oplus \Sigma_{(7,5,2,2\mid 4,4)}^{\oplus3} \oplus \\
        & \Sigma_{(7,4,4,1\mid 4,4)}^{\oplus3} \oplus \Sigma_{(7,4,3,2\mid 4,4)}^{\oplus4} \oplus \Sigma_{(7,3,3,3\mid 4,4)} \oplus \Sigma_{(6,6,4,0\mid 4,4)} \oplus \Sigma_{(6,6,3,1\mid 4,4)}^{\oplus3} \oplus \\
        & \Sigma_{(6,6,2,2\mid 4,4)}^{\oplus2} \oplus \Sigma_{(6,5,5,0\mid 4,4)} \oplus \Sigma_{(6,5,4,1\mid 4,4)}^{\oplus4} \oplus \Sigma_{(6,5,3,2\mid 4,4)}^{\oplus5} \oplus \Sigma_{(6,4,4,2\mid 4,4)}^{\oplus4} \oplus \\
        & \Sigma_{(6,4,3,3\mid 4,4)}^{\oplus3} \oplus \Sigma_{(5,5,5,1\mid 4,4)} \oplus \Sigma_{(5,5,4,2\mid 4,4)}^{\oplus3} \oplus \Sigma_{(5,5,3,3\mid 4,4)}^{\oplus2} \oplus \Sigma_{(5,4,4,3\mid 4,4)}^{\oplus3} \oplus \\
        & \Sigma_{(4,4,4,4\mid 4,4)}\,.
    \end{align*}
\end{example}

\begin{proof}
    We proceed by induction on $m$, the base of the induction being Example~\ref{example: LR di End (m,3,1,0)}. Let us then assume that $m\geq4$. Thanks to Proposition~\ref{prop:+ delta 1 non cambia} and a direct check, one can see that
    \begin{equation}\label{eqn:inclusione m 3 1 0 } 
    \mE nd(\Sigma_{(m,3,1,0)}\mQ)\supset\mE nd(\Sigma_{(m-1,3,1,0)}\mQ)\oplus\bigoplus_{\mu\in\mathsf{K}_{(m,3,1,0)}}\Sigma_\mu. 
    \end{equation}
    Now we see that the following equality holds
    \[ r^2_{(m,3,1,0)}-r^2_{(m-1,3,1,0)}=\frac{25}{4}(3m^2+m-6)(2m^3+m^2-11m-6), \]
    where $r_\lambda$ is the rank function (\ref{eqn:rank}), and that similarly the following equality holds 
    \[ \sum_{\mu\in\mathsf{K}_{(m,3,1,0)}}r_\mu=\frac{25}{4}(3m^2+m-6)(2m^3+m^2-11m-6). \]
    Combining the two equality above, it follows that the inclusion (\ref{eqn:inclusione m 3 1 0 }) is an equality, hence the claim.
\end{proof}

\begin{remark}
    The summand that contributes to the groups  $\Ext^{1,3}(\Sigma_{(m,3,1,0)}\mQ,\Sigma_{(m,3,1,0)}\mQ)$ is:
    if $m=3$ then it is $\Sigma_{(2m-2,m+1,m-1,2\mid m,m)}$, which appears with multiplicity $1$ in this case; if $m=4$ then it is $\Sigma_{(2m-3,m+1,m-1,3\mid m,m)}$.     
    The summand corresponding to $\mO_X$ is $\Sigma_{(2m-3,m,m,3\mid m,m)}$ for $m=3$.
\end{remark}

\begin{proposition}
    For every $m\geq3$ and every $\mu\in\mathsf{K}_{(m,3,1,0)}$ we have
    \begin{enumerate}
        \item $\oH^0(X, \Sigma_\mu)=0$ unless $\mu=(3,3,3,3\mid 3,3)$, in which case $\oH^0(X, \Sigma_{(3,3,3,3\mid 3,3)})=\CC$;
        \item for every $m\leq4$ we have that $\oH^1(X, \Sigma_\mu)=0$ unless $\mu=(4,4,2,2\mid 3,3)$, in which case $\oH^1(X, \Sigma_{(4,4,2,2\mid 3,3)})=\W^3 V_6^\vee$.
    \end{enumerate}
\end{proposition}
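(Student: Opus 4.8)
The plan is to compute $\oH^0(X,\Sigma_\mu)$ and $\oH^1(X,\Sigma_\mu)$ for each of the (many) factors $\mu\in\mathsf{K}_{(m,3,1,0)}$ by means of the Koszul spectral sequence (\ref{eqn:spectral sequence}), following the same template as the proof of Lemma~\ref{lemma: (2n n n 0 | n n)}. Writing $\mu=(\mu_1,\mu_2,\mu_3,\mu_4\mid m,m)$ and using the decomposition of $\W^p\Sym^3\widetilde{\mU}$ recalled in Section~\ref{section:Koszul}, the homogeneous bundles on $\Gr(2,6)$ entering the sequence are the $\Sigma_\mu\otimes\W^p\Sym^3\widetilde{\mU}$, whose irreducible summands all have $\SL(4)$-part $(\mu_1,\mu_2,\mu_3,\mu_4)$ and $\SL(2)$-part running through $(m,m)$, $(m+3,m)$, then $(m+5,m+1)$ and $(m+3,m+3)$, then $(m+6,m+3)$, and finally $(m+6,m+6)$ for $p=0,1,2,3,4$. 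To each resulting $6$-tuple $v$ one applies the Borel--Weil--Bott algorithm of Section~\ref{section:BBW}: either $v+\delta$ has a repeated entry, so the bundle is acyclic, or its entries are distinct and it contributes $\Sigma_{\operatorname{sort}(v+\delta)-\delta}V_6$ in degree $l(v+\delta)$. Since $\oH^q$ of the $p$-th term feeds $\oH^{q-p}(X,\Sigma_\mu)$, reading off $\oH^0$ and $\oH^1$ means isolating the terms with $q=p$ and $q=p+1$ respectively.

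First I would cut down the bookkeeping. By the identification $\Sigma_{\mu+1}=\Sigma_\mu$ of (\ref{eqn: + n}) (which underlies Proposition~\ref{prop:+ delta 1 non cambia}), the groups $\oH^i(X,\Sigma_\mu)$ depend only on the class of $\mu$ modulo $(1,1,1,1\mid 1,1)$; hence every parametric family in $\mathsf{K}_{(m,3,1,0)}$ normalises to a single Borel--Weil--Bott computation that can be performed once, parametrically in $m$. Moreover, as already noted for the case $\lambda=(m,3,0,0)$, the large majority of the $6$-tuples in $\mathsf{K}_{(m,3,1,0)}$ have already occurred in the earlier sections (the cases $(m,1,0,0)$, $(m,1,1,0)$, $(m,2,0,0)$, $(m,2,1,0)$, $(m,2,2,0)$ and $(m,3,0,0)$), so their cohomology is known; only a bounded number of genuinely new families remain to be handled directly.

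For item $(1)$ the argument is uniform in $m$. The group $\oH^0(X,\Sigma_\mu)$ can only receive contributions from the terms $E_1^{-p,p}=\oH^p(\Sigma_\mu\otimes\W^p\Sym^3\widetilde{\mU})$. A direct Borel--Weil--Bott check shows that for every $\mu\in\mathsf{K}_{(m,3,1,0)}$ and every $p$ these groups vanish, the single exception being $\mu=(3,3,3,3\mid3,3)$: after the reduction $\mu\equiv(0,0,0,0\mid0,0)$ the bundle is $\mO_X$ and $E_1^{0,0}=\oH^0(\mO_X)=\CC$ is the only surviving term. Because the relevant $E_1$-terms already vanish for the remaining factors, no differential of (\ref{eqn:spectral sequence}) can manufacture an $\oH^0$-class, so the conclusion holds for all $m\geq3$.

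For item $(2)$ the same scheme singles out the nonzero contribution $\W^3V_6^\vee$ from the factor $(4,4,2,2\mid3,3)$: after the reduction $(4,4,2,2\mid3,3)\equiv(2,2,0,0\mid1,1)$ this is exactly the class treated in Proposition~\ref{prop:W3 in Ext1}, where $\W^3V_6^\vee$ arises as $\oH^2\big(\Sigma_{(2,2,0,0\mid1,1)}\otimes\Sym^3\widetilde{\mU}\big)$, i.e. as the term $E_1^{-1,2}$ in total degree $1$. The hard part, and the reason the statement is confined to $m\leq4$, is that for $\oH^1$ the candidate terms $E_1^{-p,p+1}$ need \emph{not} all vanish, so one cannot argue by pointwise acyclicity but must control the differentials of (\ref{eqn:spectral sequence}): using the short exact sequences (\ref{eqn:explicit Koszul}) one checks, for $m\leq4$, that the would-be $\oH^1$-classes of every factor other than $(4,4,2,2\mid3,3)$ fail to survive, while the $\W^3V_6^\vee$ above is not hit by any differential. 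As soon as $m\geq5$ new nonzero $E_1$-terms appear and the degeneracy of the sequence can no longer be guaranteed a priori; this loss of control—illustrated concretely in Section~\ref{section:example of non-degeneracy}—is precisely the obstruction that prevents extending item $(2)$ beyond $m=4$.
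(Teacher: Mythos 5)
Your proposal is correct and follows essentially the same route as the paper: the paper's proof is exactly the ``long but easy computation'' of running Borel--Weil--Bott on each factor of $\mathsf{K}_{(m,3,1,0)}$ through the Koszul spectral sequence (\ref{eqn:spectral sequence}), using the normalisation $\Sigma_{\mu+1}=\Sigma_\mu$ and the fact that most factors already occurred in the earlier cases. Your observations that $\oH^0$ only sees the terms $E_1^{-p,p}$ (so pointwise acyclicity suffices for all $m$), and that the restriction to $m\leq4$ in item $(2)$ is forced by the appearance of uncontrolled nonzero $E_1$-terms for $m\geq5$, match the paper's own remarks.
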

\begin{proof}
    This is a long but easy computation.
\end{proof}

\begin{corollary}
    For every $m\geq3$ we have $\Hom(\Sigma_{(m,3,1,0)}\mQ,\Sigma_{(m,3,1,0)}\mQ)=\CC$.
    
    Moreover, $\Ext^1(\Sigma_{(3,3,1,0)}\mQ,\Sigma_{(3,3,1,0)}\mQ)=\W^3 V_6^\vee$ and $\Ext^1(\Sigma_{(4,3,1,0)}\mQ,\Sigma_{(4,3,1,0)}\mQ)=(\W^3 V_6^\vee)^{\oplus 2}$.
\end{corollary}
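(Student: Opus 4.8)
The plan is to extract both $\Hom$ and $\Ext^1$ from the cohomology of the endomorphism bundle. Recall that $\Ext^i(\Sigma_\lambda\mQ,\Sigma_\lambda\mQ)=\oH^i(X,\mE nd(\Sigma_\lambda\mQ))$ and that cohomology is additive over finite direct sums; feeding the recursive decomposition of Proposition~\ref{prop:LR di End(m,3,1,0)} into this, the computation reduces to the groups $\oH^0(X,\Sigma_\mu)$ and $\oH^1(X,\Sigma_\mu)$ of the individual irreducible summands, which are exactly the content of the preceding Proposition. Concretely, for each $m$ one has
\[ \oH^i(X,\mE nd(\Sigma_{(m,3,1,0)}\mQ))=\oH^i(X,\mE nd(\Sigma_{(m-1,3,1,0)}\mQ))\oplus\bigoplus_{\mu\in\mathsf{K}_{(m,3,1,0)}}\oH^i(X,\Sigma_\mu), \]
and the base of the recursion is $m=3$, where Example~\ref{example: LR di End (m,3,1,0)} identifies $\mE nd(\Sigma_{(3,3,1,0)}\mQ)$ with $\mE nd(\Sigma_{(3,2,0,0)}\mQ)$, already treated in Section~\ref{section:LR for (m,2,0,0)}.

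For the simplicity statement I would argue by induction on $m$ using the displayed formula with $i=0$. The base case $m=3$ gives $\oH^0=\CC$ since $\Hom(\Sigma_{(3,2,0,0)}\mQ,\Sigma_{(3,2,0,0)}\mQ)=\CC$. For the inductive step, item $(1)$ of the preceding Proposition shows that every summand $\Sigma_\mu$ with $\mu\in\mathsf{K}_{(m,3,1,0)}$ has $\oH^0(X,\Sigma_\mu)=0$: indeed the only factor with nonvanishing $\oH^0$ is the trivial bundle $\Sigma_{(3,3,3,3\mid 3,3)}=\mO_X$, and for $m\geq4$ no element of $\mathsf{K}_{(m,3,1,0)}$ normalises to $\mO_X$, since the first entries range over $2m,2m-1,2m-2,2m-3$, none equal to $m$. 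Thus the unique copy of $\mO_X$ survives from the base case, carrying $\oH^0=\CC$, and $\Hom(\Sigma_{(m,3,1,0)}\mQ,\Sigma_{(m,3,1,0)}\mQ)=\CC$ for all $m\geq3$.

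For the $\Ext^1$ computation I would run the same formula with $i=1$, now only for $m=3,4$, where item $(2)$ controls the first cohomology of every summand. When $m=3$ the duality of Example~\ref{example:duale} gives $\Ext^1(\Sigma_{(3,3,1,0)}\mQ,\Sigma_{(3,3,1,0)}\mQ)=\Ext^1(\Sigma_{(3,2,0,0)}\mQ,\Sigma_{(3,2,0,0)}\mQ)=\W^3V_6^\vee$. When $m=4$ the recursion yields
\[ \oH^1(X,\mE nd(\Sigma_{(4,3,1,0)}\mQ))=\oH^1(X,\mE nd(\Sigma_{(3,3,1,0)}\mQ))\oplus\bigoplus_{\mu\in\mathsf{K}_{(4,3,1,0)}}\oH^1(X,\Sigma_\mu); \]
the first term is $\W^3V_6^\vee$ by the case $m=3$, while item $(2)$ says the only summand of $\mathsf{K}_{(4,3,1,0)}$ with $\oH^1\ne0$ is $\Sigma_{(5,5,3,3\mid 4,4)}$, which equals $\Sigma_{(4,4,2,2\mid 3,3)}$ through the relation $\Sigma_{\mu+1}=\Sigma_\mu$ of (\ref{eqn: + n}) and contributes one further $\W^3V_6^\vee$. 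Adding the two gives $\Ext^1(\Sigma_{(4,3,1,0)}\mQ,\Sigma_{(4,3,1,0)}\mQ)=(\W^3V_6^\vee)^{\oplus2}$.

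The genuine obstacle lies entirely in the preceding Proposition, whose proof must run the Borel--Weil--Bott algorithm (Section~\ref{section:BBW}) against the Koszul resolution (\ref{eqn:Koszul}) for each of the many $6$-tuples in $\mathsf{K}_{(m,3,1,0)}$. This is mechanical for the factors already encountered in the earlier cases, so only the genuinely new summands require a fresh check, and the propagation of Proposition~\ref{prop:+ delta 1 non cambia} then reduces the parametric verification to finitely many patterns. The sharper difficulty --- and the reason the $\Ext^1$ assertion is confined to $m=3,4$ --- is that for the new factors appearing when $m\geq5$ the spectral sequence (\ref{eqn:spectral sequence}) is no longer guaranteed to degenerate, so Borel--Weil--Bott alone cannot exclude extra classes in $\oH^1$; ruling them out would require the hands-on control of the differentials illustrated in Section~\ref{section:example of non-degeneracy}.
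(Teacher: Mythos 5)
Your proposal is correct and follows exactly the route the paper intends: the corollary is an immediate consequence of Proposition~\ref{prop:LR di End(m,3,1,0)} together with the cohomology vanishing proposition preceding it, combined by additivity of cohomology over the recursive decomposition and the normalisation $\Sigma_{\mu+1}=\Sigma_\mu$. Your identification of the unique trivial summand $(3,3,3,3\mid 3,3)$ at $m=3$ and of the $\Ext^1$-contributing factors $(4,4,2,2\mid 3,3)$ (for $m=3$) and $(5,5,3,3\mid 4,4)$ (for $m=4$) matches the paper's own bookkeeping, as does your explanation of why the $\Ext^1$ claim stops at $m=4$.
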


\newpage
\subsection{The case $\lambda=(m,3,2,0)$}\label{section:LR for (m,3,2,0)}

For $m\geq3$, let us put
\[ \mathsf{K}_{(m,3,2,0)}:=\left\{ 
\begin{array}{l} 
(2m,m+1,m-1,0\mid m,m) \\
(2m,m,m,0\mid m,m) \\
(2m,m,m-1,1\mid m,m)^{\oplus j}\, (j=1\mbox{ if } m=3\mbox{ and } j=2\mbox{ if } m\geq4) \\
(2m,m+1,m-2,1\mid m,m)\, (m\geq4) \\
(2m,m,m-2,2\mid m,m)^{\oplus j}\, (j=1\mbox{ if } m=4\mbox{ and } j=2\mbox{ if } m\geq5) \\
(2m,m-1,m-1,2\mid m,m) \, (m\geq4) \\ 
(2m,m+1,m-3,2\mid m,m) \, (m\geq5) \\
(2m,m-1,m-2,3\mid m,m) \, (m\geq5) \\
(2m,m,m-3,3\mid m,m) \, (m\geq6) \\
(2m-1,m+1,m,0\mid m,m)^{\oplus j}\, (j=1\mbox{ if } m=3\mbox{ and } j=2\mbox{ if } m\geq4) \\
(2m-1,m+1,m-1,1\mid m,m)^{\oplus j}\, (j=1\mbox{ if } m=3\mbox{ and } j=3\mbox{ if } m\geq4) \\
(2m-1,m,m,1\mid m,m)^{\oplus2} \\
(2m-1,m,m-1,2\mid m,m)^{\oplus j}\, (j=1\mbox{ if } m=3\mbox{ and } j=3\mbox{ if } m\geq4) \\
(2m-1,m+2,m-1,0\mid m,m) \, (m\geq4) \\
(2m-1,m+2,m-2,1\mid m,m) \, (m\geq4) \\
(2m-1,m+1,m-2,2\mid m,m)^{\oplus j}\, (j=2\mbox{ if } m=4\mbox{ and } j=3\mbox{ if } m\geq5) \\
(2m-1,m-1,m-1,3\mid m,m) \, (m\geq4) \\
(2m-1,m+2,m-3,2\mid m,m) \, (m\geq5) \\
(2m-1,m,m-2,3\mid m,m)^{\oplus2}\, (m\geq5) \\
(2m-1,m+1,m-3,3\mid m,m)\, (m\geq6) \\
(2m-2,m+1,m,1\mid m,m)^{\oplus j}\, (j=1\mbox{ if } m=3\mbox{ and } j=3\mbox{ if } m\geq4) \\
(2m-2,m+1,m-1,2\mid m,m)^{\oplus j}\, (j=1\mbox{ if } m=3\mbox{ and } j=4\mbox{ if } m\geq4) \\
(2m-2,m,m,2\mid m,m)^{\oplus2} \\
(2m-2,m+2,m,0\mid m,m)^{\oplus j}\, (j=1\mbox{ if } m=4\mbox{ and } j=2\mbox{ if } m\geq5) \\
(2m-2,m+2,m-1,1\mid m,m)^{\oplus j}\, (j=2 \mbox{ if } m=4\mbox{ and } j=3\mbox{ if } m\geq5) \\
(2m-2,m+2,m-2,2\mid m,m)^{\oplus j}\, (j=2 \mbox{ if } m=4\mbox{ and } j=3\mbox{ if } m\geq5) \\
(2m-2,m+1,m+1,0\mid m,m)\, (m\geq4) \\
(2m-2,m,m-1,3\mid m,m)^{\oplus2} \\
(2m-2,m+3,m-1,0\mid m,m) \, (m\geq5) \\
(2m-2,m+3,m-2,1\mid m,m) \, (m\geq5) \\
(2m-2,m+3,m-3,2\mid m,m) \, (m\geq5) \\
(2m-2,m+1,m-2,3\mid m,m)^{\oplus2} \, (m\geq5) \\
(2m-2,m+2,m-3,3\mid m,m) \, (m\geq6) \\
(2m-3,m,m,3\mid m,m) \\
(2m-3,m+1,m+1,1\mid m,m) \, (m\geq4) \\
(2m-3,m+1,m,2\mid m,m)^{\oplus2} \, (m\geq4) \\
(2m-3,m+1,m-1,3\mid m,m) \, (m\geq4) \\
(2m-3,m+2,m+1,0\mid m,m) \, (m\geq5) \\
(2m-3,m+2,m,1\mid m,m)^{\oplus2} \, (m\geq5) \\
(2m-3,m+2,m-1,2\mid m,m)^{\oplus2} \, (m\geq5) \\
(2m-3,m+2,m-2,3\mid m,m) \, (m\geq5) \\
(2m-3,m+3,m-2,2\mid m,m) \, (m\geq6) \\
(2m-3,m+3,m-1,1\mid m,m) \, (m\geq6) \\
(2m-3,m+3,m,0\mid m,m) \, (m\geq6)
\end{array}
\right\}. \]


\begin{proposition}\label{prop:LR di End(m,3,2,0)}
    For every $m\geq3$, the following recursive Littlewood--Richardson decomposition holds
    \[ \mE nd(\Sigma_{(m,3,2,0)}\mQ)=\mE nd(\Sigma_{(m-1,3,2,0)}\mQ)\oplus\bigoplus_{\mu\in\mathsf{K}_{(m,3,2,0)}}\Sigma_\mu. \]
\end{proposition}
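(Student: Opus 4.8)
The plan is to follow verbatim the inductive scheme already used in the preceding subsections (Propositions~\ref{prop:LR di End(m,1,0,0)}, \ref{prop:LR di End(m,2,0,0)}, \ref{prop:LR di End(m,3,0,0)} and \ref{prop:LR di End(m,3,1,0)}), running a strong induction on $m$. For the base case I would take $m=3$: by Example~\ref{example:duale} one has $\Sigma_{(3,3,2,0)}\mQ=\Sigma_{(3,1,0,0)}\mQ^\vee(-3)$, hence $\mE nd(\Sigma_{(3,3,2,0)}\mQ)=\mE nd(\Sigma_{(3,1,0,0)}\mQ)$, whose decomposition is already furnished by Proposition~\ref{prop:LR di End(m,1,0,0)}. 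Since $(2,3,2,0)$ is not a partition, the term $\mE nd(\Sigma_{(2,3,2,0)}\mQ)$ in the asserted recursion is empty at $m=3$, so one only has to check that this known decomposition coincides with $\bigoplus_{\mu\in\mathsf{K}_{(3,3,2,0)}}\Sigma_\mu$, which anchors the induction.

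For the inductive step, assume $m\geq4$ and that the statement holds for $m-1$. First I would establish the inclusion
\[ \mE nd(\Sigma_{(m,3,2,0)}\mQ)\supset\mE nd(\Sigma_{(m-1,3,2,0)}\mQ)\oplus\bigoplus_{\mu\in\mathsf{K}_{(m,3,2,0)}}\Sigma_\mu. \]
The first summand is supplied abstractly by Proposition~\ref{prop:+ delta 1 non cambia}, which embeds $\mE nd(\Sigma_{(m-1,3,2,0)}\mQ)$ into $\mE nd(\Sigma_{(m,3,2,0)}\mQ)$ by sending each factor $\Sigma_\mu$ to $\Sigma_{\mu+1}$. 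The second summand is produced by exhibiting, for each $\mu\in\mathsf{K}_{(m,3,2,0)}$, explicit admissible Littlewood--Richardson tableaux (in the sense of Section~\ref{section:homogeneous}) realizing $\Sigma_\mu$ as a factor of $\Sigma_{(m,3,2,0)}\mQ\otimes\Sigma_{(m,m-2,m-3,0)}\mQ\otimes\mO_X(-m)$, exactly as carried out explicitly in the proof of Proposition~\ref{prop:W3 in Ext1}; the multiplicities recorded in $\mathsf{K}_{(m,3,2,0)}$ are precisely the numbers of such tableaux that do not already arise (after adding $1$ to every entry) from a factor of $\mE nd(\Sigma_{(m-1,3,2,0)}\mQ)$.

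To upgrade the inclusion to an equality I would invoke the standard dimension count: it suffices to verify the polynomial identity
\[ r^2_{(m,3,2,0)}-r^2_{(m-1,3,2,0)}=\sum_{\mu\in\mathsf{K}_{(m,3,2,0)}}r_\mu, \]
with $r_\lambda$ the rank function (\ref{eqn:rank}). Once the total ranks of the two sides of the inclusion agree, the inclusion is forced to be an equality. Both sides are genuinely explicit polynomials in $m$ (the left from (\ref{eqn:rank}), the right by summing Weyl dimensions of the listed $\mu$), and the check is a finite computation once $m\geq5$ stabilizes the cardinality of $\mathsf{K}_{(m,3,2,0)}$.

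The hard part will be the combinatorial bookkeeping in the inclusion step rather than any conceptual difficulty: the set $\mathsf{K}_{(m,3,2,0)}$ is large and saturated with range conditions ($m\geq4$, $m\geq5$, $m\geq6$) and multiplicities that jump between the small values $m=3,4$ and the stable regime $m\geq5$, so the delicate point is to confirm that no admissible labelling is omitted or double-counted at low $m$, and that the $+\delta_1$ embedding of Proposition~\ref{prop:+ delta 1 non cambia} together with the new factors in $\mathsf{K}_{(m,3,2,0)}$ account for every tableau without overlap. As noted in the remark following Proposition~\ref{prop:LR di End(m,3,0,0)}, most partitions in $\mathsf{K}_{(m,3,2,0)}$ already occurred in earlier cases, so in practice only the genuinely new tableaux require fresh verification; the rank identity then serves as an independent global check that the enumeration is complete.
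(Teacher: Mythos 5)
Your proposal is correct and follows essentially the same route as the paper: induction on $m$ anchored by the dual identifications $\Sigma_{(3,3,2,0)}\mQ=\Sigma_{(3,1,0,0)}\mQ^\vee(-3)$ and $\Sigma_{(4,3,2,0)}\mQ=\Sigma_{(4,2,1,0)}\mQ^\vee(-4)$, the inclusion supplied by Proposition~\ref{prop:+ delta 1 non cambia} together with a direct tableau check for the members of $\mathsf{K}_{(m,3,2,0)}$, and the rank identity $r^2_{(m,3,2,0)}-r^2_{(m-1,3,2,0)}=\sum_{\mu\in\mathsf{K}_{(m,3,2,0)}}r_\mu$ (both sides equal $\frac{25}{4}(3m^2-m-6)(2m^3-m^2-11m+6)$) forcing equality. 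The only cosmetic difference is that the paper treats $m=5$ as the first explicitly written base case and runs the step from there, whereas you fold $m=4$ into the inductive step; either bookkeeping works.
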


\begin{example}\label{example: LR di End (m,3,2,0)}
    As usual, since $\Sigma_{(3,3,2,0)}\mQ=\Sigma_{(3,1,0,0)}\mQ^\vee(-3)$ and $\Sigma_{(4,3,2,0)}\mQ=\Sigma_{(4,2,1,0)}\mQ^\vee(-4)$, let us write down the first new case, namely the case $m=5$. Explicitly we have
    \begin{align*}
        \mE nd(\Sigma_{(5,3,2,0)}\mQ)= & \Sigma_{(10,6,4,0\mid 5,5)} \oplus \Sigma_{(10,6,3,1\mid 5,5)} \oplus \Sigma_{(10,6,2,2\mid 5,5)} \oplus \Sigma_{(10,5,5,0\mid 5,5)} \oplus \Sigma_{(10,5,4,1\mid 5,5)}^{\oplus2} \oplus \\
        & \Sigma_{(10,5,3,2\mid 5,5)}^{\oplus2} \oplus \Sigma_{(10,4,4,2\mid 5,5)} \oplus \Sigma_{(10,4,3,3\mid 5,5)} \oplus \Sigma_{(9,7,4,0\mid 5,5)} \oplus \Sigma_{(9,7,3,1\mid 5,5)} \oplus \\
        & \Sigma_{(9,7,2,2\mid 5,5)} \oplus \Sigma_{(9,6,5,0\mid 5,5)}^{\oplus2} \oplus \Sigma_{(9,6,4,1\mid 5,5)}^{\oplus4} \oplus \Sigma_{(9,6,3,2\mid 5,5)}^{\oplus4} \oplus \Sigma_{(9,5,5,1\mid 5,5)}^{\oplus3} \oplus \\
        & \Sigma_{(9,5,4,2\mid 5,5)}^{\oplus5} \oplus \Sigma_{(9,5,3,3\mid 5,5)}^{\oplus3} \oplus \Sigma_{(9,4,4,3\mid 5,5)}^{\oplus2} \oplus \Sigma_{(8,8,4,0\mid 5,5)} \oplus \Sigma_{(8,8,3,1\mid 5,5)} \oplus \\
        & \Sigma_{(8,8,2,2\mid 5,5)} \oplus \Sigma_{(8,7,5,0\mid 5,5)}^{\oplus2} \oplus \Sigma_{(8,7,4,1\mid 5,5)}^{\oplus4} \oplus \Sigma_{(8,7,3,2\mid 5,5)}^{\oplus4} \oplus \Sigma_{(8,6,6,0\mid 5,5)} \oplus \\
        & \Sigma_{(8,6,5,1\mid 5,5)}^{\oplus5} \oplus \Sigma_{(8,6,4,2\mid 5,5)}^{\oplus8} \oplus \Sigma_{(8,6,3,3\mid 5,5)}^{\oplus4} \oplus \Sigma_{(8,5,5,2\mid 5,5)}^{\oplus5} \oplus \Sigma_{(8,5,4,3\mid 5,5)}^{\oplus6} \oplus \\
        & \Sigma_{(8,4,4,4\mid 5,5)} \oplus \Sigma_{(7,7,6,0\mid 5,5)} \oplus \Sigma_{(7,7,5,1\mid 5,5)}^{\oplus3} \oplus \Sigma_{(7,7,4,2\mid 5,5)}^{\oplus4} \oplus \Sigma_{(7,7,3,3\mid 5,5)}^{\oplus2} \oplus \\
        & \Sigma_{(7,6,6,1\mid 5,5)}^{\oplus2} \oplus \Sigma_{(7,6,5,2\mid 5,5)}^{\oplus6} \oplus \Sigma_{(7,6,4,3\mid 5,5)}^{\oplus6} \oplus \Sigma_{(7,5,5,3\mid 5,5)}^{\oplus5} \oplus \Sigma_{(7,5,4,4\mid 5,5)}^{\oplus3} \oplus \\
        & \Sigma_{(6,6,6,2\mid 5,5)} \oplus \Sigma_{(6,6,5,3\mid 5,5)}^{\oplus3} \oplus \Sigma_{(6,6,4,4\mid 5,5)}^{\oplus2} \oplus \Sigma_{(6,5,5,4\mid 5,5)}^{\oplus3} \oplus \Sigma_{(5,5,5,5\mid 5,5)}
    \end{align*}
\end{example}

\begin{proof}
    We proceed by induction on $m$, the base of the induction being Example~\ref{example: LR di End (m,3,2,0)}. Let us then assume that $m\geq5$. Thanks to Proposition~\ref{prop:+ delta 1 non cambia} and a direct check, one can see that
    \begin{equation}\label{eqn:inclusione m 3 2 0 } 
    \mE nd(\Sigma_{(m,3,2,0)}\mQ)\supset\mE nd(\Sigma_{(m-1,3,2,0)}\mQ)\oplus\bigoplus_{\mu\in\mathsf{K}_{(m,3,2,0)}}\Sigma_\mu. 
    \end{equation}
    Now we see that the following equality holds
    \[ r^2_{(m,3,2,0)}-r^2_{(m-1,3,2,0)}=\frac{25}{4}(3m^2-m-6)(2m^3-m^2-11m+6), \]
    where $r_\lambda$ is the rank function (\ref{eqn:rank}), and that similarly the following equality holds 
    \[ \sum_{\mu\in\mathsf{K}_{(m,3,2,0)}}r_\mu=\frac{25}{4}(3m^2-m-6)(2m^3-m^2-11m+6). \]
    Combining the two equality above, it follows that the inclusion (\ref{eqn:inclusione m 3 2 0 }) is an equality, hence the claim.
\end{proof}

\begin{remark}
    The summand that contributes to the groups  $\Ext^{1,3}(\Sigma_{(m,3,2,0)}\mQ,\Sigma_{(m,3,2,0)}\mQ)$ is:
    if $m=3$ then it is $\Sigma_{(2m-2,m+1,m-1,2\mid m,m)}$, which appears with multiplicity $1$ in this case; if $m=4$ then it is $\Sigma_{(2m-3,m+1,m-1,3\mid m,m)}$.     
    The summand corresponding to $\mO_X$ is $\Sigma_{(2m-3,m,m,3\mid m,m)}$ for $m=3$.
\end{remark}

\begin{proposition}
    For every $m\geq3$ and every $\mu\in\mathsf{K}_{(m,3,2,0)}$ we have
     $\oH^0(X, \Sigma_\mu)=0$ unless $\mu=(3,3,3,3\mid 3,3)$, in which case $\oH^0(X, \Sigma_{(3,3,3,3\mid 3,3)})=\CC$.
\end{proposition}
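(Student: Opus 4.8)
The plan is to run the same Koszul-plus-Borel--Weil--Bott machine as in the previous sections, but keeping only the information needed to read off $\oH^0$. Since by (\ref{eqn: + n}) we have $\Sigma_{(3,3,3,3\mid 3,3)}=\Sigma_{(0,0,0,0\mid 0,0)}=\mO_X$, whose space of sections is $\CC$, the whole content is to show that $\oH^0(X,\Sigma_\mu)=0$ for every other $\mu\in\mathsf{K}_{(m,3,2,0)}$. Tensoring the Koszul complex (\ref{eqn:Koszul}) with $\Sigma_\mu\widetilde{\mQ}$ and feeding it into the spectral sequence (\ref{eqn:spectral sequence}), the group $\oH^0(X,\Sigma_\mu)$ is a subquotient of the anti-diagonal $\bigoplus_{p=0}^4\oH^p(\Gr(2,6),\Sigma_\mu\widetilde{\mQ}\otimes\W^p\Sym^3\widetilde{\mU})$ of the $E_1$-page. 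Hence it suffices to prove that each of these five groups vanishes, and each is computed by a single application of the Borel--Weil--Bott algorithm of Section~\ref{section:BBW}.

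First I would dispatch the term $p=0$ uniformly. Every $\mu\in\mathsf{K}_{(m,3,2,0)}$ has the shape $(\mu_1,\mu_2,\mu_3,\mu_4\mid m,m)$, which we read as the $6$-tuple $v=(\mu_1,\mu_2,\mu_3,\mu_4,m,m)$ with $\mu_1+\mu_2+\mu_3+\mu_4=4m$, it being a factor of an endomorphism bundle. With $\delta=(5,4,3,2,1,0)$ one gets $v+\delta=(\mu_1+5,\mu_2+4,\mu_3+3,\mu_4+2,m+1,m)$, whose first four entries are automatically strictly decreasing. A nonzero $\oH^0$ requires $v+\delta$ to be strictly decreasing, which forces $\mu_4+2>m+1$, i.e.\ $\mu_4\geq m$; together with $\mu_1\geq\mu_2\geq\mu_3\geq\mu_4$ and $\sum_i\mu_i=4m$ this gives $\mu=(m,m,m,m\mid m,m)=\mO_X$. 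So the $p=0$ term is zero for all $\mu\neq\mO_X$.

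For $p\geq1$ the relevant bundles are, using the decomposition of $\W^p\Sym^3\widetilde{\mU}$ recalled in Section~\ref{section:Koszul}, the modules $\Sigma_{(\mu_1,\mu_2,\mu_3,\mu_4\mid m+a,m+b)}$ with $(a,b)$ running through $(3,0)$ for $p=1$, through $(5,1)$ and $(3,3)$ for $p=2$, through $(6,3)$ for $p=3$ and through $(6,6)$ for $p=4$. A nonzero contribution to $\oH^0(X)$ would require the associated $6$-tuple $v+\delta$ to have no repeated entry and length exactly $\ell(v+\delta)=p$. Writing $\ell$ as the number of inversions of $v+\delta$, the block $(\mu_1+5,\mu_2+4,\mu_3+3,\mu_4+2)$ is inversion-free, so $\ell(v+\delta)$ equals the number of indices $i\leq 4$ with $\mu_i+6-i<m+a+1$ plus the number with $\mu_i+6-i<m+b$. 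The point is then purely numerical: the structural bounds obeyed by the entries of $\mathsf{K}_{(m,3,2,0)}$ (in particular $\mu_3\leq m+1$, together with $\sum_i\mu_i=4m$) force this count to be either strictly larger than $p$ or to produce a repeated entry, hence an acyclic bundle. I would run this inversion count piece by piece; for instance, for $(a,b)=(3,0)$ the value $\ell=1$ would demand $\mu_3\geq m+1$ and hence $\mu_4\leq m-3$, which in turn produces a further inversion at $i=4$, a contradiction; the cases $(5,1)$, $(3,3)$, $(6,3)$, $(6,6)$ are handled in the same way.

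The main obstacle is not conceptual but bookkeeping: the set $\mathsf{K}_{(m,3,2,0)}$ is very large and each parametric $6$-tuple is a genuine partition only for $m$ in a prescribed range, so the inversion count has to be carried out case-by-case while respecting the convention on discarding non-partitions. This burden is considerably lightened by two devices already used in the earlier sections: by the duality/reduction identity (\ref{eqn:reduction}) and by Proposition~\ref{prop:+ delta 1 non cambia}, most members of $\mathsf{K}_{(m,3,2,0)}$ coincide, after the shift $\mu\mapsto\mu+1$, with partitions already treated in the cases $(m,3,1,0)$, $(m,2,s,0)$ and $(m,1,s,0)$, so that only a bounded number of genuinely new families require a fresh check. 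I would also stress that, in contrast with the previous propositions, no statement about $\oH^1$ is made here: for $\lambda=(m,3,2,0)$ the Koszul spectral sequence need not degenerate for large $m$ (cf.\ the discussion preceding Section~\ref{section:example of non-degeneracy}), but this does not affect the $\oH^0$ computation, since the vanishing of the entire anti-diagonal of the $E_1$-page is insensitive to the higher differentials.
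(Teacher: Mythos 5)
Your proposal is correct and follows exactly the route the paper intends: its ``long but easy computation'' is precisely the Borel--Weil--Bott evaluation of each Koszul graded piece, which you organize cleanly via the anti-diagonal of the $E_1$-page, the uniform $p=0$ argument, and the inversion count for $p\geq1$. The only caveat is that the parenthetical bounds $\mu_3\le m+1$ and $\sum_i\mu_i=4m$ do not by themselves close the $p=3,4$ cases (one also needs, e.g., $\mu_2\le m+3$, which does hold throughout $\mathsf{K}_{(m,3,2,0)}$ and then forces a repeated entry in $v+\delta$), but since you explicitly defer to a case-by-case verification this is a matter of bookkeeping rather than a gap.
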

\begin{proof}
    This is a long but easy computation.
\end{proof}

\begin{corollary}
    For every $m\geq3$ we have $\Hom(\Sigma_{(m,3,2,0)}\mQ,\Sigma_{(m,3,2,0)}\mQ)=\CC$.
\end{corollary}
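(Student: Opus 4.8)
The plan is to turn the simplicity statement into a cohomology computation and then read it off from the preceding Proposition. First I would use the tautological identification
\[ \Hom(\Sigma_{(m,3,2,0)}\mQ,\Sigma_{(m,3,2,0)}\mQ)=\oH^0\bigl(X,\mE nd(\Sigma_{(m,3,2,0)}\mQ)\bigr), \]
so that it suffices to show that the endomorphism bundle has a one-dimensional space of global sections.

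Next I would invoke Proposition~\ref{prop:LR di End(m,3,2,0)}: iterating the recursion (and using Proposition~\ref{prop:+ delta 1 non cambia} together with the identity $\Sigma_{\mu+1}=\Sigma_\mu$ of (\ref{eqn: + n})) exhibits $\mE nd(\Sigma_{(m,3,2,0)}\mQ)$ as a finite direct sum of irreducible homogeneous bundles, every summand being isomorphic to some $\Sigma_\mu$ with $\mu\in\mathsf{K}_{(j,3,2,0)}$, $3\le j\le m$. Since $\oH^0(X,-)$ commutes with finite direct sums, the computation reduces to summing $\oH^0(X,\Sigma_\mu)$ over these summands.

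At this point the Proposition preceding the corollary supplies everything: each $\oH^0(X,\Sigma_\mu)$ vanishes except for the single trivial summand $\mu=(3,3,3,3\mid3,3)$, for which it equals $\CC$. To conclude that $\oH^0(X,\mE nd)=\CC$ I would note that this trivial bundle $\Sigma_{(3,3,3,3\mid3,3)}=\mO_X$ is exactly the trace component of $\mE nd$ and occurs with multiplicity one, a consequence of Schur's lemma since the trivial $\SL(4)$-representation appears once in $\Sigma_{(m,3,2,0)}\otimes\Sigma_{(m,3,2,0)}^\vee$. Hence $\Sigma_{(m,3,2,0)}\mQ$ is simple.

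The corollary is therefore purely formal once the preceding Proposition is granted; all the genuine difficulty lives there, in verifying via Borel--Weil--Bott and the Koszul spectral sequence (\ref{eqn:spectral sequence}) that none of the numerous nontrivial summands $\Sigma_\mu$ acquires a degree-zero section on $X$. The one point deserving a line of care is the multiplicity-one statement for $\mO_X$: it guarantees that no spurious global sections are introduced by the $+\delta_1$ propagation of summands across the recursion, so that the global sections of $\mE nd$ reduce to the scalar endomorphisms alone.
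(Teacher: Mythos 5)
Your proposal is correct and is exactly the argument the paper intends: the corollary is stated without proof because it follows immediately from Proposition~\ref{prop:LR di End(m,3,2,0)} (iterated, with $\Sigma_{\mu+1}=\Sigma_\mu$) together with the preceding proposition computing $\oH^0(X,\Sigma_\mu)$ for all $\mu$ in the sets $\mathsf{K}_{(j,3,2,0)}$. Your extra remark that $\mO_X$ occurs with multiplicity one — visible either from Schur's lemma applied to $\Sigma_\lambda\otimes\Sigma_\lambda^\vee$ or from the explicit lists, where the trivial summand $(3,3,3,3\mid3,3)$ arises only at the base step $m=3$ and its $+\delta_1$ propagations stay trivial — is a worthwhile point that the paper leaves implicit.
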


\newpage
\subsection{The case $\lambda=(m,3,3,0)$}\label{section:LR for (m,3,3,0)}

For $m\geq3$, let us put
\[ \mathsf{K}_{(m,3,3,0)}:=\left\{ 
\begin{array}{l} 
(2m,m,m-i,i\mid m,m) \mbox{ for } i=0,1,2,3 \, (m\geq 3+i)\\
(2m-1,m+1,m,0\mid m,m) \,
(m\geq4) \\
 (2m-1,m+1,m-i,i\mid m,m) \mbox{ for } i=1,2,3 \, (m\geq 3+i) \\
 (2m-1,m,m+1-i,i\mid m,m) \mbox{ for } i=1,2,3 \, (m\geq 2+i) \\
 (2m-2,m+2,m-i,i\mid m,m) \mbox{ for } i=0,1,2 \, (m\geq5) \\
 (2m-2,m+2,m-3,3\mid m,m) \, (m\geq6) \\
 
 (2m-2,m+1,m,1\mid m,m) \, (m\geq4) \\
 (2m-2,m+1,m-1-i,i+2\mid m,m) \mbox{ for } i=0,1 \, (m\geq 4+i) \\
 (2m-2,m,m-i,i+2\mid m,m) \mbox{ for } i=0,1 \, (m\geq 3+i) \\
 (2m-3,m+3,m-i,i\mid m,m) \mbox{ for } i=0,1,2,3 \, (m\geq6) \\
 (2m-3,m+2,m-i,i+1\mid m,m) \mbox{ for } i=0,1,2 \, (m\geq5) \\
 (2m-3,m+1,m-i,i+2\mid m,m) \mbox{ for } i=0,1 \, (m\geq4) \\
 (2m-3,m,m,3\mid m,m) 
 \end{array}
\right\}.  \]


\begin{proposition}\label{prop:LR di End(m,3,3,0)}
    For every $m\geq3$, the following recursive Littlewood--Richardson decomposition holds
    \[ \mE nd(\Sigma_{(m,3,3,0)}\mQ)=\mE nd(\Sigma_{(m-1,3,3,0)}\mQ)\oplus\bigoplus_{\mu\in\mathsf{K}_{(m,3,3,0)}}\Sigma_\mu. \]
\end{proposition}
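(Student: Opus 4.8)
The plan is to mirror the inductive template already used for the cases $\lambda=(m,3,0,0)$, $(m,3,1,0)$ and $(m,3,2,0)$. First I would set up an induction on $m$, taking the small values $m\leq 5$ as base cases. These are all accessible by duality: by Example~\ref{example:duale} one has $\Sigma_{(m,3,3,0)}\mQ=\Sigma_{(m,m-3,m-3,0)}\mQ^\vee\otimes\mO_X(m)$, so that $\mE nd(\Sigma_{(m,3,3,0)}\mQ)=\mE nd(\Sigma_{(m,m-3,m-3,0)}\mQ)$. In particular $m=3$ reduces to $\mE nd(\Sym^3\mQ)$, computed in equality (\ref{eqn:LR for Sym}), while $m=4$ and $m=5$ reduce respectively to the cases $(4,1,1,0)$ and $(5,2,2,0)$ already decomposed in Section~\ref{section:LR for (m,1,1,0)} and Section~\ref{section:LR for (m,2,2,0)}. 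One checks directly that the asserted decomposition holds for these values, and that the cardinality of $\mathsf{K}_{(m,3,3,0)}$ stabilises for $m\geq 6$.

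For the inductive step, assume $m\geq 6$. Proposition~\ref{prop:+ delta 1 non cambia} provides the inclusion of $\SL(6)$-representations
\[
\mE nd(\Sigma_{(m,3,3,0)}\mQ)\supset\mE nd(\Sigma_{(m-1,3,3,0)}\mQ)\oplus\bigoplus_{\mu\in\mathsf{K}_{(m,3,3,0)}}\Sigma_\mu,
\]
where the first summand collects the factors arising from the shift $\mu\mapsto\mu+1$ (the $+\delta_1$ operation, which leaves $\Sigma_\mu$ unchanged by (\ref{eqn: + n})), and $\mathsf{K}_{(m,3,3,0)}$ is designed to list exactly the ``new'' Littlewood--Richardson factors of $\Sigma_{(m,3,3,0)}\widetilde{\mQ}\otimes\Sigma_{(m,m-3,m-3,0)}\widetilde{\mQ}$ that do not come from the shift, together with their multiplicities. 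Producing this inclusion, and in particular verifying that every element of $\mathsf{K}_{(m,3,3,0)}$ really occurs with the stated multiplicity, is a direct (if lengthy) application of the Littlewood--Richardson rule, entirely analogous to the computation carried out in the proof of Proposition~\ref{prop:W3 in Ext1}.

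To upgrade the inclusion to an equality I would compare total ranks. Using the Weyl dimension formula (\ref{eqn:rank}) one computes $r_{(m,3,3,0)}=\tfrac{5}{3}(m+3)(m-1)(m-2)$, whence a difference of squares gives
\[
r^2_{(m,3,3,0)}-r^2_{(m-1,3,3,0)}=\frac{25}{3}(m-2)^2(m+1)(2m^2+m-9).
\]
It then remains to check the matching polynomial identity $\sum_{\mu\in\mathsf{K}_{(m,3,3,0)}}r_\mu=\tfrac{25}{3}(m-2)^2(m+1)(2m^2+m-9)$; since both sides are polynomials in $m$, this is a finite verification. Equality of ranks forces the inclusion above to be an equality of $\SL(6)$-representations, completing the induction.

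The main obstacle is bookkeeping rather than conceptual: the set $\mathsf{K}_{(m,3,3,0)}$ is large and many of its entries carry parameter-dependent multiplicities and validity ranges (the parenthetical constraints such as $m\geq 3+i$), so the delicate point is to confirm that no new factor is missed and that each multiplicity is correct for all $m\geq 6$ simultaneously. The rank identity is the safeguard that makes this tractable: any omission or multiplicity error in the list $\mathsf{K}_{(m,3,3,0)}$ would manifest as a failure of the two rank polynomials to coincide.
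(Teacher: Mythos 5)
Your proposal is correct and takes essentially the same route as the paper's proof: induction on $m$ with the base cases obtained by duality from the previously treated partitions, the inclusion supplied by Proposition~\ref{prop:+ delta 1 non cambia} together with a direct Littlewood--Richardson check of the factors in $\mathsf{K}_{(m,3,3,0)}$, and the rank identity $r^2_{(m,3,3,0)}-r^2_{(m-1,3,3,0)}=\tfrac{25}{3}(m-2)^2(m+1)(2m^2+m-9)=\sum_{\mu\in\mathsf{K}_{(m,3,3,0)}}r_\mu$ to upgrade the inclusion to an equality. Your computation of $r_{(m,3,3,0)}=\tfrac{5}{3}(m+3)(m-1)(m-2)$ and the resulting difference of squares agree with the paper.
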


\begin{example}\label{example: LR di End (m,3,3,0)}
    As usual, since $\Sigma_{(3,3,3,0)}\mQ=\Sym^3\mQ^\vee(-3)$, $\Sigma_{(4,3,3,0)}\mQ=\Sigma_{(4,1,1,0)}\mQ^\vee(-4)$ and $\Sigma_{(5,3,3,0)}\mQ=\Sigma_{(5,2,2,0)}\mQ^\vee(-5)$, let us write down the first new case, namely the case $m=6$. Explicitly we have
    \begin{align*}
        \mE nd(\Sigma_{(6,3,3,0)}\mQ)= & \Sigma_{(12,6,6,0 \mid 6,6)} \oplus \Sigma_{(12,6,5,1 \mid 6,6)} \oplus \Sigma_{(12,6,4,2 \mid 6,6)} \oplus \Sigma_{(12,6,3,3 \mid 6,6)} \oplus \Sigma_{(11,7,6,0 \mid 6,6)} \oplus \\
        & \Sigma_{(11,7,5,1 \mid 6,6)} \oplus \Sigma_{(11,7,4,2 \mid 6,6)} \oplus \Sigma_{(11,7,3,3 \mid 6,6)} \oplus \Sigma_{(11,6,6,1 \mid 6,6)}^{\oplus2} \oplus \Sigma_{(11,6,5,2 \mid 6,6)}^{\oplus2} \oplus \\
        & \Sigma_{(11,6,4,3 \mid 6,6)}^{\oplus2} \oplus \Sigma_{(10,8,6,0 \mid 6,6)} \oplus \Sigma_{(10,8,5,1 \mid 6,6)} \oplus \Sigma_{(10,8,4,2 \mid 6,6)} \oplus \Sigma_{(10,8,3,3 \mid 6,6)} \oplus \\
        & \Sigma_{(10,7,6,1 \mid 6,6)}^{\oplus2} \oplus \Sigma_{(10,7,5,2 \mid 6,6)}^{\oplus2} \oplus \Sigma_{(10,7,4,3 \mid 6,6)}^{\oplus2} \oplus \Sigma_{(10,6,6,2 \mid 6,6)}^{\oplus3} \oplus \Sigma_{(10,6,5,3 \mid 6,6)}^{\oplus3} \oplus \\
        & \Sigma_{(10,6,4,4 \mid 6,6)} \oplus \Sigma_{(9,9,6,0 \mid 6,6)} \oplus \Sigma_{(9,9,5,1 \mid 6,6)} \oplus \Sigma_{(9,9,4,2 \mid 6,6)} \oplus \Sigma_{(9,9,3,3 \mid 6,6)} \oplus \\
        & \Sigma_{(9,8,6,1 \mid 6,6)}^{\oplus2} \oplus \Sigma_{(9,8,5,2 \mid 6,6)}^{\oplus2} \oplus \Sigma_{(9,8,4,3 \mid 6,6)}^{\oplus2} \oplus \Sigma_{(9,7,6,2 \mid 6,6)}^{\oplus3} \oplus \Sigma_{(9,7,5,3 \mid 6,6)}^{\oplus3} \oplus \\
        & \Sigma_{(9,7,4,4 \mid 6,6)} \oplus \Sigma_{(9,6,6,3 \mid 6,6)}^{\oplus4} \oplus \Sigma_{(9,6,5,4 \mid 6,6)}^{\oplus2} \oplus \Sigma_{(8,8,6,2 \mid 6,6)} \oplus \Sigma_{(8,8,5,3 \mid 6,6)} \oplus \\
        & \Sigma_{(8,8,4,4 \mid 6,6)} \oplus \Sigma_{(8,7,6,3 \mid 6,6)}^{\oplus2} \oplus \Sigma_{(8,7,5,4 \mid 6,6)}^{\oplus2} \oplus \Sigma_{(8,6,6,4 \mid 6,6)}^{\oplus3} \oplus \Sigma_{(8,6,5,5 \mid 6,6)} \oplus \\
        & \Sigma_{(7,7,6,4 \mid 6,6)} \oplus \Sigma_{(7,7,5,5 \mid 6,6)} \oplus \Sigma_{(7,6,6,5 \mid 6,6)}^{\oplus2} \oplus \Sigma_{(6,6,6,6 \mid 6,6)}.
    \end{align*}
\end{example}

\begin{proof}
    We proceed by induction on $m$, the base of the induction being Example~\ref{example: LR di End (m,3,3,0)}. Let us then assume that $m\geq5$. Thanks to Proposition~\ref{prop:+ delta 1 non cambia} and a direct check, one can see that
    \begin{equation}\label{eqn:inclusione m 3 3 0 } 
    \mE nd(\Sigma_{(m,3,3,0)}\mQ)\supset\mE nd(\Sigma_{(m-1,3,3,0)}\mQ)\oplus\bigoplus_{\mu\in\mathsf{K}_{(m,3,3,0)}}\Sigma_\mu. 
    \end{equation}
    Now we see that the following equality holds
    \[ r^2_{(m,3,3,0)}-r^2_{(m-1,3,3,0)}=\frac{25}{3}(m-2)^2(m+1)(2m^2+m-9), \]
    where $r_\lambda$ is the rank function (\ref{eqn:rank}), and that similarly the following equality holds 
    \[ \sum_{\mu\in\mathsf{K}_{(m,3,3,0)}}r_\mu=\frac{25}{3}(m-2)^2(m+1)(2m^2+m-9). \]
    Combining the two equality above, it follows that the inclusion (\ref{eqn:inclusione m 3 3 0 }) is an equality, hence the claim.
\end{proof}

\begin{remark}
    The summand that contributes to the groups  $\Ext^{1,3}(\Sigma_{(m,3,3,0)}\mQ,\Sigma_{(m,3,3,0)}\mQ)$ is, for $m=43$, the Schur functor $\Sigma_{(2m-3,m+1,m-1,3\mid m,m)}$. 
    The summand corresponding to $\mO_X$ is $\Sigma_{(2m-3,m,m,3\mid m,m)}$ for $m=3$.
\end{remark}

\begin{proposition}
    For every $m\geq3$ and every $\mu\in\mathsf{K}_{(m,3,3,0)}$ we have
     $\oH^0(X, \Sigma_\mu)=0$ unless $\mu=(3,3,3,3\mid 3,3)$, in which case $\oH^0(X, \Sigma_{(3,3,3,3\mid 3,3)})=\CC$.
\end{proposition}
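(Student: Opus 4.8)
The plan is to compute each $\oH^0(X,\Sigma_\mu)$ by specializing the Koszul spectral sequence (\ref{eqn:spectral sequence}) to the single irreducible factor $\Sigma_\mu$ rather than to the whole endomorphism bundle. Since the abutment in bidegree $(-p,q)$ is $\oH^{q-p}$, the group $\oH^0(X,\Sigma_\mu)$ receives contributions only from the antidiagonal $q=p$, that is from the five terms $\operatorname{E}_1^{-p,p}=\oH^p\!\left(\Gr(2,6),\Sigma_\mu\otimes\W^p\Sym^3\widetilde{\mU}\right)$ for $p=0,\dots,4$. Hence it suffices to show that for every $\mu\in\mathsf{K}_{(m,3,3,0)}$ other than $(3,3,3,3\mid3,3)$ all five of these groups vanish: once the full antidiagonal vanishes the differentials become irrelevant and $\oH^0(X,\Sigma_\mu)=0$ follows, whereas $\Sigma_{(3,3,3,3\mid3,3)}=\mO_X$ gives $\oH^0=\CC$ at once.

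First I would dispose of the untwisted level $p=0$ uniformly in $m$. Writing $\mu=(\mu_1,\mu_2,\mu_3,\mu_4\mid m,m)$ with $\mu_1\geq\cdots\geq\mu_4\geq0$ and $\sum_{i=1}^4\mu_i=4m$ (both properties hold for every factor of $\mE nd(\Sigma_{(m,3,3,0)}\mQ)$), the Borel--Weil--Bott recipe of Section~\ref{section:BBW} gives $\oH^0(\Gr,\Sigma_\mu)\neq0$ exactly when $v+\delta=(\mu_1+5,\mu_2+4,\mu_3+3,\mu_4+2,m+1,m)$ is already strictly decreasing. The only binding inequality is $\mu_4+2>m+1$, i.e.\ $\mu_4\geq m$; together with $\mu_1\geq\cdots\geq\mu_4$ and $\sum\mu_i=4m$ this forces $\mu=(m,m,m,m\mid m,m)$, which by (\ref{eqn: + n}) equals $(3,3,3,3\mid3,3)=\mO_X$. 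So at $p=0$ no nontrivial factor contributes.

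Next, for the twisted levels $p=1,2,3,4$ I would run Borel--Weil--Bott on $\Sigma_\mu\otimes\Sigma_{(a,b)}\widetilde{\mU}$ for each irreducible summand $\Sigma_{(a,b)}\widetilde{\mU}$ of $\W^p\Sym^3\widetilde{\mU}$ listed in Section~\ref{section:Koszul}; since $\Sigma_{(m,m)}\widetilde{\mU}\otimes\Sigma_{(a,b)}\widetilde{\mU}=\Sigma_{(m+a,m+b)}\widetilde{\mU}$, the relevant $6$-tuple is simply $v=(\mu_1,\mu_2,\mu_3,\mu_4,m+a,m+b)$. For each such $v$ one checks whether $v+\delta$ has a repeated entry (acyclicity) and otherwise computes the $l$-function (\ref{eqn:l function}), the aim being to show that the unique nonzero cohomological degree is always $\neq p$, so that $\operatorname{E}_1^{-p,p}=0$. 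To keep this finite I would use two reductions. Because $\mE nd$ is insensitive to dualizing and twisting, Example~\ref{example:duale} gives $\mE nd(\Sigma_{(m,3,3,0)}\mQ)=\mE nd(\Sigma_{(m,m-3,m-3,0)}\mQ)$, so for small $m$ the factors of $\mathsf{K}_{(m,3,3,0)}$ coincide with those of the already treated cases $(m,1,1,0)$, $(m,2,2,0)$, etc.; and the shift $\Sigma_{\mu+1}=\Sigma_\mu$ together with Proposition~\ref{prop:+ delta 1 non cambia} propagates each vanishing from one value of $m$ to the next, reducing the check to the genuinely new families plus the boundary values of $m$ at which some parametric tuples degenerate.

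The main obstacle is bookkeeping rather than conceptual: the set $\mathsf{K}_{(m,3,3,0)}$ is very large, and one must carry out the repeated-entry and $l$-function analysis uniformly in $m$ across all summands $\Sigma_{(a,b)}\widetilde{\mU}$ and all $p$, while separately handling the small-$m$ cases (say $m=3,4,5$) where several tuples in the list are empty or collide. A secondary subtlety is that the argument relies on the vanishing of the \emph{entire} antidiagonal in order to ignore the $d_1$ and higher differentials; for the single surviving factor this is moot, since $\oH^0(X,\mO_X)=\CC$ is immediate. I expect no new phenomenon beyond those already met in the earlier sections, so the degree-$0$ computation remains ``long but easy'': the spectral-sequence degeneration difficulties that obstruct $\oH^1$ when $t,s$ are large (cf.\ Section~\ref{section:example of non-degeneracy}) simply do not reach degree $0$.
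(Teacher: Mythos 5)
Your proposal is correct and follows exactly the route the paper intends (and compresses into ``a long but easy computation''): restrict the Koszul spectral sequence (\ref{eqn:spectral sequence}) to each irreducible factor, observe that $\oH^0$ only sees the antidiagonal $\operatorname{E}_1^{-p,p}=\oH^p(\Gr(2,6),\Sigma_\mu\otimes\W^p\Sym^3\widetilde{\mU})$, and kill each term by Borel--Weil--Bott; your uniform treatment of the $p=0$ level (using $\mu_4+2>m+1$ together with $\sum\mu_i=4m$ to isolate $\mO_X$) is a nice way to package what the paper does case by case. The only caveat is that the elements of $\mathsf{K}_{(m,3,3,0)}$ are the \emph{new} parametric families at each $m$, so the $\delta_1$-shift propagation does not spare you the uniform-in-$m$ BWB check of each listed family for $p=1,\dots,4$ --- but that is precisely the finite bookkeeping the paper also leaves to the reader.
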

\begin{proof}
    This is a long but easy computation.
\end{proof}

\begin{corollary}
    For every $m\geq3$ we have $\Hom(\Sigma_{(m,3,3,0)}\mQ,\Sigma_{(m,3,3,0)}\mQ)=\CC$.
\end{corollary}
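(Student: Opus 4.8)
The plan is to reduce the computation of $\Hom(\Sigma_{(m,3,3,0)}\mQ,\Sigma_{(m,3,3,0)}\mQ)$ to the cohomological vanishing statement established in the Proposition immediately above. Since $\Sigma_{(m,3,3,0)}\mQ$ is a vector bundle, one has the standard identification
\[
\Hom(\Sigma_{(m,3,3,0)}\mQ,\Sigma_{(m,3,3,0)}\mQ)=\oH^0\!\left(X,\mE nd(\Sigma_{(m,3,3,0)}\mQ)\right),
\]
so it is enough to determine the space of global sections of the endomorphism bundle. Note already that the identity endomorphism (equivalently, the trace) contributes a canonical copy of $\CC$, so the content of the statement is that there is nothing more.

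Next I would unwind the recursive Littlewood--Richardson decomposition of Proposition~\ref{prop:LR di End(m,3,3,0)}. Applying it repeatedly and observing that the summand $\mE nd(\Sigma_{(m-1,3,3,0)}\mQ)$ disappears once one reaches $m=3$, since $(2,3,3,0)$ is not a partition and is therefore ignored by the convention of Section~\ref{section:Evidences}, one obtains the closed form
\[
\mE nd(\Sigma_{(m,3,3,0)}\mQ)=\bigoplus_{m'=3}^{m}\ \bigoplus_{\mu\in\mathsf{K}_{(m',3,3,0)}}\Sigma_\mu .
\]
Because $\oH^0$ is additive over finite direct sums, $\oH^0(X,\mE nd(\Sigma_{(m,3,3,0)}\mQ))$ is the direct sum of the groups $\oH^0(X,\Sigma_\mu)$ taken over all the factors appearing on the right-hand side.

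Now I would invoke the preceding Proposition, which asserts that $\oH^0(X,\Sigma_\mu)=0$ for every $\mu\in\mathsf{K}_{(m',3,3,0)}$ with the single exception $\mu=(3,3,3,3\mid 3,3)$, where $\oH^0(X,\Sigma_{(3,3,3,3\mid 3,3)})=\CC$. It remains only to check that this exceptional factor occurs with total multiplicity exactly one across the entire decomposition. It belongs to $\mathsf{K}_{(3,3,3,0)}$, where it arises as the $m'=3$ specialisation of the tuple $(2m'-3,m',m',3\mid m',m')$, listed with multiplicity one; and for $m'\geq 4$ no tuple in $\mathsf{K}_{(m',3,3,0)}$ is of the diagonal form $(c,c,c,c\mid c,c)$ (indeed all listed tuples have first entry $2m',2m'-1,2m'-2$ or $2m'-3$, strictly larger than the remaining $\widetilde{\mQ}$-entries once $m'\geq 4$). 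Under the identification $\Sigma_{\mu+n}=\Sigma_\mu$ of Section~\ref{section:homogeneous} this factor is precisely the trivial line bundle $\mO_X$, matching the trace contribution noted above. Summing over all summands therefore yields $\oH^0(X,\mE nd(\Sigma_{(m,3,3,0)}\mQ))=\CC$, which is the claim.

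At the level of this corollary there is no genuine obstacle: the entire difficulty is concentrated in the $\oH^0$-vanishing Proposition that precedes it, whose proof proceeds by a Borel--Weil--Bott analysis of the (large) family $\mathsf{K}_{(m,3,3,0)}$ via the Koszul spectral sequence (\ref{eqn:spectral sequence}). As in the analogous earlier cases, the apparent length of that verification is tempered by the fact that most partitions appearing in $\mathsf{K}_{(m,3,3,0)}$ already occurred in the previously treated cases, so that only a handful of genuinely new Borel--Weil--Bott computations need to be carried out.
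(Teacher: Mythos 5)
Your argument is correct and is exactly the (implicit) route the paper takes: the corollary is an immediate consequence of the recursive Littlewood--Richardson decomposition of $\mE nd(\Sigma_{(m,3,3,0)}\mQ)$ together with the $\oH^0$-vanishing Proposition that precedes it, the only point worth checking being that the trivial summand $(3,3,3,3\mid 3,3)$ occurs with total multiplicity one, which you verify correctly. Nothing is missing.
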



\subsection{An example of resolution of the indeterminacy}\label{section:example of non-degeneracy}


The examples analysed in the sections before have a common feature: for all of them the Koszul spectral sequence determines $\Hom(\Sigma_\lambda\mQ,\Sigma_\lambda\mQ)$ without indeterminacy. This is enough for item $(1)$ of Conjecture~\ref{conjecture:simple+ext 1}. On the other hand in many cases the Koszul spectral sequence has non-trivial differentials contributing to $\Ext^1(\Sigma_\lambda\mQ,\Sigma_\lambda\mQ)$. An example is the series $\lambda=(m,3,0,0)$, which has indeterminacy in degree $1$ from $m\geq5$. 

In this section we resolve by brute force this indeterminacy in the first case, namely for $\lambda=(5,3,0,0)$. This computation also shows the typical problems that one encounters in these cases.


\begin{lemma} \label{lem:521}
Conjecture \ref{conjecture:simple+ext 1} holds for $\Sigma_{(5,3,0,0)}\mQ$.
\end{lemma}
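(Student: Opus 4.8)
The statement has two parts. Item (1) of Conjecture~\ref{conjecture:simple+ext 1}, simplicity, is already in hand: the $\oH^0$-computation of Section~\ref{section:LR for (m,3,0,0)} gives $\oH^0(X,\Sigma_\mu)=0$ for every factor $\mu\in\mathsf{K}_{(m,3,0,0)}$ other than the trivial summand, for all $m\ge3$ and with no spectral-sequence indeterminacy, so $\Hom(\Sigma_{(5,3,0,0)}\mQ,\Sigma_{(5,3,0,0)}\mQ)=\CC$. Since $(5,3,0,0)$ has three distinct entries, item (2) is formally empty for this $\lambda$; the real content, and the point of this section, is to verify the equality predicted by item (3) of Theorem~\ref{theorem C}, namely $\Ext^1(\Sigma_{(5,3,0,0)}\mQ,\Sigma_{(5,3,0,0)}\mQ)=\W^3V_6^\vee$. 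As $\lambda_2=3>0=\lambda_3=\lambda_4$, Proposition~\ref{prop:W3 in Ext1} already gives the inclusion $\W^3V_6^\vee\subseteq\Ext^1$, so the whole difficulty is the reverse inclusion, i.e.\ ruling out any further summand.

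First I would write out the full $E_1$-page of the Koszul spectral sequence (\ref{eqn:spectral sequence}) for $\mE nd(\Sigma_{(5,3,0,0)}\mQ)$, decomposing it by Proposition~\ref{prop:LR di End(m,3,0,0)} (the $m=5$ instance, on top of the lower ones) and applying Borel--Weil--Bott (Section~\ref{section:BBW}) to each summand $\Sigma_\mu\otimes\W^p\Sym^3\widetilde\mU$. For almost all summands the associated $6$-tuple shifted by $\delta=(5,4,3,2,1,0)$ either has a repeated entry, giving acyclicity, or places its cohomology in a total degree $q-p\ne1$; these are harmless. The task is to isolate the finitely many factors that appear only from $m\ge5$ — such as $(10,6,2,2\mid5,5)$, $(10,5,3,2\mid5,5)$, $(9,5,3,3\mid5,5)$ and their companions — which produce $E_1$-terms of total degree $1$ beyond the guaranteed contribution. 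That guaranteed $\W^3V_6^\vee$ is the one propagated by Proposition~\ref{prop:+ delta 1 non cambia} from $(4,4,2,2\mid3,3)$ at $m=3$ to $(6,6,4,4\mid5,5)\cong(2,2,0,0\mid1,1)$ (using (\ref{eqn: + n})), whose $\oH^1$ on $\Gr(2,6)$ is $\W^3V_6^\vee$. Each spurious degree-$1$ term comes paired, via a differential $d_r$ of the spectral sequence, with an isomorphic $E_1$-term in total degree $0$ or $2$.

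The crux is to show that each of these connecting differentials is nonzero, whence the spurious terms die on $E_\infty$. The differential is induced by the Koszul differential, which is interior product with the defining cubic $\sigma\in\Sym^3V_6^\vee=\oH^0(\Sym^3\widetilde\mU^\vee)$; although $\sigma$ breaks the $\SL(6)$-symmetry, the universal contraction $\Sym^3V_6^\vee\otimes\oH^q(\Sigma_\mu\otimes\W^p\Sym^3\widetilde\mU)\to\oH^q(\Sigma_\mu\otimes\W^{p-1}\Sym^3\widetilde\mU)$ is $\SL(6)$-equivariant. I would therefore check, by a Littlewood--Richardson decomposition, that the target irreducible occurs in $\Sym^3V_6^\vee$ tensored with the source, so that the contraction is not identically zero, and then argue that a general cubic $\sigma$ induces a nonzero map; since source and target are the same irreducible $\SL(6)$-representation, a nonzero map is an isomorphism and the pair cancels. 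Collapsing all such pairs leaves exactly $\W^3V_6^\vee$ in total degree $1$, giving $\dim\Ext^1=20$ and, with $\oH^0=\oH^4=\CC$ and $\Ext^3\cong(\Ext^1)^\vee$ by Serre duality, the value of $\Ext^2$ through the Euler characteristic of Proposition~\ref{prop:eulero}.

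The main obstacle is precisely this nonvanishing: the equivariance reduction organises the bookkeeping, but proving that interior product by a general $\sigma$ does not accidentally annihilate a given irreducible requires an honest cochain-level computation in the Koszul complex, with no uniform shortcut. I would handle the spurious pairs one at a time, and a useful consistency check throughout is that the surviving degree-$1$ cohomology must assemble into an $\SL(6)$-representation; any miscount would break this, or would be visible against the known value of $\chi(\Sigma_{(5,3,0,0)}\mQ,\Sigma_{(5,3,0,0)}\mQ)$. This is also why the method, transparent for $(5,3,0,0)$, does not obviously scale: the number and intricacy of the pairs to resolve grows rapidly with $\lambda_1$.
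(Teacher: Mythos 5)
Your overall strategy — run the Koszul spectral sequence (\ref{eqn:spectral sequence}) on the Littlewood--Richardson decomposition of $\mE nd(\Sigma_{(5,3,0,0)}\mQ)$, observe that simplicity already follows from the $\oH^0$-analysis of Section~\ref{section:LR for (m,3,0,0)}, isolate the finitely many summands whose Borel--Weil--Bott cohomology creates indeterminacy, and kill the spurious terms by showing the relevant Koszul differentials (contraction with the cubic $\sigma$) are nonzero using $\SL(6)$-equivariance of the universal contraction and genericity of $\sigma$ — is exactly the paper's. The paper finds that only two of the sixty irreducible factors are problematic, namely $\Sigma_{(10,6,2,2)}\widetilde{\mQ}\otimes\mO(-5)$ and its dual $\Sigma_{(8,8,4,0)}\widetilde{\mQ}\otimes\mO(-5)$.

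However, your cancellation mechanism is wrong as stated, and this is a genuine gap. You assert that each spurious term "comes paired with an isomorphic $E_1$-term" and that "since source and target are the same irreducible $\SL(6)$-representation, a nonzero map is an isomorphism and the pair cancels." For the factor $(10,6,2,2\mid 5,5)$ the differential one must control is
\[
\Sigma_{(6,4,4,4,0,0)}V_6^{\vee}\oplus\Sigma_{(6,4,3,2,0,0)}V_6^{\vee}\longrightarrow \Sigma_{(6,3,3,0,0,0)}V_6^{\vee},
\]
a map from a $134750$-dimensional sum of two irreducibles onto a $28875$-dimensional irreducible: source and target are not isomorphic, so neither Schur's lemma nor "nonzero $\Rightarrow$ isomorphism" applies, and what is needed is surjectivity, not bijectivity. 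Worse, since the map depends on the chosen $\sigma$ it is not $\SL(6)$-equivariant, so even "nonzero onto an irreducible target" does not give surjectivity. The paper closes this gap by dualizing to an injectivity statement for $\varphi\colon\Sigma_{(6,6,6,3,3,0)}V_6\to\Sigma_{(6,6,2,2,2,0)}V_6\oplus\Sigma_{(6,6,4,3,2,0)}V_6$, factoring $\varphi$ through $u\mapsto u\otimes\sigma^t$ followed by the projection onto $\Sigma_{(6,6,4,3,2,0)}V_6$, which occurs with multiplicity one in $\Sigma_{(6,6,6,3,3,0)}V_6\otimes\Sym^3V_6$; injectivity of $u\mapsto u\otimes\sigma^t$ for general $\sigma$ (as in \cite[Appendix~B]{kuzmanmar}) plus nonvanishing of the projection on its image then gives the result. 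Your proposal correctly anticipates the multiplicity-one idea but never supplies the argument that actually forces surjectivity, and explicitly concedes that the key nonvanishing "requires an honest cochain-level computation with no uniform shortcut" — which is precisely the step the lemma is about.
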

\begin{proof}
By Section~\ref{section:LR for (m,3,0,0)}, $\Sigma_{(5,3,0,0)}\mQ$ is simple, hence stable. Therefore, we only need to check that the first extension space is indeed 20 dimensional. As a matter of fact, $\Sigma_{(5,3,0,0)}\mQ$ corresponds to the value $m=5$ in $\Sigma_{(m,3,0,0)}$, which is the first one in the series for which we find some indeterminacy.
If we decompose $\mE nd(\Sigma_{(5,3,0,0)}\mQ)$ in irreducible factors, we get a direct sum of 60 factors. Of these, only two create problems (and they are dual to each other). These are $\Sigma_{(10,6,2,2)}\mQ \otimes \mO(-5)$ and  $\Sigma_{(8,8,4,0)}\mQ \otimes \mO(-5)$. We analyze here the first one, since the second one will completely analogous.

We can easily compute that
\[
\oH^k\left(\Gr(2,6), \W^4 \Sym^3\widetilde{\mU} \otimes \Sigma_{(10,6,2,2)}\widetilde{\mQ} \otimes \mO(-5)\right)= \begin{cases}
    0 \ \textrm{ if } k \neq 6 \\
    \Sigma_{(6,4,4,4,0,0)}V_6^{\vee} \textrm{ if } k = 6\, ,
\end{cases}
\]

\[
\oH^k\left(\Gr(2,6), \W^3 \Sym^3\widetilde{\mU} \otimes \Sigma_{(10,6,2,2)}\widetilde{\mQ} \otimes \mO(-5)\right)= \begin{cases}
    0 \ \textrm{ if } k \neq 5 \\
    \Sigma_{(6,4,3,2,0,0)}V_6^{\vee} \textrm{ if } k = 5
\end{cases}
\]

and

\[
\oH^k\left(\Gr(2,6), \W^2 \Sym^3\widetilde{\mU} \otimes \Sigma_{(10,6,2,2)}\widetilde{\mQ} \otimes \mO(-5)\right)= \begin{cases}
    0 \ \textrm{ if } k \neq 4,5 \\
    \Sigma_{(6,3,3,0,0,0)}V_6^{\vee} \textrm{ if } k = 5\\
    \Sigma_{(6,2,2,2,0,0)}V_6^{\vee} \textrm{ if } k = 4\, .\\
\end{cases}
\]

Denoting with $C_3$ the last kernel bundle coming from the splitting of the Koszul sequence (and similarly with $C_2$), we therefore obtain a sequence
\[
0 \to \Sigma_{(6,4,3,2,0,0)}V_6^{\vee} \to \oH^5(C_3) \to  \Sigma_{(6,4,4,4,0,0)}V_6^{\vee} \to 0,
\]
and moreover $\oH^i(C_3)=0$ for $i\neq5$.

Putting all these information together, we obtain a sequence in cohomology as follows:
\[
0 \to \Sigma_{(6,2,2,2,0,0)}V_6^{\vee} \to \oH^4(C_2) \to  \Sigma_{(6,4,4,4,0,0)}V_6^{\vee}  \oplus \Sigma_{(6,4,3,2,0,0)}V_6^{\vee} \to  \Sigma_{(6,3,3,0,0,0)}V_6^{\vee} \to \oH^5(C_2) \to 0.
\]
The indeterminacy is therefore resolved (and with it, the Lemma proved) if we are able to show that the map
\[
\Sigma_{(6,4,4,4,0,0)}V_6^{\vee}  \oplus \Sigma_{(6,4,3,2,0,0)}V_6^{\vee} \to  \Sigma_{(6, 3,3,0,0,0)}V_6^{\vee}
\]
is surjective, which is possible if we look at the dimensions ($134750$ vs $28875$). Equivalently, we are looking at the injectivity of the dual map 

\[
\varphi\colon \Sigma_{(6,6,6,3,3,0)}V_6 \to  \Sigma_{(6,6,2,2,2,0)}V_6  \oplus \Sigma_{(6,6,4,3,2,0)}V_6.
\]

We claim that the projection of the above map to the second component is in fact already injective, which will conclude the proof.

We first map $u \in  \Sigma_{(6,6,6,3,3,0)}V_6$
to $ \phi(u)=u\otimes \sigma^t \in \Sigma_{(6,6,6,3,3,0)}V_6 \otimes \Sym^3 V_6$. This map is injective for a general $\sigma^t $, where we are reasoning as in \cite[Appendix B]{kuzmanmar}. Moreover, again by generality of $\sigma$, the map $\phi$ is non-zero on any irreducible component of $\Sigma_{(6,6,6,3,3,0)}V_6 \otimes \Sym^3 V_6$. 

If we decompose $\Sigma_{(6,6,6,3,3,0)}V_6 \otimes \Sym^3 V_6$ in irreducibles factors, then we observe that $\Sigma_{(6,6,4,3,2,0)}V_6$ appears with multiplicity 1. Let us denote by $\pi$ the projection from $\Sigma_{(6,6,6,3,3,0)}V_6 \otimes \Sym^3 V_6$ to $\Sigma_{(6,6,4,3,2,0)}V_6$. Then the map $\varphi$ must be obtained as the composition $\varphi= \pi \circ \phi $, which is injective since $\pi$ is non-zero on the image of $\phi$.

\end{proof}

The proof above solves completely the case $\Sigma_{(5,3,0,0)}\mQ$, corroborating our claim leading to Conjecture~\ref{conjecture:simple+ext 1}. However this proof relies on a sophisticated analysis on the map involved, which cannot be transformed into a general argument for any partition. Moreover, in general the sequences arising from the Koszul complex are longer, and the maps involved are way more complicated to analyze, as already the case $\Sigma_{(4,4,0,0)}\mQ$ shows.


\section{Obstructions and smoothness}\label{section:Kuranishi smooth}
In this final section we speculate on the structure of the Kuranishi space of the vector bundles $\Sigma_\lambda\mQ$.
Let us recall the following result.
\begin{proposition}[\protect{\cite[Theorem~4.8]{MeaOno}}]
    Let $E$ be a stable and modular vector bundle on $X$. Then the DG Lie algebra controlling the infinitesimal deformations of $E$ is formal.

    In particular the Kuranishi space $\operatorname{Def}_E$ of infinitesimal deformations of $E$ is smooth if the Yoneda product 
    \[ \cup\colon\Ext^1(E,E)\times\Ext^1(E,E)\longrightarrow\Ext^2(E,E) \]
    is skew-symmetric.
\end{proposition}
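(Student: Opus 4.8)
The plan is to treat the deformation problem through its governing differential graded Lie algebra (DGLA) and to exploit the hyper-K\"ahler geometry of $X$ to reduce everything to a \emph{quadratic} problem. Concretely, fix a Hermitian--Yang--Mills metric on $E$ and model $\operatorname{RHom}(E,E)$ by the Dolbeault DGLA $L^\bullet=\bigl(A^{0,\bullet}(\mE nd(E)),\bar\partial,[-,-]\bigr)$, whose bracket is the graded commutator of $\mE nd(E)$-valued forms and whose cohomology is $H^\bullet(L)=\Ext^\bullet(E,E)$; the functor $\operatorname{Def}_E$ is the deformation functor associated to $L$. The two assertions are then logically distinct: the first is the \emph{formality} of $L$, and the second is a purely formal consequence of formality together with the hypothesis on the Yoneda pairing. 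First I would isolate formality, then deduce the smoothness criterion.

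For the \textbf{formality} statement, which I expect to be the main obstacle, I would argue as follows. Since $X$ is of type $\operatorname{K3}^{[2]}$ and $E$ is stable and modular, $E$ is projectively hyper-holomorphic in Verbitsky's sense, so the Hermitian--Yang--Mills connection on $\mE nd(E)$ is of type $(1,1)$ with respect to \emph{every} complex structure in the twistor family of the hyper-K\"ahler metric. This abundance of symmetry yields the analogue, for $\mE nd(E)$-valued forms, of the $\partial\bar\partial$-lemma. Feeding it into the Deligne--Griffiths--Morgan--Sullivan formality machine -- the usual zig-zag of DGLA quasi-isomorphisms passing through the subcomplex of $d^c$-closed forms -- one connects $L$ to its cohomology $\bigl(H^\bullet(L),0\bigr)=\bigl(\Ext^\bullet(E,E),0\bigr)$, endowed with the induced bracket and zero differential. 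This is precisely the content of \cite[Theorem~4.8]{MeaOno}, and it is the only step that genuinely uses the hyper-K\"ahler structure; everything else is formal nonsense.

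Granting formality, the \textbf{smoothness criterion} follows by an elementary computation. A formal DGLA has its deformation functor isomorphic to the one attached to $\bigl(\Ext^\bullet(E,E),0,[-,-]\bigr)$; since the differential vanishes, the Maurer--Cartan equation for $x\in\Ext^1(E,E)$ reduces to $\tfrac12[x,x]=0$, and the gauge group is governed by $\Ext^0(E,E)$. As $E$ is stable, hence simple, $\Ext^0(E,E)=\CC\cdot\operatorname{id}$ acts trivially (the identity is central for the bracket), so $\operatorname{Def}_E$ is the germ at the origin of the quadratic cone
\[ \bigl\{\,x\in\Ext^1(E,E)\ :\ [x,x]=0\,\bigr\}. \]
For degree-one classes the graded commutator is $[x,y]=x\cup y+y\cup x$, whence $[x,x]=2\,(x\cup x)$. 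If the Yoneda product $\cup$ is skew-symmetric, then $x\cup x=-x\cup x$, and since we work in characteristic zero this forces $x\cup x=0$ for all $x\in\Ext^1(E,E)$. Thus the quadratic obstruction vanishes identically, the cone fills all of $\Ext^1(E,E)$, and by formality there are no higher obstructions either, so $\operatorname{Def}_E$ is smooth of dimension $\dim\Ext^1(E,E)$.
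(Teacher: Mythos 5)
The paper offers no proof of this statement: the formality claim is quoted verbatim from \cite[Theorem~4.8]{MeaOno}, and the smoothness criterion is dismissed as a direct consequence (with a pointer to the end of \cite[Section~1.2]{MeaOno}). Your reconstruction follows exactly the route of that reference: stable $+$ modular on a manifold of type $\operatorname{K3}^{[2]}$ gives projective hyper-holomorphicity, which yields a $\partial\bar\partial$-type lemma for $\mE nd(E)$-valued $(0,\bullet)$-forms (with the role of $d^c$ played by the $\partial$-operator of an auxiliary complex structure in the twistor family), and the Deligne--Griffiths--Morgan--Sullivan zig-zag then gives formality of the Dolbeault DGLA. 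Your deduction of the smoothness criterion is also correct and is precisely the ``follows directly from formality'' step the paper alludes to: for a formal DGLA the Kuranishi space is the quadratic cone $\{x\in\Ext^1(E,E) : x\cup x=0\}$, simplicity of $E$ makes the gauge action trivial, and skew-symmetry of the Yoneda pairing kills $x\cup x$ in characteristic zero, so the cone is all of $\Ext^1(E,E)$. No gaps.
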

The last claim in the statement above follows directly form the formality claim, see for example the end of \cite[Section~1.2]{MeaOno}.

According to Theorem~\ref{theorem A} and Conjecture~\ref{conjecture:simple+ext 1}, the hypothesis of the proposition are always satisfied for $E=\Sigma_\lambda\mQ$. Moreover, according to Proposition~\ref{prop:W3 in Ext1} we always have an inclusion
\[ \W^3 V_6^\vee\subset\Ext^1(\Sigma_\lambda\mQ,\Sigma_\lambda\mQ). \]

Our first remark is the following.

\begin{lemma}\label{lemma:W2 di W3V6 in Ext2}
    Let $\lambda=(m,t,s,0)$ be a partition. In the Littlewood--Richardson decomposition of $\mE nd(\Sigma_\lambda\mQ)$ we always find the following factors:
    \begin{enumerate}
        \item for any $m\geq2$, $\Sigma_{(m+2,m,m,m-2\mid m,m)}$ with multiplicity at least $1$;
        \item for any $m\geq0$, $\Sigma_{(m,m,m,m\mid m,m)}$ with multiplicity $1$.
    \end{enumerate}
\end{lemma}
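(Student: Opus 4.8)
The plan is to translate both assertions into statements about Littlewood--Richardson coefficients and then recognise the two distinguished summands. Recall from the proof of Proposition~\ref{prop:+ delta 1 non cambia} that, writing $\lambda=(m,t,s,0)$ and setting $\bar\lambda:=(m,m-s,m-t,0)$, one has
\[ \mE nd(\Sigma_\lambda\widetilde{\mQ})=\Sigma_{(m,t,s,0)}\widetilde{\mQ}\otimes\Sigma_{\bar\lambda}\widetilde{\mQ}\otimes\mO_{\Gr(2,6)}(-m), \]
where $\mO(-m)=\Sigma_{(m,m)}\widetilde{\mU}$. Hence a factor $\Sigma_{\rho\mid(m,m)}$ occurs in $\mE nd(\Sigma_\lambda\widetilde{\mQ})$ with exactly the multiplicity $c^{\rho}_{\lambda,\bar\lambda}$ with which $\Sigma_\rho\widetilde{\mQ}$ occurs in the $\widetilde{\mQ}$-product. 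Thus item $(2)$ becomes $c^{(m,m,m,m)}_{\lambda,\bar\lambda}=1$ and item $(1)$ becomes $c^{(m+2,m,m,m-2)}_{\lambda,\bar\lambda}\geq 1$.

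Item $(2)$ is the trace/identity summand and needs no computation. Indeed $\bar\lambda$ is precisely the complement of $\lambda$ in the $4\times m$ rectangle, so $\Sigma_{\bar\lambda}\widetilde{\mQ}=(\Sigma_\lambda\widetilde{\mQ})^\vee\otimes\mO(m)$ and $\Sigma_{(m,m,m,m)}\widetilde{\mQ}=\mO(m)$; equivalently, after the shift (\ref{eqn: + n}), $\Sigma_{(m,m,m,m\mid m,m)}=\mO_X$. Its multiplicity is therefore $\dim\Hom_{\SL(4)}(\Sigma_\lambda,\Sigma_\lambda)=1$ by irreducibility of the Schur module (equivalently, the rectangular Littlewood--Richardson coefficient $c^{(m^4)}_{\lambda,\mu}$ equals $1$ when $\mu=\bar\lambda$ and vanishes otherwise). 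This settles $(2)$.

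For item $(1)$ I would exhibit an explicit Littlewood--Richardson skew tableau of shape $\nu/\lambda$ with $\nu=(m+2,m,m,m-2)$ and content $\bar\lambda=(m,m-s,m-t,0)$, in the spirit of the proof of Proposition~\ref{prop:W3 in Ext1}. In the generic range $m\geq t+2$ the filling of the skew cells of rows $1,\dots,4$ given by
\[
\begin{array}{l}
\text{row }1:\ 1\,1\\
\text{row }2:\ 1^{\,m-t-2}\,2\,2\\
\text{row }3:\ 1^{\,t-s}\,2^{\,m-t-2}\,3\,3\\
\text{row }4:\ 1^{\,s}\,2^{\,t-s}\,3^{\,m-t-2}
\end{array}
\]
is semistandard (the overlapping columns read $1<2<3$, $1<2$, or $2<3$), has content $\bar\lambda$, and its reverse reading word is a lattice word: tracking the running counts $(\#1,\#2,\#3)$ one checks they never violate $\#1\geq\#2\geq\#3$ and terminate at $(m,m-s,m-t)$. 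This produces the factor $\Sigma_{(m+2,m,m,m-2\mid m,m)}$ with multiplicity at least $1$ whenever $m\geq t+2$, uniformly in $t\geq s\geq0$ (blocks of length $0$ simply disappear).

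It remains to treat the boundary regimes $m=t$ and $m=t+1$, where the length-$(m-t-2)$ block degenerates. Here the few available $3$'s must be placed at the foot of columns carrying a $2$, and one verifies the analogous shorter tableau directly. Two structural shortcuts cut this down to a finite check: first, Proposition~\ref{prop:+ delta 1 non cambia} carries the target factor for $(m,t,s,0)$ to that for $(m+1,t,s,0)$, so it suffices to establish $(1)$ at the base value $m=\max(2,t)$; second, the duality of Example~\ref{example:duale} identifies $\mE nd(\Sigma_{(m,t,s,0)}\mQ)$ with $\mE nd(\Sigma_{\bar\lambda}\mQ)$, which pushes many degenerate corners (e.g.\ $\lambda=(m,m,s,0)$ with $s\geq2$, dual to $(m,m-s,0,0)$) back into the generic range, and for $\lambda=(m,m,m,0)$ the claim is already contained in $\mE nd(\Sym^m\mQ)=\bigoplus_{n=0}^m\Sigma_{(2n,n,n,0\mid n,n)}$ (the term $n=2$). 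The only genuine obstacle is the bookkeeping: checking the ballot condition across the three regimes and handling the empty blocks in the few remaining corner families; no idea beyond the explicit filling above is needed.
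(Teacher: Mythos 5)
Your proposal is correct and follows essentially the same route as the paper, which proves this lemma by pointing to the explicit Littlewood--Richardson tableau method of Proposition~\ref{prop:W3 in Ext1}: item $(2)$ is the trace summand (multiplicity $1$ by Schur's lemma, since $\bar\lambda$ is the rectangular complement of $\lambda$), and item $(1)$ is settled by exhibiting an admissible filling of $\nu/\lambda$ with content $\bar\lambda$. Your generic tableau checks out (semistandardness and the ballot condition both verify), and the duality/propagation shortcuts you invoke do reduce the degenerate regimes $m=t$ and $m=t+1$ to a handful of short tableaux of the same kind, so the remaining bookkeeping is genuinely routine.
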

\begin{proof}
The proof is similar to the proof of Proposition~\ref{prop:W3 in Ext1}.



\end{proof}


The cohomology of $\Sigma_{(2m,m,m,0\mid m,m)}$ has been computed in Lemma~\ref{lemma: (2n n n 0 | n n)}; on the other hand the cohomology of $\Sigma_{(m,m,m,m\mid m,m)}$ is just the cohomology of $\mO_X$. Let us then complete the picture by computing the cohomology of $\Sigma_{(m+2,m,m,m-2\mid m,m)}$.

\begin{lemma}\label{lemma: m+2 m m m-2}
    For every $m\geq2$ we have
    \[ \oH^j(X,\Sigma_{(m+2,m,m,m-2\mid m,m)})=\left\{
    \begin{array}{ll}
      0   & \mbox{if } j\neq2\,. \\
      \Sigma_{(2,2,1,1,0,0)}V_6^\vee   & \mbox{if } j=2
    \end{array} \right. \]
\end{lemma}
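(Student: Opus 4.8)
The goal is to compute the cohomology of the homogeneous bundle $\Sigma_{(m+2,m,m,m-2\mid m,m)}$ on $X$. The plan is to exploit the reduction $\Sigma_{(m+2,m,m,m-2\mid m,m)} = \Sigma_{(2,0,0,-2\mid 0,0)}$ afforded by equation (\ref{eqn: + n}), which subtracts $m$ from all six entries. Thus the bundle on $X$ whose cohomology we must compute is independent of $m$, and it suffices to treat a single representative. Concretely I would work with $\mu = (m+2,m,m,m-2\mid m,m)$ and run the machinery of Section~\ref{section:Koszul}: tensor the Koszul resolution (\ref{eqn:Koszul}) with $\Sigma_\mu\widetilde{\mQ}$ and read off $\oH^\ast(X,\Sigma_\mu)$ from the induced spectral sequence (\ref{eqn:spectral sequence}), exactly as in the proof of Lemma~\ref{lemma: (2n n n 0 | n n)}.

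The main computation is a step-by-step application of Borel--Weil--Bott (Section~\ref{section:BBW}) to each graded piece $\Sigma_\mu\widetilde{\mQ}\otimes\W^p\Sym^3\widetilde{\mU}$ for $p=0,1,2,3,4$, using the irreducible decompositions of $\W^p\Sym^3\widetilde{\mU}$ recorded in Section~\ref{section:Koszul}. First I would set $v=(m+2,m,m,m-2,m,m)$ and form $v+\delta = (m+7,m+4,m+3,m,m+1,m)$; since this has repeated entries ($m$ appears twice), the $p=0$ term is acyclic. Then for each $p$ I would add the weight of the relevant $\widetilde{\mU}$-factor to the last two coordinates and check whether $v+\delta$ has repeated entries (giving acyclicity) or compute the $l$-function and the sorted partition $\widetilde{\lambda|\mu}=\operatorname{sort}(v+\delta)-\delta$ otherwise. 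The expectation, matching the stated answer, is that a single term survives: exactly one graded piece (in $\W^2\Sym^3\widetilde{\mU}$, i.e.\ either the $\Sigma_{(5,1)}\widetilde{\mU}$ or $\Sigma_{(3,3)}\widetilde{\mU}$ summand) contributes a nonzero cohomology group in some degree $j_0$, yielding $\Sigma_{(2,2,1,1,0,0)}V_6^\vee$, while all other pieces are acyclic.

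With the cohomology of every graded piece in hand, I would assemble the result through the long exact sequences (\ref{eqn:explicit Koszul}) associated to the splitting of the Koszul complex. Because the nonzero contributions are expected to sit in isolated degrees with no competing terms in adjacent positions, the spectral sequence should degenerate and the surviving contribution lands in $\oH^2(X,\Sigma_\mu)$, forcing all other cohomology groups to vanish and giving $\oH^2 = \Sigma_{(2,2,1,1,0,0)}V_6^\vee$. The only genuine obstacle is bookkeeping: one must verify that the cohomological degree on $\Gr(2,6)$, shifted by the Koszul index $p$ via the rule ``$(p+i)$-th cohomology contributes to $i$-th cohomology of $\mE_\lambda$,'' indeed places the nonzero class in degree $2$ and that no differential in (\ref{eqn:spectral sequence}) can cancel it. Since only one summand is nonzero, there is nothing for a differential to map to or from, so the degeneration is automatic; the bulk of the work is simply the routine Borel--Weil--Bott check that every other graded piece is acyclic.
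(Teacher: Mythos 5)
Your proposal is correct and follows exactly the paper's (unwritten) argument: the paper's proof is simply ``easy application of the Borel--Weil--Bott theorem,'' and your procedure is the intended one. For the record, the single surviving summand is $\Sigma_{(m+2,m,m,m-2\mid m+5,m+1)}$ (the $\Sigma_{(5,1)}\widetilde{\mU}$ piece of $\W^2\Sym^3\widetilde{\mU}$), where $v+\delta=(m+7,m+4,m+3,m,m+6,m+1)$ has $l$-function $4$ and $\operatorname{sort}(v+\delta)-\delta=(m+2,m+2,m+1,m+1,m,m)$, giving $\Sigma_{(2,2,1,1,0,0)}V_6^\vee$ in $\oH^4$ of the Grassmannian and hence in $\oH^{4-2}=\oH^2(X,\Sigma_\mu)$, with every other graded piece acyclic as you predict.
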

\begin{proof}
    Easy application of the Borel--Weil--Bott theorem.
\end{proof}

As a corollary, we get the following result.

\begin{proposition}\label{prop:W^2V_6 in Ext^2}
    For every partition $\lambda=(\lambda_1,\lambda_2,\lambda_3,\lambda_4)$, with $\lambda_1\geq2$, we have that 
    \[ \W^2\left(\W^3 V_6^\vee\right)\subset\Ext^2(\Sigma_\lambda\mQ,\Sigma_\lambda\mQ) \]
    with multiplicity $1$.
\end{proposition}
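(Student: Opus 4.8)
The plan is to assemble the two lemmas that immediately precede the statement, the point being that for this particular inclusion no spectral-sequence subtlety intervenes. First I would invoke the reduction trick of Section~\ref{section:reduction}: since $\mE nd(\Sigma_\lambda\mQ)=\mE nd(\Sigma_{\lambda'}\mQ)$ for $\lambda'=(\lambda_1-\lambda_4,\lambda_2-\lambda_4,\lambda_3-\lambda_4,0)$, the group $\Ext^2$ is unchanged and I may assume $\lambda=(m,t,s,0)$, the hypothesis then reading $m=\lambda_1\geq 2$. Next I would record the $\SL(6)$-decomposition
\[ \W^2\left(\W^3V_6^\vee\right)=\Sigma_{(2,2,1,1,0,0)}V_6^\vee\oplus\CC, \]
already used in the proof of Proposition~\ref{prop:ext210}.

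The conceptual heart is the observation that, unlike the full $\Ext$-algebra computation, here there is nothing to survive. Indeed, on $\Gr(2,6)$ the bundle $\widetilde{\mE}_\lambda$ splits as a direct sum of irreducible homogeneous bundles $\Sigma_\mu$; since restriction to $X$ and taking cohomology both commute with finite direct sums, I would write
\[ \Ext^2(\Sigma_\lambda\mQ,\Sigma_\lambda\mQ)=\oH^2(X,\mE_\lambda)=\bigoplus_\mu\oH^2\bigl(X,(\Sigma_\mu)|_X\bigr)^{\oplus m_\mu}, \]
the sum running over the Littlewood--Richardson factors with their multiplicities. Hence it suffices to locate summands whose degree-$2$ cohomology on $X$ produces the two constituents above.

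This is exactly what the two preceding lemmas supply. By Lemma~\ref{lemma:W2 di W3V6 in Ext2} the factor $\Sigma_{(m,m,m,m\mid m,m)}=\mO_X$ occurs with multiplicity $1$ and the factor $\Sigma_{(m+2,m,m,m-2\mid m,m)}$ occurs with multiplicity at least $1$. By Lemma~\ref{lemma: m+2 m m m-2} one has $\oH^2(X,\Sigma_{(m+2,m,m,m-2\mid m,m)})=\Sigma_{(2,2,1,1,0,0)}V_6^\vee$, while $\oH^2(X,\mO_X)=\CC$ since $h^{0,2}=1$ for a manifold of type $\operatorname{K3}^{[2]}$. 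Feeding these into the displayed direct-sum decomposition yields a summand
\[ \Sigma_{(2,2,1,1,0,0)}V_6^\vee\oplus\CC=\W^2\left(\W^3V_6^\vee\right)\subset\Ext^2(\Sigma_\lambda\mQ,\Sigma_\lambda\mQ). \]
For the multiplicity-$1$ claim I would note that the two contributing factors are precisely those pinned down in Lemma~\ref{lemma:W2 di W3V6 in Ext2}: the trivial constituent $\CC$ comes from the trace summand $\mO_X$, occurring once, and the copy of $\Sigma_{(2,2,1,1,0,0)}V_6^\vee$ from the single cohomology-contributing copy of $\Sigma_{(m+2,m,m,m-2\mid m,m)}$.

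The hard part is not this assembly, which is essentially formal once the two lemmas are granted, but the lemmas themselves. Establishing Lemma~\ref{lemma:W2 di W3V6 in Ext2} requires producing, uniformly in $(m,t,s)$, the admissible Young-tableau labellings that exhibit these two factors in the Littlewood--Richardson decomposition of $\Sigma_{(m,t,s,0)}\mQ\otimes\Sigma_{(m,m-s,m-t,0)}\mQ$, in the style of the proof of Proposition~\ref{prop:W3 in Ext1}; and Lemma~\ref{lemma: m+2 m m m-2} requires a Borel--Weil--Bott check verifying that the Koszul spectral sequence for $(\Sigma_{(m+2,m,m,m-2\mid m,m)})|_X$ degenerates, leaving cohomology only in degree $2$. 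Both are parameter-uniform verifications rather than single computations, and that uniformity is where the genuine work lies.
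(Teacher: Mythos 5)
Your proof is essentially identical to the paper's: it decomposes $\W^2\left(\W^3V_6^\vee\right)=\Sigma_{(2,2,1,1,0,0)}V_6^\vee\oplus\CC$ and reads off the two constituents from Lemma~\ref{lemma:W2 di W3V6 in Ext2} (presence of the factors $\Sigma_{(m+2,m,m,m-2\mid m,m)}$ and $\Sigma_{(m,m,m,m\mid m,m)}=\mO_X$ in the Littlewood--Richardson decomposition) combined with Lemma~\ref{lemma: m+2 m m m-2} and $\oH^2(X,\mO_X)=\CC$. The only wrinkle, which you share with the paper, is that after the reduction trick the first entry of the reduced partition is $\lambda_1-\lambda_4$ rather than $\lambda_1$, so the hypothesis actually needed to invoke item $(1)$ of Lemma~\ref{lemma:W2 di W3V6 in Ext2} is $\lambda_1-\lambda_4\geq2$, not $\lambda_1\geq2$.
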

\begin{proof}
    First of all, let us recall that 
    \[ \W^2\left(\W^3 V_6^{\vee}\right) = \Sigma_{(2,2,1,1,0,0)} V_6^{\vee} \oplus \CC \]
    and that, by Lemma~\ref{lemma: m+2 m m m-2}, $\oH^2(\Sigma_{(m+2,m,m,m-2\mid m,m)})=\Sigma_{(2,2,1,1,0,0)}V_6^\vee$. 

    By Lemma~\ref{lemma:W2 di W3V6 in Ext2} we eventually have
    \[ \oH^2(\Sigma_{(m+2,m,m,m-2\mid m,m)})\oplus\oH^2(\Sigma_{(m,m,m,m\mid m,m)})\subset\oH^2(\mE nd(\Sigma_\lambda\mQ,\Sigma_\lambda\mQ)) \]
    with multiplicity $1$, from which the claim follows.
\end{proof}
Notice that if $\lambda_1=1$, then there are only four cases:
\begin{itemize}
    \item if $\lambda=(1,0,0,0)$, then $\Sigma_\lambda\mQ=\mQ$ is rigid and unobstructed;
    \item if $\lambda=(1,1,0,0)$, then $\Sigma_\lambda\mQ=\W^2\mQ$ has $20$ moduli and $\Ext^2(\W^2\mQ,\W^2\mQ)_0\cong\CC$, so that $\W^2\mQ$ is unobstructed (\cite[Theorem~1.1]{Fat24});
    \item if $\lambda=(1,1,1,0)$, then $\Sigma_\lambda\mQ=\mQ(1)$ is rigid and unobstructed;
    \item if $\lambda=(1,1,1,1)$, then $\Sigma_\lambda\mQ=\mO_X$ is rigid and unobstructed.
\end{itemize}

It is very tempting to speculate that the image of the Yoneda morphism is exactly the factor $\W^2\left(\W^3 V_6^\vee\right)\subset\Ext^2(\Sigma_\lambda\mQ,\Sigma_\lambda\mQ)$, and we do so by setting the following conjecture.

\begin{conjB}
    Let $X\subset\Gr(2,V_6)$ be the Fano variety of lines of a smooth cubic fourfold. For any partition $\lambda=(\lambda_1,\lambda_2,\lambda_3,\lambda_4)$, let us consider the associated vector bundle $\Sigma_\lambda\mQ$ on $X$. Then the Kuranishi space 
    \[ \operatorname{Def}(\Sigma_\lambda\mQ) \]
    of infinitesimal deformations of $\Sigma_\lambda\mQ$ is smooth.
\end{conjB}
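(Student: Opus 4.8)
\emph{Reduction to a skew-symmetry statement.} By Theorem~\ref{theorem A} the bundle $\Sigma_\lambda\mQ$ is modular and, granting the simplicity part of Conjecture~\ref{conjecture:simple+ext 1}, it is also slope stable; as recalled in the introduction, a stable modular bundle on a manifold of type $\operatorname{K3}^{[2]}$ is projectively hyper-holomorphic, so the DG Lie algebra governing $\operatorname{Def}(\Sigma_\lambda\mQ)$ is formal by \cite[Theorem~4.8]{MeaOno}. Consequently the Kuranishi space is, as a germ, the quadratic cone $\{a\in\Ext^1(\Sigma_\lambda\mQ,\Sigma_\lambda\mQ):a\cup a=0\}$, and it is smooth \emph{if and only if} the Yoneda square vanishes identically, i.e.\ the product
\[ \cup\colon\Ext^1(\Sigma_\lambda\mQ,\Sigma_\lambda\mQ)\times\Ext^1(\Sigma_\lambda\mQ,\Sigma_\lambda\mQ)\longrightarrow\Ext^2(\Sigma_\lambda\mQ,\Sigma_\lambda\mQ) \]
is skew-symmetric. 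Thus the whole problem is to show that the symmetric component $\Sym^2\Ext^1\to\Ext^2$ of the cup product is zero.

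\emph{The trace direction.} The composition of $\cup$ with the trace $\operatorname{tr}\colon\Ext^2\to\oH^2(\mO_X)=\CC$ is the pairing $(a,b)\mapsto\operatorname{tr}(a\cup b)$, which satisfies $\operatorname{tr}(a\cup b)=(-1)^{1\cdot1}\operatorname{tr}(b\cup a)=-\operatorname{tr}(b\cup a)$ and is therefore skew-symmetric; in particular $\operatorname{tr}(a\cup a)=0$ for every $a$. Under the identification of Proposition~\ref{prop:W^2V_6 in Ext^2}, the group $\oH^2(\mO_X)$ is exactly the trivial summand $\CC$ of $\W^2(\W^3V_6^\vee)\subset\Ext^2$ (the ``symplectic'' direction, coming from the factor $\mO_X=\Sigma_{(m,m,m,m\mid m,m)}$). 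So the component of $a\cup a$ along this $\CC$ already vanishes, and it remains only to control the trace-free part.

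\emph{The representation-theoretic core.} Assuming the second part of Conjecture~\ref{conjecture:simple+ext 1}, in the rigid cases $\Ext^1=\W^3V_6^\vee$, while in the generic case $\Ext^1=(\W^3V_6^\vee)^{\oplus2}$. The plan is to show that the symmetric square of the cup product lands in an $\SL(6)$-subrepresentation of $\Ext^2$ disjoint from $\Sym^2\Ext^1$. One has
\[ \Sym^2\left(\W^3V_6^\vee\right)=\Sigma_{(2,2,2,0,0,0)}V_6^\vee\oplus\Sigma_{(2,1,1,1,1,0)}V_6^\vee,\qquad \W^2\left(\W^3V_6^\vee\right)=\Sigma_{(2,2,1,1,0,0)}V_6^\vee\oplus\CC, \]
and these two have no common irreducible constituent. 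If one can promote $\cup$ to an equivariant map on the distinguished subspaces $\W^3V_6^\vee\subset\Ext^1$ of Proposition~\ref{prop:W3 in Ext1} and $\W^2(\W^3V_6^\vee)\subset\Ext^2$ of Proposition~\ref{prop:W^2V_6 in Ext^2}, then Schur's lemma forces the symmetric part to vanish. The natural source of such an equivariance is the quaternionic $\mathrm{SU}(2)$-symmetry attached to the projectively hyper-holomorphic structure, under which $\Ext^\ast(\Sigma_\lambda\mQ,\Sigma_\lambda\mQ)$ decomposes and $\cup$ is equivariant; the Yoneda square would then be constrained to map into the weight space corresponding to $\W^2$, never to $\Sym^2$.

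\emph{The main obstacle.} The delicate point is precisely that, as remarked after Conjecture~\ref{conjecture:simple+ext 1}, the group $\Ext^2(\Sigma_\lambda\mQ,\Sigma_\lambda\mQ)$ carries \emph{no} natural $\SL(6)$-action, so one cannot invoke $\SL(6)$-equivariance of $\cup$ directly. That the symmetric target is genuinely present is already visible in Section~\ref{section:Sigma 2 1}: there $\Ext^1=\W^3V_6^\vee$ but $\Ext^2_0\cong\End(\W^3V_6^\vee)\supset\Sym^2(\W^3V_6^\vee)$, so the vanishing of the Yoneda square is a real constraint rather than a formal consequence of the decompositions above. Making the argument rigorous therefore requires either an explicit description of the multiplicative structure of the Koszul/Borel--Weil--Bott spectral sequence (\ref{eqn:spectral sequence}) that computes the cup product on the relevant summands, or a genuinely geometric use of the hyper-holomorphic connection to realize the $\mathrm{SU}(2)$-equivariance; this is the step I expect to be the hardest, and it is exactly what is missing for a proof in general.
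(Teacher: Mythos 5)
This statement is Conjecture~B: the paper does not prove it, and neither does your proposal — you say so yourself in your last paragraph, and that self-assessment is accurate. What you have written is essentially a reconstruction of the evidence the paper itself assembles in Section~\ref{section:Kuranishi smooth}: the reduction via formality (\cite[Theorem~4.8]{MeaOno}) of smoothness of $\operatorname{Def}(\Sigma_\lambda\mQ)$ to skew-symmetry of the Yoneda product on $\Ext^1$, the observation that $\Sym^2(\W^3V_6^\vee)$ and $\W^2(\W^3V_6^\vee)$ have no common irreducible constituent, and the identification (Proposition~\ref{prop:W^2V_6 in Ext^2}) of a distinguished copy of $\W^2(\W^3V_6^\vee)$ inside $\Ext^2(\Sigma_\lambda\mQ,\Sigma_\lambda\mQ)$ into which one would like the Yoneda square to land. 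Your additional remark that the trace component of $a\cup a$ vanishes automatically by graded commutativity of the trace pairing is correct and is a genuine (if small) refinement of the paper's discussion, since it kills the $\CC$-summand of $\W^2(\W^3V_6^\vee)$ unconditionally.

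The concrete gap is the one you name: there is no $\SL(6)$-equivariance of the cup product available (the paper explicitly warns that $\Ext^2(\Sigma_\lambda\mQ,\Sigma_\lambda\mQ)$ does not in general carry an $\SL(6)$-action), so Schur's lemma cannot be invoked to kill the symmetric part, and the example of $\Sigma_{(2,1,0,0)}\mQ$ — where $\Ext^2_0\cong\End(\W^3V_6^\vee)$ visibly contains $\Sym^2(\W^3V_6^\vee)$ — shows the target of the symmetric square is nonzero, so the vanishing is a real analytic/multiplicative statement about the spectral sequence (\ref{eqn:spectral sequence}) or the hyper-holomorphic structure, not a formal consequence of the decompositions. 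Two further caveats: your reduction also quietly assumes the simplicity part of Conjecture~\ref{conjecture:simple+ext 1} (to get stability, hence formality), so even the reduction step is conditional outside the cases settled by Theorem~\ref{theorem C}; and the proposed $\mathrm{SU}(2)$-equivariance route is only a heuristic — no construction is given of an $\mathrm{SU}(2)$-action on the Ext-algebra for which $\cup$ is equivariant and whose isotypic decomposition separates $\Sym^2$ from $\W^2$. As it stands, the proposal is a correct statement of the strategy and of why it is incomplete, not a proof.
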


\appendix

\section{Vector bundles of the form $\Sigma_\mu\mU$}\label{section:U}

In the main part of the manuscript we only considered vector bundles of the form $\Sigma_\lambda\mQ$, but the reader could ask the reason for this choice. This section provide a simple answer to this question.

\begin{proposition}\label{prop:U not modular}
    Let $\lambda=(\lambda_1,\lambda_2,\lambda_3,\lambda_4)$ and $\mu=(\mu_1,\mu_2)$ be two partitions. Then the vector bundle
    \[ \Sigma_\lambda\mQ\otimes\Sigma_\mu\mU \]
    on the Fano variety of lines $X$ is modular if and only if $\mu_1=\mu_2$.
\end{proposition}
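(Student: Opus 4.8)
The plan is to reduce everything to the already-established (non-)modularity of $\mU$ and of $\Sigma_\lambda\mQ$, using two elementary multiplicativity properties of the discriminant. First I would normalise $\mu$. Recall from (\ref{eqn: x n}) that for the rank-$2$ bundle $\mU$ one has $\Sigma_{(\mu_1,\mu_2)}\mU=\Sym^{\mu_1-\mu_2}\mU\otimes(\det\mU)^{\otimes\mu_2}$, and that $\det\mU=\mO_X(-1)$. Hence $\Sigma_\lambda\mQ\otimes\Sigma_\mu\mU=\bigl(\Sigma_\lambda\mQ\otimes\Sym^{n}\mU\bigr)\otimes\mO_X(-\mu_2)$ with $n:=\mu_1-\mu_2\geq0$. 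Since the discriminant is invariant under twisting by a line bundle $L$ — a one-line check from $c_1(F\otimes L)=c_1(F)+\rk(F)c_1(L)$ and $\ch_2(F\otimes L)=\ch_2(F)+c_1(F)c_1(L)+\tfrac{\rk(F)}{2}c_1(L)^2$ — modularity of $\Sigma_\lambda\mQ\otimes\Sigma_\mu\mU$ is equivalent to that of $\Sigma_\lambda\mQ\otimes\Sym^{n}\mU$. This already settles the case $\mu_1=\mu_2$ ($n=0$): there the bundle is a twist of $\Sigma_\lambda\mQ$, which is modular by Theorem~\ref{theorem A}.

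For $n\geq1$ I would use the tensor-product formula $\Delta(E\otimes F)=(\rk F)^2\Delta(E)+(\rk E)^2\Delta(F)$, immediate from $\ch(E\otimes F)=\ch(E)\ch(F)$, to write
\[ \Delta(\Sigma_\lambda\mQ\otimes\Sym^n\mU)=(n+1)^2\,\Delta(\Sigma_\lambda\mQ)+r_\lambda^2\,\Delta(\Sym^n\mU). \]
The first summand lies in $\QQ\,\mathsf{c}_2(X)$ by Theorem~\ref{theorem A}. For the second summand I would compute $\Delta(\Sym^n\mU)$ via the splitting principle: writing $a,b$ for the Chern roots of $\mU$, the roots of $\Sym^n\mU$ are $ia+(n-i)b$, and summing the associated power sums gives
\[ \Delta(\Sym^n\mU)=\frac{n(n+2)(n+1)^2}{12}\,\Delta(\mU), \]
the rank-$2$ analogue of the $\Sym^m$ discriminant formula recorded for rank-$4$ bundles in Section~\ref{section:Sym Q}.

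It then remains to observe that the coefficient $\tfrac{n(n+2)(n+1)^2}{12}$ is strictly positive for $n\geq1$, so $\Delta(\Sym^n\mU)$ is a nonzero rational multiple of $\Delta(\mU)$. By Example~\ref{example:Q modular} we have $\Delta(\mU)=\tfrac32 c_1(\mU)^2-\tfrac12\mathsf{c}_2(X)\notin\QQ\,\mathsf{c}_2(X)$; equivalently $c_1(\mQ)^2=c_1(\mU)^2$ and $\mathsf{c}_2(X)$ are linearly independent in $\oH^4(X,\QQ)$, which one sees directly from the Gram determinant $108\cdot 828-180^2=57024\neq0$ of the intersection form on $\Span\{c_1(\mQ)^2,\mathsf{c}_2(X)\}$ (using $\int_X c_1(\mQ)^4=108$, $\int_X c_1(\mQ)^2\mathsf{c}_2(X)=180$ and $\int_X\mathsf{c}_2(X)^2=828$). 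Since $r_\lambda\neq0$, in the displayed decomposition the $\mathsf{c}_2(X)$-part is harmless while the $\Delta(\mU)$-part survives, whence $\Delta(\Sigma_\lambda\mQ\otimes\Sym^n\mU)\notin\QQ\,\mathsf{c}_2(X)$ and the bundle is not modular. The only genuinely delicate point is this last linear independence — everything else is formal — but it is guaranteed by the numerical computation above and is in any case already embedded in Example~\ref{example:Q modular}.
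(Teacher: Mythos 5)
Your proof is correct and follows essentially the same route as the paper's: reduce to $\Sym^n\mU$ by a line-bundle twist, apply $\Delta(E\otimes F)=(\rk F)^2\Delta(E)+(\rk E)^2\Delta(F)$, show $\Delta(\Sym^n\mU)$ is a positive multiple of $\Delta(\mU)=\tfrac32 c_1(\mQ)^2-\tfrac12\mathsf{c}_2(X)$, and conclude from the non-modularity of $\mU$. The only cosmetic differences are that the paper computes $\ch(\Sym^n\mU)$ via Svrtan's plethysm formula rather than the splitting principle, and that you make explicit the linear independence of $c_1(\mQ)^2$ and $\mathsf{c}_2(X)$ (via the Gram determinant), which the paper leaves implicit.
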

When $\mu_1=\mu_2=k$ we have $\Sigma_\mu\mU=\mO_X(k)$ and the modularity follows from Theorem~\ref{theorem A}. Therefore the proposition is saying that the bundles considered in Theorem~\ref{theorem A} are essentially the only modular vector bundles (among the possible plethysms on a general $X$).

\proof[Proof of Proposition~\ref{prop:U not modular}]

Let us proceed in steps. First of all, we look at the case when $\lambda=(0,0,0,0)$.

\begin{lemma}
    If $\mu=(\mu_1,\mu_2)$ is a partition, then
    \[ \Delta(\Sigma_\mu\mU)=\frac{\rho(\mu)}{2}\rk(\Sigma_\mu\mU)^2\Delta(\mU), \]
    where
    \[ \rho(\mu)=\frac{\mu_1^2+\mu_2^2-2\mu_1\mu_2+2\mu_1-2\mu_2}{6}. \]
    In particular, $\Sigma_\mu\mU$ is not modular.
\end{lemma}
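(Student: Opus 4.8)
The plan is to reduce the statement to a rank-two symmetric-power computation and then to invoke that the discriminant is insensitive to twists by line bundles. Since $\mU$ has rank $2$, every Schur functor collapses: $\Sigma_{(\mu_1,\mu_2)}\mU=\Sym^{\,a}\mU\otimes(\det\mU)^{\otimes\mu_2}$ with $a:=\mu_1-\mu_2$. Because $\mE nd(E\otimes L)\cong\mE nd(E)$ for any line bundle $L$ and $\Delta(E)=-\ch_2(\mE nd(E))$ (see the identity $\ch(\mE nd(F))=(\rk(F)^2,0,-\Delta(F),0,\Xi(F))$), the twist by $(\det\mU)^{\otimes\mu_2}$ leaves the discriminant unchanged. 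Hence $\Delta(\Sigma_\mu\mU)=\Delta(\Sym^{a}\mU)$ and $\rk(\Sigma_\mu\mU)=a+1$, and it suffices to compute $\Delta(\Sym^{a}\mU)$.

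First I would apply the splitting principle. If $x,y$ are the Chern roots of $\mU$, then the Chern roots of $\Sym^{a}\mU$ are $\{\,ix+(a-i)y\,\}_{0\le i\le a}$. Summing these gives $c_1(\Sym^{a}\mU)=\tfrac{a(a+1)}{2}c_1(\mU)$, and summing their squares, using $\sum_{i} i^2=\tfrac{a(a+1)(2a+1)}{6}$, $\sum_i i(a-i)=\tfrac{a(a+1)(a-1)}{6}$, together with $x^2+y^2=c_1(\mU)^2-2c_2(\mU)$ and $xy=c_2(\mU)$, yields $\ch_2(\Sym^{a}\mU)=\tfrac{a(a+1)(2a+1)}{12}c_1(\mU)^2-\tfrac{a(a+1)(a+2)}{6}c_2(\mU)$. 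Substituting into $\Delta=c_1^2-2\rk\,\ch_2$ and collecting terms, I expect to obtain $\Delta(\Sym^{a}\mU)=\tfrac{a(a+1)^2(a+2)}{12}\bigl(4c_2(\mU)-c_1(\mU)^2\bigr)$.

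The last factor is exactly $\Delta(\mU)$: since $\ch_2(\mU)=\tfrac12 c_1(\mU)^2-c_2(\mU)$, one has $\Delta(\mU)=c_1(\mU)^2-4\ch_2(\mU)=4c_2(\mU)-c_1(\mU)^2$. Comparing the prefactor with $\tfrac{\rho(\mu)}{2}(a+1)^2$ and recalling $\rho(\mu)=\tfrac{\mu_1^2+\mu_2^2-2\mu_1\mu_2+2\mu_1-2\mu_2}{6}=\tfrac{a(a+2)}{6}$ gives the claimed identity $\Delta(\Sigma_\mu\mU)=\tfrac{\rho(\mu)}{2}\rk(\Sigma_\mu\mU)^2\,\Delta(\mU)$. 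For the non-modularity one notes that $\rho(\mu)\neq0$ precisely when $\mu_1>\mu_2$, so it is enough to see $\Delta(\mU)\notin\QQ\,\mathsf{c}_2(X)$. By Example~\ref{example:Q modular}, $\Delta(\mU)=\tfrac32 c_1(\mU)^2-\tfrac12\mathsf{c}_2(X)$, and since $c_1(\mU)^2=c_1(\mQ)^2$ it suffices that $c_1(\mQ)^2$ and $\mathsf{c}_2(X)$ be linearly independent in $\oH^4(X,\QQ)$; this follows from the recorded intersection numbers, for if $\mathsf{c}_2(X)=\eta\,c_1(\mQ)^2$ then $\int_X c_1(\mQ)^2\mathsf{c}_2(X)=180$ and $\int_X c_1(\mQ)^4=108$ would force $\eta=\tfrac53$, contradicting $\int_X\mathsf{c}_2(X)^2=828\neq(\tfrac53)^2\cdot108$. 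The only mildly technical point is the power-sum bookkeeping for $\ch_2(\Sym^{a}\mU)$; the rest is formal.
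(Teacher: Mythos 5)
Your proof is correct and follows essentially the same route as the paper: reduce to $\Sym^{\mu_1-\mu_2}\mU$ via the determinant twist, compute $c_1$ and $\ch_2$ of the symmetric power, and conclude non-modularity from $\Delta(\mU)=\tfrac32 c_1(\mQ)^2-\tfrac12\mathsf{c}_2(X)$. The only differences are cosmetic: you derive $\ch_2(\Sym^a\mU)$ by the splitting principle where the paper cites the plethysm formula of \cite{Dragutin}, and you spell out the linear independence of $c_1(\mQ)^2$ and $\mathsf{c}_2(X)$ (via the intersection numbers $108$, $180$, $828$), which the paper leaves as ``easy to check''.
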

\begin{proof}
    Using the same reduction trick as in Section~\ref{section:reduction} we have
    \[ \Sigma_\mu\mU=\Sym^{\mu_1-\mu_2}\mU\otimes\mO_X(\mu_2). \]
    Since $\Delta(\Sym^{\mu_1-\mu_2}\mU\otimes\mO_X(\mu_2))=\Delta(\Sym^{\mu_1-\mu_2}\mU)$, we can reduce the proof to the symmetric case.

    As in Section~\ref{section:Sym Q}, we can use \cite[Theorem~4.8]{Dragutin} to compute the first two Chern classes of $\Sym^m\mU$:
    \begin{align*}\label{eqn:ch di Sym U}
        c_1(\Sym^m\mU)= & \frac{m}{2}(m+1)c_1(\mU) \\
        \ch_2(\Sym^m\mU)= & \frac{m(m-1)}{12}(m+1)c_1(\mU)^2+\frac{m(m+2)}{6}(m+1)\ch_2(\mU).
    \end{align*}
    It is then easy to see that
    \begin{equation*}
        \Delta(\Sym^m\mU)=\frac{m(m+2)}{12}(m+1)^2\Delta(\mU).
    \end{equation*}
    Putting 
    \begin{equation*} 
    \rho(\mu):=\frac{(\mu_1-\mu_2)(\mu_1-\mu_2+2)}{6} 
    \end{equation*}
    proves the first statement of the proof.
    
    Finally, as it is easy to check, one has $\Delta(\mU)=\frac{3}{2}c_1(\mQ)^2-\frac{1}{2}\mathsf{c}_2(X)$, from which it follows that $\Sigma_{\mu}\mU$ is not modular, unless $\mu=(\mu_1,\mu_1)$.
\end{proof}

To conclude the proof, we use the following known result.

\begin{lemma}
    Let $F$ and $G$ be two torsion free sheaves on a smooth and projective variety $X$. Then
    \[ \Delta(E\otimes F)=\rk(F)^2\Delta(E)+\rk(E)^2\Delta(F). \]
\end{lemma}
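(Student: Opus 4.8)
The plan is to reduce everything to the multiplicativity of the Chern character, $\ch(E\otimes F)=\ch(E)\ch(F)$, which holds in $\oH^\ast(X,\QQ)$ for arbitrary coherent sheaves; in particular the torsion-free hypothesis plays no role in the computation and is only there to make $\Delta$ the natural invariant to track. Writing $r=\rk(E)$ and $s=\rk(F)$, I would first expand the product $\ch(E)\ch(F)$ and read off its components in cohomological degrees $0$, $1$ and $2$.

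This yields the three identities
\[ \rk(E\otimes F)=rs,\qquad c_1(E\otimes F)=s\,c_1(E)+r\,c_1(F), \]
\[ \ch_2(E\otimes F)=s\,\ch_2(E)+c_1(E)\,c_1(F)+r\,\ch_2(F). \]
The first two are the standard formulas for rank and first Chern class of a tensor product, and the third is simply the degree-two part of $\ch(E)\ch(F)$. These are the only inputs needed.

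Next I would substitute into the defining formula $\Delta(\mathcal{G})=c_1(\mathcal{G})^2-2\rk(\mathcal{G})\ch_2(\mathcal{G})$ applied to $\mathcal{G}=E\otimes F$. Expanding gives
\[ c_1(E\otimes F)^2=s^2c_1(E)^2+2rs\,c_1(E)c_1(F)+r^2c_1(F)^2 \]
and
\[ 2\,\rk(E\otimes F)\,\ch_2(E\otimes F)=2rs^2\ch_2(E)+2rs\,c_1(E)c_1(F)+2r^2s\,\ch_2(F). \]
The single point worth isolating is that the mixed term $2rs\,c_1(E)c_1(F)$ occurs in both expansions and therefore cancels in the difference. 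After this cancellation the surviving terms regroup as $s^2\bigl(c_1(E)^2-2r\,\ch_2(E)\bigr)+r^2\bigl(c_1(F)^2-2s\,\ch_2(F)\bigr)$, which is exactly $\rk(F)^2\Delta(E)+\rk(E)^2\Delta(F)$.

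There is no genuine obstacle here: the statement is a formal consequence of the ring structure of the Chern character, and the entire content is the bookkeeping cancellation of the cross term. The only care required is to keep the two ranks $r$ and $s$ straight when matching coefficients, since the discriminant of $E$ is weighted by $\rk(F)^2$ and vice versa.
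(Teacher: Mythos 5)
Your computation is correct and is exactly the ``easy computation'' the paper leaves implicit: multiplicativity of the Chern character, extraction of the degree $0$, $1$, $2$ components, and cancellation of the cross term $2rs\,c_1(E)c_1(F)$ in $c_1(E\otimes F)^2-2\rk(E\otimes F)\ch_2(E\otimes F)$. Nothing further is needed.
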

\begin{proof}
    This is an easy computation.
\end{proof}

Putting all together, we eventually get
\begin{align*}
    \Delta(\Sigma_\lambda\mQ\otimes\Sigma_\mu\mU) & =r_\mu r_\lambda\left( \frac{\delta(\lambda)}{4}\mathsf{c}_2(X)+\frac{\rho(\mu)}{2}\left(\frac{3}{2}c_1(\mQ)^2-\frac{1}{2}\mathsf{c}_2(X)\right) \right) \\
    & = r_\mu r_\lambda\frac{3\rho(\mu)}{2}c_1(Q)^2 + \frac{r_\mu r_\lambda}{4}\left( \delta(\lambda)+\rho(\mu)\right) \mathsf{c}_2(X)
\end{align*}
which is not modular unless $\mu=(\mu_1,\mu_1)$.

\endproof

\section{Combinatorial remarks and questions}\label{section:congetture verdure}

In this section we work on the Grassmannian $\Gr(2,6)$ and we consider the Schur functors $\Sigma_\lambda\tmQ$ of the tautological quotient bundle. We are interested in uderstanding the Littlewood--Richardson decomposition of
\[ \mE nd(\Sigma_\lambda\tmQ). \]
In particular our starting point is Proposition~\ref{prop:+ delta 1 non cambia}, which claims that there is an abstract inclusion
\[ \mE nd(\Sigma_\lambda\tmQ)\subset \mE nd(\Sigma_{\lambda+\delta_1}\tmQ), \]
where $\delta_1=(1,0,0,0)$ is the first fundamental weight. In particular there exists a finite set of partitions $\mathsf{K}_\lambda$ such that 
\begin{equation} \label{eqn:defK} 
\mE nd(\Sigma_{\lambda+\delta_1}\tmQ)=\mE nd(\Sigma_\lambda\tmQ)\oplus\bigoplus_{\mu\in\mathsf{K}_\lambda}\Sigma_\mu. 
\end{equation}
Moreover, in Section~\ref{section:Evidences} we noticed that the cardinality of $\mathsf{K}_\lambda$ becomes constant as the first entry $\lambda_1$ of $\lambda$ becomes bigger.

The aim of this appendix is to formalise this remark through a series a conjectures.
\medskip

From now on, as usual, without loss of generality we suppose that $\lambda=(m,t,s,0)$.

Starting from $\lambda$ we can repeatedly apply Proposition~\ref{prop:+ delta 1 non cambia} and hence consider the sequence of partitions $\{\lambda+n\delta_1\}_{n\in\mathbb{N}}$. To avoid repetitions, when considering a sequence we always think it as starting from $(t,t,s,0)$. This is implicitly what we have done in Section~\ref{section:Sym Q} and Section~\ref{section:Evidences} when considering at once partitions of the form $(m,0,0,0)$ or $(m,2,1,0)$ etc...
\medskip

It turns out that to better express the phenomena we observed, it is more convenient to write a partition $\lambda$ as a combination of the fundamental weights $\delta_1=(1,0,0,0)$, $\delta_2=(1,1,0,0)$ and $\delta_3=(1,1,1,0)$. (We do not consider $\delta_4$ since we only consider partitions whose last entry vanishes.) In other words, we write
\[
\lambda=a \delta_1+b \delta_2+c \delta_3,
\]
where \[
m=a+b+c,\qquad t=b+c\qquad\mbox{ and }\qquad s=c.
\]

With this notations it is easier, for example, to express the sequence of partitions associated to Proposition~\ref{prop:+ delta 1 non cambia}: once two positive integers $b$ and $c$ are fixed, we start with the partition $\lambda=b\delta_2+c\delta_3$ and then we repeatedly add $\delta_1$ to it.
\medskip

Our first remark is the following conjecture about the cardinality (counted with multiplicity) of the set $\mathsf{K}_\lambda$ defined in (\ref{eqn:defK}).

\begin{conjecture}\label{conjecture:1B}
    Fix two positive integers $b$ and $c$. Then the cardinality (with multiplicity) of $\mathsf{K}_{a\delta_1+b\delta_2+c\delta_3}$ is constant for every $a\geq b+c-1$.
\end{conjecture}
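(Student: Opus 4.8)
The quantity to control is the total number $N(\lambda):=\sum_\mu m_\mu$ of irreducible summands, counted with multiplicity, of $\mE nd(\Sigma_\lambda\tmQ)=\bigoplus_\mu(\Sigma_\mu)^{\oplus m_\mu}$. By the defining relation (\ref{eqn:defK}) together with the multiplicity-preserving injection of summands produced in the proof of Proposition~\ref{prop:+ delta 1 non cambia}, the cardinality with multiplicity of $\mathsf{K}_\lambda$ is exactly the first difference
\[ \#\mathsf{K}_\lambda=N(\lambda+\delta_1)-N(\lambda), \]
so Conjecture~\ref{conjecture:1B} is equivalent to the statement that, for fixed $b,c$, the function $a\mapsto N(a\delta_1+b\delta_2+c\delta_3)$ is \emph{eventually affine-linear} in $a$, with onset at $a=b+c-1$. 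First I would record, using the identity $\mE nd(\Sigma_\lambda\tmQ)=\Sigma_\alpha\tmQ\otimes\Sigma_\beta\tmQ\otimes\mO_X(-m)$ from the proof of Proposition~\ref{prop:+ delta 1 non cambia}, that the $\SL(4)$-content is $\Sigma_\alpha V_4\otimes\Sigma_\beta V_4$ with $\alpha=(m,t,s,0)$ and $\beta=(m,m-s,m-t,0)$. In the fundamental-weight coordinates $\delta_1,\delta_2,\delta_3$ these read $\alpha=a\delta_1+b\delta_2+c\delta_3$ and its dual $\beta=c\delta_1+b\delta_2+a\delta_3$, so by the Littlewood--Richardson rule (\ref{eqn:LR}),
\[ N(\lambda)=\sum_\nu c^{\,\nu}_{\alpha,\beta}, \]
and increasing $a$ by one slides $\alpha$ along the $\delta_1$-ray and $\beta$ along the $\delta_3$-ray.

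\textbf{The polytope model.} I would then realise $N(\lambda)$ as the number of lattice points of a single rational polytope. In the Knutson--Tao hive model, fix two boundaries equal to $\alpha$ and $\beta$ and leave the third boundary $\nu$ free (subject only to being a partition); summing $c^\nu_{\alpha,\beta}$ over $\nu$ glues the individual hive polytopes into one polytope $P_a$, and $N(\lambda)=\#(P_a\cap\ZZ^{D})$, where $D$ is the number of interior hive vertices. The two fixed boundaries have consecutive edge-lengths $(a,b,c)$ and $(c,b,a)$, so as $a$ grows the only unbounded edge-lengths are the two equal lengths $a$: the polytope $P_a$ elongates along a single one-parameter family while staying bounded transversally. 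By the Ehrhart theory of such parametric (``moving'') polytopes, $\#(P_a\cap\ZZ^{D})$ is, for $a$ large, a quasi-polynomial in $a$, and the one-dimensional elongation forces its degree to be $1$; the $\Sym^m\mQ$ computation, where $N=m+1$ (see (\ref{eqn:LR for Sym})), is the model case. Its leading coefficient is the normalised elongation length and is automatically period $1$, so the substantive point is that the degree-$0$ term be non-periodic as well, whence $N(a)$ is an honest affine function and its first difference stabilises.

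\textbf{Threshold and multiplicities.} The exact onset $a\ge b+c-1$ (equivalently $m\ge 2t-1$) I would extract from an explicit stabilisation of the fillings refining Proposition~\ref{prop:+ delta 1 non cambia}. Using Remark~\ref{remark:tutti 1} (the first row of every admissible filling carries only $1$'s) and the column-strictness of Remark~\ref{rmk:strictly increasing on colomns}, one shows that once $m\ge 2t-1$ the first row is long enough that each admissible filling splits as a fixed ``core'' pattern depending only on $b,c$, followed by a block of $1$'s, $2$'s and $3$'s that may be slid freely to the right. Each unit increase of $m$ then admits a constant number of new extensions, and the injection $\mathcal L_\mu\mapsto\mathcal L_{\mu+1}$ of Proposition~\ref{prop:+ delta 1 non cambia} becomes a bijection onto all but a fixed set of fillings; this simultaneously pins the threshold and rules out the quasi-periodicity left open above. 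Multiplicities require no separate treatment, since distinct Littlewood--Richardson fillings are distinct lattice points of $P_a$.

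\textbf{Main obstacle.} The genuinely hard steps are the last two: establishing \emph{period one} for the Ehrhart quasi-polynomial of $P_a$ (no hidden periodicity in the degree-$0$ coefficient), and proving that stabilisation begins \emph{exactly} at $a=b+c-1$ rather than merely for $a\gg0$. The former needs a clean integrality statement for the thin hive polytopes $P_a$; the latter needs the explicit ``core plus slidable block'' normal form and a verification that no genuinely new filling type can arise once $m\ge 2t-1$. Both are elementary in any fixed case, as the worked sections for $t,s\le 3$ illustrate, but delicate to make uniform in $b,c$. The same scheme would apply on $\Gr(k,N)$ with $4$ replaced by $N$, the threshold then governed by the analogous edge-length inequalities.
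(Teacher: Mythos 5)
You should first be aware that the paper does not prove this statement: Conjecture~\ref{conjecture:1B} is posed in Appendix~\ref{section:congetture verdure} precisely as an open question (``a general proof is lacking, so that we leave them as open questions''), and the only support offered is the case-by-case verification for $t,s\le 3$ recorded in Table~\ref{table:values for k} and carried out in Section~\ref{section:Sym Q} and Section~\ref{section:Evidences}. So there is no proof in the paper to compare yours against; the question is whether your proposal closes the gap, and it does not.

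Your reformulation is correct and useful: by (\ref{eqn:defK}) and the multiplicity-preserving injection of Proposition~\ref{prop:+ delta 1 non cambia}, the cardinality with multiplicity of $\mathsf{K}_\lambda$ equals the first difference $N(\lambda+\delta_1)-N(\lambda)$ of the total number of irreducible summands, and the identification $\alpha=a\delta_1+b\delta_2+c\delta_3$, $\beta=c\delta_1+b\delta_2+a\delta_3$ is right, so the conjecture is equivalent to $a\mapsto N(a)$ being affine-linear for $a\ge b+c-1$. The hive-polytope/parametric-Ehrhart strategy is a reasonable way to attack this. But the two steps you defer at the end are not technical loose ends --- they \emph{are} the conjecture. (i) Eventual quasi-polynomiality of $\#(P_a\cap\ZZ^D)$ is soft and proves nothing here: if the quasi-polynomial had period $p>1$, the first difference would be eventually \emph{periodic} rather than constant, so the whole content lies in the period-one and degree-one claims, for which you give only the heuristic that $P_a$ ``elongates along a single one-parameter family while staying bounded transversally''; this needs an actual description of the recession behaviour of the hive polytope with boundaries $\alpha,\beta$ sliding along the $\delta_1$- and $\delta_3$-rays, which you do not supply. (ii) Even granting eventual linearity, Ehrhart theory only gives it for $a\gg0$, whereas the conjecture asserts the exact onset $a=b+c-1$; your ``core plus slidable block'' normal form for the Littlewood--Richardson fillings is asserted, not constructed, and making it uniform in $b,c$ is exactly the case analysis the paper already performs for small $b,c$ without finding a general argument. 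As written, the proposal is a plausible research plan rather than a proof, and you acknowledge as much in your final paragraph.
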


This conjecture determines the threshold after which the number of ``new" partitions needed in the Littlewood--Richardson decomposition of $\mE nd(\Sigma_\lambda\tmQ)$ is constant.

This motivates the following definition.

\begin{definition}
    For any positive integers $b$ and $c$, we define the natural number $\mathsf{k}_{b,c}$ as the cardinality of $\mathsf{K}_{b+c-1\delta_1+b\delta_2+c\delta_3}$,  i.e.
    \[ \mathsf{k}_{b,c}=|\mathsf{K}_{b+c-1\delta_1+b\delta_2+c\delta_3}|. \]
\end{definition}

Before continuing, let us pause on three examples.

\begin{example}
    Let us point out three explicit examples.
    \begin{enumerate}
        \item Let $\lambda=(m,0,0,0)=m\delta_1$. We saw in Section~\ref{section:Sym Q} that for any $m\geq1$,
        \[ \mE nd(\Sym^{m+1}\widetilde{\mQ})=\mE nd(\Sym^{m}\widetilde{\mQ})\oplus\Sigma_{(2m,m,m,0\,| m,m)}. \]
        Since in this case we have $b=c=0$, Conjecture~\ref{conjecture:1B} holds and $\mathsf{k}_{0,0}=1$.\\
        
        \item Let $\lambda=(m,1,0)=(m-1)\delta_1+\delta_2$, so that $b=1$ and $c=0$. 
        The case $m=1$ was described in Example~\ref{example:LR di End(m,1,0,0)} and we have
        \[ \mE nd\left(\Sigma_{(1,1,0,0)}\mQ\right)=\Sigma_{(2,2,0,0\mid 1,1)} \oplus \Sigma_{(2,1,1,0\mid 1,1)} \oplus \Sigma_{(1,1,1,1\mid 1,1)}. \]
    
        Moreover, we saw in Section~\ref{section:Sigma 2 1} that for $m=2$ we have

        \begin{align*}
        \mE nd(\Sigma_{(2,1,0,0)}\mQ)=& \mE nd\left(\Sigma_{(1,1,0,0)}\mQ\right) \oplus \Sigma_{(4,3,1,0\mid 2,2)} \oplus \Sigma_{(4,2,2,0\mid 2,2)}\oplus \\  &\Sigma_{(4,2,1,1\mid 2,2)}\oplus\Sigma_{(3,3,2,0\mid 2,2)} \oplus \Sigma_{(3,2,2,1\mid 2,2)}
      \end{align*}

       We can easily reproduce similar computations for $m=3$, which gives
\begin{align*}
        \mE nd(\Sigma_{(3,1,0,0)}\mQ)=& \mE nd\left(\Sigma_{(2,1,0,0)}\mQ\right) \oplus  \Sigma_{(6,4,2,0\mid 3,3)} \oplus \Sigma_{(6,3,3,0\mid 3,3)}\oplus \\  &\Sigma_{(6,3,2,1\mid 3,3)}\oplus\Sigma_{(5,4,3,0\mid 3,3)} \oplus \Sigma_{(5,3,3,1\mid 3,3)}
      \end{align*}

In each case we add $5$ more partitions. In fact, Conjecture~\ref{conjecture:1B} holds by Proposition~\ref{prop:LR di End(m,1,0,0)} and $\mathsf{k}_{1,0}=5$.


\item In both the two previous examples, we had $b+c-1\leq 0$. To have a more interesting example, let us consider the sequence $(m,2,0,0)=(m-2)\delta_1+2\delta_2$. In Example~\ref{example: LR di End (m,2,0,0)} we saw that $\mE nd(\Sigma_{(2,2,0,0)}\mQ)$ has 6 irreducible factors, while $\mE nd(\Sigma_{(3,2,0,0)}\mQ)$ has 18 irreducible factors: we need to add $12$ partitions. Similarly we can see that $\mE nd(\Sigma_{(4,2,0,0)}\mQ)$ has 32 components and $\mE nd(\Sigma_{(5,2,0,0)}\mQ)$ has 46 components: in both these cases we need to add $14$ partitions. In fact  Conjecture~\ref{conjecture:1B} holds by Proposition~\ref{prop:LR di End(m,2,0,0)} and $\mathsf{k}_{2,0}=14$.

    \end{enumerate}
\end{example}

In Section~\ref{section:Sym Q} and Sections from \ref{section:LR for (m,1,0,0)} to \ref{section:LR for (m,3,3,0)} we verify Conjecture~\ref{conjecture:1B}, and compute the numbers $\mathsf{k}_{b,c}$, in several cases that we list in Table~\ref{table:values for k}.

\FloatBarrier
\begin{table}[h!bt]
\caption{Values of $\mathsf{k}_{b,c}$}
\label{table:values for k}
\begin{tabular}{ccc} 
\toprule
$\lambda$ & $(b,c)$ & $\mathsf{k}_{b,c}$   \\
					\midrule
   $(m,0,0,0)$ & $(0,0)$ & 1 \\  
   $(m,1,0,0)$ & $(1,0)$ & 5 \\	
   $(m,1,1,0)$ & $(0,1)$ & 5 \\
   $(m,2,0,0)$ & $(2,0)$ & 14 \\
   $(m,2,1,0)$ & $(1,1)$ & 26 \\
   $(m,2,2,0)$ & $(0,2)$ & 14 \\
   $(m,3,0,0)$ & $(3,0)$ & 30 \\
   $(m,3,1,0)$ & $(2,1)$ & 71 \\
   $(m,3,2,0)$ & $(1,2)$ & 71 \\
   $(m,3,3,0)$ & $(0,3)$ & 30 \\
					\bottomrule
				\end{tabular}
\end{table}

One peculiar feature appearing from Table~\ref{table:values for k} is that the numbers $\mathsf{k}_{b,c}$ are symmetric in $b$ and $c$. We observed the same symmetry in many more cases, which lead us to state the following conjecture.

\begin{conjecture}\label{conjecture:symmetry}
    For any positive integer $b$ and $c$ we have $\mathsf{k}_{b,c}=\mathsf{k}_{c,b}$, i.e.\ the numbers $\mathsf{k}_{b,c}$ are symmetric in $b$ and $c$.
\end{conjecture}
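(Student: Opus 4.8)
The plan is to establish the symmetry $\mathsf{k}_{b,c}=\mathsf{k}_{c,b}$ by exploiting the duality relation of Example~\ref{example:duale}, which lies behind the observation already recorded after Theorem~\ref{thm:garden of happiness}. First I would recall that for $\lambda=(m,t,s,0)$ we have $\Sigma_\lambda\mQ=\Sigma_{\lambda^\circ}\mQ^\vee\otimes\mO_X(-m)$ with $\lambda^\circ=(m,m-s,m-t,0)$, so that $\mE nd(\Sigma_\lambda\mQ)\cong\mE nd(\Sigma_{\lambda^\circ}\mQ)$ as abstract direct sums of Schur functors. Writing $\lambda=a\delta_1+b\delta_2+c\delta_3$ means $m=a+b+c$, $t=b+c$, $s=c$; a direct computation of $\lambda^\circ$ in these coordinates gives $m-s=a+b$ and $m-t=a$, hence $\lambda^\circ=a\delta_1+c\delta_2+b\delta_3$. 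In other words, the duality-plus-twist operation exchanges the roles of $b$ and $c$ while fixing $a$.

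The key step is then to show that this isomorphism $\mE nd(\Sigma_\lambda\mQ)\cong\mE nd(\Sigma_{\lambda^\circ}\mQ)$ is compatible with the recursive structure coming from Proposition~\ref{prop:+ delta 1 non cambia}, i.e.\ with adding $\delta_1$. Since adding $\delta_1$ increases $a$ by one and leaves $b,c$ untouched, and since the duality sends $a\delta_1+b\delta_2+c\delta_3$ to $a\delta_1+c\delta_2+b\delta_3$, the two operations commute: $(\lambda+\delta_1)^\circ=\lambda^\circ+\delta_1$. I would verify this bijection at the level of the complementary sets $\mathsf{K}_\lambda$ defined in (\ref{eqn:defK}), namely that the assignment $\mu\mapsto\mu^\circ$ (the analogous duality on the $6$-tuples $\mu=(\mu_1,\mu_2,\mu_3,\mu_4\mid m,m)$, sending $\Sigma_\mu$ to its dual factor) restricts to a multiplicity-preserving bijection
\[ \mathsf{K}_{a\delta_1+b\delta_2+c\delta_3}\;\xrightarrow{\ \sim\ }\;\mathsf{K}_{a\delta_1+c\delta_2+b\delta_3}. \]
This follows because $\mE nd(\Sigma_{\lambda+\delta_1}\mQ)=\mE nd(\Sigma_\lambda\mQ)\oplus\bigoplus_{\mu\in\mathsf{K}_\lambda}\Sigma_\mu$ and the dual of this decomposition is precisely the corresponding decomposition for $\lambda^\circ$, with the summand $\mE nd(\Sigma_\lambda\mQ)$ going to $\mE nd(\Sigma_{\lambda^\circ}\mQ)$ and the new factors going to the new factors. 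Taking cardinalities with multiplicity at the threshold $a=b+c-1$ (which is symmetric in $b$ and $c$) yields $\mathsf{k}_{b,c}=\mathsf{k}_{c,b}$.

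The main obstacle, and the only genuinely nontrivial point, is to justify that the bijection $\mu\mapsto\mu^\circ$ really does preserve multiplicities and carries $\mathsf{K}_\lambda$ to $\mathsf{K}_{\lambda^\circ}$ rather than mixing the ``old'' summand $\mE nd(\Sigma_\lambda\mQ)$ with the new factors. Concretely one must check that under duality the canonical inclusion $\mE nd(\Sigma_\lambda\mQ)\subset\mE nd(\Sigma_{\lambda+\delta_1}\mQ)$ of Proposition~\ref{prop:+ delta 1 non cambia} is sent to the analogous inclusion for $\lambda^\circ$; equivalently, the labelling-lifting construction in the proof of Proposition~\ref{prop:+ delta 1 non cambia} is intertwined with the Littlewood--Richardson duality $m^{\nu}_{\lambda',\lambda}=m^{\nu^\vee}_{\lambda'^\vee,\lambda^\vee}$ for Schur functors. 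This requires a careful but purely combinatorial bookkeeping of how the dual of a $6$-tuple $\mu$ (after the fixed twist by $\mO_X(-m)$) interacts with the ``$+1$'' shift $\mu\mapsto\mu+1=\mu$ of (\ref{eqn: + n}). Once this compatibility is in hand, Conjecture~\ref{conjecture:symmetry} reduces to the already-established duality $\mE nd(\Sigma_\lambda\mQ)\cong\mE nd(\Sigma_{\lambda^\circ}\mQ)$, and the symmetry $\mathsf{k}_{b,c}=\mathsf{k}_{c,b}$ follows; the numerical coincidences in Table~\ref{table:values for k} would then be explained rather than merely observed.
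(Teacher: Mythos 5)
The statement you are trying to prove is left as an open conjecture in the paper (the appendix explicitly says a general proof is lacking), so there is no proof of record to compare against; your proposal would be genuinely new if it worked. Unfortunately it does not, because the central computation is wrong. Writing $\lambda=a\delta_1+b\delta_2+c\delta_3$, i.e.\ $\lambda=(a+b+c,\,b+c,\,c,\,0)$, the dual partition of Example~\ref{example:duale} is $\lambda^\circ=(m,m-s,m-t,0)=(a+b+c,\,a+b,\,a,\,0)$, whose gap vector is $(c,b,a)$; hence $\lambda^\circ=c\delta_1+b\delta_2+a\delta_3$, not $a\delta_1+c\delta_2+b\delta_3$ as you claim. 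Duality reverses the gap sequence, so it exchanges $a$ and $c$ and \emph{fixes} $b$ --- it does not exchange $b$ and $c$. A concrete check: the dual of $(2,1,0,0)$ (which has $(a,b,c)=(1,1,0)$) is $(2,2,1,0)$ (which has $(a,b,c)=(0,1,1)$), exactly as used in Section~\ref{section:LR for (m,2,1,0)}, and not $(2,1,1,0)$ as your formula would predict.

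Because of this, the rest of the argument collapses. Adding $\delta_1$ increases $a$, so under duality it corresponds to increasing $c$ on the dual side, i.e.\ $(\lambda+\delta_1)^\circ=\lambda^\circ+\delta_3\neq\lambda^\circ+\delta_1$: the duality intertwines the $\delta_1$-recursion for the family with parameters $(b,c)$ with a $\delta_3$-recursion for the family with reversed gaps, which is precisely the phenomenon alluded to in the closing remark of Appendix~\ref{section:congetture verdure} about a ``$\delta_3$-version'' of Conjecture~\ref{conjecture:1B}. What duality actually gives is the (already recorded) fact that the two families $(m,t,s,0)$ and $(m,m-s,m-t,0)$ have isomorphic endomorphism bundles; it gives no relation between $\mathsf{k}_{b,c}$ and $\mathsf{k}_{c,b}$, since these count increments along two different one-parameter directions. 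If you want to pursue a duality-type proof, you would need an additional symmetry that exchanges $b$ and $c$ while preserving the $\delta_1$-direction (for instance something induced by the outer automorphism $V_4\mapsto V_4^\vee$ of $\SL(4)$ applied to only one tensor factor of $\mE nd$), and it is not clear such an operation acts on the decompositions in the required way; this is presumably why the statement is left as a conjecture.
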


With the help of computer algebra we computed the numbers $\mathsf{k}_{b,c}$ for approximately one hundred values of $b$ and $c$. What we observed is that (beside confirming Conjectures~\ref{conjecture:1B} and~\ref{conjecture:symmetry}) these values are not casual, but seem to be perfectly interpolated by a polynomial of degree $5$. 

We state in the following final conjecture the shape of this polynomial.

\begin{conjecture}
    We have
    \[ \mathsf{k}_{b,c}=\left\{
    \begin{array}{ll}
    f(b,c) & \mbox{if } b\geq c \\
    f(c,b) & \mbox{if } b< c
    \end{array}\right.
    \]
    where
    \begin{align*}
        f(b,c)= & \frac{1}{60}\left( 60+134c+90c^2+15c^3+c^5 \right)+\frac{1}{12}\left( 26+62c+45c^2+8c^3-c^4 \right)b\, +\\
        & \frac{1}{2}\left( 3+7c+5c^2+c^3 \right)b^2 +\frac{(c+1)^2}{3}b^3.
    \end{align*}
\end{conjecture}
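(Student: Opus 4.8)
The plan is to translate the statement into a purely combinatorial lattice‑point count and then exploit the polytopal (vector partition function) nature of Littlewood--Richardson multiplicities. Recall from Example~\ref{example:duale} that, for $\lambda=(m,t,s,0)$,
\[ \mE nd(\Sigma_\lambda\tmQ)=\Sigma_{(m,t,s,0)}\tmQ\otimes\Sigma_{(m,m-s,m-t,0)}\tmQ\otimes\mO_{\Gr(2,6)}(-m), \]
so by the Littlewood--Richardson rule~(\ref{eqn:LR}) the number of irreducible constituents of $\mE nd(\Sigma_\lambda\tmQ)$, counted with multiplicity, is $T(\lambda):=\sum_{\nu}m^{\nu}_{(m,t,s,0),\,(m,m-s,m-t,0)}$, the sum taken over partitions $\nu$ of length at most $4$. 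By the very definition of $\mathsf{K}_\lambda$ in~(\ref{eqn:defK}) together with Proposition~\ref{prop:+ delta 1 non cambia}, one has $T(\lambda+\delta_1)-T(\lambda)=|\mathsf{K}_{\lambda+\delta_1}|$; hence, writing $\lambda=a\delta_1+b\delta_2+c\delta_3$, the number $\mathsf{k}_{b,c}$ is exactly the first finite difference of $a\mapsto T(a\delta_1+b\delta_2+c\delta_3)$. The first task is therefore to understand $T$ as a function of $(a,b,c)$, and in particular to prove Conjecture~\ref{conjecture:1B}, i.e.\ that this finite difference is independent of $a$ for $a\geq b+c-1$.

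Second, I would model $T$ by the integer points of a hive (equivalently Berenstein--Zelevinsky) polytope. For fixed length $4$ each coefficient $m^{\nu}_{\lambda,\mu}$ is the number of lattice points of a rational polytope whose facets depend linearly on $(\lambda,\mu,\nu)$; summing over $\nu$ lets the $\nu$‑coordinates float, so that $T$ is the number of lattice points of a single polytope $Q(a,b,c)$ whose facets move linearly with $(a,b,c)$. By the Blakley--Sturmfels--Dahmen--Micchelli theory of vector partition functions, $(a,b,c)\mapsto T$ is then piecewise quasi‑polynomial, with chamber walls lying on a hyperplane arrangement dictated by the hive combinatorics. Two features must be extracted from this picture: (i) on the deep region $a\gg0$ the function is an honest polynomial, of degree $1$ in $a$, which yields Conjecture~\ref{conjecture:1B} and identifies $\mathsf{k}_{b,c}$ with the $a$‑linear coefficient; and (ii) the only wall meeting $\{a\gg0\}$ is $\{b=c\}$, which is the structural reason why $\mathsf{k}_{b,c}$ is represented by one polynomial $f(b,c)$ on $b\geq c$ and by its reflection on $b<c$.

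Third, I would fix the degree and the two remaining polynomials. The total degree of $T$ under uniform scaling of $(a,b,c)$ is at most $6$: three from the dimension $\binom{3}{2}=3$ of the $GL_4$ hive fibre and three from the $3$‑parameter family of outer shapes $\nu$. Combined with degree $1$ in $a$, the slope $\mathsf{k}_{b,c}$ has degree at most $5$ in $(b,c)$, matching the claimed top terms of $f$; with this a priori bound the coefficients are over‑determined by finitely many values, so the explicit $f$ is confirmed by the computations of Section~\ref{section:Sym Q} and Sections~\ref{section:LR for (m,1,0,0)}--\ref{section:LR for (m,3,3,0)} (Table~\ref{table:values for k}) supplemented by enough further data points. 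The symmetry $\mathsf{k}_{b,c}=\mathsf{k}_{c,b}$ of Conjecture~\ref{conjecture:symmetry}, which glues the two chambers, I would derive from an involution on Littlewood--Richardson fillings; note that $\mE nd(\Sigma_\lambda\tmQ)\cong\mE nd(\Sigma_\lambda\tmQ^\vee)$ already gives the related identity $T(a\delta_1+b\delta_2+c\delta_3)=T(c\delta_1+b\delta_2+a\delta_3)$, providing strong evidence and a candidate mechanism for the reflection.

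The main obstacle is step two. The soft general‑position results give only piecewise \emph{quasi}‑polynomiality, whereas the statement requires a genuine polynomial of degree exactly $5$ with no surviving periodic corrections and exactly the wall $\{b=c\}$ in the deep chamber. Establishing this demands a hands‑on unimodularity analysis of the four‑row hive cone — showing that the relevant projections of the hive lattice are unimodular so that the Ehrhart period is $1$, and that the chamber decomposition in the $a\gg0$ region is precisely the half‑spaces $b\geq c$ and $b\leq c$. It is exactly at this point that the clean two‑chamber shape $f(b,c)\,/\,f(c,b)$ — and with it the exact degree and the vanishing of lower‑order periodic terms — must be \emph{proved} rather than merely observed, and I expect this unimodularity/chamber analysis to be the crux of the argument.
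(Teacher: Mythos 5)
The statement you are addressing is stated in the paper as a \emph{conjecture}: the authors give no proof, only the data of Table~\ref{table:values for k} (the cases $b+c\leq3$ worked out in Section~\ref{section:Sym Q} and Sections~\ref{section:LR for (m,1,0,0)}--\ref{section:LR for (m,3,3,0)}) plus an unpublished computer check of roughly one hundred further values of $(b,c)$. So there is no argument of theirs to compare yours against, and the relevant question is whether your proposal actually closes the gap. It does not, and you say so yourself: the entire content of the conjecture — that the first difference $\mathsf{k}_{b,c}$ is an honest polynomial of degree $5$ (not merely a quasi-polynomial), that the only chamber wall surviving in the region $a\gg0$ is $\{b=c\}$, and that no periodic corrections remain — is concentrated in your ``step two,'' which you explicitly leave as the unresolved crux. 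The general Blakley--Sturmfels--Dahmen--Micchelli machinery you invoke only guarantees piecewise quasi-polynomiality of the lattice-point count of the parametric hive polytope; upgrading this to a single polynomial on each of the two half-spaces requires the unimodularity and chamber analysis you defer, and without it the ``finitely many data points over-determine the coefficients'' step is circular, since the a priori degree bound and the absence of periodic terms are exactly what licenses interpolation from finitely many values.

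Two further points. First, your reduction of $\mathsf{k}_{b,c}$ to the first finite difference of the total multiplicity count $T(\lambda)$ silently assumes Conjecture~\ref{conjecture:1B} (stabilisation of the difference for $a\geq b+c-1$), which is itself unproven in the paper except in the cases of Table~\ref{table:values for k}; your framework would prove it as a by-product of the deep-chamber analysis, but again only once that analysis is carried out. Second, your proposed mechanism for the reflection symmetry is sound as far as it goes — the identity $\mE nd(\Sigma_\lambda\tmQ)\cong\mE nd(\Sigma_\lambda\tmQ^\vee)$ does give $T(a\delta_1+b\delta_2+c\delta_3)=T(c\delta_1+b\delta_2+a\delta_3)$ via Example~\ref{example:duale}, and this is a genuinely useful observation that the paper only records implicitly — but exchanging the roles of $a$ and $c$ is not the same as exchanging $b$ and $c$ at fixed large $a$, so deducing $\mathsf{k}_{b,c}=\mathsf{k}_{c,b}$ from it still requires an argument. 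In short: the strategy is a reasonable and well-motivated research program, arguably the right one, but as written it is a plan with its decisive step missing, not a proof, and it should not be presented as settling a statement that the authors themselves only conjecture.
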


\begin{remark}
    Proposition~\ref{prop:+ delta 1 non cambia} seems to hold more generally if instead of $\delta_1$ we add to $\lambda$ the weights $\delta_2$ and $\delta_3$.
    Working out explicit cases with the help of computer algebra, it seems that an analog of Conjecture~\ref{conjecture:1B} does not hold for $\delta_2$. On the other hand, it seems that a $\delta_3$-version of Conjecture~\ref{conjecture:1B} should hold.
\end{remark}

\begin{remark}
    So far we worked in the grassmannian $\Gr(2,6)$. On the other hand there is nothing special about this choice. One can naturally wonder whether Proposition~\ref{prop:+ delta 1 non cambia} holds more generally for any grassmannian $\Gr(k,n)$. Working out some small values of $k$ and $n$, it seems that the answer is affirmative, hence it may be worth to investigate the features outlined in this appendix in bigger generality.
\end{remark}

\bibliographystyle{alpha}

\end{document}